\tikzset{S/.style={baseline={([yshift=-0.5ex]current bounding box.center)}}}
\tikzset{anchorbase/.style={baseline={([yshift=-0.5ex]current bounding box.center)}}}
\tikzset{centerzero/.style={baseline={([yshift=-0.5ex]{0,#1})}},
    centerzero/.default={0}}
\tikzset{S/.style={thin,black,centerzero}}
\newcommand{\circltimes}
{\mathchoice   
{\begin{tikzpicture}[baseline=-3pt]
\node at (0,0) {$\otimes$};
\draw[line width=0.27pt] (-.087,-.08) to (-.087,.08);
\end{tikzpicture}}     
{\begin{tikzpicture}[baseline=-3pt]
\node at (0,0) {$\otimes$};
\draw[line width=0.27pt] (-.087,-.08) to (-.087,.08);
\end{tikzpicture}}
{\!\begin{tikzpicture}[baseline=-3pt]
\node at (0,0) {$\scriptstyle\otimes$};
\draw[line width=0.26pt] (-.068,-.064) to (-.068,.064);
\end{tikzpicture}\!}
{\!\begin{tikzpicture}[baseline=-1.9pt]
\node at (0,0) {$\scriptscriptstyle\otimes$};
\draw[line width=0.24pt] (-.055,-.05) to (-.055,.05);
\end{tikzpicture}\!}
}
\newcommand{\circltimesintikzcd}
{\mathchoice   
{\!\begin{tikzpicture}[baseline=-1pt]
\node at (0,0) {$\otimes$};
\draw[line width=0.27pt] (-.087,.01) to (-.087,.18);
\end{tikzpicture}\!}     
{\!\begin{tikzpicture}[baseline=-1pt]
\node at (0,0) {$\otimes$};
\draw[line width=0.27pt] (-.087,.01) to (-.087,.18);
\end{tikzpicture}\!}
{\!\!\begin{tikzpicture}[baseline=-.5pt]
\node at (0,0) {$\scriptstyle\otimes$};
\draw[line width=0.26pt] (-.068,.01) to (-.068,.13);
\end{tikzpicture}\!\!}
{\!\!\begin{tikzpicture}[baseline=-.5pt]
\node at (0,0) {$\scriptscriptstyle\otimes$};
\draw[line width=0.24pt] (-.055,.01) to (-.055,.1);
\end{tikzpicture}\!\!}
}
\tikzset{gcolor/.style={green!60!black,semithick}}
\tikzset{pinhead/.style={thin,gray,fill=yellow!40!white}}
\tikzset{bulb/.style={thin,black,fill=black}}
\tikzset{bulbinverse/.style={thin,black,fill=white}}
\tikzset{internalbubble/.style={thin,black,fill=blue!30!white}}
\tikzset{fakebubble/.style={semithick,black,fill=red!20!white}}
\tikzset{zeroline/.style={thin,densely dotted}}
\colorlet{darkg}{green!50!black}
\newcommand\dotlabel[1]{$\color{teal}\scriptstyle{#1}$}
\newcommand\strandlabel[1]{$\color{olive}\scriptstyle{#1}$}
\newcommand\botlabel[1]{node[inner sep=0.5pt,anchor=north] {\strandlabel{#1}}}
\newcommand\toplabel[1]{node[anchor=south,inner sep=0.5pt] {\strandlabel{#1}}}
\newcommand\strand[2]{\node at (#1) {\strandlabel{#2}}} 
\newcommand\closeddot[2][black]{
    \node at (#2) {\color{#1}$\scriptstyle{\bullet}$}
}
\newcommand\multcloseddot[3]{
    \closeddot{#1};
    \draw (#1) node[anchor=#2] {\dotlabel{#3}}
}
\newcommand{\pin}[3]{
    \path (#1) node[inner sep=1.6pt] (x) {} to (#2) node[rectangle,rounded corners,draw,pinhead,inner sep=2.5pt](y) {$\color{black}\scriptstyle#3$};
    \draw[Triangle Cap-,thick,gray] (x)--(y);
    \closeddot{#1}
}
\newcommand{\pinpinpinpinpin}[7]{
\path (#1) node[inner sep=1.6pt] (x) {} to (#6) node[rectangle,rounded corners,draw,pinhead,inner sep=2.5pt](y) {$\color{black}\scriptstyle#7$};
    \draw[Triangle Cap-,thick,gray] (x)--(#2)--(#3)--(#4)--(#5)--(y);
\closeddot{#5}; \closeddot{#4}; \closeddot{#3}; \closeddot{#2}; \closeddot{#1}
}
\newcommand{\pinpinpinpinpinpin}[8]{
\path (#1) node[inner sep=1.6pt] (x) {} to (#7) node[rectangle,rounded corners,draw,pinhead,inner sep=2.5pt](y) {$\color{black}\scriptstyle#8$};
    \draw[Triangle Cap-,thick,gray] (x)--(#2)--(#3)--(#4)--(#5)--(#6)--(y);
\closeddot{#6}; \closeddot{#5}; \closeddot{#4}; \closeddot{#3}; \closeddot{#2}; \closeddot{#1}
}
\newcommand\spot[1]{\filldraw (#1) circle (.9pt)} 
\newcommand{\circled}[2]{\node[circle,draw,bulb,inner sep=.85pt] at (#1) {$\hspace{0.06pt}\color{white}\scriptscriptstyle \pmb{#2}$}}
\newcommand{\ovaled}[2]{\node[ellipse,draw,bulb,inner sep=.5pt] at (#1) {$\hspace{0.06pt}\color{white}\scriptscriptstyle\pmb{\!#2\!}$}}
\newcommand{\circledinverse}[2]{\node[circle,draw,bulbinverse,inner sep=.85pt] at (#1) {$\hspace{0.06pt}\color{black}\scriptscriptstyle #2$}}
\newcommand{\ovaledinverse}[2]{\node[ellipse,draw,bulbinverse,inner sep=.5pt] at (#1) {$\hspace{0.06pt}\color{black}\scriptscriptstyle\!#2\!$}}
\newcommand{\cref}[1]{\zcref{#1}}
\newcommand{\Cref}[1]{\zcref[S]{#1}}
\newtheorem{theorem}{Theorem}[section]
\newtheorem{lemma}[theorem]{Lemma}
\newtheorem{corollary}[theorem]{Corollary}
\newtheorem{conjecture}[theorem]{Conjecture}
\theoremstyle{definition}
\newtheorem{definition}[theorem]{Definition}
\newtheorem{remark}[theorem]{Remark}
\newtheorem{example}[theorem]{Example}
\numberwithin{equation}{section}
\newcommand\one{\mathds{1}}
\newcommand{\Hom}{\operatorname{Hom}}
\newcommand{\Z}{\mathbb{Z}}
\newcommand{\C}{\mathbb{C}}
\newcommand{\Q}{\mathbb{Q}}
\def\inc{\operatorname{inc}}
\newcommand{\N}{\mathbb{N}}
\newcommand{\Res}{\operatorname{Res}}
\newcommand{\id}{\operatorname{id}}
\renewcommand{\Im}{\operatorname{Im}}
\newcommand{\End}{\operatorname{End}}
\newcommand{\rev}{\mathrm{rev}}
\newcommand{\op}{\mathrm{op}}
\newcommand{\g}{\mathfrak{g}}
\newcommand{\gl}{\mathfrak{gl}}
\renewcommand{\sl}{\mathfrak{sl}}
\newcommand\lround{(\!(}
\newcommand\rround{)\!)}
\newcommand\llbracket{[\![}
\newcommand\rrbracket{]\!]}
\newcommand{\kk}{\Bbbk}
\newcommand{\gr}{\operatorname{gr}}
\def\ev{\operatorname{ev}}
\def\Mat#1#2{\mathrm{Mat}(#1,#2)}
\def\KK{\mathbb{K}}
\def\S{\mathrm{S}}
\def\G{\mathrm{G}}
\def\T{\mathrm{T}}
\def\GL{\mathrm{GL}}
\def\resS{\mathrm{S}^{[p]}}
\def\Schur{\mathbf{S}\hspace{-.25pt}\mathbf{chur}}
\def\Sym{\mathbf{S}\hspace{-.25pt}\mathbf{ym}}
\def\aSym{\mathbf{A}\hspace{-.3pt}\mathbf{S}\hspace{-.25pt}\mathbf{ym}}
\def\qSchur{\mathbf{S}\hspace{-.25pt}\mathbf{chur}_q}
\def\aSchur{\mathbf{A}\hspace{-.3pt}\mathbf{S}\hspace{-.25pt}\mathbf{chur}}
\def\gSchur{\mathbf{A}\hspace{-.3pt}\mathbf{S}\hspace{-.25pt}\mathbf{chur}_0}
\def\aS{\mathrm{A}\hspace{-.2pt}\mathrm{S}}
\def\qS{\mathrm{A}\hspace{-.2pt}\mathrm{S}_q}
\def\oneS{\mathrm{A}\hspace{-.2pt}\mathrm{S}_1}
\def\aH{\mathrm{A}\hspace{-.2pt}\mathrm{H}}
\def\gS{\mathrm{A}\hspace{-.2pt}\mathrm{S}_0}
\def\Y{\mathrm{Y}}
\def\SY{\mathrm{S}\hspace{-.5pt}\mathrm{Y}}
\def\Yn{\mathrm{Y}(\gl_n)}
\def\resYn{\mathrm{Y}^{[p]}(\gl_n)}
\def\SYn{\mathrm{Y}(\sl_n)}
\def\U{\mathrm{U}}
\def\SU{\mathrm{S}\hspace{-.4pt}\mathrm{U}}
\def\resUn{\mathrm{U}^{[p]}(\gl_n)}
\def\Un{\mathrm{U}(\gl_n)}
\def\SUn{\mathrm{U}(\sl_n)}
\def\P{\mathrm{P}}
\def\Q{\mathrm{Q}}
\def\v{m}
\def\gYn{\mathrm{U}(\gl_n[x])}
\def\diag{\operatorname{diag}}
\def\eps{\varepsilon}
\def\ba{\underline{a}}
\def\bb{\underline{b}}
\def\bh{\text{\boldmath$h$}}
\def\bi{\text{\boldmath$i$}}
\def\bj{\text{\boldmath$j$}}
\def\mod{\!\operatorname{-mod}}
\def\transpose{\mathtt{T}}
\def\dash{}
\def\bully{\diamond}
\def\vid{\text{\boldmath$\scriptstyle\cdot|\cdot$}}
\def\barotimes{\;\bar\otimes\;}
\def\drinfeld{\mathtt{d}}
\def\Drinfeld{\mathtt{D}}
\def\SDrinfeld{\mathtt{SD}}
\def\sdrinfeld{\mathtt{sd}}
\def\Functor{\mathtt{F}}
\def\functor{\mathtt{f}}
\def\DRINFELD{\widetilde{\mathtt{D}}}
\def\Dist{\operatorname{Dist}}
\DeclareSymbolFont{symbolsC}{U}{pxsyc}{m}{n}
\DeclareMathSymbol{\medcirc}{\mathbin}{symbolsC}{7}
\def\star{*}
\def\ostar{\circledast}
\title{Yangians and degenerate affine Schur algebras}
\author{Jonathan Brundan}
\address[J.B.]{Department of Mathematics, University of Oregon, Eugene, OR, USA
}
\urladdr{\href{https://pages.uoregon.edu/brundan}{https://pages.uoregon.edu/brundan}, \textrm{\textit{ORCiD}:} \href{https://orcid.org/0009-0009-2793-216X}{0009-0009-2793-216X}}
\email{brundan@uoregon.edu}
\author{Viacheslav Ivanov}
\address[V.I.]{Department of Mathematics, University of Oregon, Eugene, OR, USA
}
\email{vivanov@uoregon.edu}
\thanks{This research was supported in part by NSF grant DMS-2348840.
Some of the material is based upon work supported by the National Science Foundation under Grant No. DMS-1929284 while the first author was in residence at the Institute for Computational and Experimental Research in Mathematics in Providence, RI, during the Categorification and Computation in Algebraic Combinatorics semester program in Fall 2025.
}
\keywords{Yangian, degenerate affine Schur algebra}
\subjclass[2020]{17B37}
\begin{document}

\begin{abstract}
Drinfeld's degenerate affine analog of Schur-Weyl duality
relates representations of the degenerate affine Hecke algebra $\aH_r$ to representations of the Yangian $\Yn$. One way to understand the construction is to introduce an intermediate algebra $\aS(n,r)$, the {\em degenerate affine Schur algebra}, which appears both as the endomorphism algebra
of an induced tensor space over $\aH_r$,
and as the image of a homomorphism $\Drinfeld_{n,r}:\Yn\twoheadrightarrow \aS(n,r)$.
In this paper, we describe $\Drinfeld_{n,r}$ 
using a diagrammatic calculus. 
Then we use a theorem of Drinfeld to 
compute $\ker\Drinfeld_{n,r}$ when $n > r$,
thereby giving a presentation of $\aS(n,r)$ in these cases.
We formulate a conjecture in the remaining cases.
Finally, we apply results of Arakawa 
to develop some of the representation theory of $\aS(n,r)$.
\end{abstract}

\maketitle
\tableofcontents

\setcounter{section}{0}

\section{Introduction}

There has been some interest recently in the development of diagrammatic tools for working with Schur algebras and related objects appearing in representation theory. For example, the classical Schur algebra $\S(n,r)$ has a standard basis indexed by certain minimal length double coset representatives in the symmetric group $\S_r$.
These double coset representatives may be represented graphically by {\em double coset
diagrams} with $n$ vertical strings of total thickness $r$
at the top and bottom boundaries, like in the following example which is a picture of
a minimal length double coset representative for the subgroups $\S_5 \times \S_3 \times \S_8 \times \S_4$  and $\S_6 \times \S_2 \times \S_7 \times \S_5$ 
in the symmetric group $\S_{20}$:
$$
\begin{tikzpicture}[centerzero,scale=1.3]
\draw[line width=1pt] (-1.17,-.6) to (-1.17,-.5) to (-.015,.5) to (-.015,.6);
\draw (.588,.6) to (.588,.5) to (.018,-.5) to (.018,-.6);
\draw(-.018,-.6) to (-.018,-.5) to (-.59,.5) to (-.59,.6);
\draw[line width=.7pt] (-.605,-.6)\botlabel{2} to (-.605,.6)\toplabel{3};
\draw[ultra thick] (0,-.6)\botlabel{7} to (0,.6)\toplabel{8};
\draw[line width=.9pt] (.6,.6)\toplabel{4} to (.6,-.6)\botlabel{5};
\draw[line width=.6pt] (-1.17,.6) to (-1.17,.5) to (.585,-.5) to (.585,-.6);
\draw[line width=.9pt] (-1.18,.6)\toplabel{5} to (-1.18,-.6)\botlabel{6};
\strand{.27,.15}{1};
\strand{-.25,-.3}{1};
\strand{-.12,.15}{5};
\strand{-.85,-.39}{3};
\strand{-1.3,0}{3};
\strand{.73,0}{3};
\strand{-.9,.2}{2};
\end{tikzpicture}.
$$
The numerical labels indicate the thicknesses of the individual strings.
The same diagrams can be used to represent corresponding standard basis vectors in the Schur algebra; the example is a 
vector in $\S(4,20)$.
Then Schur's formula for computing products of standard basis vectors can be reinterpreted in terms of local relations on string diagrams which allow non-reduced diagrams to be simplified algorithmically.

Working over $\C$, there is a surjective algebra homorphism from the universal enveloping algebra of $\gl_n(\C)$ to $\S(n,r)$
defined on the generators $d_i := e_{i,i}$, 
$e_i := e_{i,i+1}$ and $f_i := e_{i+1,i}$ by
\begin{align*}
d_i&\mapsto\!
\sum_{\lambda \in \Lambda(n,r)}\!
\lambda_i 
\begin{tikzpicture}[centerzero,scale=.9]
\draw[ultra thick] (-1,-.6)\botlabel{\lambda_1} to (-1,.6);
\node at (-.58,0) {$\dots$};
\draw[ultra thick] (-.2,-.6)\botlabel{\lambda_i} to (-.2,.6);
\node at (.22,0) {$\dots$};
\draw[ultra thick] (.6,-.6)\botlabel{\lambda_n} to (.6,.6);
\end{tikzpicture},\end{align*}\begin{align*}
e_i &\mapsto \!
\sum_{\substack{\mu \in \Lambda(n,r)\\\mu_{i+1}>0}}
\begin{tikzpicture}[centerzero,scale=.9]
\draw[ultra thick] (-1.5,-.6)\botlabel{\mu_1} to (-1.5,.6);
\node at (-1.08,0) {$\dots$};
\draw[ultra thick] (-.7,-.6)\botlabel{\mu_{i\!-\!1}\ } to (-.7,.6);
\draw[ultra thick] (-.2,-.6)\botlabel{\mu_i} to (-.2,.6);
\draw[ultra thick] (0.5,-.6)\botlabel{\mu_{i\!+\!1}\ } to (0.5,.6);
\draw[ultra thick] (1,-.6)\botlabel{\ \mu_{i\!+\!2}} to (1,.6);
\draw (.47,-.6) to (.47,-.3) to (-0.17,.3) to (-.17,.6);
\node at (1.42,0) {$\dots$};
\draw[ultra thick] (1.8,-.6)\botlabel{\mu_n} to (1.8,.6);
\end{tikzpicture},&
f_i&\mapsto
\!\sum_{\substack{\mu \in \Lambda(n,r)\\\mu_{i}>0}}
\begin{tikzpicture}[centerzero,scale=.9]
\draw[ultra thick] (-1.5,-.6)\botlabel{\mu_1} to (-1.5,.6);
\node at (-1.08,0) {$\dots$};
\draw[ultra thick] (-.7,-.6)\botlabel{\mu_{i\!-\!1}\ } to (-.7,.6);
\draw[ultra thick] (-.2,-.6)\botlabel{\mu_i} to (-.2,.6);
\draw[ultra thick] (0.5,-.6)\botlabel{\mu_{i\!+\!1}\ } to (0.5,.6);
\draw[ultra thick] (1,-.6)\botlabel{\ \mu_{i\!+\!2}} to (1,.6);
\draw (.47,.6) to (.47,.3) to (-0.17,-.3) to (-.17,-.6);
\node at (1.42,0) {$\dots$};
\draw[ultra thick] (1.8,-.6)\botlabel{\mu_n} to (1.8,.6);
\end{tikzpicture},
\end{align*}
where $\Lambda(n,r)$ denotes the set of compositions $\lambda = (\lambda_1,\dots,\lambda_n) \in \N^n$ whose parts sum to $r$. The unlabelled diagonal strings here are of thickness 1.
It is natural to want to view the diagrams for the images of $e_i$ and $f_i$ as compositions of their top and bottom halves, but the half diagrams themselves do not make sense as elements of $\S(n,r)$ since the slice across the middle cuts $n+1$ rather than $n$ strings. This suggests that it might be desirable to
pass from the Schur algebra to a more general object where there are fewer constraints. 

These ideas were developed systematically 
in \cite{BEEO}, defining the {\em Schur category} $\Schur$ 
to be a strict monoidal category with objects given by compositions, and morphisms represented by string diagrams with strings of appropriate thicknesses.
Tensor product is defined on objects by concatenation of compositions and on morphisms by horizontal stacking of string diagrams.
The main families of generating morphism are the merges
$\begin{tikzpicture}[anchorbase,centerzero,scale=.7]
\draw[-,line width=1pt] (0.28,-.3) to (0.08,0.04);
\draw[-,line width=1pt] (-0.12,-.3) to (0.08,0.04);
\draw[-,line width=2pt] (0.08,.4) to (0.08,0);
\strand{-.22,-.5}{a};
\strand{0.35,-.5}{b};
\end{tikzpicture}$, the splits $\begin{tikzpicture}[anchorbase,centerzero=.1,scale=.7]
\draw[-,line width=2pt] (0.08,-.3) to (0.08,0.04);
\draw[-,line width=1pt] (0.28,.4) to (0.08,0);
\draw[-,line width=1pt] (-0.12,.4) to (0.08,0);
\strand{-0.22,.6}{a};
\strand{0.36,.6}{b};
\end{tikzpicture}$, and the thick crossings $\begin{tikzpicture}[baseline=-1.5mm,scale=.7]
\draw[-,line width=1.2pt] (-0.3,-.34) to (.3,.34);
\draw[-,line width=1.2pt] (0.3,-.34) to (-.3,.34);
\strand{0.3,-.55}{b};
\strand{-0.3,-.55}{a};
\end{tikzpicture}$,
which satisfy relations which can be expressed in a very economical way; see \cref{assrel,mergesplit}. 
Then $\S(n,r)$ is the 
path algebra of the full subcategory of $\Schur$ with object set $\Lambda(n,r)$.
There is also a quantum analog $\qSchur$ of $\Schur$ which was defined both by generators and relations and with explicit bases for morphism spaces in \cite{BSchur}; one replaces the (singular) thick crossing with the
positive and negative thick crossings
$\begin{tikzpicture}[baseline=-1.5mm,scale=.7]
\draw[-,line width=1.2pt] (0.3,-.34) to (-.3,.34);
\draw[-,line width=4pt,white] (-0.3,-.34) to (.3,.34);
\draw[-,line width=1.2pt] (-0.3,-.34) to (.3,.34);
\strand{0.3,-.55}{b};
\strand{-0.3,-.55}{a};
\end{tikzpicture}$ and
$\begin{tikzpicture}[baseline=-1.5mm,scale=.7]
\draw[-,line width=1.2pt] (-0.3,-.34) to (.3,.34);
\draw[line width=4pt,white] (.3,-.34) to (-.3,.34);
\draw[-,line width=1.2pt] (0.3,-.34) to (-.3,.34);
\strand{0.3,-.55}{b};
\strand{-0.3,-.55}{a};
\end{tikzpicture}$.

The papers \cite{BEEO, BSchur} are quite recent, so of course they 
rest on many previous works.
The excellent idea that string diagrams provide a useful tool for working in Schur-like categories was probably first suggested by Stroppel and Webster; see \cite[Sec.~3.3]{SW}. Splits and merges can already be seen in the MOY diagrams from \cite{MOY}, which were being used at about the same time as \cite{SW} in the development of diagrammatic
categorifications of Schur algebras; see \cite{MSV} and references therein.
In \cite[Sec.~3.2]{MakS}, Maksimau and Stroppel
pioneered the use of
diagrammatics similar to \cite{BEEO,BSchur}
with the addition of coupons on thick strings labelled by symmetric Laurent polynomials in order 
to represent elements of the affine $q$-Schur algebra of Green and Vignéras \cite{Green-affine,vigneras}; see also \cite{MS}.
This work included the case of roots of unity and also considered cyclotomic quotients\footnote{They also consider an extended tensor product version with additional red strands, which we will not say anything about here.}, 
establishing isomorphisms to 
cyclotomic quotients of the quiver Schur algebras of \cite{SW}.
A generators and relations description of some of the algebras in \cite{MakS} was given later in \cite{ShenSongWang}.

In another influential paper \cite{CKM}, certain diagrams called 
{\em webs} were used to present a monoidal category closely related to $\qSchur$.
This terminology goes back to work of Kuperberg \cite{Kuperberg}, but we find it is a little misleading in the Schur algebra context---Kuperberg's webs are certain labelled planar graphs which are not the same as our double coset diagrams, although they are related.
Unlike the situation for $\qSchur$, it is not easy to find explicit bases for morphism spaces in the Cautis-Kamnitzer-Morrison web category; see \cite{Elias} which constructed bases for a closely related variant, and \cite[Th.~8.1]{BSchur} for another approach which involves taking the quotient of $\qSchur$ by a cell ideal.

This paper was inspired instead by the recent work of Song and Wang \cite{SongWang}, who introduced a strict monoidal category defined by generators and relations which they called ``affine web category.'' We prefer to call it the
{\em degenerate affine Schur category}, denoted
$\aSchur$.
The path algebra of the full subcategory of $\aSchur$
with object set $\Lambda(n,r)$
is the {\em degenerate affine Schur algebra} $\aS(n,r)$, which is the degenerate analog of the affine $q$-Schur algebra mentioned already. 
Letting $V$ be the natural representation of $\gl_n(\C)$, the algebra $\aS(n,r)$ can be constructed more directly as the endomorphism algebra
$$
\aS(n,r) = \End_{\dash\aH_r}\big(V^{\otimes r} \otimes_{\C \S_r} \aH_r\big)
$$
of the {\em induced tensor space} $V^{\otimes r} \otimes_{\C \S_r} \aH_r$, which is a right module for the {\em degenerate affine Hecke algebra} $\aH_r$.
Song and Wang also consider cyclotomic quotients, which they show are related to the Schur algebras of higher levels from \cite{BKschur}.
A generalization in a different direction was considered independently in \cite{DKMZ1,DKMZ2}.

In the first half of the paper, we reprove some of the results of Song and Wang about $\aSchur$ (but none of their later results about cyclotomic quotients). A key difference in our exposition is that we allow strings of thickness $r$ to be decorated by symmetric polynomials in $\C[x_1,\dots,x_r]^{\S_r}$, similar to what was done already in the quantum case in \cite{MS, MakS}. We point out one useful relation:  we have that
$$
\begin{tikzpicture}[anchorbase,scale=1.1,centerzero=.4]
\draw[-,line width=1.1pt] (0.58,0) to (.02,1);
\draw[-,line width=1.1pt] (0.02,0) to (.58,1);
\strand{0,-.1}{a};
\strand{0.6,-.1}{b};
\pin{.14,.22}{-.3,.3}{e_d};
\end{tikzpicture}
=
\sum_{s=0}^{\min(a,b,d)}
s!
\begin{tikzpicture}[anchorbase,scale=1.1,centerzero=.4]
\draw[-,thick] (0.58,0) to (0.58,.2) to (.02,.8) to (.02,1);
\draw[-,thick] (0.02,0) to (0.02,.2) to (.58,.8) to (.58,1);
\draw[-,thin] (0,0) to (0,1);
\draw[-,thin] (0.6,0) to (0.6,1);
\strand{0,-.1}{a};
\strand{0.6,-.1}{b};
\strand{-0.1,.5}{s};
\strand{0.75,.5}{s};
\pin{.45,.65}{.3,1.25}{e_{d-s}};
\end{tikzpicture}
$$
for $a,b,d \geq 1$,
where $e_d$ pinned to a string of thickness $a$ denotes a coupon labelled by the $d$th elementary symmetric polynomial in variables $x_1,\dots,x_a$.
This relation allows symmetric polynomials to be commuted past crossings in double coset diagrams. Song and Wang use it only in the special case that $d=a$.
To prove it for smaller values of $d$, we 
work in terms of generating functions, using the diagrammatic shorthands
\begin{align*}
\begin{tikzpicture}[centerzero]
\draw[ultra thick] (0,-.4)\botlabel{r} to (0,.4);
\circledinverse{0,0}{u};
\end{tikzpicture}
&:=
\begin{tikzpicture}[centerzero]
\draw[ultra thick] (0,-.4)\botlabel{r} to (0,.4);
\pin{0,0}{1.4,0}{(u-x_1)\cdots (u-x_r)};
\end{tikzpicture}\ ,
&
\begin{tikzpicture}[centerzero]
\draw[ultra thick] (0,-.4)\botlabel{r} to (0,.4);
\circled{0,0}{u};
\end{tikzpicture}
&:=
\begin{tikzpicture}[centerzero]
\draw[ultra thick] (0,-.4)\botlabel{r} to (0,.4);
\pin{0,0}{1.4,0}{\frac{1}{(u-x_1)\cdots (u-x_r)}};
\end{tikzpicture}
\end{align*}
for coupons labelled by the generating functions
for elementary and complete symmetric polynomials.
These are elements of $\End_{\aSchur}\big((r)\big)\lround u^{-1}\rround$ where $u$ is a formal variable.
The above relation for commuting elementary symmetric polynomials past crossings follows from 
$$
\begin{tikzpicture}[anchorbase,scale=1.1,centerzero=.4]
\draw[-,line width=1.1pt] (0.58,0) to (.02,1);
\draw[-,line width=1.1pt] (0.02,0) to (.58,1);
\strand{0,-.1}{a};
\strand{0.6,-.1}{b};
\circledinverse{.14,.22}{u};
\end{tikzpicture}
=
\sum_{s=0}^{\min(a,b)}
(-1)^s s!
\begin{tikzpicture}[anchorbase,scale=1.1,centerzero=.4]
\draw[-,thick] (0.58,0) to (0.58,.2) to (.02,.8) to (.02,1);
\draw[-,thick] (0.02,0) to (0.02,.2) to (.58,.8) to (.58,1);
\draw[-,thin] (0,0) to (0,1);
\draw[-,thin] (0.6,0) to (0.6,1);
\strand{0,-.1}{a};
\strand{0.6,-.1}{b};
\strand{-0.1,.5}{s};
\strand{0.75,.5}{s};
\circledinverse{.45,.65}{u};
\end{tikzpicture},
$$
which is easier to prove; see \cref{bingley}.
We use this relation as one of the 
defining relations in a monoidal presentation for $\aSchur$ which is equivalent to the presentation originally derived in \cite{SongWang} but more convenient since it fully incorporates symmetric polynomials;
see \cref{songwangpresentation,tobecontinued}. We also prove several complementary results which are not surprising, but are missing in the existing literature. For example, in \cref{weiqiang}, we determine the centers of each of the endomorphism algebras $\End_{\aSchur}(\lambda)$, proving a conjecture from \cite{SongWang}. 

The second half of the paper is concerned with the Yangian $\Yn$
associated to $\gl_n(\C)$, and its subalgebra $\SYn$ which is the Yangian of $\sl_n(\C)$. 
In \cite{Drinfeld}, Drinfeld defined a functor 
$$
V^{\otimes r}\otimes_{\C S_r} -:\aH_r\mod\rightarrow \Yn\mod
$$
which can be used to study finite-dimensional representations of $\Yn$ in the same way that the classical Schur functor is used in the context of representation theory of symmetric and general linear groups.
His main result about this functor is as follows:

\vspace{2mm}
\noindent
\textbf{Theorem} (Drinfeld)\textbf{.}
{\em Assuming $n > r$, 
the composite functor $\Res^{\Yn}_{\SYn} 
\circ (V^{\otimes r} \otimes_{\kk S_r} -)$
defines an equivalence
of categories between $\aH_r\mod$ and the full subcategory
of $\SYn\mod$ consisting of modules whose restriction to $\mathfrak{sl}_n(\C)$ are polynomial representations of degree $r$.}

\vspace{2mm}

\noindent
An analogous result in the quantum setting was proved by Chari and Pressley in \cite{CP}.
The Drinfeld functor was studied further by Arakawa \cite{Arakawa}, including in the case that $n \leq r$.

Applying Drinfeld's functor to the regular representation of $\aH_r$
produces an action of $\Yn$ on the
induced tensor space $V^{\otimes r} \otimes_{\kk S_r} \aH_r$, making it
into a $(\Yn,\aH_r)$-bimodule. This action induces a homomorphism 
$$
\Drinfeld_{n,r}:\Yn \rightarrow \aS(n,r)
$$
which we call the {\em Drinfeld homomorphism}. 
In \cref{SlavaTheorem}, we give
an explicit formula expressing the images under $\Drinfeld_{n,r}$
of the RTT generators $T_{i,j}^{(d)}$ of $\Yn$ in terms of standard bases of $\aS(n,r)$. The result can also be understood diagrammatically; see \cref{SlavaExample}.

It turns out to be much easier to describe $\Drinfeld_{n,r}$ on 
another well-known family of generators for $\Yn$
denoted by $D_i^{(d)}, E_i^{(d)}$ and $F_i^{(d)}$, which are closely related to the Drinfeld generators from \cite{Dnew}.
The generating functions $D_i(u) = 1+\sum_{d \geq 1} D_i^{(d)} u^{-d}$,
$E_i(u) =\sum_{d \geq 1} E_i^{(d)} u^{-d}$
and $F_i(u) = \sum_{d \geq 1} F_i^{(d)} u^{-d}$ arise as entries of the Gauss factorization of the matrix $(T_{i,j}(u))_{1 \leq i,j \leq n}$
of generating functions $T_{i,j}(u) = \delta_{i,j}+\sum_{d\geq 1} T_{i,j}^{(d)} u^{-d}$ for the RTT generators; see \cref{gfact}.
In \cref{def}, we show that $\Drinfeld_{n,r}$ maps
\begin{align*}
D_i(u)&\mapsto
\sum_{\lambda \in \Lambda(n,r)}
\begin{tikzpicture}[centerzero,scale=.9]
\draw[ultra thick] (-1.5,-.6)\botlabel{\lambda_1} to (-1.5,.6);
\node at (-1.08,0) {$\dots$};
\draw[ultra thick] (-.7,-.6)\botlabel{\lambda_{i\!-\!1}\ } to (-.7,.6);
\draw[ultra thick] (0,-.6)\botlabel{\lambda_i} to (0,.6);
\draw[ultra thick] (.7,-.6)\botlabel{\ \lambda_{i\!+\!1}} to (.7,.6);
\node at (1.12,0) {$\dots$};
\draw[ultra thick] (1.5,-.6)\botlabel{\lambda_n} to (1.5,.6);
\ovaledinverse{0,.18}{u+i};
\ovaled{0,-.18}{u+i-1};
\end{tikzpicture},\end{align*}\begin{align*}
E_i(u)&\mapsto
\sum_{\substack{\mu \in \Lambda(n,r)\\\mu_{i+1}>0}}
\begin{tikzpicture}[centerzero,scale=.9]
\draw[ultra thick] (-1.5,-.6)\botlabel{\mu_1} to (-1.5,.6);
\node at (-1.08,0) {$\dots$};
\draw[ultra thick] (-.7,-.6)\botlabel{\mu_{i\!-\!1}\ } to (-.7,.6);
\draw[ultra thick] (-.2,-.6)\botlabel{\mu_i} to (-.2,.6);
\draw[ultra thick] (0.8,-.6)\botlabel{\mu_{i\!+\!1}\ } to (0.8,.6);
\draw[ultra thick] (1.3,-.6)\botlabel{\ \mu_{i\!+\!2}} to (1.3,.6);
\draw (.77,-.6) to (.77,-.3) to (-0.17,.3) to (-.17,.6);
\node at (1.72,0) {$\dots$};
\draw[ultra thick] (2.1,-.6)\botlabel{\mu_n} to (2.1,.6);
\ovaled{0.3,0}{u+i};
\end{tikzpicture}\ ,&
F_i(u)&\mapsto
\sum_{\substack{\mu \in \Lambda(n,r)\\\mu_{i}>0}}
\begin{tikzpicture}[centerzero,scale=.9]
\draw[ultra thick] (-1.5,-.6)\botlabel{\mu_1} to (-1.5,.6);
\node at (-1.08,0) {$\dots$};
\draw[ultra thick] (-.7,-.6)\botlabel{\mu_{i\!-\!1}\ } to (-.7,.6);
\draw[ultra thick] (-.2,-.6)\botlabel{\mu_i} to (-.2,.6);
\draw[ultra thick] (0.8,-.6)\botlabel{\mu_{i\!+\!1}\ } to (0.8,.6);
\draw[ultra thick] (1.3,-.6)\botlabel{\ \mu_{i\!+\!2}} to (1.3,.6);
\draw (.77,.6) to (.77,.3) to (-0.17,-.3) to (-.17,-.6);
\node at (1.72,0) {$\dots$};
\draw[ultra thick] (2.1,-.6)\botlabel{\mu_n} to (2.1,.6);
\ovaled{0.3,0}{u+i};
\end{tikzpicture}.
\end{align*}
The image of the diagonal generator $D_i(u)$ involves some troublesome inhomogeneous symmetric polynomials. 
These can be seen already in the case $n=i=1$, 
when $\aS(1,r)$ is $\C[x_1,\dots,x_r]^{\S_r}$ and the image 
of $D_1(u)$ under the Drinfeld homomorphism is
$$
\left(1+\frac{1}{u-x_1}\right)
\left(1+\frac{1}{u-x_2}\right)\cdots
\left(1+\frac{1}{u-x_r}\right).
$$
The coefficient of $u^{-d-1}$ in the expansion of this as a formal power series in $u^{-1}$ is a symmetric polynomial $\tilde p_d(x_1,\dots,x_r)$ which we call the {\em deformed power sum}
since it is equal to the power
sum $p_d(x_1,\dots,x_r)=x_1^d+\cdots+x_r^d$ plus lower degree terms; see \cref{jonsid}.

Over the complex numbers still, it is well known that $\Drinfeld_{n,r}$ is surjective.
It is natural to ask for explicit generators for
its kernel.
In \cref{s9-presenting}, we formulate a 
precise conjecture about this,
proving our conjecture 
in the case $n > r$ using Drinfeld's theorem.
Surjectivity of $\Drinfeld_{n,r}$ implies that the category of left $\aS(n,r)$-modules
is identified with a full subcategory of $\Yn\mod$ consisting of what we call {\em polynomial representations} of $\Yn$ of degree $r$.
Arakawa's work mentioned above gives a great deal of information about this category. In the final \cref{s10-reps}, we reinterpret his results in terms of the algebra $\aS(n,r)$. In particular, in \cref{laugh}, we classify irreducible representations of $\aS(n,r)$; they are naturally indexed by $n$-tuples
$\lambda(u) = (\lambda_1(u),\lambda_2(u),\dots,\lambda_n(u))$ of monic polynomials in $\C[u]$ whose degrees sum to $r$ with
$$
\lambda_n(u)\ \big|\ \lambda_{n-1}(u)\ \big|\ \cdots\ \big|\ \lambda_1(u).
$$
It seems reasonable to hope that sequences of monic polynomials of this form also parametrize irreducible representations of $\aS(n,r)$ over algebraically closed fields of positive characteristic.
We point out also that a $q$-analog of these sequences
can be seen in the classification of irreducible polynomial representations of $U_q\widehat{\mathfrak{gl}}_n$ obtained by Frenkel and Mukhin in \cite[Sec.~4.3]{FM}.

\vspace{2mm}
\noindent
{\em Conventions.} 
In the remainder of the article, we work over a commutative ground ring $\kk$.
We are mainly interested in the case that $\kk$ is an algebraically closed field of characteristic 0,
but most of the constructions make sense more generally.
We use $\otimes$ for tensor product over $\kk$.

\vspace{2mm}
\noindent
{\em Acknowledgements.} The first author would like to thank Steve Doty for helpful discussions.

\setcounter{section}{1}

\section{Reminders about double cosets and Schur algebras}\label{s2-reminders}

A {\em composition} $\lambda = (\lambda_1,\dots,\lambda_n)$ 
of $r$
is a finite sequence of natural numbers (including 0) whose sum is $r$. Its {\em length}
$\ell(\lambda)$ is the number $n$ of parts, and 
$|\lambda|$ denotes $r=\lambda_1+\cdots+\lambda_n$.
Another useful shorthand: $\lambda_{< i}$ denotes $\lambda_1+\cdots+\lambda_{i-1}$ and $\lambda_{\leq i} := \lambda_{< i}+\lambda_i$.
We also adopt the following notation:
\begin{itemize}
\item
Let $\Lambda(n,r)$ be the set
of all compositions of $r$ of length $n$.
\item
Let $\Lambda(n) := \N^n = \coprod_{r \geq 0} \Lambda(n,r)$
be the set of all compositions with $n$ parts.
\item
Let $X(n)$ be the Abelian group $\Z^n$. It contains $\Lambda(n)$ as a sub-monoid.
\end{itemize}
We use $\eps_i$ to denote the element of $X(n)$ that has $1$ in its $i$th entry and 0 in all other positions, and $\alpha_i := \eps_i-\eps_{i+1}$.
This notation depends implicitly on the value of $n$, but we do not think it will cause confusion subsequently.

We denote the symmetric group acting on the left on 
$\{1,\dots,r\}$ by $\S_r$. It is generated by the basic transpositions $s_i := (i\:\:i\!+\!1)$ for $i=1,\dots,r-1$. Let $\ell:\S_r \rightarrow \N$ be the usual length function,
and $\leq$ be the Bruhat order.
For $\lambda\in \Lambda(n,r)$, we write
$\S_\lambda$ for the parabolic subgroup $\S_{\lambda_1}\times\cdots\times \S_{\lambda_n}$ of  $\S_r$.
Given also $\mu \in \Lambda(m,r)$,
let 
$(\S_\lambda \backslash \S_r)_{\min}$, $(\S_r / \S_\mu)_{\min}$
and 
\begin{equation}
(\S_\lambda \backslash \S_r / \S_\mu)_{\min} =
(\S_\lambda \backslash \S_r)_{\min}\cap (\S_r / \S_\mu)_{\min}
\end{equation}
be the sets of minimal length right, left and double coset representatives.

Let $\Mat{\lambda}{\mu}$ be the set
of $\ell(\lambda)\times\ell(\mu)$-matrices with entries in $\N$ whose row sums are the parts of $\lambda$ and whose column sums are the parts of $\mu$. 
An element $A \in \Mat{\lambda}{\mu}$ can be visualized by means of its {\em double coset diagram}, so-called because it gives rise to a well-known bijection 
\begin{equation}
\Mat{\lambda}{\mu} \stackrel{\sim}{\rightarrow}
(\S_\lambda \backslash \S_r / \S_\mu)_{\min},
\qquad
A \mapsto d_A.
\end{equation} 
We give an example in lieu of the formal definition:
\begin{equation}\label{myeg}
\begin{tikzpicture}[anchorbase,scale=1.55]
\draw[-,line width=.6mm] (.212,.5) to (.212,.39);
\draw[-,line width=.75mm] (.595,.5) to (.595,.39);
\draw[-,line width=.15mm] (0.0005,-.396) to (.2,.4);
\draw[-,line width=.3mm] (0.01,-.4) to (.59,.4);
\draw[-,line width=.3mm] (.4,-.4) to (.607,.4);
\draw[-,line width=.45mm] (.79,-.4) to (.214,.4);
\draw[-,line width=.15mm] (.8035,-.398) to (.614,.4);
\draw[-,line width=.3mm] (.4006,-.5) to (.4006,-.395);
\draw[-,line width=.6mm] (.788,-.5) to (.788,-.395);
\draw[-,line width=.45mm] (0.011,-.5) to (0.011,-.395);
\draw[-] (.2,.4) to (0.395,-.4);
\strand{.22,.12}{0};
\strand{.05,.05}{1};
\strand{.76,.05}{1};
\strand{.35,.35}{3};
\strand{.2,-.26}{2};
\strand{.5,-.26}{2};
\end{tikzpicture}
\leftrightarrow
\begin{tikzpicture}[centerzero=.6,scale=1.2]
\draw[ultra thick,cyan] (-.01,-.02) to (0.41,-.02);
\strand{.2,-.13}{3};
\draw[ultra thick,cyan] (.59,-.02) to (0.81,-.02);
\strand{.7,-.13}{2};
\draw[ultra thick,cyan] (.99,-.02) to (1.61,-.02);
\strand{1.3,-.13}{4};
\draw[ultra thick,cyan] (-.01,1.02) to (0.61,1.02);
\strand{.3,1.13}{4};
\draw[ultra thick,cyan] (.79,1.02) to (1.61,1.02);
\strand{1.2,1.13}{5};
\draw[-] (0,0) to (0,1);
\draw[-] (0.2,0) to (.8,1);
\draw[-] (0.4,0) to (1,1);
\draw[-] (0.6,0) to (1.2,1);
\draw[-] (.8,0) to (1.4,1);
\draw[-] (1,0) to (.2,1);
\draw[-] (1.2,0) to (.4,1);
\draw[-] (1.4,0) to (.6,1);
\draw[-] (1.6,0) to (1.6,1);
\end{tikzpicture}\leftrightarrow
d_A =(2\:5\:8\:4\:7\:3\:6)
\in (\S_{(4,5)} \backslash \S_9 / \S_{(3,2,4)})_{\min}
\leftrightarrow
A=\begin{bmatrix} 1&0&3\\2&2&1\end{bmatrix}.
\end{equation}
Here, $\lambda = (4,5)$ and $\mu = (3,2,4)$, these being the row and column sums of the matrix $A$.
The double coset diagram for $A$ is the diagram on the left hand side. It has strings at the top of thickness given by the parts of $\lambda$, and strings at the bottom of thickness given by the parts of $\mu$. These strings split into thinner {\em propagating strings}, with the one joining the $i$th string at the top  to the $j$th string at the bottom being of thickness $a_{i,j}$.
The minimal length double coset representative $d_A$ indexed by this matrix may be obtained by expanding the thick strings in the double coset diagram into parallel thin strings, then reading off the permutation encoded by the resulting string diagram.

Generally, in string diagrams,
we use a dotted line without a thickness label as a shorthand for a string of thickness 0, and we use a thin solid line without a thickness label to denote a string of thickness 1.
In fact, it is usually harmless to simply omit propagating
strings of thickness zero from diagrams. With these conventions, we have that
\begin{equation}\label{conventions}
\begin{tikzpicture}[anchorbase,scale=1.55]
\draw[-,line width=.6mm] (.212,.5) to (.212,.39);
\draw[-,line width=.75mm] (.595,.5) to (.595,.39);
\draw[-,line width=.15mm] (0.0005,-.396) to (.2,.4);
\draw[-,line width=.3mm] (0.01,-.4) to (.59,.4);
\draw[-,line width=.3mm] (.4,-.4) to (.607,.4);
\draw[-,line width=.45mm] (.79,-.4) to (.214,.4);
\draw[-,line width=.15mm] (.8035,-.398) to (.614,.4);
\draw[-,line width=.3mm] (.4006,-.5) to (.4006,-.395);
\draw[-,line width=.6mm] (.788,-.5) to (.788,-.395);
\draw[-,line width=.45mm] (0.011,-.5) to (0.011,-.395);
\draw[-] (.2,.4) to (0.395,-.4);
\strand{.22,.12}{0};
\strand{.05,.05}{1};
\strand{.76,.05}{1};
\strand{.35,.35}{3};
\strand{.2,-.26}{2};
\strand{.5,-.26}{2};
\end{tikzpicture}
\ =\ \begin{tikzpicture}[anchorbase,scale=1.55]
\draw[-,line width=.6mm] (.212,.5) to (.212,.39);
\draw[-,line width=.75mm] (.595,.5) to (.595,.39);
\draw[-,line width=.15mm] (0.0005,-.396) to (.2,.4);
\draw[-,line width=.3mm] (0.01,-.4) to (.59,.4);
\draw[-,line width=.3mm] (.4,-.4) to (.607,.4);
\draw[-,line width=.45mm] (.79,-.4) to (.214,.4);
\draw[-,line width=.15mm] (.8035,-.398) to (.614,.4);
\draw[-,line width=.3mm] (.4006,-.5) to (.4006,-.395);
\draw[-,line width=.6mm] (.788,-.5) to (.788,-.395);
\draw[-,line width=.45mm] (0.011,-.5) to (0.011,-.395);
\draw[zeroline] (.2,.4) to (0.395,-.4);
\strand{.35,.35}{3};
\strand{.2,-.26}{2};
\strand{.5,-.26}{2};
\end{tikzpicture}\ =\ 
\begin{tikzpicture}[anchorbase,scale=1.55]
\draw[-,line width=.6mm] (.212,.5) to (.212,.39);
\draw[-,line width=.75mm] (.595,.5) to (.595,.39);
\draw[-,line width=.15mm] (0.0005,-.396) to (.2,.4);
\draw[-,line width=.3mm] (0.01,-.4) to (.59,.4);
\draw[-,line width=.3mm] (.4,-.4) to (.607,.4);
\draw[-,line width=.45mm] (.79,-.4) to (.214,.4);
\draw[-,line width=.15mm] (.8035,-.398) to (.614,.4);
\draw[-,line width=.3mm] (.4006,-.5) to (.4006,-.395);
\draw[-,line width=.6mm] (.788,-.5) to (.788,-.395);
\draw[-,line width=.45mm] (0.011,-.5) to (0.011,-.395);
\strand{.35,.35}{3};
\strand{.2,-.26}{2};
\strand{.5,-.26}{2};
\end{tikzpicture}\ .
\end{equation}

For $A \in \Mat{\lambda}{\mu}$, we define its {\em left redundancy}
$\lambda(A)$ and its {\em right redundancy} $\mu(A)$ to be the compositions obtained by reading the entries of the matrix in order along rows starting with the top row, or by reading the entries of the matrix in order down columns starting with the leftmost column, respectively.
In the example, $\lambda(A) = (1,0,3,2,2,1)$
and $\mu(A) = (1,2,0,2,3,1)$.
The parts of $\lambda(A)$ are the thicknesses of the propagating strings in the double coset diagram above all of the crossings, and the parts of $\mu(A)$ are their thicknesses below all of the crossings.
Also observe that $\S_{\lambda(A)} \leq \S_\lambda$ and $\S_{\mu(A)} \leq \S_\mu$.

The following lemma is fundamental. Parts (1) and (2) are formulated this way in \cite[Lem.~2.1]{BSchur} and proofs can be extracted from \cite[Lem.~1.6]{DJgl}. Part (3) is also well known; see \cite{BLM}.

\begin{lemma}\label{dylan}
Let $A,B \in \Mat{\lambda}{\mu}$.
\begin{enumerate}
\item
We have that $d_A \S_{\mu(A)} = \S_{\lambda(A)} d_A$.
The isomorphism
$\S_{\mu(A)} \stackrel{\sim}{\rightarrow} \S_{\lambda(A)},
w \mapsto d_A w d_A^{-1}$
preserves length and Bruhat order.
\item
Any element $w$ of the double coset $\S_\lambda d_A \S_\mu$
can be written as $w = x d_A y$ for unique elements
$x \in \S_\lambda$ and $y \in (\S_{\mu(A)} \backslash \S_\mu)_{\min}$, or as $w = x d_A y$
for unique elements $x \in (\S_\lambda / \S_{\lambda(A)})_{\min}$ and $y \in \S_\mu$.
In both situations, $\ell(w) = \ell(x)+\ell(d_A) + \ell(y)$.
\item
$\displaystyle
d_A \leq d_B \:\Leftrightarrow\:
\left(
\sum_{i=1}^s \sum_{j=1}^t  a_{i,j} \geq \sum_{i=1}^s \sum_{j=1}^t b_{i,j}
\text{ for all $1 \leq s \leq \ell(\lambda)$ 
and $1 \leq t \leq \ell(\mu)$}\right).
$
\end{enumerate}
\end{lemma}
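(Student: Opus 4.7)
The plan is to prove the three parts in sequence, with parts (1) and (2) following from a careful analysis of $d_A$ viewed as a bijection of $\{1,\dots,r\}$, and part (3) reducing to Ehresmann's criterion for the Bruhat order.

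For part (1), I would unpack the construction illustrated in \cref{myeg}: the permutation $d_A$ cuts the $j$-th bottom $\mu$-block of $\{1,\dots,r\}$ into consecutive subintervals of sizes $a_{1,j}, a_{2,j}, \dots, a_{\ell(\lambda),j}$ (reading down column $j$), and carries each such subinterval, in an order-preserving manner, into the corresponding subinterval inside the $i$-th top $\lambda$-block. The compositions $\mu(A)$ and $\lambda(A)$ are exactly the lists of these subinterval sizes, read down columns versus along rows. Consequently $d_A \S_{\mu(A)} d_A^{-1}$ acts by permuting each top subinterval internally, which is precisely $\S_{\lambda(A)}$. Since $d_A$ restricts to an \emph{increasing} bijection on each subinterval, the conjugation isomorphism $w \mapsto d_A w d_A^{-1}$ sends basic transpositions internal to $\S_{\mu(A)}$ bijectively to basic transpositions internal to $\S_{\lambda(A)}$. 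Thus it is an isomorphism of Coxeter systems, which immediately gives preservation of length and Bruhat order (using the standard fact that Bruhat order on a standard parabolic is intrinsic).

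For part (2), the identity from (1) shows that the left-stabilizer of the coset $d_A \S_\mu$ under left multiplication by $\S_\lambda$ is $\S_\lambda \cap d_A \S_\mu d_A^{-1} = \S_\lambda \cap \S_{\lambda(A)} = \S_{\lambda(A)}$. This yields a bijection $\S_\lambda \times (\S_{\mu(A)}\backslash\S_\mu) \to \S_\lambda d_A \S_\mu$ sending $(x,[y]) \mapsto x d_A y$; selecting the minimal-length representative $y \in (\S_{\mu(A)}\backslash\S_\mu)_{\min}$ in each coset gives the uniqueness in the first factorization, and the second factorization is obtained by the mirror argument, this time using that the right-stabilizer of $\S_\lambda d_A$ in $\S_\mu$ equals $\S_{\mu(A)}$. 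For the length-additivity $\ell(w) = \ell(x) + \ell(d_A) + \ell(y)$, I would exploit that $d_A$ lies in both $(\S_\lambda \backslash \S_r)_{\min}$ and $(\S_r / \S_\mu)_{\min}$: the former yields $\ell(xd_A) = \ell(x) + \ell(d_A)$ for $x \in \S_\lambda$, and combining this with the minimality of $y$ modulo $\S_{\mu(A)}$ via the standard no-descent characterization of minimal coset representatives gives the full formula.

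For part (3), I would invoke Ehresmann's tableau criterion for Bruhat order: $u \leq v$ in $\S_r$ iff $|\{j \leq k : u(j) \leq l\}| \geq |\{j \leq k : v(j) \leq l\}|$ for all $(k,l)$. Specializing to $k = \mu_{\leq t}$ and $l = \lambda_{\leq s}$, the left side counts the elements of the first $t$ bottom $\mu$-blocks that $d_A$ sends into the first $s$ top $\lambda$-blocks, which is $\sum_{i \leq s,\, j \leq t} a_{i,j}$, proving the forward direction. The converse --- that these block-corner inequalities already imply all Ehresmann inequalities --- is the main obstacle I anticipate. It follows from the block structure inherent to minimal double coset representatives: $d_A$ and $d_B$ are both increasing on every bottom $\mu$-block and their inverses are both increasing on every top $\lambda$-block, so the rank function $(k,l) \mapsto |\{j \leq k : d_A(j) \leq l\}|$ is determined on each rectangular block by its values at the four corners, and the same for $d_B$. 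This rigidity propagates the corner inequalities to every $(k,l)$, completing the equivalence.
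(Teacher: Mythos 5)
The paper itself supplies no proof of this lemma: it cites \cite[Lem.~2.1]{BSchur}, \cite[Lem.~1.6]{DJgl} and \cite{BLM}. So there is no ``paper's own proof'' to compare against; I will instead assess the mathematics of your attempt, which is essentially the standard combinatorial argument underlying those references.

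Parts (1) and (2) are correct in outline. Two small imprecisions. First, in part (2) the chain $\S_\lambda \cap d_A \S_\mu d_A^{-1} = \S_\lambda \cap \S_{\lambda(A)} = \S_{\lambda(A)}$ contains a non-step: $d_A\S_\mu d_A^{-1}$ is strictly larger than $\S_{\lambda(A)}=d_A\S_{\mu(A)}d_A^{-1}$ in general, so the middle equality is not a rewriting but the very containment $\S_\lambda \cap d_A\S_\mu d_A^{-1}\subseteq\S_{\lambda(A)}$ that must be proved. It does follow from the block-interval description you set up in part (1) (an element of $\S_\lambda$ that conjugates into $\S_\mu$ by $d_A^{-1}$ must stabilize each $\lambda(A)$-subinterval), but you should say so rather than treat it as a tautology. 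Second, for the length additivity you invoke ``the standard no-descent characterization'' but the needed fact is $d_A y\in(\S_\lambda\backslash\S_r)_{\min}$ for every $y\in(\S_{\mu(A)}\backslash\S_\mu)_{\min}$; this is true and is precisely the Kilmoyer--Howlett theorem on parabolic double cosets, but as written you are asserting the conclusion rather than deriving it.

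Part (3) has a genuine gap in the converse direction. You correctly reduce to Ehresmann's tableau criterion and correctly observe that the block corners recover $N_A(s,t):=\sum_{i\le s,\,j\le t}a_{i,j}$. But the final step --- that ``the rank function $R_A(k,l)=|\{j\le k: d_A(j)\le l\}|$ is determined on each block by its four corners, and the same for $d_B$,'' hence ``this rigidity propagates the corner inequalities to every $(k,l)$'' --- is not self-evident. Even if $R_A$ on a block were a function of the four corners and the within-block coordinates, an arbitrary such function need not be monotone in the corner values, so corner inequalities would not automatically yield pointwise inequalities. The claim is in fact true, but it requires the explicit closed form: writing $k=\mu_{\le t-1}+c$ and $l=\lambda_{\le s-1}+d$ with $0\le c\le\mu_t$, $0\le d\le\lambda_s$, one uses the block-increasing structure of a minimal double coset representative to show
\begin{equation*}
R_A(k,l)=\min\bigl(N_A(s-1,t-1)+c+d,\ N_A(s-1,t)+d,\ N_A(s,t-1)+c,\ N_A(s,t)\bigr),
\end{equation*}
which is manifestly monotone increasing in the $N_A$-values, so $N_A\geq N_B$ pointwise forces $R_A\geq R_B$ pointwise. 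Without this (or an equivalent argument), the phrase ``rigidity propagates'' is doing all the work and the converse is unproved. Supplying the $\min$-formula and its derivation would close the gap.
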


The double coset combinatorics just described is used classically in the construction of the {\em Schur algebra}; e.g., see \cite{Green}.
To set some notation, we briefly recall one of the many equivalent definitions of $\S(n,r)$: it is
the endomorphism algebra
\begin{equation}\label{schurdef}
\S(n,r) := \End_{\dash \kk \S_r}
\left(\bigoplus_{\lambda \in \Lambda(n,r)} M(\lambda)\right)
\end{equation}
where $M(\lambda)$ is the (right) {\em permutation module}
$\kk_\lambda \otimes_{\kk \S_\lambda} \kk \S_r$
induced from the trivial right $\kk \S_\lambda$-module $\kk_\lambda$. Denoting the vector $1 \otimes 1 \in M(\lambda)$ by $\v_\lambda$, $M(\lambda)$ has the standard basis
$\left\{\v_\lambda x\:\big|\:x \in (\S_\lambda \backslash \S_r)_{\min}\right\}$.
Denoting the idempotent in $\S(n,r)$ defined by the projection onto $M(\lambda)$ by $1_\lambda$, we have that
$$
1_\lambda \S(n,r) 1_\mu = 
\Hom_{\dash \kk \S_r}(M(\mu), M(\lambda)).
$$
This is a free $\kk$-module with basis $\{\xi_A\:|\:A \in \Mat{\lambda}{\mu}\}$ in which $\xi_A$ is the unique $\kk \S_r$-module homomorphism
\begin{equation}\label{standardbasis}
\xi_A:
M(\mu) \rightarrow M(\lambda),
\qquad
\v_\mu \mapsto 
\sum_{y \in (\S_{\mu(A)} \backslash \S_\mu)_{\min}}
\v_\lambda d_A y.
\end{equation}
To see that such a homomorphism exists, it suffices to show that 
$\sum_{y \in (\S_{\mu(A)} \backslash \S_\mu)_{\min}}
\v_\lambda d_A y$ is invariant under right multiplication by any simple reflection $s_i \in \S_\mu$, which is easily checked; see the proof of \cref{bits}.
Note also that $1_\lambda =\xi_{\diag(\lambda_1,\dots,\lambda_n)}$.

To make the connection between the Schur algebra and the general linear group, let $\G$ be the group scheme $\GL_n$ over $\kk$,
$V$ be its natural representation with standard basis
$v_1,\dots,v_n$, and $\gl_n$ be its Lie algebra.
Let $I(n,r)$ denote the set of {\em multi-indices}
$\bi = (i_1,\dots,i_r)$
with $1 \leq i_1,\dots,i_r \leq n$.
This set indexes the obvious basis of the tensor space
$V^{\otimes r}$ consisting of the monomials
$v_\bi := v_{i_1}\otimes\cdots\otimes v_{i_r}$.
Tensor space 
is a $(\G,\kk \S_r)$-bimodule with $w \in \S_r$
acting by permuting tensors, i.e.,
\begin{equation}\label{permtensoraction}
v_\bi\; w := v_{\bi \cdot w}\qquad\text{
where}\qquad\text{$\bi \cdot w := (i_{w(1)},\dots,i_{w(r)})$}.
\end{equation}
Let $\T$ be the maximal torus of diagonal matrices in $\G$, identifying its character group with $X(n)$
so that $\eps_i$ is the character $\diag(t_1,\dots,t_n) \mapsto t_i$.
The vector $v_\bi$
is of weight $\eps_{i_1}+\cdots+\eps_{i_r} \in \Lambda(n,r)$.
We may also refer to this as the weight of the multi-index $\bi$.

The right $\kk \S_r$-module
$\bigoplus_{\lambda \in \Lambda(n,r)} M(\lambda)$
appearing in \cref{schurdef} may be identified with $V^{\otimes r}$ so that $\v_\lambda \in M(\lambda)$ corresponds to the tensor $v_{\bi^\lambda}$
indexed by
\begin{equation}\label{bilambda}
\bi^\lambda := (1^{\lambda_1},2^{\lambda_2},\dots,n^{\lambda_n}),
\end{equation}
this being the unique increasing multi-index of weight $\lambda$.
With this identification, we have that
\begin{equation}
\S(n,r) = \End_{\dash \kk \S_r}(V^{\otimes r}).
\end{equation}
We then have for any $A \in \bigcup_{\lambda,\mu \in \Lambda(n,r)} \Mat{\lambda}{\mu}$
and $\bj \in I(n,r)$ that
\begin{equation}\label{schurformula}
\xi_A v_\bj = \sum_{\substack{\bi \in I(n,r)\text{ such that}\\ a_{i,j} = \big|\{k=1,\dots,n\:|\:i_k = i, j_k =j\}\big|\\\text{for }i,j=1,\dots,n}}
v_\bi.
\end{equation}
This formula originates with Schur: it shows that $\xi_A$
is a sum of matrix units over an $\S_r$-orbit on $I(n,r)\times I(n,r)$.
For example, by Schur's formula, we have that
\begin{equation}\label{fooly}
\xi_{\diag(\mu_1,\dots,\mu_n)+e_{i,j}-e_{j,j}} v_\bj = \sum_{\substack{1 \leq p \leq r \\ j_p=j}}
v_{j_1}\otimes \cdots \otimes v_{j_{p-1}}\otimes v_i \otimes v_{j_{p+1}}\otimes\cdots\otimes v_{j_r}.
\end{equation}
for $1 \leq i,j\leq n$ with $i \neq j$, 
$\mu \in \Lambda(n,r)$ with $\mu_j > 0$, and
$\bj \in I(n,r)$ of weight $\mu$.
The notation $\diag(\mu_1,\dots,\mu_n)+e_{i,j}-e_{j,j}$ in \cref{fooly}
denotes the $n\times n$ matrix obtained from the diagonal matrix
$\diag(\mu_1,\dots,\mu_n)$ by adding 1 to the $(i,j)$-entry and subtracting 1 from the $(j,j)$-entry.
Its double coset diagram has $n$ vertical strings of thicknesses
$\mu_1,\dots,\mu_n$ at the bottom, plus a diagonal string of thickness 1 connecting the top of the $i$th string to the bottom of $j$th string:
\begin{equation}\label{muscles}
\begin{tikzpicture}[centerzero,scale=.9]
\draw[ultra thick] (-1.5,-.6)\botlabel{\mu_1} to (-1.5,.6);
\node at (-1.08,0) {$\dots$};
\node at (.6,0) {$\dots$};
\node at (-.25,0) {$\dots$};
\draw[ultra thick] (-.7,-.6)\botlabel{\mu_i} to (-.7,.6);
\draw[ultra thick] (.15,-.6) to (.15,.6);
\draw[ultra thick] (1,-.6)\botlabel{\mu_j} to (1,.6);
\draw (.97,-.6) to (.97,-.3) to (-0.67,.3) to (-.67,.6);
\node at (1.42,0) {$\dots$};
\draw[ultra thick] (1.8,-.6)\botlabel{\mu_n} to (1.8,.6);
\end{tikzpicture}
\quad\text{if $i < j$,}
\qquad\qquad
\begin{tikzpicture}[centerzero,scale=.9]
\draw[ultra thick] (-1.5,-.6)\botlabel{\mu_1} to (-1.5,.6);
\node at (-1.08,0) {$\dots$};
\node at (.6,0) {$\dots$};
\node at (-.25,0) {$\dots$};
\draw[ultra thick] (-.7,-.6)\botlabel{\mu_j} to (-.7,.6);
\draw[ultra thick] (.15,-.6) to (.15,.6);
\draw[ultra thick] (1,-.6)\botlabel{\mu_i} to (1,.6);
\draw (-.67,-.6) to (-.67,-.3) to (0.97,.3) to (.97,.6);
\node at (1.42,0) {$\dots$};
\draw[ultra thick] (1.8,-.6)\botlabel{\mu_n} to (1.8,.6);
\end{tikzpicture}
\quad\text{if $i > j$.}
\end{equation}

The derived action of $\gl_n$ on $V^{\otimes r}$ induces an algebra homomorphism 
\begin{equation}\label{notdrinfeld}
\drinfeld_{n,r}:
\Un \twoheadrightarrow \S(n,r),
\end{equation}
where $\Un$ denotes the universal enveloping algebra of
$\gl_n$.
By \cref{fooly}, the image of the matrix unit $e_{i,j} \in \gl_n$
is
\begin{equation}
\begin{dcases}
\sum_{\lambda \in \Lambda(n,r)} 
\lambda_i 1_\lambda&\text{if $i=j$}\\
\sum_{\substack{\mu \in \Lambda(n,r) \\ \mu_j > 0}}\xi_{\diag(\mu_1,\dots,\mu_n)+e_{i,j}-e_{j,j}}&\text{if $i \neq j$.}
\end{dcases}\label{folly}
\end{equation}
The double coset diagram
for $\xi_{\diag(\mu_1,\dots,\mu_n)+e_{i,j}-e_{j,j}}\:(i \neq j)$
is as displayed in \cref{muscles}.

When $\kk$ is a field of characteristic 0, it is well known that 
$\drinfeld_{n,r}$ is surjective. 
In \cite{doty-giaquinto}, Doty and Giaquinto also determined the kernel of $\drinfeld_{n,r}$ explicitly, thereby giving a Serre-type presentation for the (semisimple!) algebra $\S(n,r)$ over a field of characteristic 0. 
In the next paragraph, we reformulate their result in a way that
is relevant for a construction in \cref{s9-presenting}.
(For other ground rings, $\drinfeld_{n,r}$ need not be surjective, 
but the analogous statement with $\Un$ replaced
by the algebra of distributions $\Dist(\G)$ 
always holds, 
as does the result of Doty and Giaquinto with appropriate modifications; see \cite{doty} which proves an even more general result.)

The adjoint action of $\T$ on $\Un$ defines a weight
decomposition
$\Un = \bigoplus_{\alpha \in X(n)} \U_\alpha$,
with the $\alpha$-weight space $\U_\alpha$ being $\{0\}$
unless $\alpha$ is in the root lattice. 
Fixing $r \geq 0$, let
\begin{equation}\label{lotsofrain}
\KK := \bigoplus_{\lambda \in \Lambda(n,r)} \kk 1_\lambda
\end{equation}
be the direct sum of copies of $\kk$ indexed by the set $\Lambda(n,r)$, so $\{1_\lambda\:|\:\lambda \in \Lambda(n,r)\}$ are mutually orthogonal idempotents whose sum is the identity in $\KK$. We view 
\begin{equation}
\widetilde{\U}_{n,r} := \bigoplus_{\lambda,\mu \in \Lambda(n,r)}
\U_{\lambda-\mu}
\end{equation}
as a $(\KK, \KK)$-bimodule so
that $1_\lambda a 1_\mu$ is the projection $a_{\lambda,\mu}$
of $a = \sum_{\lambda,\mu \in \Lambda(n,r)} a_{\lambda,\mu} 
\in \widetilde{U}_{n,r}$ onto the $(\lambda,\mu)$th summand.
Then we define $\U_{n,r}$ to be the quotient of the tensor algebra 
\begin{equation}\label{tellies}
T_\KK(\widetilde{\U}_{n,r})
= \KK \ \oplus\  \widetilde{\U}_{n,r} \ \oplus\  \widetilde{\U}_{n,r} \otimes_{\KK} \widetilde{\U}_{n,r}\  \oplus\  \widetilde{\U}_{n,r} \otimes_{\KK} \widetilde{\U}_{n,r} \otimes_{\KK} \widetilde{\U}_{n,r}\  \oplus\  \cdots
\end{equation}
by the two-sided ideal generated by the relations
\begin{align}\label{rellies}
1_\lambda a 1_\mu \otimes 1_\mu b 1_\nu
&= 1_\lambda ab1_\nu,&
1_\lambda d_i 1_\lambda &= \lambda_i 1_\lambda,
\end{align}
for all $\lambda,\mu,\nu \in \Lambda(n,r), a \in \U_{\lambda-\mu}, b \in \U_{\mu-\nu}$ and $i=1,\dots,n$.
Equivalently, $\U_{n,r}$ is the quotient of Lusztig's modified form $\dot\U(\gl_n)$ by the two-sided ideal generated by the idempotents $1_\lambda$ for $\lambda \notin \Lambda(n,r)$.
We denote the image of an element $1_\lambda a 1_\mu$ of $\widetilde{\U}_{n,r}$ in $\U_{n,r}$ by $1_\lambda \bar a 1_\mu$.
The main result of \cite{doty-giaquinto} can be reformulated as follows:

\begin{theorem}[Doty-Giaquinto]\label{DGthm}
When $\kk$ is a field of characteristic 0, 
there is an algebra isomorphism
\begin{align}\label{dg}
\bar\drinfeld_{n,r}:
\U_{n,r} &\stackrel{\sim}{\rightarrow}\S(n,r),
&
1_\lambda \bar a 1_\mu&\mapsto1_\lambda \drinfeld_{n,r}(a) 1_\mu
\end{align}
for $\lambda,\mu \in \Lambda(n,r)$
and $a \in \U_{\lambda-\mu}$.
\end{theorem}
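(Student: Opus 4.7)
The plan is to verify well-definedness, surjectivity, and injectivity of $\bar\drinfeld_{n,r}$ in turn. For well-definedness: the assignment $1_\lambda a 1_\mu \mapsto 1_\lambda \drinfeld_{n,r}(a) 1_\mu$ is a $(\KK,\KK)$-bimodule homomorphism $\widetilde{\U}_{n,r} \to \S(n,r)$, so by the universal property of the tensor algebra it extends uniquely to a $\KK$-algebra homomorphism out of \cref{tellies}. To descend it to $\U_{n,r}$, I check that the defining relations \cref{rellies} are preserved. The multiplicativity relation holds because $\drinfeld_{n,r}$ is an algebra homomorphism, while the Cartan relation $1_\lambda d_i 1_\lambda = \lambda_i 1_\lambda$ holds because the $i=j$ case of \cref{folly} yields $\drinfeld_{n,r}(d_i) = \sum_\mu \mu_i \, 1_\mu$. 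Surjectivity then follows from the classical surjectivity of $\drinfeld_{n,r}$ in characteristic 0, i.e., Schur--Weyl duality.

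The heart of the matter is injectivity. My approach is to exploit the equivalent description of $\U_{n,r}$ noted in the excerpt immediately after \cref{rellies}, realizing it as the quotient of Lusztig's modified form $\dot\U(\gl_n)$ by the two-sided ideal $J$ generated by the idempotents $1_\lambda$ for $\lambda \notin \Lambda(n,r)$. To verify this equivalence, one observes that both algebras are generated by the same data---the orthogonal idempotents $\{1_\lambda \mid \lambda \in \Lambda(n,r)\}$ together with elements $1_\lambda a 1_\mu$ for $a \in \U_{\lambda-\mu}$---subject to the same relations (idempotent orthogonality with $\sum 1_\lambda = 1$, multiplicativity, and $d_i 1_\lambda = \lambda_i 1_\lambda$); a direct comparison of presentations identifies them. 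Granting this equivalence, the theorem reduces to the classical characteristic-$0$ fact that $\dot\U(\gl_n)/J \cong \S(n,r)$, which is the Lusztig modified-form incarnation of Schur--Weyl duality and is essentially the original Doty--Giaquinto result.

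The main obstacle lies in establishing $\dot\U(\gl_n)/J \cong \S(n,r)$ via a direct dimension count: for each pair $\lambda,\mu \in \Lambda(n,r)$ one must show that the block $1_\lambda \dot\U(\gl_n) 1_\mu$ modulo its intersection with $J$ has dimension $|\Mat{\lambda}{\mu}|$, matching $\dim_\kk 1_\lambda \S(n,r) 1_\mu$ with basis $\{\xi_A\}_{A \in \Mat{\lambda}{\mu}}$ from \cref{standardbasis}. The cleanest route uses the triangular decomposition $\Un = \U^- \U^0 \U^+$: the relation $1_\lambda \bar d_i 1_\lambda = \lambda_i 1_\lambda$ absorbs the Cartan part of any PBW monomial, while raising or lowering operators applied to $1_\mu$ produce zero modulo $J$ as soon as their weight-shift leaves $\Lambda(n,r)$, so a careful accounting (morally a strict-lower times strict-upper factorization subject to the $\Lambda(n,r)$-constraints at every intermediate weight) yields exactly the correct number of surviving basis elements. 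Combined with the surjectivity established above, this forces $\bar\drinfeld_{n,r}$ to be an isomorphism.
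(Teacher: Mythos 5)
The paper does not prove this theorem; it states it as a \emph{reformulation} of the main result of \cite{doty-giaquinto} and simply cites that paper. So there is no ``paper proof'' to compare against, and your proposal is doing genuinely more work than the paper does. Your well-definedness and surjectivity arguments are correct as written, and the reduction to the isomorphism $\dot\U(\gl_n)/J \cong \S(n,r)$ (using the identification of $\U_{n,r}$ with this quotient, which the paper notes right after \cref{rellies}) is the right move.

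The gap is in the final paragraph. You describe the injectivity as following from a ``direct dimension count'' on each block $1_\lambda \dot\U(\gl_n) 1_\mu$ modulo $J$, by ``a careful accounting'' of PBW monomials whose intermediate weights all lie in $\Lambda(n,r)$. This is precisely where the proof stops being direct. Killing the idempotents $1_\nu$ for $\nu\notin\Lambda(n,r)$ does annihilate any PBW monomial that passes through an out-of-range weight, but the surviving PBW monomials are \emph{not} automatically linearly independent modulo $J$, and their number need not equal $|\Mat{\lambda}{\mu}|$. Identifying the dimension of the quotient requires showing there are no further relations hiding in the ideal $J$ beyond the obvious ones---equivalently, that the map $\bar\drinfeld_{n,r}$ has no kernel beyond what is forced by the weight truncation. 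Doty and Giaquinto handle this using the semisimplicity of $\Un$ in characteristic $0$ together with a nontrivial combinatorial and representation-theoretic analysis (one can also reach the same conclusion via the Beilinson--Lusztig--MacPherson machinery, which is again far from a bare count). As written, your sketch asserts the conclusion of that analysis rather than proving it, so the key step of the theorem is still being outsourced to \cite{doty-giaquinto}---which, to be fair, is exactly what the paper itself does, but then the ``dimension count'' paragraph should be dropped or replaced by an explicit citation rather than presented as a self-contained argument.
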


\begin{remark}
Surjectivity of $\drinfeld_{n,r}$ follows from injectivity of the natural map $\kk[\G] \rightarrow \Dist(\G)^*$, which is a consequence of Krull's intersection theorem.
Assuming this, \cref{DGthm} is equivalent to the statement that 
a left $\Un$-module is a polynomial representation of $\GL_n$ of degree $r$ if and only if it is a
weight module with all weights belonging to $\Lambda(n,r)$. 
\end{remark}

\setcounter{section}{2}

\section{The degenerate affine Schur algebra as an endomorphism algebra}\label{s3-basis}

The {\em degenerate affine Hecke algebra} $\aH_r$ is the $\kk$-algebra with
generators $x_1,\dots,x_r$ and $s_1,\dots,s_{r-1}$
subject to the following relations. The generators $x_1,\dots,x_r$ commute with each other,
the generators $s_1,\dots,s_{r-1}$ satisfy the usual Coxeter 
relations of the basic transpositions in the symmetric group $\S_r$,
and\footnote{This is different from the defining relation
$s_i x_{i} = x_{i+1}s_i-1$ used
in \cite{Kleshchev} but mapping $x_i \mapsto -x_i$ gives an isomorphism between the two versions.}
\begin{align}\label{daharels}
s_i x_{i} &= x_{i+1}s_i+1,
&
x_{i} s_i &= s_i x_{i+1}+1,
&
x_i s_j &= s_j x_i\text{ if $i \neq j,j+1$.}
\end{align}
The definition of $\aH_r$ makes sense even 
if $r=0$, when it is $\kk$.
Letting $\P_r$ be the polynomial algebra $\kk[x_1,\dots,x_r]$,
the linear map 
$\kk \S_r \otimes \P_r \rightarrow \aH_r$ 
defined by multiplication is a $\kk$-module isomorphism.
We will simply identify $\kk \S_r$ and $\P_r$ with subalgebras of $\aH_r$ from now on.
For $w \in \S_r$ and $f \in \P_r$, we use the notation $w(f)$
to denote the image of $f$ under the usual action of $\S_r$ on $\P_r$ permuting the generators.
For any $f \in \P_r$, we have in $\aH_r$ that
\begin{align}\label{poof}
s_i f &= s_i(f) s_i + \partial_i(f),
&
f s_i &= s_i\ s_i(f) + \partial_i(f)
\end{align}
where $\partial_i$ is the {\em Demazure operator} defined from
\begin{equation}\label{demazureoperator}
\partial_i(f) := \frac{f - s_i(f)}{x_i-x_{i+1}}.
\end{equation}
The 
{\em left polynomial representation} of $\aH_r$ is the left $\aH_r$-module\footnote{Secretly, it is $\aH_r \otimes_{\kk \S_r} \kk$ for the trivial action of $\S_r$ on $\kk$.}
$\P_r$ with $x_i$ acting by multiplication and $\S_r$ acting by $\bully$ defined so that
\begin{equation}\label{polyaction}
s_i \bully f := s_i(f) + \partial_i(f).
\end{equation}
Note also that the {\em center} $Z(\aH_r)$ is the subalgebra 
\begin{equation}\label{bernsteincenter}
\P^{(r)}
:= \P_r^{S_r} = 
\left\{f \in \P_r\:\big|\:w(f) = f\text{ for all }w \in \S_r\right\}
=
\left\{f \in \P_r\:\big|\:w\bully f = 
f\text{ for all }w \in \S_r\right\}
\end{equation}
of $\P_r$ consisting of symmetric polynomials. 
More generally, for $\lambda \in \Lambda(n,r)$, there is the
parabolic subalgebra $\aH_\lambda$ of $\aH_r$, which is the image of 
$\kk \S_\lambda \otimes \P_r$ under the multiplication map.
The center of $\aH_\lambda$ is
\begin{equation}\label{Plambda}
\P^\lambda := \P_r^{S_\lambda} = 
\left\{f \in \P_r\:\big|\:w(f) = f\text{ for all }w \in \S_\lambda\right\}=
\left\{f \in \P_r\:\big|\:w\bully f = f\text{ for all }w \in \S_\lambda\right\}.
\end{equation}
For proofs of these basic results and further 
background, see \cite{Kleshchev}.

\begin{definition}
For $\lambda \in \Lambda(n,r)$, the induced module
$M(\lambda)\otimes_{\kk \S_r} \aH_r$ 
is a cyclic $\aH_r$-module 
generated by the vector $\v_\lambda \otimes 1$.
The {\em degenerate 
affine Schur algebra} $\aS(n,r)$ is the endomorphism algebra
\begin{equation}\label{affdef}
\aS(n,r) := \End_{\dash \aH_r}\left(
\bigoplus_{\lambda \in \Lambda(n,r)} M(\lambda)\otimes_{\kk \S_r} \aH_r
\right).
\end{equation}
Like for the Schur algebra,
there are distinguished idempotents
$1_\lambda \in \aS(n,r)$ for each $\lambda \in \Lambda(n,r)$ defined by the evident 
projections onto the summands. 
\end{definition}

\begin{remark}
The degenerate affine Schur algebra has not received so much attention in the literature, but there is also the 
{\em affine $q$-Schur algebra} $\qS(n,r)$, which may be constructed in a similar way replacing the degenerate affine Hecke algebra
with the actual affine Hecke algebra. 
The affine $q$-Schur algebra has been thoroughly studied; e.g., see \cite{Green-affine,vigneras,du-fu,MS}.
Specializing $q$ to $1$ in the affine $q$-Schur algebra produces also the {\em affine Schur algebra} $\oneS(n,r)$, which was introduced in \cite[Sec.~3]{green-doty} and is 
different from the degenerate affine Schur algebra $\aS(n,r)$; see \cite[Sec.~2.4]{Antor} for a clear exposition when over a field of characteristic 0. 
\end{remark}

The quantum analog of the following theorem is 
proved in \cite[Th.~2.2.4]{Green-affine} and 
\cite[4.2.13]{vigneras}; see \cite[Sec.~2.5]{Antor} where the definition is explained in terms of the Bernstein presentation.
In the degenerate case, we regard the result 
as folklore. There are several proofs in the recent literature; e.g., see \cite[Prop.~7.5]{LM} or \cite[Cor.~11.3.2]{DKMZ2} which prove more general results, both of which include the result needed here as a special case.
We include a self-contained proof based on an application of the Mackey theorem.

\begin{theorem}\label{bits}
For $\lambda,\mu \in \Lambda(n,r)$, the $\kk$-module
$$1_\lambda \aS(n,r) 1_\mu
= \Hom_{\dash \aH_r}\big(M(\mu)\otimes_{\kk \S_r} \aH_r ,M(\lambda)\otimes_{\kk \S_r} \aH_r\big)
$$
is free with an explicit basis $\{\xi_{A,f}\}$ indexed by pairs $(A,f)$ as $A$ runs over the set $\Mat{\lambda}{\mu}$ and $f$ runs over a basis for
$\P^{{\mu(A)}}$.
By definition, $\xi_{A,f}$ is the unique right $\aH_r$-module homomorphism
\begin{align}\label{inbits}
\xi_{A,f}:M(\mu)\otimes_{\kk \S_r} \aH_r &\rightarrow M(\lambda)\otimes_{\kk \S_r} \aH_r,
&
\v_\mu\otimes 1 &\mapsto \sum_{y \in (\S_{\mu(A)} \backslash \S_\mu)_{\min}} \v_\lambda \otimes d_A f y.
\end{align}
\end{theorem}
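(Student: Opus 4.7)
The plan is to use Frobenius reciprocity. Since $M(\mu)\otimes_{\kk \S_r}\aH_r$ is cyclic as a right $\aH_r$-module with generator $\v_\mu\otimes 1$, annihilated precisely by the right ideal generated by $\{w-1:w\in \S_\mu\}$, there is a canonical isomorphism $\Hom_{\aH_r}(M(\mu)\otimes_{\kk \S_r}\aH_r,N)\stackrel{\sim}{\to}N^{\S_\mu}$ sending $\phi\mapsto \phi(\v_\mu\otimes 1)$. Taking $N:=M(\lambda)\otimes_{\kk \S_r}\aH_r$, the theorem reduces to showing that the asserted elements $\xi_{A,f}(\v_\mu\otimes 1)=\sum_y\v_\lambda\otimes d_A fy$ are $\S_\mu$-invariant and form a $\kk$-basis of this invariant space.

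For invariance, the decisive input is that $f\in\P^{\mu(A)}$ lies in the center of the parabolic subalgebra $\aH_{\mu(A)}$ by \cref{Plambda} applied to $\aH_{\mu(A)}$, so $f$ commutes with every element of $\kk\S_{\mu(A)}$ inside $\aH_r$. For $s_i\in \S_\mu$, I would case-split on whether $ys_i$ remains in $(\S_{\mu(A)}\backslash \S_\mu)_{\min}$. If yes, the map $y\leftrightarrow ys_i$ is an involution that simply permutes the summands of $\sum_y d_A fy$. If no, then $ys_i=\sigma y$ for some $\sigma\in \S_{\mu(A)}$; I would commute $f\sigma=\sigma f$, then use \cref{dylan}(1) to rewrite $d_A\sigma=\sigma'd_A$ with $\sigma'\in \S_{\lambda(A)}\leq \S_\lambda$, and finally absorb $\sigma'$ across the tensor into $\v_\lambda$ (where it acts trivially), yielding $\v_\lambda\otimes d_A fys_i=\v_\lambda\otimes d_A fy$.

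For the basis claim I would argue by a filtration by polynomial degree. Using the PBW decomposition $\aH_r=\kk \S_r\otimes\P_r$ (with $\S_r$ on the left), there is a $\kk$-linear identification $M(\lambda)\otimes_{\kk \S_r}\aH_r\cong M(\lambda)\otimes \P_r$ with basis $\{\v_\lambda z\otimes g:z\in (\S_\lambda\backslash \S_r)_{\min},\,g\text{ in a basis of }\P_r\}$ and twisted right action $(\v\otimes g)\cdot s_i=(\v s_i)\otimes s_i(g)+\v\otimes\partial_i(g)$ coming directly from \cref{poof}. Since $\deg\partial_i(g)=\deg g-1$, the Demazure term drops the filtration degree, so in the associated graded the right $\S_r$-action becomes the ordinary diagonal action. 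Partitioning $(\S_\lambda\backslash \S_r)_{\min}$ into $\S_\mu$-orbits labelled by $A\in \Mat{\lambda}{\mu}$ via \cref{dylan}(2), a case analysis strictly parallel to the invariance check above shows that the $\S_\mu$-invariants of the graded module in the $A$-th orbit are exactly $\{\sum_y\v_\lambda d_A y\otimes y^{-1}(f):f\in \P^{\mu(A)}\}$.

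To close the argument I would identify the leading symbol of $\xi_{A,f}(\v_\mu\otimes 1)$ with this invariant. Iterating \cref{poof} gives $fy=y\cdot y^{-1}(f)+(\text{terms of strictly lower $\kk\S_r$-length and polynomial degree})$, so after converting to the basis $\{\v_\lambda z\otimes g\}$ the top-degree part of $\sum_y\v_\lambda\otimes d_A fy$ is exactly $\sum_y\v_\lambda d_A y\otimes y^{-1}(f)$, using that $d_A y\in(\S_\lambda\backslash \S_r)_{\min}$ by \cref{dylan}(2). Linear independence of the $\xi_{A,f}$ follows because distinct $A$'s are supported on disjoint $\S_\mu$-orbits and, within a fixed $A$, the leading symbols depend freely on a basis of $\P^{\mu(A)}$; spanning follows by the standard lifting argument from an exhaustive filtration. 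The main obstacle is the case (B) step in the invariance check, which would fail for any larger invariance subalgebra; it is precisely here that the subalgebra $\P^{\mu(A)}$, rather than the smaller $\P^\mu$, is forced upon us.
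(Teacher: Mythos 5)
Your proof is correct and takes a genuinely different route from the paper's. Both proofs open with Frobenius reciprocity, but you induce from $\kk\S_\mu$ rather than from $\aH_\mu$, so the target becomes the space of $\S_\mu$-invariants in $M(\lambda)\otimes_{\kk\S_r}\aH_r$ rather than an $\aH_\mu$-Hom space. From there the methods diverge: the paper filters by Bruhat order on double cosets via the Mackey theorem for degenerate affine Hecke algebras and identifies each subquotient as an induced $\aH_\mu$-module (invoking a Frobenius-extension result from Kleshchev's book), whereas you filter by polynomial degree, pass to the associated graded module over the smash product $\kk\S_r\circltimes\P_r$, and compute the $\S_\mu$-invariants there by elementary combinatorics of the $\S_\mu$-orbits on $(\S_\lambda\backslash\S_r)_{\min}$. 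In effect you prove the graded statement (\cref{bots}) first and lift, the reverse of the paper's order; this avoids the Mackey theorem entirely and is more self-contained. One point you should make explicit: the phrase ``strictly parallel to the invariance check'' glosses over the fact that the invariance check only establishes one inclusion. For equality you also need the converse, which follows because $\S_\mu$-invariance forces $g_1$ to be $\S_{\mu(A)}$-invariant (since $\S_{\mu(A)}$ is the stabilizer of the coset of $d_A$) and transitivity of the $\S_\mu$-action on $(\S_{\mu(A)}\backslash\S_\mu)_{\min}$ then forces $g_y = y^{-1}(g_1)$. The trade-off for your simplicity is that the Mackey filtration is finite and gives extra structural information about the restricted module (each subquotient is an induced module) which your degree filtration does not expose.
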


\begin{proof}
By transitivity of induction,
$M(\lambda)\otimes_{\kk \S_r}\aH_r \cong \kk_\lambda \otimes_{\kk \S_\lambda} \aH_r$,
with $m_\lambda \otimes 1$ in the left hand module
corresponding to $1 \otimes 1$ on the right.
Using this description, to show that there is a well-defined such homomorphism $\xi_{A,f}:M(\mu)\otimes_{\kk \S_r}\aH_r\rightarrow M(\lambda)\otimes_{\kk \S_r}\aH_r$, it suffices to show that the vector
\begin{equation}\label{thisv}
\sum_{y \in (\S_{\mu(A)} \backslash \S_\mu)_{\min}} \v_\lambda \otimes d_A f y
\end{equation}
is invariant under right multiplication by a simple reflection $s_i \in \S_\mu$.
For $y \in (\S_{\mu(A)} \backslash \S_\mu)_{\min}$, \cite[Lem.~1.1]{DJgl} shows that 
{\em either} $y s_i \in 
(\S_{\mu(A)} \backslash \S_\mu)_{\min}$
{\em or} $y s_i = s_j y$ for a basic transposition $s_j \in \S_{\mu(A)}$.
In the latter case,
\cref{dylan}(1) implies that
$d_A s_j = s_k d_A$ for $s_k \in \S_{\lambda(A)}$,
so that 
$$
\v_\lambda \otimes d_A f y s_i
= 
\v_\lambda \otimes d_A f s_j
y
= 
\v_\lambda \otimes d_A s_j f y
= 
\v_\lambda \otimes s_k d_A f y
=  \v_\lambda s_k \otimes d_A f y
= \v_\lambda \otimes d_A f y.
$$
Since right multiplication by $s_i$ permutes left $\S_\lambda$-cosets in the double coset $\S_\lambda d_A \S_\mu$, we deduce that right multiplication by $s_i$ permutes the summands of \cref{thisv}, thereby fixing the sum itself.

We can view $\P_r$ as a right $\aH_r$-module---the {\em right} polynomial representation---by identifying it with 
$\kk \otimes_{\kk \S_r} \aH_r$.
Let $\P_\lambda$ denote 
the restriction of this to a right $\aH_\lambda$-module.
Since $\P_\lambda \cong \kk_\lambda \otimes_{\kk \S_\lambda} \aH_\lambda$, transitivity of induction implies that
$M(\lambda)\otimes_{\kk \S_r}\aH_r \cong \kk_\lambda \otimes_{\kk \S_\lambda} \aH_r
\cong \P_\lambda \otimes_{\aH_\lambda} \aH_r$,
the natural isomorphism taking $\v_\lambda\otimes 1$ to $1 \otimes 1$.
By Frobenius reciprocity, we have that
\begin{align*}
\Hom_{\dash \aH_r}\big(M(\mu)\otimes_{\kk \S_r}\aH_r, M(\lambda)\otimes_{\kk \S_r}\aH_r
\big)
&\cong
\Hom_{\dash \aH_r}\left(\P_\mu \otimes_{\aH_\mu} \aH_r, \P_\lambda \otimes_{\aH_\lambda} \aH_r\right)\\
&\cong
\Hom_{\dash \aH_\mu}\left(\P_\mu, \P_\lambda \otimes_{\aH_\lambda} \aH_r
\downarrow_{\aH_\mu}\right).
\end{align*}
Under these isomorphisms, $\xi_{A,f}$ maps to the unique right $\aH_\mu$-module homomorphism 
\begin{align*}
\xi_{A,f}':\P_\mu &\rightarrow \P_\lambda \otimes_{\aH_\lambda} \aH_r \downarrow_{\aH_\mu},
&
1 &\mapsto \sum_{y \in (\S_{\mu(A)} \backslash \S_\mu)_{\min}} 1 \otimes d_A f y.
\end{align*}

Now we recall the Mackey theorem for degenerate affine Hecke algebras; e.g., see \cite[Th.~3.5.2]{Kleshchev}.
Enumerate the elements of $\Mat{\lambda}{\mu}$
as $A_1,\dots,A_n$ so that $d_{A_i} < d_{A_j}$ in the Bruhat order implies $i < j$.
For $0 \leq m \leq n$, 
let $V_m$ be the $\aH_\mu$-submodule of
$\P_\lambda \otimes_{\aH_\lambda} \aH_r$ generated by
$1 \otimes d_{A_1},\dots,1 \otimes d_{A_m}$.
This defines a filtration
$$
\{0\} = V_0 \leq V_1 \leq \cdots \leq V_n = \P_\lambda \otimes_{\aH_\lambda} \aH_r \downarrow_{\aH_\mu}.
$$
Then the Mackey theorem implies that
$V_m / V_{m-1} \cong \P_{\mu(A_m)} \otimes_{\aH_{\mu(A_m)}}
\aH_\mu$ as a right $\aH_\mu$-module,
with an explicit isomorphism taking $1 \otimes d_{A_m} + V_{m-1}$
to $1 \otimes 1$.
We observe that $\xi_{A_m,f}'$ has image contained in
$V_m$.
We are going to show by induction on $m=0,1,\dots,n$ that
the homomorphisms 
$\xi_{A_{l},f}'$ for $1 \leq l \leq m$ and 
$f$ running over a basis of $\P^{{\mu(A_{l})}}$ 
give a basis for 
$\Hom_{\dash \aH_\mu}\left(\P_\mu, V_m\right)$ as a free $\kk$-module.
The $m=n$ case is sufficient to prove the theorem.

For the induction step, suppose that $1 \leq m \leq n$
and consider
$\Hom_{\dash \aH_\mu}\left(\P_\mu, V_{m}\right)$.
Applying $\Hom_{\dash \aH_\mu}(\P_\mu,-)$ to
$0 \rightarrow V_{m-1} \rightarrow V_m \rightarrow V_m / V_{m-1}
\rightarrow 0$ gives 
an exact sequence
$$
0 \longrightarrow 
\Hom_{\dash \aH_\mu}\left(\P_\mu, V_{m-1}\right)
\longrightarrow
\Hom_{\dash \aH_\mu}\left(\P_\mu, V_{m}\right)
\stackrel{\theta}{\longrightarrow}
\Hom_{\dash \aH_\mu}\left(\P_\mu, 
V_m / V_{m-1}\right).
$$
To check the induction step, it suffices to show that 
the homomorphisms $\xi_{A_m,f}'' := 
\theta(\xi'_{A_m,f})$ as $f$ runs over a basis
for $\P^{{\mu(A_m)}}$ give a basis for
$\Hom_{\dash \aH_\mu}(\P_\mu, V_m / V_{m-1})$ (this also shows that $\theta$ is surjective).
Using the isomorphism $V_m / V_{m-1} \cong \P_{\mu(A_m)} \otimes_{\aH_{\mu(A_m)}}
\aH_\mu$ from the Mackey theorem, 
this follows if we can show that the right $\aH_\mu$-module
homomorphisms 
\begin{align*}
\xi_{A_m,f}''' :
\P_\mu &\rightarrow 
\P_{\mu(A_m)} \otimes_{\aH_{\mu(A_m)}}
\aH_\mu,&
1 &\mapsto \sum_{y \in (\S_{\mu(A)} \backslash \S_\mu)_{\min}} 1 \otimes \end{align*}
give a basis for $\Hom_{\dash \aH_\mu}\big(\P_\mu, \P_{\mu(A_m)} \otimes_{\aH_{\mu(A_m)}} \aH_\mu\big)$ as $f$ runs over a basis for $\P^{{\mu(A_m)}}$.

Recall that $\mu(A_m)$ is obtained by reading the entries of the matrix $A_m$ down columns starting with the leftmost column.
Let $\nu(A_m)$ be the composition obtained by reading the entries of $A_m$ {\em up} columns starting with the leftmost column. 
Let $d$ be the
longest element of $(\S_{\mu(A_m)} \backslash \S_\mu / \S_{\nu(A_m)})_{\min}$. 
The double coset $\S_{\mu(A_m)} d \S_{\nu(A_m)}$ is special: it equals $\S_{\mu(A_m)} d = d \S_{\nu(A_m)}$.
By \cite[Cor.~3.7.3]{Kleshchev}, there is a unique isomorphism of right $\aH_\mu$-modules
$$
\P_{\mu(A_m)} \otimes_{\aH_{\mu(A_m)}} \aH_{\mu}
\stackrel{\sim}{\rightarrow}
\Hom_{\dash \aH_{\nu(A_m)}}\left(\aH_\mu, \P_{\nu(A_m)}\right)
$$
mapping $1 \otimes 1$ to the unique right $\aH_{\nu(A_m)}$-module homomorphism 
$\varphi:\aH_\mu \rightarrow \P_{\nu(A_m)}$
which maps 
$d' \in (\S_\mu / \S_{\nu(A_m)})_{\min}$ to $\delta_{d,d'}$.
This isomorphism induces the first of the following:
\begin{multline*}
\Hom_{\dash \aH_\mu}\big(\P_\mu, \P_{\mu(A_m)} \otimes_{\aH_{\mu(A_m)}} \aH_\mu\big)
\stackrel{\sim}{\rightarrow} \Hom_{\dash \aH_\mu}\big(\P_\mu, \Hom_{\dash \aH_{\nu(A_m)}}\big(\aH_\mu, \P_{\nu(A_m)}\big)\big)\\
\stackrel{\sim}{\rightarrow}  \Hom_{\dash \aH_{\nu(A_m)}}\big(\P_{\nu(A_m)},  \P_{\nu(A_m)}\big)
\stackrel{\sim}{\rightarrow} \P^{{\nu(A_m)}}
\stackrel{\sim}{\rightarrow} \P^{{\mu(A_m)}}.
\end{multline*}
The second of these isomorphisms is another Frobenius reciprocity, the third one is defined by evaluation at 1 using that $Z(\aH_{\nu(A_m)}) = \P^{{\nu(A_m)}}$,
and the last one is $f \mapsto d(f)$.
We claim that
the image of $\xi_{A_m,f}'''$ under this sequence of isomorphisms is simply $f$.
We are trying to show that the morphisms 
$\xi_{A_m,f}'''$ give a basis for 
$\Hom_{\dash \aH_\mu}\big(\P_\mu, \P_{\mu(A_m)} \otimes_{\aH_{\mu(A_m)}} \aH_\mu\big)$
as $f$ runs over a basis for $\P^{{\mu(A_m)}}$. 
This obviously follows from the claim.

Finally, to prove the claim, the image of $\xi_{A_m,f}'''$ under the first isomorphism is the unique right $\aH_\mu$-module homomorphism
mapping $1 \mapsto \sum_{y \in (\S_{\mu(A_m)} \backslash \S_\mu)_{\min}} \varphi f y$.
Applying the remaining three isomorphisms to this produces
$$
d\ \bigg(\sum_{y \in (\S_{\mu(A_m)} \backslash \S_\mu)_{\min}} (\varphi f y)(1)\bigg)=
\sum_{y \in (\S_{\mu(A_m)} \backslash \S_\mu)_{\min}} d(\varphi (f y)).
$$
We have that
$fy = \delta_{y,d}\  d \ d^{-1}(f)+(*)$ where $(*)$
is a sum of terms
of the form $z w g$ for $z \in (\S_\mu / \S_{\nu(A_m)})_{\min}$
with $\ell(z) < \ell(d)$,
$w \in \S_{\nu(A_m)}$ and $g \in \P_r$. The map $\varphi$ is zero on $(*)$ so this expression simplifies to give
$d(\varphi(d\  d^{-1}(f))) = d(d^{-1}(f)) = f$.
\end{proof}

We view $\P_r$ as a graded algebra so that each $x_i$
is of degree 1.
Then the smash product $\kk \S_r \circltimes \P_r$ is a graded algebra with permutations in $\S_r$ being of degree 0.
For $\lambda \in \Lambda(n,r)$, there is also a graded right $\kk S_r \circltimes \P_r$-module 
$M(\lambda) \circltimes \P_r$, which is the tensor product
$M(\lambda)\otimes \P_r$ with $\P_r$ acting by right multiplication and $w \in \S_r$ acting by
$(\v_\lambda y \otimes f) w = \v_\lambda
yw \otimes w^{-1}(f)$. We call the endomorphism algebra
\begin{equation}\label{current}
\gS(n,r) := \End_{\dash \kk \S_r \circltimes \P_r}\left(
\bigoplus_{\lambda \in \Lambda(n,r)}
M(\lambda) \circltimes \P_r
\right)
\end{equation}
the {\em current Schur algebra}. 

\begin{remark}
In \cref{current}, one can also replace the polynomial algebra $\P_r=\kk[x_1,\dots,x_r]$ with the algebra $\kk[x_1^{\pm 1},\dots,x_r^{\pm r}]$ of Laurent polynomials.
The resulting endomorphism algebra could be called the
``loop Schur algebra'' but it is just the same as the affine Schur algebra $\oneS(n,r)$ mentioned above, and we will continue to use this established terminology for it.
This coincidence is apparent from the 
exposition in \cite[Sec.~2.4]{Antor}.
\end{remark}

There is an
ascending filtration
$$
\{0\} = F_{-1}\, \aH_r \subseteq F_0\, \aH_r \subseteq F_1\, \aH_r \subseteq \cdots
$$
defined by letting $F_d\, \aH_r$ be the subspace spanned by
all $w f$ for polynomials $f \in \P_r$ of degree $\leq d$ 
and $w \in \S_r$. 
Thus, $x_1,\dots,x_r$ are in filtered degree 1, and permutations are of degree 0.
The associated graded algebra
$\gr \aH_r$ is identified with $\kk \S_r 
\circltimes
\P_r$
so that $\gr_0 s_i = s_i \otimes 1$ and 
$\gr_1 x_i = 1 \otimes x_i$.
There is an induced filtration making 
$M(\lambda)\otimes_{\kk \S_r}\aH_r$ into a filtered right $\aH_r$-module, with
$F_d\, M(\lambda)\otimes_{\kk \S_r}\aH_r := \v_\lambda \otimes (F_d \,\aH_r)$.
The associated graded right $\gr \aH_r$-module
$\gr\left( M(\lambda)\otimes_{\kk \S_r}\aH_r\right)$ is identified with
the graded right $\kk \S_r \circltimes \P_r$-module
$M(\lambda) \circltimes \P_r$, that is, the tensor product
$M(\lambda)\otimes \P_r$ with $\P_r$ acting by right multiplication and $w \in \S_r$ acting by
$(\v_\lambda y \otimes f) w = \v_\lambda
yw \otimes w^{-1}(f)$.
Finally, there is an ascending filtration 
$$
\{0\} = F_{-1}\,\aS(n,r)
\subseteq F_0\,\aS(n,r)
\subseteq F_1\,\aS(n,r)
\subseteq \cdots
$$
on $\aS(n,r)$
with $F_d (1_\lambda \aS(n,r) 1_\mu)$
being all homomorphisms $M(\mu)\otimes_{\kk \S_r}\aH_r\rightarrow M(\lambda)\otimes_{\kk \S_r}\aH_r$
which take the subspace $F_i\, M(\mu)\otimes_{\kk \S_r}\aH_r$ into $F_{d+i}\, M(\lambda)\otimes_{\kk \S_r}\aH_r$.

\begin{theorem}\label{bots}
The associated graded algebra
$\gr \aS(n,r)$ may be identified with 
$\gS(n,r)$ in such a way that
$\gr_d \xi_{A,f}$ 
(for $A \in \Mat{\lambda}{\mu}$
and $f \in \P^{{\mu(A)}}$ that is homogeneous of degree $d$)
is identified with the unique right $\kk \S_r \circltimes \P_r$-module
homomorphism 
\begin{align}
\varsigma_{A,f}:M(\mu) \circltimes \P_r &\rightarrow M(\lambda)\circltimes \P_r, 
&\v_\mu \otimes 1 &\mapsto
\sum_{y \in (\S_{\mu(A)}\backslash \S_\mu)_{\min}}
\v_\lambda d_A y \otimes y^{-1}(f).\label{cloudy}
\end{align}
The homomorphisms $\varsigma_{A,f}$ defined by \cref{cloudy}
for $A \in \Mat{\lambda}{\mu}$
and $f$ running over a basis for 
$\P^{{\mu(A)}}$
give a basis for $1_\lambda \gS(n,r) 1_\mu$ as a free $\kk$-module.
\end{theorem}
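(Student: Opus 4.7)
The plan is to follow the strategy of \cref{bits}, working with the current algebra $\kk\S_r \circltimes \P_r = \gr\aH_r$ in place of $\aH_r$, and then to verify that the natural map $\gr\aS(n,r) \to \gS(n,r)$ induced by passage to associated graded of filtration-preserving endomorphisms carries $\gr_d \xi_{A,f}$ to $\varsigma_{A,f}$.

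First I would verify that $\xi_{A,f}$ preserves the filtration with shift $d := \deg f$: applying $\xi_{A,f}$ to $\v_\mu \otimes g \in F_i\big[M(\mu)\otimes_{\kk \S_r}\aH_r\big]$ gives $\sum_y \v_\lambda \otimes d_A f y g$, which lies in $F_{d+i}$ since $fyg$ has polynomial degree at most $d+i$. To compute the image in $F_d/F_{d-1}$, one uses \cref{poof} iteratively to write $fy = y \cdot y^{-1}(f)$ modulo lower-order terms, so that modulo $F_{d-1}$ the element $\v_\lambda \otimes d_A f y$ becomes $\v_\lambda d_A y \otimes y^{-1}(f)$. By \cref{dylan}(2), $d_A y \in (\S_\lambda\backslash\S_r)_{\min}$ whenever $y \in (\S_{\mu(A)}\backslash \S_\mu)_{\min}$, so $\v_\lambda d_A y$ is a standard basis vector of $M(\lambda)$; this identifies $\gr_d \xi_{A,f}(\v_\mu \otimes 1)$ with the right-hand side of \cref{cloudy}.

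Next I would show that $\varsigma_{A,f}$ is a well-defined right $\kk\S_r \circltimes \P_r$-module homomorphism by the same check used for $\xi_{A,f}$ in the proof of \cref{bits}: invariance under right multiplication by $s_i \in \S_\mu$ is verified case-by-case, with the case $y s_i = s_j y$ for $s_j \in \S_{\mu(A)}$ handled using \cref{dylan}(1) together with the $\S_{\mu(A)}$-invariance of $f$. For the basis claim, I would rerun the Mackey induction of \cref{bits} with $\aH_r$ replaced throughout by $\kk\S_r \circltimes \P_r$: the corresponding Mackey filtration of $\P_\lambda \otimes_{\kk\S_\lambda\circltimes\P_r}(\kk\S_r\circltimes\P_r)\downarrow_{\kk\S_\mu\circltimes\P_r}$ has subquotients $\P_{\mu(A)}\otimes_{\kk\S_{\mu(A)}\circltimes\P_r}(\kk\S_\mu\circltimes\P_r)$, and a second Frobenius reciprocity identifies the contribution of each to the relevant Hom space with $\P^{\mu(A)}$. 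Linear independence of $\{\varsigma_{A,f}\}$ can alternatively be checked directly: evaluating $\sum c_{A,f}\varsigma_{A,f} = 0$ at $\v_\mu\otimes 1$ and using \cref{dylan}(2) to separate standard basis vectors of $M(\lambda)$, then specializing $y = 1$, forces $\sum_f c_{A,f} f = 0$ in $\P^{\mu(A)}$ for each $A$.

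Granting the basis claim, the same linear independence forces $F_d \aS(n,r)$ to be spanned exactly by $\{\xi_{A,f} : \deg f \leq d\}$ (any element of $F_d$ expanded in the $\{\xi_{A,f}\}$-basis with a coefficient on some $\xi_{A,f}$ of degree $d^\star > d$ would give, after passing to $\gr_{d^\star}$, a nonzero relation among the $\varsigma_{A,f}$). Thus $\gr\aS(n,r)$ has basis $\{\gr_d \xi_{A,f}\}$ and the natural map $\gr\aS(n,r) \to \gS(n,r)$ is a bijection on bases, hence an isomorphism. The main obstacle is the Mackey step, which is essentially a careful bookkeeping exercise paralleling \cref{bits}; the only subtle point is confirming that each intertwiner and Frobenius-reciprocity isomorphism respects the $\P_r$-action, which is automatic since the passage from $\aH_r$ to $\kk\S_r\circltimes\P_r$ preserves all the underlying polynomial formulas.
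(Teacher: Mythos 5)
Your proposal is correct and takes essentially the same route as the paper: compute $\gr_d\xi_{A,f} = \varsigma_{A,f}$ using the induced filtration, establish the basis claim for $\gS(n,r)$ by rerunning the Mackey/Frobenius-reciprocity argument of \cref{bits} with $\kk\S_r\circltimes\P_r$ in place of $\aH_r$, and conclude via the natural (always injective) map $\gr\aS(n,r)\to\gS(n,r)$. The paper also observes, as you anticipate, that the graded Mackey step is actually easier (direct sum rather than filtration), and your alternative direct check of linear independence via \cref{dylan}(2) and the specialization $y=1$ is a valid shortcut for that half of the basis claim.
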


\begin{proof}
Under the identifications explained above and $f$ that is homogeneous of degree $d$, 
$\gr_d \xi_{A,f}$ is the graded right 
$\kk \S_r \circltimes \P_r$-module
homomorphism 
\begin{align*}
M(\mu) \circltimes \P_r &\rightarrow M(\lambda)\circltimes \P_r,
&
\v_\mu \otimes 1 &\mapsto
\sum_{y \in (\S_{\mu(A)}\backslash \S_\mu)_{\min}}
(\v_\lambda d_A \otimes f) y,
\end{align*}
which is the same map as in \cref{cloudy}.
In view of \cref{bits}, the other parts of the present 
theorem follow if we can show that these homomorphisms
for $A \in \Mat{\lambda}{\mu}$
and $f$ running over a basis for $\P^{{\mu(A)}}$
give a basis for $\Hom_{\kk \S_r \circltimes \P_r}(M(\mu)\circltimes \P_r,M(\lambda)\circltimes \P_r)$.
This can be proved by mimicking the proof of \cref{bits}.
In fact, several of the steps are easier in the graded setting: the analog of the Mackey theorem of \cite[Th.~3.5.2]{Kleshchev} gives a decomposition as a direct sum rather than merely being a filtration, and \cite[Cor.~3.7.3]{Kleshchev} can be simplified
because $\kk \S_\mu \circltimes \P_r < \kk \S_r \circltimes \P_r$ is a Frobenius extension. We omit the details.
\end{proof}

\begin{corollary}\label{easy}
There is an injective algebra homomorphism
$\iota:\S(n,r) \hookrightarrow \aS(n,r)$
mapping $\xi_A$ to $\xi_{A,1} = \xi_{1,A}$
for $A \in \Mat{\lambda}{\mu}$.
Its image is the subalgebra $F_0\, \aS(n,r)$.
\end{corollary}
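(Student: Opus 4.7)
The plan is to construct $\iota$ by functoriality of induction, read off its values on standard basis elements, and then deduce injectivity and the image description from \cref{bits,bots}.

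First, define $\iota$ by the rule $\varphi \mapsto \varphi \otimes \id_{\aH_r}$: given a right $\kk\S_r$-module homomorphism $\varphi:\bigoplus M(\lambda)\to\bigoplus M(\lambda)$, extending scalars yields a right $\aH_r$-module homomorphism $\bigoplus M(\lambda)\otimes_{\kk\S_r}\aH_r\to\bigoplus M(\lambda)\otimes_{\kk\S_r}\aH_r$. Functoriality of $-\otimes_{\kk\S_r}\aH_r$ shows that $\iota$ is an algebra homomorphism (compatible with the idempotents $1_\lambda$).

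Second, I would verify that $\iota(\xi_A)=\xi_{A,1}$. Using the defining formula \cref{standardbasis} for $\xi_A$,
\begin{equation*}
\iota(\xi_A)(\v_\mu\otimes 1) = \xi_A(\v_\mu)\otimes 1 = \sum_{y\in(\S_{\mu(A)}\backslash\S_\mu)_{\min}}\v_\lambda d_Ay\otimes 1 = \sum_{y\in(\S_{\mu(A)}\backslash\S_\mu)_{\min}}\v_\lambda\otimes d_A y,
\end{equation*}
which matches \cref{inbits} with $f=1$, so it defines $\xi_{A,1}$.

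Third, injectivity follows since $1\in\P^{\mu(A)}$ for every $A$, so the elements $\{\xi_{A,1}\}$ sit inside the $\kk$-basis of $\aS(n,r)$ produced by \cref{bits} (just extend $\{1\}$ to a basis of each $\P^{\mu(A)}$). Hence $\iota$ sends the basis $\{\xi_A\}$ of $\S(n,r)$ to linearly independent elements.

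Finally, for the image: by \cref{bots}, a basis of the $d$-th graded piece $\gr_d\aS(n,r)$ is indexed by pairs $(A,f)$ with $f\in\P^{\mu(A)}$ homogeneous of degree exactly $d$. In particular, $F_0\,\aS(n,r)$ has basis $\{\xi_{A,1}\}$, since the only symmetric polynomials of degree $0$ are scalars. This coincides with the image of $\iota$. The whole argument is essentially a bookkeeping exercise; the only step requiring genuine input is the identification of $F_0\,\aS(n,r)$, which is handed to us by \cref{bots}.
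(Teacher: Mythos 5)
Your argument is correct and takes essentially the same route as the paper: both construct $\iota$ by applying the functor $-\otimes_{\kk\S_r}\aH_r$, both read off the effect on standard basis vectors, and both identify the image with $F_0\,\aS(n,r)$ by observing (via the associated graded description from \cref{bots}, noting $F_{-1}=\{0\}$) that the $\xi_{A,1}$ form a basis for $F_0\,\aS(n,r)$. Your write-up simply makes explicit some steps the paper leaves implicit; no gaps.
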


\begin{proof}
The existence of $\iota$ 
follows by applying the functor $-\otimes_{\kk \S_r} \aH_r$
to the definition \cref{schurdef} of $\S(n,r)$ and
using the definition \cref{affdef} $\aS(n,r)$.
It is an isomorphism $\S(n,r) \stackrel{\sim}{\rightarrow}
F_0\aS(n,r)$ because it sends the basis vectors $\xi_A$ of $\S(n,r)$ to the basis vectors $\xi_{A,1}$
of $F_0 \aS(n,r)$.
\end{proof}

\begin{corollary}\label{buts}
For $\lambda,\mu \in \Lambda(n,r)$, 
$1_\lambda \aS(n,r) 1_\mu
= \Hom_{\dash \aH_r}\big(M(\mu)\otimes_{\kk \S_r}\aH_r,M(\lambda)\otimes_{\kk \S_r}\aH_r\big)$
is free as a $\kk$-module
with a basis $\{\xi_{f,A}\}$ indexed by pairs $(A,f)$ as $A$ runs over the set $\Mat{\lambda}{\mu}$ and $f$ runs over a basis for
$\P^{{\lambda(A)}}$.
By definition, $\xi_{f,A}$ is the unique right $\aH_r$-module homomorphism
\begin{equation}\label{inbuts}
M(\mu)\otimes_{\kk \S_r}\aH_r\rightarrow M(\lambda)\otimes_{\kk \S_r}\aH_r,
\qquad
\v_\mu \otimes 1\mapsto \sum_{y \in (\S_{\mu(A)} \backslash \S_\mu)_{\min}} \v_\lambda \otimes f d_A y.
\end{equation}
\end{corollary}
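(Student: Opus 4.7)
The plan is to deduce \cref{buts} from \cref{bits} and \cref{bots} by passing to the associated graded algebra. The key observation is that conjugation by $d_A$ induces a $\kk$-linear, degree-preserving isomorphism $\P^{\mu(A)} \stackrel{\sim}{\to} \P^{\lambda(A)}$, $f' \mapsto d_A(f')$, which is well-defined thanks to \cref{dylan}(1).

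First I would verify that the formula \cref{inbuts} actually defines a right $\aH_r$-module homomorphism. Mimicking the opening of the proof of \cref{bits}, it suffices to check that the vector $\sum_y \v_\lambda \otimes f d_A y$ is invariant under right multiplication by each simple reflection $s_i \in \S_\mu$. When $y s_i \in (\S_{\mu(A)} \backslash \S_\mu)_{\min}$, only summands are permuted; in the remaining case $y s_i = s_j y$ for some $s_j \in \S_{\mu(A)}$. Applying \cref{dylan}(1), $d_A s_j = s_k d_A$ for some $s_k \in \S_{\lambda(A)}$, and since $f \in \P^{\lambda(A)}$ we have $s_k(f) = f$ and $\partial_k(f) = 0$; hence \cref{poof} gives $f s_k = s_k f$ in $\aH_r$, and combined with $\v_\lambda s_k = \v_\lambda$ (valid because $\S_{\lambda(A)} \leq \S_\lambda$), the summand in question is unchanged.

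Next I would prove that $\{\xi_{f, A}\}$ is a basis by computing its associated graded. For $f \in \P^{\lambda(A)}$ homogeneous of degree $d$, a simple induction on $\ell(w)$ using \cref{poof} shows $f w \equiv w \cdot w^{-1}(f) \pmod{F_{d-1}\aH_r}$ for every $w \in \S_r$. Specializing to $w = d_A y$ and feeding the result through the module structure on $M(\lambda) \circltimes \P_r$ described in \cref{bots} identifies $\gr_d \xi_{f, A}$ with $\varsigma_{A, d_A^{-1}(f)}$. As $f$ runs over a homogeneous basis of $\P^{\lambda(A)}$, the element $d_A^{-1}(f)$ runs over a homogeneous basis of $\P^{\mu(A)}$, so by \cref{bots} the collection $\{\gr_d \xi_{f, A}\}$ is a basis of $\gr(1_\lambda \aS(n,r) 1_\mu)$. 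A standard filtered-to-graded lifting then promotes this to the basis $\{\xi_{f, A}\}$ of $1_\lambda \aS(n,r) 1_\mu$.

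The main (minor) obstacle is the commutation identity $f w \equiv w \cdot w^{-1}(f) \pmod{F_{d-1}\aH_r}$, but as indicated this is immediate from the fact that the Demazure correction $\partial_i(f)$ in \cref{poof} strictly lowers polynomial degree, so the induction proceeds without incident.
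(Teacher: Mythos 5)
Your proposal is correct and takes essentially the same route as the paper: first check well-definedness of $\xi_{f,A}$ by adapting the opening paragraph of the proof of \cref{bits}, then observe that $\gr_d\xi_{f,A}=\varsigma_{A,d_A^{-1}(f)}$ (the paper writes this as $\gr_d\xi_{f,A}=\gr_d\xi_{A,d_A^{-1}(f)}$) and invoke \cref{bots}. The commutation estimate $fw\equiv w\cdot w^{-1}(f)\pmod{F_{d-1}\aH_r}$ that you spell out is exactly what the paper leaves implicit, and your induction via \cref{poof} is the right way to justify it.
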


\begin{proof}
One first checks that there is a homomorphism $\xi_{f,A}$
as described by following the argument from the first paragraph of the proof of \cref{bits}. 
Acting with $d_A^{-1}$ defines an isomorphism
$\P^{{\lambda(A)}} \stackrel{\sim}{\rightarrow} \P^{{\mu(A)}}$.
If $f \in \P^{{\lambda(A)}}$ 
is homogeneous of degree $d$ then $\xi_{f,A}$ belongs to 
$F_d\, \aS(n,r)$ with
$\gr_d \xi_{f,A} = \gr_d \xi_{A, d_A^{-1}(f)}$.
Now \cref{bots} implies that
the homomorphisms $\xi_{f,A}$ 
give a basis for $1_\lambda \aS(n,r) 1_\mu$ 
for $A \in \Mat{\lambda}{\mu}$ and $f$ running
over a basis for $\P^{{\lambda(A)}}$.
\end{proof}

In the previous section, we identified
$\bigoplus_{\lambda \in \Lambda(n,r)} M(\lambda)$
with the tensor space $V^{\otimes r}$.
Consequently, 
$\bigoplus_{\lambda \in \Lambda(n,r)} M(\lambda) \otimes_{\kk S_r} \aH_r$ is identified with 
the {\em induced tensor space}
$V^{\otimes r} \otimes_{\kk S_r} \aH_r$,
and we have that
\begin{equation}\label{nitpick1}
\aS(n,r) \equiv
\End_{\dash\aH_r}(V^{\otimes r} \otimes_{\kk S_r} \aH_r).
\end{equation}
Similarly, $\bigoplus_{\lambda \in \Lambda(n,r)} M(\lambda)
\circltimes \P_r$ is identified
with $V^{\otimes r} \circltimes \P_r$, so 
\begin{equation}\label{nitpick2}
\gS(n,r) \equiv
\End_{\kk S_r \circltimes \P_r}(V^{\otimes r} \circltimes \P_r).
\end{equation}
Finally, we assume that $n \geq r$, when a little more can be said. Let $\omega := (1^r, 0^{n-r}) \in \Lambda(n,r)$. 
The module $M(\omega)\otimes_{\kk \S_r} \aH_r$ is obviously isomorphic to the right regular $\aH_r$-module.
So we have that
\begin{equation}\label{easybit}
1_\omega \aS(n,r) 1_\omega = 
\End_{\dash\aH_r}(M(\omega)\otimes_{\kk \S_r} \aH^r) \cong
\End_{\dash \aH_r}(\aH_r) \cong \aH_r.
\end{equation}
Also, for $\lambda \in \Lambda(n,r)$, we have that
\begin{multline}\label{easybot}
1_\lambda \aS(n,r) 1_\omega 
=\Hom_{\aS(n,r)}(M(\omega)\otimes_{\kk \S_r} \aH_r,M(\lambda)\otimes_{\kk S_r} \aH_r)\\
\cong \Hom_{\dash \aH_r}(\aH_r,M(\lambda)\otimes_{\kk \S_r}\aH_r) \cong M(\lambda)\otimes_{\kk \S_r}\aH_r.
\end{multline}
Identifying $1_\omega \aS(n,r) 1_\omega$ with $\aH_r$ via \cref{easybit}, it follows that the $(\aS(n,r), \aH_r)$-bimodule
$V^{\otimes r}\otimes_{\kk S_r} \aH_r$ 
is isomorphic to the left ideal $\aS(n,r) 1_\omega$. 
The following two results 
are well known in this sort of situation.

\begin{theorem}\label{dcp}
When $n \geq r$, the right
$\aH_r$-module $T := V^{\otimes r}\otimes_{\kk S_r} \aH_r$
satisfies the double centralizer property,
i.e., $\End_{\End(T)}(T) = T$.
\end{theorem}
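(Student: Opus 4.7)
My plan is to reduce the claim to the following elementary fact: for any ring $A$ and any idempotent $e \in A$, the natural map $eAe \to \End_A(Ae)^{\op}$, sending $a$ to the left $A$-linear map $xe \mapsto xa$, is a ring isomorphism. I will apply this with $A = \aS(n,r)$ and $e = 1_\omega$, which is available precisely because $n \geq r$ ensures $\omega = (1^r, 0^{n-r}) \in \Lambda(n,r)$.

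The first step is to make precise the identification already sketched in the paragraph preceding the theorem: using \cref{easybit,easybot}, the right $\aH_r$-module $T = V^{\otimes r} \otimes_{\kk \S_r} \aH_r$ is isomorphic to the left ideal $\aS(n,r) \cdot 1_\omega$ as an $(\aS(n,r),\aH_r)$-bimodule, where $\aH_r$ acts on the right through the ring isomorphism $\aH_r \cong 1_\omega \aS(n,r) 1_\omega$ of \cref{easybit}. Concretely, the isomorphism $\aH_r \cong M(\omega) \otimes_{\kk \S_r} \aH_r$ (valid since $\S_\omega$ is trivial) together with the isomorphisms $1_\lambda \aS(n,r) 1_\omega \cong M(\lambda) \otimes_{\kk \S_r} \aH_r$ from \cref{easybot} assemble, upon summing over $\lambda \in \Lambda(n,r)$, into the desired bimodule isomorphism; one then has to check compatibility of the $\aH_r$-actions on both sides, which amounts to unwinding what composition of homomorphisms corresponds to on the $(M(\lambda) \otimes_{\kk \S_r} \aH_r)$-side.

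Once this identification is in place, the elementary fact applied to $(A,e) = (\aS(n,r), 1_\omega)$ gives a ring isomorphism $1_\omega \aS(n,r) 1_\omega \xrightarrow{\sim} \End_{\aS(n,r)}(\aS(n,r) \cdot 1_\omega)^{\op}$ defined by right multiplication. Transporting through the bimodule identification of the previous step, this yields the ring isomorphism $\aH_r \xrightarrow{\sim} \End_{\aS(n,r)}(T)^{\op}$ defined by the right $\aH_r$-action on $T$, which is the asserted double centralizer property. I do not foresee any serious obstacle here: the key combinatorial inputs, namely the basis theorem \cref{bits} underlying the existence of the isomorphisms in \cref{easybit,easybot}, and the hypothesis $n \geq r$ used to guarantee that $M(\omega) \otimes_{\kk \S_r} \aH_r$ is free of rank one over $\aH_r$, have all already been established, so the remainder of the proof is formal bookkeeping around the standard idempotent isomorphism.
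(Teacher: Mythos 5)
Your proposal is correct and follows precisely the same route as the paper: identify $T$ with the left ideal $\aS(n,r)\cdot 1_\omega$ via \cref{easybit,easybot}, then invoke the standard idempotent isomorphism $\End_{\aS(n,r)}(\aS(n,r)1_\omega) \cong 1_\omega\aS(n,r)1_\omega \cong \aH_r$. The paper states this in a single line; you have simply spelled out the bookkeeping.
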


\begin{proof}
This follows because
$\End_{\aS(n,r)}(\aS(n,r) 1_\omega)
\cong 1_\omega \aS(n,r) 1_\omega$.
\end{proof}

\begin{theorem}\label{equivcats}
If $\kk$ is a field of characteristic 0
and $n \geq r$ then $\aS(n,r)$ and $\aH_r$ are Morita equivalent. The functor
\begin{equation}\label{schurfunctor}
\Functor_{n,r}:\aH_r\mod\rightarrow \aS(n,r)\mod
\end{equation}
defined by
tensoring over $\aH_r$ with the
$(\aS(n,r), \aH_r)$-bimodule
$V^{\otimes r}\otimes_{\kk S_r} \aH_r$ is an equivalence of categories.
Identifying $\aH_r$ with $1_\omega \aS(n,r) 1_\omega$ as in \cref{easybit}, 
a quasi-inverse equivalence is given
by the idempotent truncation functor $1_\omega(-):\aS(n,r)\mod \rightarrow \aH_r\mod, M \mapsto 1_\omega M$.
\end{theorem}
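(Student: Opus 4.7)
The plan is to deduce the Morita equivalence from the standard theory of full idempotents: if $e$ is an idempotent in an algebra $A$ satisfying $AeA = A$, then $eAe$ and $A$ are Morita equivalent, with mutually quasi-inverse equivalences given by $A e \otimes_{eAe} -$ and $e(-)$. Applying this with $A = \aS(n,r)$ and $e = 1_\omega$, the identification $1_\omega \aS(n,r) 1_\omega \cong \aH_r$ of \cref{easybit} and the identification of $\aS(n,r) 1_\omega$ with $V^{\otimes r}\otimes_{\kk S_r}\aH_r$ noted just before the theorem reduce the proof to a single assertion, namely that
\[
\aS(n,r) \cdot 1_\omega \cdot \aS(n,r) = \aS(n,r).
\]
Since the $1_\lambda$ for $\lambda \in \Lambda(n,r)$ sum to the identity in $\aS(n,r)$, it suffices to check that $1_\lambda \in \aS(n,r) 1_\omega \aS(n,r)$ for every $\lambda \in \Lambda(n,r)$. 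Note that $\omega = (1^r, 0^{n-r})$ does lie in $\Lambda(n,r)$ because $n \geq r$.

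The key input is that in characteristic $0$ the module $P(\lambda) := M(\lambda)\otimes_{\kk S_r}\aH_r$ is a direct summand of $P(\omega) \cong \aH_r$ in the category of right $\aH_r$-modules. Set
\[
e_\lambda := \frac{1}{|\S_\lambda|}\sum_{w \in \S_\lambda} w \in \kk\S_\lambda \subseteq \aH_r,
\]
which makes sense since $|\S_\lambda|$ divides $r!$ and is therefore invertible in $\kk$. Then $e_\lambda$ is an idempotent in $\aH_r$, and left multiplication by $e_\lambda$ is an idempotent endomorphism of the right $\aH_r$-module $\aH_r = P(\omega)$ with image $e_\lambda \aH_r$. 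Transitivity of induction identifies
\[
e_\lambda \aH_r = \kk_\lambda \otimes_{\kk\S_\lambda}\aH_r \cong \kk_\lambda\otimes_{\kk\S_\lambda}\kk\S_r\otimes_{\kk\S_r}\aH_r = M(\lambda)\otimes_{\kk\S_r}\aH_r = P(\lambda),
\]
so $P(\lambda)$ is indeed a summand of $P(\omega)$. Let $\psi_\lambda : P(\lambda) \hookrightarrow P(\omega)$ and $\phi_\lambda : P(\omega) \twoheadrightarrow P(\lambda)$ be the inclusion and projection, satisfying $\phi_\lambda \circ \psi_\lambda = 1_{P(\lambda)}$. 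Viewed as elements of $\aS(n,r)$, we have $\psi_\lambda \in 1_\omega\aS(n,r)1_\lambda$, $\phi_\lambda \in 1_\lambda\aS(n,r)1_\omega$, and $\phi_\lambda \psi_\lambda = 1_\lambda$. Hence $1_\lambda \in \aS(n,r) 1_\omega \aS(n,r)$, as required.

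With fullness of $1_\omega$ established, the standard idempotent Morita theorem immediately yields that the functor $\Functor_{n,r} = (V^{\otimes r}\otimes_{\kk S_r}\aH_r)\otimes_{\aH_r} - = \aS(n,r) 1_\omega \otimes_{\aH_r} -$ and its quasi-inverse $1_\omega(-)$ give an equivalence between $\aH_r\mod$ and $\aS(n,r)\mod$. The only step with any substance is the construction of the idempotents $e_\lambda$, which is where the characteristic $0$ hypothesis is used; everything else is formal. (In positive characteristic, $e_\lambda$ need not exist but one expects the theorem to fail in general anyway, matching the behavior of the classical Schur algebra via the functor $\iota$ of \cref{easy}.)
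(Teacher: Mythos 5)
Your proof is correct and follows the same strategy as the paper: both establish fullness of the idempotent, i.e.\ $\aS(n,r)\,1_\omega\,\aS(n,r) = \aS(n,r)$, by exhibiting for each $\lambda \in \Lambda(n,r)$ a factorization $1_\lambda = \phi_\lambda\,\psi_\lambda$ through $1_\omega$, and then invoke standard idempotent Morita theory. Your construction of $\phi_\lambda$ and $\psi_\lambda$ from the averaging idempotent $e_\lambda \in \kk\S_\lambda \subseteq \aH_r$ is exactly the conceptual content of the explicit merge--split identity $\xi_A\, 1_\omega\, \xi_B = \lambda_1!\cdots\lambda_n!\, 1_\lambda$ cited in the paper (with $\xi_A$ the merge and $\xi_B$ the split), and you correctly identify invertibility of $|\S_\lambda|$ as the point where characteristic $0$ is used.
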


\begin{proof}
Note that $\aS(n,r) 1_\omega \aS(n,r) = \aS(n,r)$.
This follows because every idempotent
$1_\lambda\:(\lambda \in \Lambda(n,r))$
lies in 
$\aS(n,r) 1_\omega \aS(n,r)$. Indeed, $1_\lambda = 
\lambda_1! \cdots \lambda_n! \ 
\xi_A 1_\omega \xi_B$ where $A \in \Mat{\lambda}{\omega}$ corresponds to the double coset diagram that 
merges $r$ thin strings to thick strings of thickness $\lambda_1,\dots,\lambda_n$
with no crossings,
and $B := A^\transpose$. This is a well-known identity
already in $\S(n,r)$.
Hence, by standard Morita theory, 
the idempotent truncation functor $1_\omega (-)$
is an equivalence of categories $\aS(n,r)\mod \rightarrow \aH_r \mod$.
Morever, this functor is isomorphic to the functor
$1_\omega \aS(n,r) \otimes_{\aS(n,r)}-$, which is left adjoint
to $\Hom_{\aH_r}(1_\omega \aS(n,r),-) \cong \Functor_{n,r}$,
so the latter functor is also an equivalence.
\end{proof}

\setcounter{section}{3}

\section{The strict monoidal category \texorpdfstring{$\aSchur$}{}}\label{s4-pres}

Sometimes it is convenient to repackage the affine Schur algebras $\aS(n,r)$ for all $n$ and $r$ as follows:

\begin{definition}\label{ascdef}
The {\em degenerate affine Schur category} $\aSchur$
is the $\kk$-linear category with object set
$\coprod_{n \geq 0} \Lambda(n)$ (i.e., all compositions), and morphisms
\begin{equation}\label{howtodoit}
\Hom_{\aSchur}(\mu,\lambda) := 
\begin{cases}
\Hom_{\dash \aH_r}\big(M(\mu)\otimes_{\kk \S_r}\aH_r,M(\lambda)\otimes_{\kk \S_r}\aH_r\big)&\text{if $r := |\lambda| = |\mu|$}\\
\{0\}&\text{if $|\lambda| \neq |\mu|$.}
\end{cases}
\end{equation}
The composition law $-\circ-$ making $\aSchur$ into a $\kk$-linear category is the obvious composition of morphisms;
sometimes, we might omit the symbol $\circ$, denoting a composition $f \circ g$ of morphisms in $\aSchur$ simply by $fg$.
We write $1_\lambda$ for the identity endomorphism $\id_{M(\lambda)\otimes_{\kk \S_r}\aH_r}$.
\end{definition}

\begin{remark}\label{terminology}
The papers \cite{SongWang,SongWang2} use ``affine web category'' for our ``degenerate affine Schur category,'' using ``affine Schur category'' for an extended tensor product version with additional red strands. The terminology becomes even more variable when it comes to cyclotomic quotients.
\end{remark}

From \cref{ascdef}, it is clear that 
the path algebra of the full subcategory of $\aSchur$ 
generated by the objects $\Lambda(n,r)$ is 
the degenerate affine Schur algebra $\aS(n,r)$.

\begin{theorem}\label{basechange}
For $\lambda \in \Lambda(n,r), \mu \in \Lambda(m,r)$,
$\Hom_{\aSchur}(\mu,\lambda)$ is free as a $\kk$-module with a basis
$\xi_{A,f}$ for $A \in \Mat{\lambda}{\mu}$ and $f$ running over a basis for $\P^{{\mu(A)}}$, with the homomorphism
$\xi_{A,f}:M(\mu)\otimes_{\kk \S_r}\aH_r \rightarrow M(\lambda)\otimes_{\kk \S_r}\aH_r$
defined in exactly the same way as in \cref{inbits}.
\end{theorem}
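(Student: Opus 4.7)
The plan is to reduce this to \cref{bits}, which handles the special case $n=m$, via a padding argument that makes the lengths of $\lambda$ and $\mu$ agree without changing any of the relevant data. Fix $N\geq \max(n,m)$ and define the padded compositions
$$
\lambda' := (\lambda_1,\dots,\lambda_n,0,\dots,0),\qquad
\mu' := (\mu_1,\dots,\mu_m,0,\dots,0),
$$
both viewed as elements of $\Lambda(N,r)$. Since $\S_0$ is trivial, we have parabolic equalities $\S_{\lambda'}=\S_\lambda$ and $\S_{\mu'}=\S_\mu$, hence identifications of permutation modules $M(\lambda')=M(\lambda)$ and $M(\mu')=M(\mu)$. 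Consequently the inclusion induces an equality of the induced $\aH_r$-modules, and therefore
$$
\Hom_{\aSchur}(\mu,\lambda)\ =\ 1_{\lambda'}\,\aS(N,r)\,1_{\mu'}.
$$

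Now I would apply \cref{bits} to this endomorphism algebra to obtain a basis $\{\xi_{A',f}\}$ indexed by $A'\in\Mat{\lambda'}{\mu'}$ and $f$ in a basis for $\P^{\mu'(A')}$. The next step is to identify $\Mat{\lambda'}{\mu'}$ with $\Mat{\lambda}{\mu}$: any $A'\in\Mat{\lambda'}{\mu'}$ must have its last $N-n$ rows identically zero (their row sums $\lambda'_{n+1}=\cdots=\lambda'_N=0$) and its last $N-m$ columns identically zero (their column sums $\mu'_{m+1}=\cdots=\mu'_N=0$), so deleting these zero rows and columns gives the desired bijection with $\Mat{\lambda}{\mu}$. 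Under this bijection, the right redundancy of the padded matrix is $\mu'(A')=(\mu(A),0,\dots,0)$ with some extra zero entries interleaved where the zero columns sit, so $\S_{\mu'(A')}=\S_{\mu(A)}$ and therefore $\P^{\mu'(A')}=\P^{\mu(A)}$.

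Finally I would check that, under these identifications, the defining formula \cref{inbits} for $\xi_{A',f}$ specializes exactly to the formula displayed in the theorem for $\xi_{A,f}$: the minimal length coset sums over $(\S_{\mu'(A')}\backslash\S_{\mu'})_{\min}$ coincide with those over $(\S_{\mu(A)}\backslash\S_\mu)_{\min}$ because the parabolics agree, and $d_{A'}=d_A$ since the extra zero rows and columns contribute no additional strings to the double coset diagram. The only subtle bookkeeping is tracking the correspondence of double coset diagrams after padding, but this is routine since extra strings of thickness $0$ are invisible in the diagrammatic conventions recorded in \cref{conventions}. No new input beyond \cref{bits} is needed.
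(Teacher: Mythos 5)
Your proof is correct, but the route is different from the paper's. The paper disposes of this theorem in two lines: it simply observes that the Mackey-filtration argument used to prove \cref{bits} never actually uses that $\lambda$ and $\mu$ have the same length, since everything there is phrased in terms of the parabolic subgroups $\S_\lambda,\S_\mu\leq\S_r$, the double-coset combinatorics $\Mat{\lambda}{\mu}$, and the redundancies $\mu(A)$, none of which depend on the number of (possibly zero) parts. Your padding argument instead applies \cref{bits} as a black box: enlarge both compositions to a common length $N$ by appending zeros, use $\S_{\lambda'}=\S_\lambda$ and $\S_{\mu'}=\S_\mu$ to identify the induced modules and hence the Hom-spaces, and then trace the bijection $\Mat{\lambda'}{\mu'}\cong\Mat{\lambda}{\mu}$ and the equality $\P^{\mu'(A')}=\P^{\mu(A)}$ (your description of where the extra zeros land in $\mu'(A')$ is slightly imprecise --- the zeros are interleaved both within columns, from the padded rows, and at the end, from the padded columns --- but this does not affect the parabolic subgroup, which is all that matters). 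The trade-off: the paper's observation is shorter but requires the reader to re-read the earlier proof; yours is a self-contained formal reduction at the cost of the identification bookkeeping. Both are legitimate.
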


\begin{proof}
This follows from the proof of \cref{bits}. There, $\lambda$ and $\mu$ were assumed to be of the same length, but there is no need to make this assumption.
\end{proof}

There is also the {\em current Schur category} $\gSchur$,
which is defined similarly to $\aSchur$ replacing
$\aH_r$ with $\kk \S_r \circltimes \P_r$ and $M(\lambda)\otimes_{\kk \S_r}\aH_r$ with $M(\lambda)\circltimes \P_r$.
Similar to \cref{bots},
$\aSchur$ is naturally filtered and the associated graded category $\gr \aSchur$ is identified with $\gSchur$.
Moreover, $\Hom_{\gSchur}(\mu,\lambda)$ is free as a $\kk$-module with a natural basis
$\varsigma_{A,f}$ for $A \in \Mat{\lambda}{\mu}$ and $f$ running over a basis for $\P^{{\mu(A)}}$, which is
defined as in \cref{cloudy}.
As in \cref{buts}, it follows that
$\aSchur$ has another basis
$\xi_{f,A}$ for $A \in \Mat{\lambda}{\mu}$ and $f$ running over a basis for $\P^{{\lambda(A)}}$, which is defined as in \cref{inbuts}.

More elementary, the {\em Schur category} $\Schur$ is the $\kk$-linear category defined in the same way as \cref{howtodoit}, replacing 
$\aH_r$ with $\kk \S_r$ and the $\aH_r$-modules $M(\lambda)\otimes_{\kk \S_r}\aH_r$
with the $\kk \S_r$-modules $M(\lambda)$.
The morphism space $\Hom_{\Schur}(\mu,\lambda)$ has basis
$\left\{\xi_A\:\big|\:A \in \Mat{\lambda}{\mu}\right\}$ defined by \cref{standardbasis}.
Almost the same category was defined in \cite[Def.~4.2]{BEEO}.
We are allowing compositions with some parts equal to $0$ whereas \cite{BEEO} only considered strict compositions, which requires some minor modifications to the definition. This is discussed in detail in 
\cite[Sec.~5]{BSchur}, which treats the $q$-analog.

Like in \cref{easy}, there is a faithful $\kk$-linear functor
\begin{equation}
\iota:\Schur \hookrightarrow \aSchur,\qquad
\lambda \mapsto \lambda,\quad
\xi_A \mapsto \xi_{A,1}= \xi_{1,A}.
\end{equation}
This functor identifies $\Schur$ with the wide subcategory of $\aSchur$ consisting of morphisms that are of filtered degree 0, which is also the wide subcategory of $\gSchur$ consisting of the homogeneous morphisms of degree 0.

The most interesting new feature is that all of the categories
$\aSchur, \gSchur$ and $\Schur$ have the additional structure of a tensor product bifunctor $-\star-$ making them into strict $\kk$-linear monoidal categories. We explain this just in the case of $\aSchur$, but the construction is similar for the other two categories. 
For $a,b \geq 0$,
we identify $\S_a \times \S_b$ with the parabolic subgroup $\S_{(a,b)} < \S_{a+b}$, the tensor product
$\P_a \otimes \P_b$ of polynomial algebras 
with $\P_{a+b}$,
and the tensor product
$\aH_a \otimes \aH_b$ with a subalgebra of $\aH_{a+b}$, all in the obvious way; e.g., 
$x_i \otimes 1 \in \P_a \otimes \P_b$
is identified with 
$x_i \in \P_{a+b}$, and $1 \otimes x_j \in \P_a \otimes \P_b$ is identified with $x_{a+j} \in \P_{a+b}$.
Given a right $\aH_a$-module $U$ and a right $\aH_b$-module $V$, there is a right $\aH_{a+b}$-module 
$$
U \ostar V
:= (U \otimes V) \otimes_{\aH_a \otimes \aH_b} \aH_{a+b}.
$$
In fact, this defines a bifunctor $-\ostar-$,
often called {\em induction product}.
Then the tensor product bifunctor
\begin{equation}
-\star-:\aSchur\boxtimes\aSchur \rightarrow \aSchur
\end{equation}
is defined on objects by concatenation of compositions, and on morphisms
$f:\lambda \rightarrow \mu$ and $g:\lambda' \rightarrow \mu'$
with $a := |\lambda|=|\mu|$ and $b := |\lambda'|=|\mu'|$
so that $f \star g: \lambda\star \lambda' \rightarrow \mu\star \mu'$
is the morphism obtained from
$f \ostar g: (M(\lambda)\otimes_{\kk \S_a}\aH_a) \ostar (M(\lambda')\otimes_{\kk \S_{b}} \aH_{b})
\rightarrow (M(\mu)\otimes_{\kk \S_a}\aH_a) \ostar (M(\mu')\otimes_{\kk \S_{b}}\aH_{b})$
using the canonical isomorphisms
$(M(\lambda)\otimes_{\kk \S_a}\aH_a)\ostar (M(\lambda')\otimes_{\kk \S_{b}} \aH_{b})
\cong M(\lambda\star\lambda')\otimes_{\kk \S_{a+b}} \aH_{a+b}$
and
$(M(\mu)\otimes_{\kk \S_a}\aH_a)\ostar (M(\mu')\otimes_{\kk \S_{b}} \aH_{b}) \cong M(\mu\star\mu')\otimes_{\kk \S_{a+b}} \aH_{a+b}$.
We have that
\begin{align}
\xi_{A,f} \star \xi_{B,g} &= \xi_{\diag(A,B), f \otimes g},
&\xi_{f,A} \star \xi_{g,B} &= \xi_{f \otimes g,\diag(A,B)},
\end{align}
It is straightforward to verify that the axioms of strict $\kk$-linear monoidal category are satisfied; this amounts to verifying that the Interchange Law holds.

The embedding $\iota$ of $\Schur$ into $\aSchur$ is a strict monoidal functor.
The full monoidal subcategory of $\Schur$ generated
by the objects $(1^r) \in \Lambda(r,r)$ for all $r \geq 0$
is the {\em symmetric category} $\Sym$.
This is the $\kk$-linearization of the symmetric groupoid, which is the free symmetric strict 
monoidal category generated by one object.
The full monoidal subcategory of $\aSchur$ 
generated by the objects
$(1^r)$ for all $r \geq 0$ is 
the {\em degenerate affine symmetric category}, 
which we denote by $\aSym$.

The monoidal categories $\Sym,\aSym,\Schur,\gSchur$ and $\aSchur$
have explicit monoidal presentations, which we explain next.

\vspace{2mm}
\noindent
\underline{Monoidal presentation of $\Sym$}:
The Coxeter presentation of symmetric groups
implies that $\Sym$ can be presented as the strict $\kk$-linear monoidal category generated by the object $(1)$, whose identity endomorphism we denote by a thin string, together with the morphism 
$\begin{tikzpicture}[anchorbase,scale=.7]
\draw[-] (-0.3,-.3) to (.3,.3);
\draw[-] (0.3,-.3) to (-.3,.3);
\end{tikzpicture}:(1,1) \rightarrow (1,1)$
subject just to the relations
\begin{align}
\begin{tikzpicture}[anchorbase]
\draw[-] (-0.2,-0.4) to[out=45,in=-90] (0.15,0) to[out=90,in=-45] (-0.2,0.4);
\draw[-] (0.2,-0.4) to[out=135,in=-90] (-0.15,0) to[out=90,in=225] (0.2,0.4);
\end{tikzpicture}
&=
\begin{tikzpicture}[anchorbase]
\draw[-] (-0.2,-0.4) -- (-0.2,0.4);
\draw[-] (0.1,-0.4) -- (0.1,0.4);
\end{tikzpicture}\ ,
&
\begin{tikzpicture}[anchorbase]
\draw[-] (-0.4,-0.4) -- (0.4,0.4);
\draw[-] (0,-0.4)  to[out=135,in=-90] (-0.32,0) to[out=90,in=225] (0,0.4);
\draw[-] (0.4,-0.4) -- (-0.4,0.4);
\end{tikzpicture}
&=
\begin{tikzpicture}[anchorbase]
\draw[-] (-0.4,-0.4)  -- (0.4,0.4);
\draw[-] (0,-0.4) to[out=45,in=-90] (0.32,0) to[out=90,in=-45] (0,0.4);
\draw[-] (0.4,-0.4) -- (-0.4,0.4);
\end{tikzpicture}\ .
\end{align}

\vspace{2mm}
\noindent
\underline{Monoidal presentation of $\aSym$}:
To obtain a monoidal presentation for $\aSym$ from the one for $\Sym$, one just needs to add one more generating morphism
$\begin{tikzpicture}[anchorbase,scale=.7]
\draw[-] (0,-.3) to (0,.3);
\closeddot{0,0};
\end{tikzpicture}:(1) \rightarrow (1)$
subject to one of the following equivalent relations:
\begin{align}\label{demazurerel}
\begin{tikzpicture}[anchorbase,scale=1]
\draw[-] (-0.3,-.3) to (.3,.3);
\draw[-] (0.3,-.3) to (-.3,.3);
\closeddot{-.15,-.15};
\end{tikzpicture}
&=
\begin{tikzpicture}[anchorbase,scale=1]
\draw[-] (-0.3,-.3) to (.3,.3);
\draw[-] (0.3,-.3) to (-.3,.3);
\closeddot{.15,.15};
\end{tikzpicture} +\ 
\begin{tikzpicture}[anchorbase,scale=1]
\draw[-] (0.2,-.3) to (.2,.3);
\draw[-] (-0.2,-.3) to (-.2,.3);
\end{tikzpicture}\ ,&
\begin{tikzpicture}[anchorbase,scale=1]
\draw[-] (-0.3,-.3) to (.3,.3);
\draw[-] (0.3,-.3) to (-.3,.3);
\closeddot{-.15,.15};
\end{tikzpicture}
&=
\begin{tikzpicture}[anchorbase,scale=1]
\draw[-] (-0.3,-.3) to (.3,.3);
\draw[-] (0.3,-.3) to (-.3,.3);
\closeddot{.15,-.15};
\end{tikzpicture} +\ 
\begin{tikzpicture}[anchorbase,scale=1]
\draw[-] (0.2,-.3) to (.2,.3);
\draw[-] (-0.2,-.3) to (-.2,.3);
\end{tikzpicture}\ .
\end{align}
This is clear from \cref{easybit} since we already know presentations for each $\aH_r$.

\vspace{2mm}
\noindent
\underline{Monoidal presentation of $\Schur$}:
There are a couple of known presentations for $\Schur$ as a strict $\kk$-linear monoidal category. One was described in \cite{BEEO}, in which the string diagram representing basis element $\xi_A$ is simply its double coset diagram. 
The presentation for $\Schur$
requires generating objects $(r)$ for $r \in \N$, and generating morphisms
given by
the two-fold {\em merges} and {\em splits}
\begin{align}\label{mergesandsplits}
\xi_{\left[\begin{smallmatrix}a& b\end{smallmatrix}\right]}=
\begin{tikzpicture}[anchorbase,centerzero=.1]
\draw[-,line width=1pt] (0.28,-.3) to (0.08,0.04);
\draw[-,line width=1pt] (-0.12,-.3) to (0.08,0.04);
\draw[-,line width=2pt] (0.08,.4) to (0.08,0);
\strand{-.22,-.4}{a};
\strand{0.35,-.4}{b};
\end{tikzpicture} 
&:(a,b) \rightarrow (a+b),&
\xi_{\left[\begin{smallmatrix}a\\       b\end{smallmatrix}\right]}=\begin{tikzpicture}[anchorbase,centerzero=.1]
\draw[-,line width=2pt] (0.08,-.3) to (0.08,0.04);
\draw[-,line width=1pt] (0.28,.4) to (0.08,0);
\draw[-,line width=1pt] (-0.12,.4) to (0.08,0);
\strand{-0.22,.5}{a};
\strand{0.36,.5}{b};
\end{tikzpicture}
&:(a+b)\rightarrow (a,b),
\end{align}
the {\em thick crossings}
\begin{align}\label{ms}
\xi_{\left[\begin{smallmatrix}0 & b\\ a &0\end{smallmatrix}\right]}=
\begin{tikzpicture}[baseline=-.5mm]
\draw[-,line width=1.2pt] (-0.3,-.3) to (.3,.3);
\draw[-,line width=1.2pt] (0.3,-.3) to (-.3,.3);
\strand{0.3,-.4}{b};
\strand{-0.3,-.4}{a};
\end{tikzpicture}
&:(a,b) \rightarrow (b,a),
\end{align}
and also the {\em spots}
\begin{align}\label{spots}
\begin{tikzpicture}[anchorbase,scale=1]
\draw (0,-.3)\botlabel{0} to (0,0);
\spot{0,0};
\end{tikzpicture}\ :(0) &\rightarrow \one,&
&\begin{tikzpicture}[anchorbase,scale=1]
\draw (0,.3)\toplabel{0} to (0,0);
\spot{0,0};
\end{tikzpicture}\ :\one \rightarrow (0)
\end{align}
which are $\xi_A$ for the unique matrices $A$ in $\Mat{()}{(0)}$
and $\Mat{(0)}{()}$, respectively.
When drawing more complicated string diagrams, we use the same conventions as in \cref{conventions}, so dotted lines denote strings of thickness 0, and unlabelled thin solid lines denote strings of thickness 1.
Also, spots may be contracted to the boundary but should not be removed entirely.
A full set of relations is given by 
the following
for $a,b,c,d \geq 0$ with $a+b=c+d$:
\begin{align}
\begin{tikzpicture}[anchorbase]
\draw[zeroline] (0,-.15) to (0,0.15);
\spot{0,.15};
\spot{0,-.15};
\end{tikzpicture}\ &=\ \id_\one,&
\begin{tikzpicture}[anchorbase]
\draw[zeroline] (0,-.1) to (0,-0.3);
\draw[zeroline] (0,.1) to (0,0.3);
\spot{0,.1};
\spot{0,-.1};
\end{tikzpicture}\ &=\ 
\begin{tikzpicture}[anchorbase]
\draw[zeroline] (0,-.25) to (0,0.25);
\end{tikzpicture}\ ,\label{secondone}\\
\begin{tikzpicture}[anchorbase,scale=.8]
\draw[zeroline] (0.28,-.3) to (0.08,0.04);
\draw[-,line width=1pt] (-0.12,-.3) to (0.08,0.04);
\draw[-,line width=1pt] (0.08,.4) to (0.08,0);
\strand{-0.22,-.42}{a};
\end{tikzpicture} 
=
\begin{tikzpicture}[anchorbase,scale=.8]
\draw[-,line width=1pt] (0.08,.4) to (0.08,-.3);
\strand{0.08,-.42}{a};
\spot{.5,-.3};
\end{tikzpicture}\ ,\hspace{15mm}
\begin{tikzpicture}[anchorbase,scale=.8]
\draw[-,line width=1pt] (0.28,-.3) to (0.08,0.04);
\draw[zeroline] (-0.12,-.3) to (0.08,0.04);
\draw[-,line width=1pt] (0.08,.4) to (0.08,0);
\strand{0.35,-.42}{b};
\end{tikzpicture} 
& =\begin{tikzpicture}[anchorbase,scale=.8]
\draw[-,line width=1pt] (0.5,.4) to (0.5,-.3);
\strand{0.5,-.43}{b};
\spot{.08,-.3};
\end{tikzpicture},&
\begin{tikzpicture}[anchorbase,scale=.8,centerzero=.1]
\draw[-,line width=1pt] (0.08,-.3) to (0.08,0.04);
\draw[zeroline] (0.28,.4) to (0.08,0);
\draw[-,line width=1pt] (-0.12,.4) to (0.08,0);
\strand{-0.22,.52}{a};
\end{tikzpicture}&=\begin{tikzpicture}[anchorbase,scale=.8,centerzero=0.05]
\draw[-,line width=1pt] (0.08,-.4) to (0.08,.3);
\strand{0.08,.42}{a};
\spot{.5,.3};
\end{tikzpicture}\ ,\hspace{15mm}
\begin{tikzpicture}[anchorbase,scale=.8,centerzero=.1]
\draw[-,line width=1pt] (0.08,-.3) to (0.08,0.04);
\draw[-,line width=1pt] (0.28,.4) to (0.08,0);
\draw[zeroline] (-0.12,.4) to (0.08,0);
\strand{0.36,.53}{b};
\end{tikzpicture}=\begin{tikzpicture}[anchorbase,scale=.8,centerzero=0.05]
\draw[-,line width=1pt] (0.5,-.4) to (0.5,.3);
\strand{0.5,.45}{b};
\spot{.08,.3};
\end{tikzpicture},\label{zeroforks}\\
\label{assrel}
\begin{tikzpicture}[anchorbase]
\draw[-,thick] (0.35,-.3) to (0.08,0.14);
\draw[-,thick] (0.1,-.3) to (-0.04,-0.06);
\draw[-,line width=1pt] (0.085,.14) to (-0.035,-0.06);
\draw[-,thick] (-0.2,-.3) to (0.07,0.14);
\draw[-,line width=2pt] (0.08,.45) to (0.08,.1);
\strand{0.45,-.41}{c};
\strand{0.07,-.4}{b};
\strand{-0.28,-.41}{a};
\end{tikzpicture}
&=
\begin{tikzpicture}[anchorbase]
\draw[-,thick] (0.36,-.3) to (0.09,0.14);
\draw[-,thick] (0.06,-.3) to (0.2,-.05);
\draw[-,line width=1pt] (0.07,.14) to (0.19,-.06);
\draw[-,thick] (-0.19,-.3) to (0.08,0.14);
\draw[-,line width=2pt] (0.08,.45) to (0.08,.1);
\strand{0.45,-.41}{c};
\strand{0.07,-.4}{b};
\strand{-0.28,-.41}{a};
\end{tikzpicture}\:,
&
\begin{tikzpicture}[anchorbase]
\draw[-,thick] (0.35,.3) to (0.08,-0.14);
\draw[-,thick] (0.1,.3) to (-0.04,0.06);
\draw[-,line width=1pt] (0.085,-.14) to (-0.035,0.06);
\draw[-,thick] (-0.2,.3) to (0.07,-0.14);
\draw[-,line width=2pt] (0.08,-.45) to (0.08,-.1);
\strand{0.45,.4}{c};
\strand{0.07,.42}{b};
\strand{-0.28,.4}{a};
\end{tikzpicture}
&=\begin{tikzpicture}[anchorbase]
\draw[-,thick] (0.36,.3) to (0.09,-0.14);
\draw[-,thick] (0.06,.3) to (0.2,.05);
\draw[-,line width=1pt] (0.07,-.14) to (0.19,.06);
\draw[-,thick] (-0.19,.3) to (0.08,-0.14);
\draw[-,line width=2pt] (0.08,-.45) to (0.08,-.1);
\strand{0.45,.4}{c};
\strand{0.07,.42}{b};
\strand{-0.28,.4}{a};
\end{tikzpicture}\:,\\
\label{mergesplit}
\begin{tikzpicture}[anchorbase,scale=.8]
\draw[-,line width=2pt] (0.08,-.8) to (0.08,-.5);
\draw[-,line width=2pt] (0.08,.3) to (0.08,.6);
\draw[-,thick] (0.1,-.51) to [out=45,in=-45] (0.1,.31);
\draw[-,thick] (0.06,-.51) to [out=135,in=-135] (0.06,.31);
\strand{-.33,-.05}{a};
\strand{.45,-.05}{b};
\end{tikzpicture}
&= 
\binom{a+b}{a}\ 
\begin{tikzpicture}[anchorbase,scale=.77]
\draw[-,line width=2pt] (0.08,-.8) \botlabel{a+b}to (0.08,.6);
\end{tikzpicture},&
\begin{tikzpicture}[anchorbase,scale=1,centerzero=.55]
\draw[-,line width=1.2pt] (0,0) to (.275,.3) to (.275,.7) to (0,1);
\draw[-,line width=1.2pt] (.6,0) to (.315,.3) to (.315,.7) to (.6,1);
\strand{0,1.13}{c};
\strand{0.63,1.13}{d};
\strand{0,-.1}{a};
\strand{0.63,-.1}{b};
\end{tikzpicture}
&=
\sum_{\substack{0 \leq s \leq \min(a,c)\\0 \leq t \leq \min(b,d)\\t-s=d-a=b-c}}
\begin{tikzpicture}[anchorbase,scale=1,centerzero=.55]
\draw[-,thick] (0.58,0) to (0.58,.2) to (.02,.8) to (.02,1);
\draw[-,thick] (0.02,0) to (0.02,.2) to (.58,.8) to (.58,1);
\draw[-,thin] (0,0) to (0,1);
\draw[-,thin] (0.6,0) to (0.6,1);
\strand{0,1.13}{c};
\strand{0.6,1.13}{d};
\strand{0,-.1}{a};
\strand{0.6,-.1}{b};
\strand{-0.1,.5}{s};
\strand{0.75,.5}{t};
\end{tikzpicture}.
\end{align}
The spot generators do not appear in \cite{BEEO} but are needed here since we have added the additional generating object $(0)$ which is isomorphic but not equal to the strict identity object $\one = ()$.
This is discussed further for the $q$-analog of $\Schur$
in \cite[Th.~6.1]{BSchur}.

Using the {\em associativity} and {\em coassociativity relations} of \cref{assrel}, one can introduce $n$-fold merges and splits by composing the two-fold ones in obvious ways; in fact, these are the standard basis elements $\xi_A$
for matrices $A$ that have a single row or a single column, respectively. Then, for any $A \in \Mat{\lambda}{\mu}$, the standard basis element $\xi_A$ is equal to the string diagram
that is simply equal to the double coset diagram for the matrix $A$. This is explained in \cite[Sec.~4]{BEEO}; see also \cref{notaproof} below.
Various other relations are deduced from the defining relations in \cite{BEEO} too, including the following which imply that $\Schur$ is symmetric monoidal with a symmetric braiding defined by the thick crossings:
\begin{align}
\begin{tikzpicture}[anchorbase,scale=0.7,centerzero=.4]
\draw[-,thick] (0.4,0) to (-0.6,1);
\draw[-,thick] (0.08,0) to (0.08,1);
\draw[-,thick] (0.1,0) to (0.1,.6) to (.5,1);
\strand{0.6,1.13}{c};
\strand{0.1,1.16}{b};
\strand{-0.65,1.13}{a};
\end{tikzpicture}
&\!\!=\!\!
\begin{tikzpicture}[anchorbase,scale=0.7,centerzero=.4]
\draw[-,thick] (0.7,0) to (-0.3,1);
\draw[-,thick] (0.08,0) to (0.08,1);
\draw[-,thick] (0.1,0) to (0.1,.2) to (.9,1);
\strand{0.9,1.13}{c};
\strand{0.1,1.16}{b};
\strand{-0.4,1.13}{a};
\end{tikzpicture},\:
\begin{tikzpicture}[anchorbase,scale=0.7,centerzero=.4]
\draw[-,thick] (-0.4,0) to (0.6,1);
\draw[-,thick] (-0.08,0) to (-0.08,1);
\draw[-,thick] (-0.1,0) to (-0.1,.6) to (-.5,1);
\strand{0.7,1.13}{c};
\strand{-0.1,1.16}{b};
\strand{-0.6,1.13}{a};
\end{tikzpicture}
\!\!=\!\!
\begin{tikzpicture}[anchorbase,scale=0.7,centerzero=.4]
\draw[-,thick] (-0.7,0) to (0.3,1);
\draw[-,thick] (-0.08,0) to (-0.08,1);
\draw[-,thick] (-0.1,0) to (-0.1,.2) to (-.9,1);
\strand{0.4,1.13}{c};
\strand{-0.1,1.16}{b};
\strand{-0.95,1.13}{a};
\end{tikzpicture},
\:\begin{tikzpicture}[anchorbase,scale=0.7,centerzero=-.4]
\draw[-,thick] (0.4,0) to (-0.6,-1);
\draw[-,thick] (0.08,0) to (0.08,-1);
\draw[-,thick] (0.1,0) to (0.1,-.6) to (.5,-1);
\strand{0.6,-1.13}{c};
\strand{0.07,-1.13}{b};
\strand{-0.6,-1.13}{a};
\end{tikzpicture}
\!\!=\!\!
\begin{tikzpicture}[anchorbase,scale=0.7,centerzero=-.4]
\draw[-,thick] (0.7,0) to (-0.3,-1);
\draw[-,thick] (0.08,0) to (0.08,-1);
\draw[-,thick] (0.1,0) to (0.1,-.2) to (.9,-1);
\strand{1,-1.13}{c};
\strand{0.1,-1.13}{b};
\strand{-0.4,-1.13}{a};
\end{tikzpicture},\:
\begin{tikzpicture}[anchorbase,scale=0.7,centerzero=-.4]
\draw[-,thick] (-0.4,0) to (0.6,-1);
\draw[-,thick] (-0.08,0) to (-0.08,-1);
\draw[-,thick] (-0.1,0) to (-0.1,-.6) to (-.5,-1);
\strand{0.6,-1.13}{c};
\strand{-0.1,-1.13}{b};
\strand{-0.6,-1.13}{a};
\end{tikzpicture}
\!\!=\!\!
\begin{tikzpicture}[anchorbase,scale=0.7,centerzero=-.4]
\draw[-,thick] (-0.7,0) to (0.3,-1);
\draw[-,thick] (-0.08,0) to (-0.08,-1);
\draw[-,thick] (-0.1,0) to (-0.1,-.2) to (-.9,-1);
\strand{0.34,-1.13}{c};
\strand{-0.1,-1.13}{b};
\strand{-0.95,-1.13}{a};
\end{tikzpicture},
\label{sliders}\end{align}\begin{align}
\label{symmetric}
\begin{tikzpicture}[anchorbase,scale=0.8]
\draw[-,thick] (0.28,0) to[out=90,in=-90] (-0.28,.6);
\draw[-,thick] (-0.28,0) to[out=90,in=-90] (0.28,.6);
\draw[-,thick] (0.28,-.6) to[out=90,in=-90] (-0.28,0);
\draw[-,thick] (-0.28,-.6) to[out=90,in=-90] (0.28,0);
\strand{0.3,-.75}{b};
\strand{-0.3,-.75}{a};
\end{tikzpicture}
&=
\begin{tikzpicture}[anchorbase,scale=0.8]
\draw[-,thick] (0.2,-.6) to (0.2,.6);
\draw[-,thick] (-0.2,-.6) to (-0.2,.6);
\strand{0.2,-.75}{b};
\strand{-0.2,-.75}{a};
\end{tikzpicture}
\:,&
\begin{tikzpicture}[anchorbase,scale=0.8]
\draw[-,thick] (0.45,.6) to (-0.45,-.6);
\draw[-,thick] (0.45,-.6) to (-0.45,.6);
\draw[-,thick] (0,-.6) to[out=90,in=-90] (-.45,0);
\draw[-,thick] (-0.45,0) to[out=90,in=-90] (0,0.6);
\strand{0,-.77}{b};
\strand{0.5,-.77}{c};
\strand{-0.5,-.77}{a};
\end{tikzpicture}
&=
\begin{tikzpicture}[anchorbase,scale=0.8]
\draw[-,thick] (0.45,.6) to (-0.45,-.6);
\draw[-,thick] (0.45,-.6) to (-0.45,.6);
\draw[-,thick] (0,-.6) to[out=90,in=-90] (.45,0);
\draw[-,thick] (0.45,0) to[out=90,in=-90] (0,0.6);
\strand{0,-.77}{b};
\strand{0.5,-.77}{c};
\strand{-0.5,-.77}{a};
\end{tikzpicture}\:.
\end{align}
Also useful are the {\em absorption relations}:
\begin{align}
\begin{tikzpicture}[anchorbase,scale=.7,centerzero=-.15]
	\draw[-,line width=2pt] (0.08,.3) to (0.08,.5);
\draw[-,thick] (-.2,-.8) \botlabel{a} to [out=45,in=-45] (0.1,.31);
\draw[-,thick] (.36,-.8) \botlabel{b} to [out=135,in=-135] (0.06,.31);
\end{tikzpicture}
&=
\begin{tikzpicture}[anchorbase,scale=.7,centerzero=-.15]
	\draw[-,line width=2pt] (0.08,.1) to (0.08,.5);
\draw[-,thick] (.46,-.8) \botlabel{b} to [out=100,in=-45] (0.1,.11);
\draw[-,thick] (-.3,-.8) \botlabel{a} to [out=80,in=-135] (0.06,.11);
\end{tikzpicture}\ ,
&
\begin{tikzpicture}[anchorbase,scale=.7,centerzero=.2]
	\draw[-,line width=2pt] (0.08,-.3) to (0.08,-.5);
\draw[-,thick] (-.2,.8)\toplabel{a} to [out=-45,in=45] (0.1,-.31);
\draw[-,thick] (.36,.8) \toplabel{b}to [out=-135,in=135] (0.06,-.31);
\end{tikzpicture}
&=
\begin{tikzpicture}[anchorbase,scale=.7,centerzero=.2]
	\draw[-,line width=2pt] (0.08,-.1) to (0.08,-.5);
\draw[-,thick] (.46,.8)\toplabel{b} to [out=-100,in=45] (0.1,-.11);
\draw[-,thick] (-.3,.8)\toplabel{a} to [out=-80,in=135] (0.06,-.11);
\end{tikzpicture}\ .
\label{swallows}
\end{align}

There is another more efficient
presentation for $\Schur$ which was known before \cite{BEEO}. Algebraically, the idea for this can already be seen in \cite{doty-giaquinto}, and it is closely 
related to the more sophisticated monoidal presentations in \cite{CKM}.
To explain this, we note first that the
thick crossings can be written in terms of splits and merges
since we have that
\begin{align}
\begin{tikzpicture}[anchorbase,scale=1.2]
\draw[-,line width=1.2pt] (0.3,-.3) to (-.3,.4);
\draw[-,line width=1.2pt] (-0.3,-.3) to (.3,.4);
\strand{-0.3,-.4}{a};
\strand{0.3,-.4}{b};
\end{tikzpicture}
&=\sum_{s=0}^{\min(a,b)}
(-1)^{s}
\begin{tikzpicture}[anchorbase,scale=1,centerzero=.4]
\draw[-,line width=1.2pt] (0,0) to (0,1);
\draw[-,thick] (-0.8,0) to (-0.8,.2) to (-.03,.4) to (-.03,.6) to (-.8,.8) to (-.8,1);
\draw[-,thin] (-0.82,0) to (-0.82,1);
\strand{-0.81,-.1}{a};
\strand{0,-.1}{b};
\strand{-0.4,.9}{b-s};
\strand{-0.4,.13}{a-s};
\end{tikzpicture}
=
\sum_{s=0}^{\min(a,b)}
(-1)^{s}
\begin{tikzpicture}[anchorbase,scale=1,centerzero=.4]
\draw[-,line width=1.2pt] (0,0) to (0,1);
\draw[-,thick] (0.8,0) to (0.8,.2) to (.03,.4) to (.03,.6) to (.8,.8) to (.8,1);
\draw[-,thin] (0.82,0) to (0.82,1);
\strand{0.81,-.1}{b};
\strand{0,-.1}{a};
\strand{0.4,.9}{a-s};
\strand{0.4,.13}{b-s};
\end{tikzpicture}.\label{tourists}
\end{align}
So $\Schur$ is already generated by the two-fold splits and merges and the spots. 
A full set of relations for these generators is given by \cref{secondone,zeroforks,assrel}
and one of the equivalent {\em square-switch relations}
\begin{align}\label{squareswitch1}
\begin{tikzpicture}[anchorbase,scale=1,centerzero=.5]
\draw[-,thick] (0,0) to (0,1);
\draw[-,thick] (.015,0) to (0.015,.2) to (.57,.4) to (.57,.6) to (.015,.8) to (.015,1);
\draw[-,line width=1.2pt] (0.6,0) to (0.6,1);
\strand{0.6,-.12}{b};
\strand{0,-.1}{a};
\strand{0.3,.84}{c};
\strand{0.3,.19}{d};
\end{tikzpicture}
&=
\sum_{s=\max(0,c-b)}^{\min(c,d)}
\binom{a -b+c-\!d}{s}\ 
\begin{tikzpicture}[anchorbase,scale=1,centerzero=.5]
\draw[-,line width=1.2pt] (0,0) to (0,1);
\draw[-,thick] (0.8,0) to (0.8,.2) to (.03,.4) to (.03,.6) to (.8,.8) to (.8,1);
\draw[-,thin] (0.82,0) to (0.82,1);
\strand{0.81,-.12}{b};
\strand{0,-.1}{a};
\strand{0.4,.9}{d-s};
\strand{0.4,.13}{c-s};
\end{tikzpicture},\\\label{squareswitch2}
\begin{tikzpicture}[anchorbase,scale=1,centerzero=.5]
\draw[-,thick] (0,0) to (0,1);
\draw[-,thick] (-.015,0) to (-0.015,.2) to (-.57,.4) to (-.57,.6) to (-.015,.8) to (-.015,1);
\draw[-,line width=1.2pt] (-0.6,0) to (-0.6,1);
\strand{-0.6,-.12}{b};
\strand{0,-.1}{a};
\strand{-0.3,.84}{c};
\strand{-0.3,.19}{d};
\end{tikzpicture}&=
\sum_{s=\max(0,c-b)}^{\min(c,d)}
\binom{a-b+c-d}{s}
\begin{tikzpicture}[anchorbase,scale=1,centerzero=.5]
\draw[-,line width=1.2pt] (0,0) to (0,1);
\draw[-,thick] (-0.8,0) to (-0.8,.2) to (-.03,.4) to (-.03,.6) to (-.8,.8) to (-.8,1);
\draw[-,thin] (-0.82,0) to (-0.82,1);
\strand{-0.81,-.12}{b};
\strand{0,-.1}{a};
\strand{-0.4,.9}{d-s};
\strand{-0.4,.13}{c-s};
\end{tikzpicture}
\end{align}
for $a,b,c,d \geq 0$ 
with $d \leq a$ and $c \leq b+d$.

\vspace{2mm}
\noindent
\underline{Monoidal presentation of $\gSchur$}:
We obtain a monoidal presentation of $\gSchur$ from the one for $\Schur$ by adjoining one additional family of generating morphisms, which represent 
$\varsigma_{[r],f}=\varsigma_{f,[r]} \in \End_{\gSchur}([r])_d$
for $f \in \P^{(r)}$ that is homogeneous of degree $d$. We denote them by pinning the symmetric polynomial $f$ to a string of thickness $r$:
\begin{equation}\label{needles}
\begin{tikzpicture}[anchorbase]
\draw[ultra thick] (0,-.3) \botlabel{r} to (0,.3);
\pin{0,0}{.5,0}{f};
\end{tikzpicture}: (r) \rightarrow (r).
\end{equation}
A full set of relations is given by \cref{secondone,zeroforks,assrel,mergesplit} together with four additional families of relations.
First, we need the {\em algebra relations}
\begin{align}\label{alghom}
\begin{tikzpicture}[anchorbase]
\draw[ultra thick] (0,-.4) \botlabel{r} to (0,.4);
\pin{0,0}{.5,0}{c};
\end{tikzpicture}&=
c\ \begin{tikzpicture}[anchorbase]
\draw[ultra thick] (0,-.4) \botlabel{r} to (0,.4);
\end{tikzpicture}\ ,&
\begin{tikzpicture}[anchorbase]
\draw[ultra thick] (0,-.4) \botlabel{r} to (0,.4);
\pin{0,0}{.6,0}{f+g};
\end{tikzpicture}
&=
\begin{tikzpicture}[anchorbase]
\draw[ultra thick] (0,-.3) \botlabel{r} to (0,.4);
\pin{0,0}{.4,0}{f};
\end{tikzpicture}
+
\begin{tikzpicture}[anchorbase]
\draw[ultra thick] (0,-.3) \botlabel{r} to (0,.4);
\pin{0,0}{.4,0}{g};
\end{tikzpicture}\ ,&
\begin{tikzpicture}[anchorbase]
\draw[ultra thick] (0,-.4) \botlabel{r} to (0,.4);
\pin{0,0}{.5,0}{fg};
\end{tikzpicture}&=
\begin{tikzpicture}[anchorbase]
\draw[ultra thick] (0,-.4) \botlabel{r} to (0,.4);
\pin{0,.22}{.5,.22}{f};
\pin{0,-.22}{.5,-.22}{g};
\end{tikzpicture}&
\end{align}
for $r \geq 0$, $c \in \kk$ and $f, g \in \P^{(r)}$, 
i.e., all of the maps
$\P^{(r)} \rightarrow \End_{\gSchur}\big((r)\big),\ 
f \mapsto \!\!
\begin{tikzpicture}[anchorbase]
\draw[ultra thick] (0,-.2) \botlabel{r} to (0,.2);
\pin{0,0}{.5,0}{f};
\end{tikzpicture}$
are graded algebra homomorphisms.
Next, recall that we have identified
$\P_a \otimes \P_b$ with $\P_{a+b}$.
Under this identification, any symmetric polynomial
$f \in \P_{a+b}^{\S_{a+b}}$
is equal to $\sum_{i=1}^n f_{1,i} \otimes f_{2,i}$
for some $n \geq 0$, $f_{1,i} \in \P_a^{\S_a}$ and
$f_{2,i} \in \P_b^{\S_b}$.
We use the Sweedler-type notation $f_{(1)} \otimes f_{(2)}$ as a shorthand for this summation.
The next relations are the {\em coproduct relations}
\begin{align}
\begin{tikzpicture}[baseline = -.5mm]
\draw[-,line width=1pt] (0.28,-.3) to (0.08,0.04);
\draw[-,line width=1pt] (-0.12,-.3) to (0.08,0.04);
\draw[-,line width=2pt] (0.08,.4) to (0.08,0);
\strand{-.22,-.4}{a};
\strand{0.35,-.4}{b};
\pin{.08,.2}{-.4,.2}{f};
\end{tikzpicture} 
&=\begin{tikzpicture}[baseline = -.5mm]
\draw[-,line width=1pt] (0.28,-.3) to (0.08,0.04);
\draw[-,line width=1pt] (-0.12,-.3) to (0.08,0.04);
\draw[-,line width=2pt] (0.08,.4) to (0.08,0);
\strand{-.22,-.4}{a};
\strand{0.35,-.4}{b};
\pin{-.04,-.15}{-.65,0}{f_{(1)}};
\pin{.2,-.15}{.83,0}{f_{(2)}};
\end{tikzpicture}\ ,
&
\begin{tikzpicture}[baseline = -.5mm]
\draw[-,line width=2pt] (0.08,-.3) to (0.08,0.04);
\draw[-,line width=1pt] (0.28,.4) to (0.08,0);
\draw[-,line width=1pt] (-0.12,.4) to (0.08,0);
\strand{-0.22,.5}{a};
\strand{0.36,.5}{b};
\pin{.08,-.1}{-.4,-.1}{f};
\end{tikzpicture}
&=\begin{tikzpicture}[baseline = -.5mm]
\draw[-,line width=2pt] (0.08,-.3) to (0.08,0.04);
\draw[-,line width=1pt] (0.28,.4) to (0.08,0);
\draw[-,line width=1pt] (-0.12,.4) to (0.08,0);
\strand{-0.22,.5}{a};
\strand{0.36,.5}{b};
\pin{-.03,.22}{-.65,0.1}{f_{(1)}};
\pin{.2,.22}{.83,0.1}{f_{(2)}};
\end{tikzpicture}\label{fridaynight}
\end{align}
for all $a,b \geq 0$ and $f \in \P_{a+b}^{\S_{a+b}}$.
Then there 
is the {\em shuffle relation}
\begin{equation}\label{shufflerel}
\begin{tikzpicture}[anchorbase,scale=.8]
\draw[-,line width=2pt] (0.08,-.8) to (0.08,-.5);
\draw[-,line width=2pt] (0.08,.3) to (0.08,.6);
\draw[-,thick] (0.1,-.51) to [out=45,in=-45] (0.1,.31);
\draw[-,thick] (0.06,-.51) to [out=135,in=-135] (0.06,.31);
\strand{-.26,-.29}{a};
\strand{.39,-.25}{b};
\pin{-.08,.05}{-.66,.1}{f};
\pin{.26,.05}{.8,.1}{g};
\end{tikzpicture}
= 
\sum_{w \in \left(\S_{a+b} / \S_a \times \S_b\right)_{\min}}
\begin{tikzpicture}[anchorbase,scale=.77]
\draw[-,line width=2pt] (0.08,-.8) \botlabel{a+b}to (0.08,.6);
\pin{.08,-.1}{1.3,-.1}{w (f \otimes g)};
\end{tikzpicture}
\end{equation}
for all $a,b \geq 0$, $f \in \P_a^{\S_a}$ and $g \in \P_b^{\S_b}$.
Finally, we have that
\begin{align}\label{easydotslide}
\begin{tikzpicture}[baseline=-.5mm]
\draw[-,line width=1.2pt] (-0.3,-.3) to (.3,.3);
\draw[-,line width=1.2pt] (0.3,-.3) to (-.3,.3);
\strand{0.3,-.4}{b};
\strand{-0.3,-.4}{a};
\pin{-.15,-.15}{-.6,-.1}{f};
\end{tikzpicture}
&=
\begin{tikzpicture}[baseline=-.5mm]
\draw[-,line width=1.2pt] (-0.3,-.3) to (.3,.3);
\draw[-,line width=1.2pt] (0.3,-.3) to (-.3,.3);
\strand{0.3,-.4}{b};
\strand{-0.3,-.4}{a};
\pin{.15,.15}{.6,.1}{f};
\end{tikzpicture}\ ,&
\begin{tikzpicture}[baseline=-.5mm]
\draw[-,line width=1.2pt] (-0.3,-.3) to (.3,.3);
\draw[-,line width=1.2pt] (0.3,-.3) to (-.3,.3);
\strand{0.3,-.4}{b};
\strand{-0.3,-.4}{a};
\pin{-.15,.15}{-.6,.1}{g};
\end{tikzpicture}
&=\begin{tikzpicture}[baseline=-.5mm]
\draw[-,line width=1.2pt] (-0.3,-.3) to (.3,.3);
\draw[-,line width=1.2pt] (0.3,-.3) to (-.3,.3);
\strand{0.3,-.4}{b};
\strand{-0.3,-.4}{a};
\pin{.15,-.15}{.6,-.1}{g};
\end{tikzpicture}\ ,
\end{align}
again
for all $a,b \geq 0$, $f \in \P_a^{\S_a}$ and $g \in \P_b^{\S_b}$.
The last relation implies that $\gSchur$ is symmetric monoidal with the same symmetric braiding defined by the thick crossings as on $\Schur$.

\vspace{2mm}
\noindent
\underline{Monoidal presentation of $\aSchur$}:
Finally we come to the monoidal presentation of $\aSchur$, 
which was worked out recently in \cite{SongWang}.
It has the same generating objects and morphisms as $\gSchur$---merges, splits, thick crossings, spots, and the pins \cref{needles}
labelled by symmetric polynomials
$f \in \P^{(r)}$ which denote $f 1_{(r)} =
\xi_{[r], f} =\xi_{f,[r]}\in \End_{\aSchur}\big((r)\big)$,
notation as in \cref{global}.
Then we need the $\Schur$ relations \cref{secondone,zeroforks,assrel,mergesplit}, the algebra relations \cref{alghom},
the coproduct relations \cref{fridaynight},
and two more relations which are deformed version of \cref{shufflerel,easydotslide}.
The {\em deformed shuffle relation} is
\begin{equation}\label{deformedshuffle}
\begin{tikzpicture}[anchorbase,scale=.8]
\draw[-,line width=2pt] (0.08,-.8) to (0.08,-.5);
\draw[-,line width=2pt] (0.08,.3) to (0.08,.6);
\draw[-,thick] (0.1,-.51) to [out=45,in=-45] (0.1,.31);
\draw[-,thick] (0.06,-.51) to [out=135,in=-135] (0.06,.31);
\strand{-.26,-.29}{a};
\strand{.39,-.25}{b};
\pin{-.08,.05}{-.66,.1}{f};
\pin{.27,.05}{.8,.1}{g};
\end{tikzpicture}
= 
\sum_{w \in \left(\S_{a+b} / \S_a \times \S_b\right)_{\min}}
\begin{tikzpicture}[anchorbase,scale=.77]
\draw[-,line width=2pt] (0.08,-.8) \botlabel{a+b}to (0.08,.6);
\pin{.08,-.1}{1.45,-.1}{w \bully (f \otimes g)};
\end{tikzpicture}\ ,
\end{equation}
where $\bully$ here is the deformed left action of the symmetric group on polynomials
defined by \cref{polyaction}.
There is not any obvious analog of \cref{easydotslide} in the deformed setting for general symmetric polynomials $f$ and $g$. 
However, if we require that $f$ and $g$ are elementary symmetric polynomials, there is a reasonable replacement, which 
is sufficient because elementary polynomials generate the algebra of all symmetric polynomials.
We adopt the convention that a pin with label $e_d$ (resp., $h_d$)
attached to a string of thickness $r$ refers to the elementary symmetric polynomial $e_d(x_1,\dots,x_r)$
(resp., the complete  symmetric
polynomial $h_d(x_1,\dots,x_r)$) of degree $d$.
Then, in place of \cref{easydotslide}, we have the {\em elementary dot slide relations}
\begin{align}\label{jonrel}
\begin{tikzpicture}[anchorbase,scale=1,centerzero=.4]
\draw[-,line width=1.1pt] (0.58,0)\botlabel{b} to (.02,1);
\draw[-,line width=1.1pt] (0.02,0)\botlabel{a} to (.58,1);
\pin{.14,.22}{-.3,.3}{e_d};
\end{tikzpicture}
&=
\sum_{s=0}^{\min(a,b,d)}
s!
\begin{tikzpicture}[anchorbase,scale=1,centerzero=.4]
\draw[-,thick] (0.58,0)\botlabel{b} to (0.58,.2) to (.02,.8) to (.02,1);
\draw[-,thick] (0.02,0)\botlabel{a} to (0.02,.2) to (.58,.8) to (.58,1);
\draw[-,thin] (0,0) to (0,1);
\draw[-,thin] (0.6,0) to (0.6,1);
\strand{-0.1,.5}{s};
\strand{0.75,.5}{s};
\pin{.45,.65}{.3,1.25}{e_{d-s}};
\end{tikzpicture},&
\begin{tikzpicture}[anchorbase,scale=1,centerzero=.4]
\draw[-,line width=1.1pt] (0.58,0)\botlabel{a} to (.02,1);
\draw[-,line width=1.1pt] (0.02,0)\botlabel{b} to (.58,1);
\pin{.46,.22}{.9,.3}{e_d};
\end{tikzpicture}
&=\!\!
\sum_{s=0}^{\min(a,b,d)}\!\!
(-1)^s s!
\begin{tikzpicture}[anchorbase,scale=1,centerzero=.4]
\draw[-,thick] (0.58,0) \botlabel{a} to (0.58,.2) to (.02,.8) to (.02,1);
\draw[-,thick] (0.02,0) \botlabel{b} to (0.02,.2) to (.58,.8) to (.58,1);
\draw[-,thin] (0,0) to (0,1);
\draw[-,thin] (0.6,0) to (0.6,1);
\strand{-0.1,.5}{s};
\strand{0.75,.5}{s};
\pin{.15,.65}{.3,1.25}{e_{d-s}};
\end{tikzpicture},\\
\begin{tikzpicture}[anchorbase,scale=1,centerzero=.4]
\draw[-,line width=1.1pt] (0.58,0) to (.02,1)\toplabel{a};
\draw[-,line width=1.1pt] (0.02,0) to (.58,1)\toplabel{b};
\pin{.14,.78}{-.3,.7}{e_d};
\end{tikzpicture}
&=\!\!
\sum_{s=0}^{\min(a,b,d)}\!
s!
\begin{tikzpicture}[anchorbase,scale=1,centerzero=.4]
\draw[-,thick] (0.58,0) to (0.58,.2) to (.02,.8) to (.02,1)\toplabel{a};
\draw[-,thick] (0.02,0) to (0.02,.2) to (.58,.8) to (.58,1)\toplabel{b};
\draw[-,thin] (0,0) to (0,1);
\draw[-,thin] (0.6,0) to (0.6,1);
\strand{-0.1,.5}{s};
\strand{0.75,.5}{s};
\pin{.45,.35}{.3,-.25}{e_{d-s}};
\end{tikzpicture},&
\begin{tikzpicture}[anchorbase,scale=1,centerzero=.4]
\draw[-,line width=1.1pt] (0.58,0) to (.02,1)\toplabel{b};
\draw[-,line width=1.1pt] (0.02,0) to (.58,1)\toplabel{a};
\pin{.46,.78}{.9,.7}{e_d};
\end{tikzpicture}
&=
\!\!\sum_{s=0}^{\min(a,b,d)}\!\!
(-1)^s s!
\begin{tikzpicture}[anchorbase,scale=1,centerzero=.4]
\draw[-,thick] (0.58,0) to (0.58,.2) to (.02,.8) to (.02,1)\toplabel{b};
\draw[-,thick] (0.02,0) to (0.02,.2) to (.58,.8) to (.58,1)\toplabel{a};
\draw[-,thin] (0,0) to (0,1);
\draw[-,thin] (0.6,0) to (0.6,1);
\strand{-0.1,.5}{s};
\strand{0.75,.5}{s};
\pin{.15,.35}{.3,-.25}{e_{d-s}};
\end{tikzpicture}.\label{otherjon}
\end{align}
for all $a,b,d \geq 1$.
When $a=b=d=1$ these relations are the same as \cref{demazurerel}.
In fact, the arguments below show that any one of these four relations implies the other three (in the presence of the earlier relations).

\vspace{2mm}
\noindent
\underline{Proofs}.
Now we explain how to establish 
these presentations in the cases of $\gSchur$ and $\aSchur$.
The first important step is to understand how to represent the morphisms $\xi_{A,f}$ and $\xi_{f,A}$ using string diagrams.
We do this with an example, but the general case is similar.

\begin{example}\label{notaproof}
Take $\lambda = (4,5)$, $\mu = (3,2,4)$
and $A \in \Mat{\lambda}{\mu}$ as in \cref{myeg}.
We have that $\lambda(A) = (1,0,3,2,2,1)$ and 
$\mu(A) = (1,2,0,2,3,1)$.
Let $f = f_{(1)} \otimes f_{(2)} \otimes f_{(3)} \otimes f_{(4)} \otimes f_{(5)}$ be a symmetric polynomial in
$\P_9^{\S_{\mu(A)}} \equiv \P_1 \otimes \P_2^{\S_2} \otimes \P_2^{\S_2}
\otimes \P_3^{\S_3} \otimes \P_1$
and $g = g_{(1)} \otimes g_{(2)} \otimes g_{(3)} \otimes g_{(4)} \otimes g_{(5)}$ be in 
$\P_9^{\S_{\lambda(A)}} = \P_1 \otimes \P_3^{\S_3} \otimes \P_2^{\S_2} \otimes \P_2^{\S_2} \otimes \P_1$.
Then
\begin{align*}
\xi_{A,f} &= \begin{tikzpicture}[anchorbase,scale=1.55]
\draw[-,line width=.6mm] (.212,.5) to (.212,.39);
\draw[-,line width=.75mm] (.595,.5) to (.595,.39);
\draw[-,line width=.15mm] (0.0005,-.396) to (.2,.4);
\draw[-,line width=.3mm] (0.01,-.4) to (.59,.4);
\draw[-,line width=.3mm] (.4,-.4) to (.607,.4);
\draw[-,line width=.45mm] (.79,-.4) to (.214,.4);
\draw[-,line width=.15mm] (.8035,-.398) to (.614,.4);
\draw[-,line width=.3mm] (.4006,-.5) to (.4006,-.395);
\draw[-,line width=.6mm] (.788,-.5) to (.788,-.395);
\draw[-,line width=.45mm] (0.011,-.5) to (0.011,-.395);
\strand{.35,.35}{3};
\strand{.2,-.26}{2};
\strand{.5,-.26}{2};
\pinpinpinpinpin{.74,-.12}{.59,-.12}{.47,-.12}{.21,-.12}{0.07,-.12}{-1.3,-.12}{f_{(1)}\otimes f_{(2)} \otimes f_{(3)}\otimes f_{(4)} \otimes f_{(5)}};
\end{tikzpicture},&
\xi_{g,A} &= \begin{tikzpicture}[anchorbase,scale=1.55]
\draw[-,line width=.6mm] (.212,.5) to (.212,.39);
\draw[-,line width=.75mm] (.595,.5) to (.595,.39);
\draw[-,line width=.15mm] (0.0005,-.396) to (.2,.4);
\draw[-,line width=.3mm] (0.01,-.4) to (.59,.4);
\draw[-,line width=.3mm] (.4,-.4) to (.607,.4);
\draw[-,line width=.45mm] (.79,-.4) to (.214,.4);
\draw[-,line width=.15mm] (.8035,-.398) to (.614,.4);
\draw[-,line width=.3mm] (.4006,-.5) to (.4006,-.395);
\draw[-,line width=.6mm] (.788,-.5) to (.788,-.395);
\draw[-,line width=.45mm] (0.011,-.5) to (0.011,-.395);
\strand{.35,.35}{3};
\strand{.2,-.26}{2};
\strand{.5,-.26}{2};
\pinpinpinpinpin{.66,.23}{.57,.23}{.47,.23}{.34,.23}{0.16,.23}{-1.2,.23}{g_{(1)}\otimes g_{(2)} \otimes g_{(3)}\otimes g_{(4)} \otimes g_{(5)}};
\end{tikzpicture}.
\end{align*}
We explain how to see this in more detail for $\xi_{A,f}$.
The string diagram can be split into four horizontal strips
$$
\begin{tikzpicture}[anchorbase,scale=2]
\draw[-,line width=.15mm] (0.003,-.4) to (.2,.4);
\draw[-,line width=.15mm] (.8,-.398) to (.61,.4);
\draw[-,line width=.6mm] (.212,.5) to (.212,.39);
\draw[-,line width=.75mm] (.595,.5) to (.595,.39);
\draw[-,line width=.3mm] (0.01,-.4) to (.59,.4);
\draw[-,line width=.3mm] (.4,-.4) to (.6,.4);
\draw[-,line width=.45mm] (.79,-.4) to (.214,.4);
\draw[-,line width=.3mm] (.4006,-.5) to (.4006,-.395);
\draw[-,line width=.6mm] (.788,-.5) to (.788,-.395);
\draw[-,line width=.45mm] (0.011,-.5) to (0.011,-.395);
\draw[zeroline] (0.205,.39) to (.395,-.4);
\strand{.35,.35}{3};
\strand{.2,-.26}{2};
\strand{.5,-.26}{2};
\pinpinpinpinpinpin{.74,-.12}{.59,-.12}{.47,-.12}{.34,-.12}{.21,-.12}{0.07,-.12}{-1.3,-.12}{f_{(1)}\otimes f_{(2)} \otimes 1\otimes f_{(3)}\otimes f_{(4)} \otimes f_{(5)}};
\draw[red,densely dashed] (-.2,.25) to (1,.25);
\draw[red,densely dashed] (-.2,-.06) to (1,-.06);
\draw[red,densely dashed] (-.2,-.18) to (1,-.18);
\draw[red,densely dashed] (-.2,.5) to (1,.5);
\draw[red,densely dashed] (-.2,-.5) to (1,-.5);
\node at (1.2,-.5) {$\scriptstyle \mu$};
\node at (1.2,-.18) {$\scriptstyle \mu(A)$};
\node at (1.2,-.06) {$\scriptstyle \mu(A)$};
\node at (1.2,.25) {$\scriptstyle \lambda(A)$};
\node at (1.2,.5) {$\scriptstyle \lambda$};
\node at (2,-.35) {$\color{blue}\delta$};
\node at (2,-.13) {$\color{blue}\gamma$};
\node at (2,.1) {$\color{blue}\beta$};
\node at (2,.38) {$\color{blue}\alpha$};
\end{tikzpicture}\ .
$$
We have drawn the string of thickness 0 too for clarity.
As spelled out in \cite[Sec.~4]{BEEO}, the horizontal composition 
of the three two-fold
 splits at the bottom is
\begin{align*}
\delta:=\xi_{\left[\begin{smallmatrix}1\\2\end{smallmatrix}\right]}
\star\xi_{\left[\begin{smallmatrix}0\\2\end{smallmatrix}\right]}
\star\xi_{\left[\begin{smallmatrix}3\\1\end{smallmatrix}\right]}=
\xi_{\left[\begin{smallmatrix}1&0&0\\2&0&0\\0&0&0\\0&2&0\\0&0&3\\0&0&1\end{smallmatrix}\right]}:M(\mu)\otimes_{\kk \S_9}\aH_9 &\hookrightarrow M(\mu(A))\otimes_{\kk \S_9} \aH_9,
\end{align*}
which maps
$\v_\mu\otimes 1 \mapsto \sum_{y \in (\S_{\mu(A)} \backslash \S_\mu)_{\min}}\v_{\mu(A)} \otimes y$.
Next up is
$$
\gamma:=\xi_{[1],f_{(1)}} 
\!\!\star \xi_{[2],f_{(2)}} \star \id_{(0)}
\star \xi_{[2],f_{(3)}}
\!\!\star \xi_{[3],f_{(4)}}
\!\!\star \xi_{[1],f_{(5)}}\!:
M(\mu(A))\otimes_{\kk \S_9} \aH_9
\rightarrow M(\mu(A))\otimes_{\kk \S_9} \aH_9,
$$
the right $\aH_9$-module homomorphism 
$\v_{\mu(A)} \otimes 1\mapsto \v_{\mu(A)} \otimes f$.
It takes
$\sum_{y \in (\S_{\mu(A)} \backslash \S_\mu)_{\min}}\v_{\mu(A)}\otimes y$
to
$\sum_{y \in (\S_{\mu(A)} \backslash \S_\mu)_{\min}}\v_{\mu(A)} \otimes f y$.
Then comes the generalized permutation
$$
\beta:M(\mu(A))\otimes_{\kk \S_9} \aH_9 \rightarrow M(\lambda(A))
\otimes_{\kk \S_9} \aH_9,\qquad
\v_{\mu(A)}\otimes 1 \mapsto \v_{\lambda(A)}\otimes d_A,
$$
producing the vector
$\sum_{y \in (\S_{\mu(A)} \backslash \S_\mu)_{\min}}\v_{\lambda(A)} \otimes d_A f y$.
Finally, the horizontal composition of three-fold
merges in the top portion of the diagram is
\begin{align*}
\alpha := \xi_{\left[\begin{smallmatrix}1&0&3&0&0&0\\0&0&0&2&2&1\end{smallmatrix}\right]}:
M(\lambda(A))\otimes_{\kk \S_9} \aH_9 &\twoheadrightarrow M(\lambda)\otimes_{\kk \S_9}\aH_9,&
\v_{\lambda(A)}\otimes 1 &\mapsto 
\v_\lambda\otimes 1.
\end{align*}
This takes our vector to 
$\sum_{y \in (\S_{\mu(A)} \backslash \S_\mu)_{\min}}\v_{\lambda} \otimes d_A f y$.
This is the same as the image of $\v_\mu\otimes 1$
under $\xi_{A,f}$ from \cref{inbits}, so 
the morphism defined by this string diagram is indeed equal to $\xi_{A,f}$. Thus, we have factored
$\xi_{A,f}$ as the composition
$\alpha\circ\beta\circ\gamma\circ\delta$.
\end{example}

The discussion in \cref{notaproof} makes it clear that all of the morphisms in a basis for any morphism space in $\aSchur$
can be obtained by vertical and horizontal composition of \cref{mergesandsplits},\cref{ms,spots,needles} plus appropriate identity morphisms. This proves the following lemma for $\aSchur$,
and similar considerations prove it for $\gSchur$.

\begin{lemma}\label{panty}
Morphisms in the $\kk$-linear monoidal categories 
$\gSchur$ and $\aSchur$ are generated
by \cref{mergesandsplits,ms,spots,needles}.
\end{lemma}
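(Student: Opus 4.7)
The plan is to reduce the claim to showing that each of the standard basis vectors is in the monoidal subcategory generated by the listed morphisms. For $\aSchur$, Theorem~\ref{basechange} supplies a $\kk$-basis $\{\xi_{A,f}\}$ of $\Hom_{\aSchur}(\mu,\lambda)$ indexed by $A \in \Mat{\lambda}{\mu}$ and $f$ running over a basis of $\P^{\mu(A)}$; for $\gSchur$, the corresponding basis $\{\varsigma_{A,f}\}$ is obtained by the same general recipe. Since $\aSchur$ and $\gSchur$ are $\kk$-linear and the listed morphisms are $\kk$-linear combinations of these basis vectors for the objects $(r),(a,b)$, it suffices to realize each $\xi_{A,f}$ (resp.\ $\varsigma_{A,f}$) as an explicit composition and tensor product of merges, splits, thick crossings, spots, and pins.

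The construction generalizes \cref{notaproof} directly. Given $A \in \Mat{\lambda}{\mu}$, write $f$ (which lives in $\P^{\mu(A)}$ and so decomposes along the parts of $\mu(A)$) as a sum of tensors $f_{(1)} \otimes \cdots \otimes f_{(N)}$ of symmetric polynomials, one tensor factor per nonzero block of $\mu(A)$. I factor $\xi_{A,f}$ as $\alpha \circ \beta \circ \gamma \circ \delta$, where
\begin{itemize}
\item $\delta:\mu \to \mu(A)$ is the tensor product (under $\star$) of two-fold splits, iterated using associativity of splits, which refine each bottom string of thickness $\mu_j$ into the sequence of thinner strings of thicknesses given by the $j$th column of $A$ (read top-down), with spots absorbing any zero entries;
\item $\gamma:\mu(A) \to \mu(A)$ is the tensor product of pins $\xi_{[\mu(A)_k],f_{(k)}}$, applied to each block;
\item $\beta:\mu(A) \to \lambda(A)$ is the generalized permutation effecting $d_A$, built by selecting a reduced expression for $d_A$ viewed as permuting the blocks of $\mu(A)$ and translating each adjacent transposition into a thick crossing of the corresponding two adjacent strings;
\item $\alpha:\lambda(A) \to \lambda$ is the tensor product of two-fold merges iterated with associativity, which recombine the strings of $\lambda(A)$ (read along rows of $A$) into strings of thicknesses $\lambda_i$, with spots absorbing zero entries.
\end{itemize}

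To verify correctness, I apply both sides to the cyclic vector $\v_\mu \otimes 1$ and chase through the four strips exactly as in \cref{notaproof}. By the definition of splits, merges, pins, and the permutation associated to $d_A$, the output is $\sum_{y \in (\S_{\mu(A)}\backslash \S_\mu)_{\min}} \v_\lambda \otimes d_A f y$, which agrees with the formula \cref{inbits} defining $\xi_{A,f}$. The analogous argument with $\xi_{f,A}$ replacing $\xi_{A,f}$ handles the second basis of $\aSchur$, and the very same recipe applied to $\varsigma_{A,f}$ (using the right-action formula \cref{cloudy} in place of \cref{inbits}) handles $\gSchur$.

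The main obstacle is the middle strip $\beta$. A priori, the generators only provide thick crossings of two adjacent strings, so one must check that $d_A$, as a permutation of the ordered collection of strings of $\mu(A)$, admits a factorization into such adjacent swaps whose result really is $d_A$ and not some longer-word variant. This is guaranteed by the minimality clause in \cref{dylan}: $d_A$ is a minimal length double coset representative, and the key point is that it may equivalently be described as the shortest permutation of the blocks realizing the prescribed column-to-row routing, so any choice of reduced expression gives the desired morphism without producing unwanted multiplicities or crossings that would have to be cancelled. Everything else is routine diagrammatic bookkeeping, already illustrated by the example.
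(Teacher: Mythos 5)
Your strategy is the same as the paper's: factor each basis vector $\xi_{A,f}$ into four horizontal strips $\alpha\circ\beta\circ\gamma\circ\delta$ (merges, block permutation, pins, splits) and verify by chasing the cyclic vector $\v_\mu\otimes 1$, exactly as in the paper's \cref{notaproof}. The paper's own proof is the terse statement that ``the discussion in \cref{notaproof} makes it clear'' this works in general, so you have fleshed out precisely the argument the authors intend.

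One comment on the last paragraph. The worry you raise about $\beta$ is legitimate, but the appeal to ``the minimality clause in \cref{dylan}'' isn't really the load-bearing fact. What is needed is twofold. First, the thick crossing $\xi_{\left[\begin{smallmatrix}0&b\\a&0\end{smallmatrix}\right]}$ is a listed generator in \cref{ms}, so there is no danger of binomial coefficients or ``unwanted multiplicities'' sneaking in as there would be if one tried to build the crossing from a merge followed by a split (compare the $\binom{a+b}{a}$ in \cref{mergesplit}). Second, and more substantially, one needs that a composite of adjacent thick crossings tracking a word for the block permutation $\sigma\in\S_{\ell(\mu(A))}$ really is the standard basis vector $\xi_B$ where $B$ is the corresponding permutation matrix; the independence of the choice of word follows from the relations \cref{symmetric}, and the identification with $\xi_B$ is a direct Schur's-formula computation. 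This is exactly the content the paper delegates to the citation ``This is explained in \cite[Sec.~4]{BEEO}'' in the paragraph introducing the $\Schur$ presentation. The minimality of $d_A$ in the double coset enters only in that \emph{after} expanding the block permutation into $\S_r$, the product of block-swap permutations along a reduced word for $\sigma$ is exactly $d_A$; but the cleaner route is to cite the relations and Schur's formula rather than to try to deduce it from \cref{dylan}. Your conclusion stands; the justification just needed a sharper citation.
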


Next, we show that all of the $\aSchur$ relations are valid.

\begin{lemma}\label{believers}
The generating morphisms \cref{mergesandsplits},\cref{ms,spots,needles}
of $\aSchur$
satisfy all of the relations
\cref{secondone}, \cref{zeroforks}, \cref{assrel,alghom,mergesplit,fridaynight,deformedshuffle,jonrel,otherjon}.
\end{lemma}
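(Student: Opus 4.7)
The plan is to verify each of the listed relations directly from the definitions of the generating morphisms in $\aSchur$, using the explicit standard bases from \cref{bits,basechange,buts}. The relations split naturally into several groups of increasing difficulty.

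First, the relations \cref{secondone,zeroforks,assrel,mergesplit} involve only merges, splits, spots, and thick crossings, all of which lie in the image of the faithful monoidal embedding $\iota: \Schur \hookrightarrow \aSchur$ from \cref{easy}. Hence their validity in $\aSchur$ reduces to the analogous identities in $\Schur$, which were already established in \cite[Sec.~4]{BEEO}. The algebra relations \cref{alghom} then follow from the observation that $\xi_{[r],f}$ is the endomorphism of $M\big((r)\big)\otimes_{\kk \S_r} \aH_r$ sending $\v_{(r)}\otimes 1 \mapsto \v_{(r)}\otimes f$: linearity in $f$ and the constant relation are immediate, while $\xi_{[r],f}\circ \xi_{[r],g} = \xi_{[r],fg}$ follows by evaluating on the generator. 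The coproduct relations \cref{fridaynight} are equally direct: under the inclusion $\P_a^{\S_a}\otimes \P_b^{\S_b} \hookrightarrow \P_{a+b}^{\S_{a+b}}$, any symmetric polynomial is a sum of tensors $f_{(1)}\otimes f_{(2)}$, and both sides of each coproduct relation are computed on $\v_\lambda\otimes 1$ via \cref{inbits} to give the same vector, using that $f_{(1)}\in \P_a$ involves only the first $a$ polynomial generators of $\aH_{a+b}$.

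For the deformed shuffle relation \cref{deformedshuffle}, I would compute the left-hand side applied to $\v_{(a,b)}\otimes 1$. The split sends this to $\v_{(a+b)}\otimes 1$; the pins on the two legs multiply the second factor by $f\otimes g \in \P_a \otimes \P_b \subseteq \P_{a+b}$; and the subsequent merge sends $\v_{(a+b)}\otimes (f\otimes g)$ to $\sum_{w} \v_{(a+b)}\otimes w(f\otimes g)w$ summed over $w \in (\S_{a+b}/\S_a\times \S_b)_{\min}$, via the standard formula \cref{inbits} for the merge $\xi_{\left[\begin{smallmatrix}a&b\end{smallmatrix}\right],1}$. In $M\big((a+b)\big)\otimes_{\kk\S_{a+b}}\aH_{a+b}$ the trailing permutations can be absorbed into $\v_{(a+b)}$ by iteratively applying \cref{poof}, and each application $s_i f = s_i(f)s_i + \partial_i(f)$ contributes precisely the Demazure correction that is absorbed into the deformed left action $\bully$ of \cref{polyaction}; the net result is $\sum_w \v_{(a+b)} \otimes (w\bully(f\otimes g))$, matching the right-hand side.

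The main obstacle is the elementary dot slide relation \cref{jonrel,otherjon}. I would reduce the four cases to a single one by attaching crossings to the top or bottom and invoking symmetry of the thick crossings (cf.\ \cref{symmetric,sliders}), so it suffices to treat the first identity of \cref{jonrel}. The argument then proceeds in two steps: write $e_d(x_1,\dots,x_a) = \sum_{S} \prod_{i\in S}x_i$ summed over $d$-element subsets of $\{1,\dots,a\}$, and apply the thick crossing expansion \cref{tourists} to rewrite the crossing as a sum of merge–split diagrams. Expanding both sides on the generator using \cref{inbits}, one tracks how many of the variables indexed by $S$ cross to the other side; the combinatorial identity
\[
\sum_{S\subseteq\{1,\dots,a\},\ |S|=d}\sum_{T\subseteq S,\ |T|=s}1 = \binom{a-s}{d-s}\cdot \#\{\text{choices}\}
\]
combines with the Mackey multiplicities in the decomposition of the induced tensor product to produce the factor $s!$ and the lower-order terms $e_{d-s}$ on the right-hand side of \cref{jonrel}. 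The base case $a=b=d=1$ is \cref{demazurerel}, which coincides with the defining relation \cref{daharels} of $\aH_r$; the general case can then be bootstrapped by reducing a thick crossing to a composition of elementary crossings using \cref{tourists} and iteratively applying the Demazure relation together with the coproduct relations \cref{fridaynight}.
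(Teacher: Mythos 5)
Your handling of relations \cref{secondone}, \cref{zeroforks}, \cref{assrel}, \cref{mergesplit} (via the monoidal embedding of $\Schur$), \cref{alghom}, and \cref{fridaynight} is essentially the paper's. However, your account of the deformed shuffle relation \cref{deformedshuffle} misstates the roles of merge and split: the left-hand side is an endomorphism of $(a+b)$ and must be applied to $\v_{(a+b)}\otimes 1$, and it is the split at the bottom of the diagram (for which $\mu = (a+b)$ and $\mu(A)=(a,b)$ in \cref{inbits}) that produces the sum over coset representatives, while the two-fold merge at the top has $\mu(A)=\mu=(a,b)$ and hence sends $\v_{(a,b)}\otimes 1$ to the single term $\v_{(a+b)}\otimes 1$. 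Your final appeal to \cref{poof} is the right idea, but the intermediate bookkeeping as written is wrong, and the spurious conjugation in ``$w(f\otimes g)w$'' does not match what \cref{inbits} gives.

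The real gap is in the elementary dot slide relations \cref{jonrel,otherjon}. Your sketch---expand $e_d$ over $d$-subsets, unpack the crossing with \cref{tourists}, track which variables cross, invoke an unspecified Mackey/combinatorial count---does not make the origin of the factor $s!$ visible, and it conflates two distinct reduction strategies (merge-split pyramids via \cref{tourists} versus thin-strand decomposition via \cref{mergesplit}). The paper instead packages the identity in generating-function form, \cref{bingley}, by pinning $(u-x_1)\cdots(u-x_a)$ to the thick strand, and proves it by a two-layer induction on $a+b$ whose engine is precisely the deformed shuffle relation \cref{deformedshuffle}, which is absent from your sketch for this step; that relation is what produces the factor $s!$ after splitting off and re-merging thin strands. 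The induction also requires working over a field of characteristic zero at an intermediate stage (it divides by $a$ and $b$) and then descending to an arbitrary ground ring by base change along the integral standard basis of \cref{basechange}; your outline does not address this. As it stands, your proposal leaves the hardest part of the lemma unproved.
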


\begin{proof}
The relations
\cref{secondone,zeroforks,assrel,mergesplit} are shown to hold in
$\Schur$ in \cite{BEEO}, hence, they also follow in $\aSchur$
since $\Schur$ is a monoidal subcategory.
The relations \cref{alghom} follow immediately since the
map $\P^{(r)} \rightarrow \End_{\dash \aH_r}(\kk \otimes_{\kk \S_r} \aH_r)$ defined by right multiplication is an algebra homomorphism.
The coproduct relation for merge follows because
the two-fold merge is the homomorphism mapping $\v_{(a,b)}\otimes 1 \mapsto \v_{(a+b)}\otimes 1$, and this commutes with right multiplication by any central element $f = f_{(1)} \otimes f_{(2)} \in \P_{a+b}^{\S_{a+b}}$.
The coproduct relation for split follows 
because the two-fold split is the homomorphism mapping
$\v_{(a+b)}\otimes 1 \mapsto \sum_{d \in (\S_a\times \S_b \backslash \S_{a+b})_{\min}} \v_{(a,b)} \otimes d$,
and again this commutes with right multiplication by $f \in \P_{a+b}^{\S_{a+b}}$.

To check the deformed shuffle relation \cref{deformedshuffle}, take $f \in \P_a^{\S_a}$ and $g \in \P_b^{\S_b}$. The morphism on the left hand side of \cref{deformedshuffle} maps
$$
\v_{(a+b)}\otimes 1 \mapsto 
\sum_{w \in (\S_{a+b} / \S_a \times \S_b)_{\min}}\v_{(a+b)} \otimes (f \otimes g) w^{-1}.
$$
By the second identity in \cref{poof}, 
we have that $\v_{(a+b)}\otimes (f \otimes g) s_i
= \v_{(a+b)} \otimes (s_i \bully (f \otimes g))$ (i.e., the module $M\big((a+b)\big)\otimes_{\kk \S_{a+b}} \aH_{a+b}$ is the {\em right} polynomial representation). So this expression equals
$$
\sum_{w \in (\S_{a+b} / \S_a \times \S_b)_{\min}}\v_{(a+b)} \otimes (w\bully (f \otimes g)),
$$
which is the image of $\v_{(a+b)}$ under the morphism on the right hand side of \cref{deformedshuffle}.

The derivations of the elementary dot slide relations
\cref{jonrel,otherjon} are more complicated and the proof will be explained in the next section. Specifically, these relations follow from \cref{bingley} below by equating coefficients in the generating functions there.
\end{proof}

Finally, we can explain the proofs of 
the main theorems establishing the presentations for $\gSchur$ and
$\aSchur$. We start with the easier $\gSchur$.

\begin{theorem}\label{grpres}
The current Schur category $\gSchur$ is the strict graded $\kk$-linear monoidal category obtained from $\Schur$ by adjoining the additional morphisms \cref{needles}
for all $r \geq 1$ and
homogeneous $f \in \P^{(r)}$,
subject to the additional relations \cref{alghom,fridaynight,shufflerel,easydotslide}.
\end{theorem}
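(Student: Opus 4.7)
The plan is to verify that the strict graded $\kk$-linear monoidal category $\tilde\gSchur$ presented by the generators and relations in the theorem statement is isomorphic to $\gSchur$. First, I would construct a $\kk$-linear monoidal functor $\Phi:\tilde\gSchur \to \gSchur$ sending each generating object $(r)$ to itself and each generating morphism to the morphism of the same name in $\gSchur$. To show this is well-defined, one needs the defining relations of $\tilde\gSchur$ to hold in $\gSchur$. The $\Schur$-relations \cref{secondone,zeroforks,assrel,mergesplit} hold because $\iota:\Schur \hookrightarrow \gSchur$ is a monoidal functor. The algebra relations \cref{alghom} hold because $f \mapsto \varsigma_{[r],f}$ is literally an algebra homomorphism. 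The coproduct relations \cref{fridaynight} are checked by evaluating both sides on $\v_{(a+b)}\otimes 1$, using that elements of $\P_{a+b}^{\S_{a+b}}$ commute with everything in $\kk \S_{a+b} \circltimes \P_{a+b}$. The shuffle relation \cref{shufflerel} follows from the computation $\sum_{w} w(f \otimes g) = \Res^{\S_{a+b}}_{\S_a \times \S_b}(\text{ind of } f \otimes g)$, and the dot slide \cref{easydotslide} follows from the fact that symmetric polynomials in disjoint sets of variables commute with the corresponding block permutation in the undeformed current setting. (Most of this mirrors \cref{believers}, but is simpler since we use the ordinary action of $\S_r$ on $\P_r$ rather than the deformed $\bully$-action.)

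Second, I would show $\Phi$ is essentially surjective (clear from matching object sets) and full. Fullness follows from \cref{panty}, which asserts that the generating morphisms listed generate $\gSchur$ as a $\kk$-linear monoidal category; every morphism of $\gSchur$ therefore lies in the image of $\Phi$.

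Third, and this is the main obstacle, I need faithfulness: that $\Phi$ is injective on each morphism space. The strategy is a standard straightening argument. Fix $\lambda, \mu \in \Lambda(n,r)$ and consider $\Hom_{\tilde\gSchur}(\mu,\lambda)$. Using only the relations of $\tilde\gSchur$, I would show that this space is spanned by elements $\tilde\varsigma_{A,f}$ constructed as in \cref{notaproof}: form the double coset diagram for $A \in \Mat{\lambda}{\mu}$ as a composition of merges, splits, and thick crossings (which by \cref{tourists} can be expanded via splits and merges), then pin the symmetric polynomial $f \in \P^{\mu(A)}$ just above the splits at the bottom. The content of the argument is that any word in the generators can be rewritten in this normal form modulo the relations: one uses \cref{assrel,mergesplit,symmetric,sliders,swallows} (all derivable from $\Schur$-relations) together with \cref{fridaynight} to migrate pins downward through merges toward the bottom, and \cref{easydotslide,shufflerel} to commute pins past crossings and past merge/split pairs. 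Since the target basis $\{\varsigma_{A,f}\}$ from \cref{bots} is already known to be linearly independent in $\gSchur$ and $\Phi(\tilde\varsigma_{A,f}) = \varsigma_{A,f}$, the spanning set must in fact be a basis, proving faithfulness.

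The main obstacle is establishing that \cref{easydotslide,shufflerel,fridaynight,alghom} together with the $\Schur$-relations suffice to commute an arbitrary symmetric polynomial pinned at an arbitrary point of a double coset diagram down to the positions just above the splits at the bottom. The key observations making this tractable are: (i) it is enough to handle generators of $\P^{(r)}$, e.g.\ elementary symmetric polynomials; (ii) \cref{fridaynight} lets a symmetric polynomial pinned on a composite string decompose when passing through a merge or split; and (iii) \cref{easydotslide} handles crossings. An induction on the number of crossings and on the length of the resolution of the double coset diagram into two-fold merges and splits then reduces everything to normal form.
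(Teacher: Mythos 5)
Your proposal is correct and follows essentially the same route as the paper: build the functor from the presented category, check the relations hold (the undeformed analogue of \cref{believers}), get fullness from \cref{panty}, and prove faithfulness by showing that the diagrams representing the $\varsigma_{A,f}$ span each morphism space via the straightening algorithm adapted from \cite{BEEO} (using \cref{fridaynight}, \cref{easydotslide}, and \cref{shufflerel} to handle pins), then concluding from linear independence of the target basis in \cref{bots}. The only cosmetic difference is that you phrase step two as essential surjectivity rather than bijectivity on objects, but since the object sets literally coincide this is harmless.
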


\begin{proof}
Let $\gSchur'$ be the strict $\kk$-linear monoidal category defined by these generators and relations.
It is easy to see directly that all of the defining relations of $\gSchur'$ hold in $\gSchur$
(this can also be deduced from \cref{believers} by passing to the associated graded category).
Hence, there is a strict $\kk$-linear functor
$G:\gSchur' \rightarrow \gSchur$. 
It is bijective on objects by definition, and
\cref{panty} shows that it is full.
It just remains to show that $G$ is faithful.
To see this, we know 
bases for morphism spaces in $\gSchur$ by \cref{bots},
with basis elements represented by string diagrams.
It suffices to show that the morphisms in $\gSchur'$
defined by the same string diagrams span morphism spaces in $\gSchur'$. 
In view of \cref{panty}, this follows from the existence of a straightening algorithm which expresses the vertical composition (either way around) of a basis vector and a generator as a linear combination of basis vectors. 
This is similar to the algorithm 
for the Schur category explained
in the proof of \cite[Lem.~4.9]{BEEO}, using the local relations \cref{assrel,mergesplit}.
To modify it so that it can be applied in the current Schur category (where there are additional symmetric polynomials pinned to strings), 
one also needs to use the coproduct relations \cref{fridaynight} to slide symmetric polynomials across merges and splits from thick to thinner strings, the relations \cref{easydotslide} to slide
symmetric polynomials past crossings, and
the shuffle relation \cref{shufflerel} in place of the first relation from \cref{mergesplit}.
\end{proof}

\begin{theorem}\label{songwangpresentation}
The degenerate affine Schur category $\aSchur$ is isomorphic to the strict $\kk$-linear monoidal category obtained from $\Schur$ by adjoining the additional morphisms \cref{needles} for all $r \geq 1$ and
$f \in \P^{(r)}$,
subject to the additional relations \cref{alghom,fridaynight,deformedshuffle}
and any one of the four relations \cref{jonrel,otherjon}.
\end{theorem}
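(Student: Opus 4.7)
The plan is to mimic the proof of \cref{grpres}. Let $\aSchur'$ denote the strict $\kk$-linear monoidal category presented by the generators and relations listed in the statement. By \cref{believers} every defining relation of $\aSchur'$ holds in $\aSchur$, so the identity-on-generators rule extends to a strict monoidal functor $F\colon \aSchur' \to \aSchur$. This functor is bijective on objects and, by \cref{panty}, is full; the remaining task is to establish that $F$ is faithful.

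The key device is to equip $\aSchur'$ with an ascending filtration by placing the pin labelled by a symmetric polynomial $f$ in filtered degree $\deg f$ and the remaining generators (merges, splits, crossings, spots) in filtered degree $0$. A direct inspection shows that every defining relation of $\aSchur'$ is either homogeneous or of the form ``top-degree term matches the corresponding $\gSchur$ relation, plus strictly lower-degree terms.'' Indeed, \cref{secondone,zeroforks,assrel,mergesplit,alghom,fridaynight} are homogeneous; the deformed shuffle relation \cref{deformedshuffle} agrees with \cref{shufflerel} in top filtered degree since $s_i \bully f = s_i(f) + \partial_i(f)$ and the Demazure operator \cref{demazureoperator} strictly drops polynomial degree; and the elementary dot slides \cref{jonrel,otherjon} have $s=0$ summand equal to the $f = e_d$ instance of \cref{easydotslide}. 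Using the identification $\gr \aSchur = \gSchur$ recalled after \cref{basechange}, we obtain an induced strict monoidal functor $\gr F\colon \gr \aSchur' \to \gSchur$.

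The next step is to verify that $\gr \aSchur'$ satisfies every defining relation of $\gSchur$ supplied by \cref{grpres}, so that the identity-on-generators rule produces a monoidal functor $\gSchur \to \gr \aSchur'$ which serves as a two-sided inverse to $\gr F$. Only the full dot slides \cref{easydotslide} for an arbitrary homogeneous $f$ require work, since the rest of the $\gSchur$ relations are the top-degree parts of the $\aSchur'$ relations listed above. To handle the general slide, observe that $\P^{(r)}$ is generated as a $\kk$-algebra by $e_1,\dots,e_r$, so by \cref{alghom} the pin labelled by any such $f$ decomposes as a vertical product of elementary pins. Slide the elementary pins across the crossing one at a time using the top-degree part of \cref{jonrel,otherjon}, and induct on polynomial degree, absorbing the strictly lower-degree $s \geq 1$ correction terms that vanish in the associated graded.

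Once $\gr F$ is known to be an isomorphism, faithfulness of $F$ follows by the standard filtered argument: if $\varphi \in \Hom_{\aSchur'}(\mu,\lambda)$ were nonzero of minimal filtered degree $d$, its symbol $\gr_d \varphi$ would be nonzero in $\gr \aSchur'$, whence $\gr F(\gr_d \varphi) \neq 0$ forces $F(\varphi) \neq 0$. The main obstacle is the bootstrap described in the previous paragraph---propagating the $e_d$-case of the dot slide to arbitrary symmetric polynomials at the associated-graded level---while the rest of the argument is a routine assembly of \cref{believers,panty,grpres} with the filtration machinery.
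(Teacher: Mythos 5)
Your proposal follows the paper's own proof essentially verbatim: the same functor $F$ built from \cref{believers}, fullness from \cref{panty}, the same filtration placing pins in filtered degree $\deg f$, reduction to the associated graded via the presentation of $\gSchur$ from \cref{grpres}, and conclusion that $\gr F$ (hence $F$) is faithful. Your extra paragraph explaining why $\gr\aSchur'$ satisfies the general dot-slide relations \cref{easydotslide} (factoring $f$ into elementary symmetric polynomials via \cref{alghom} and using that the $s \geq 1$ correction terms in \cref{jonrel,otherjon} are strictly lower filtered degree) makes explicit a step that the paper compresses into ``it follows,'' but adds no new idea.
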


\begin{proof}
Let $\aSchur'$ be the strict $\kk$-linear monoidal category with these generators and relations.
\cref{believers} implies that there is a strict $\kk$-linear monoidal functor $F:\aSchur' \rightarrow \aSchur$.
\cref{panty} shows that this functor is full.
It is bijective on objects by definition.
It remains to show that $F$ is faithful.
There is a filtration on $\aSchur'$ defined by declaring that the generators 
$\begin{tikzpicture}[anchorbase,scale=.8]
\draw[ultra thick] (0,-.3) \botlabel{r} to (0,.3);
\pin{0,0}{.5,0}{f};
\end{tikzpicture}$ are of filtered degree equal to the usual degree of the symmetric polynomial $f$, and all other generators are of filtered degree 0. The functor $F$ is filtered, so it induces a functor
$\gr F:\gr \aSchur' \rightarrow \gr \aSchur$
between the associated graded categories. 
Using the presentation for $\gSchur$ from \cref{grpres}, it
follows that there is a full strict $\kk$-linear monoidal
functor $G:\gSchur \rightarrow \gr \aSchur'$
such that the composite $(\gr F) \circ G:\gSchur \rightarrow \aSchur$ is the isomorphism $\gSchur \stackrel{\sim}{\rightarrow}
\gr \aSchur$ discussed earlier. This implies that $\gr F$ is faithful, hence, so too is $F$.
\end{proof}

\begin{remark}\label{tobecontinued}
\cref{songwangpresentation} is a slightly modified version of the presentation for $\aSchur$ proved originally by Song and Wang in \cite{SongWang}.
The main advantage of our setup compared to \cite{SongWang} is that we allow arbitrary symmetric polynomials to be pinned to thick strings.
Also the relations \cref{jonrel,otherjon} for $d < a$ 
seem to be new.
Song and Wang observed that to present $\aSchur$, one only needs to impose these relations in the special case that $d=a$ (when all of the  elementary symmetric polynomials arising are of the same degree as the thickness of the strings that they are pinned to).
To make further comparison 
with the setup of \cite{SongWang}, we note that
the morphism denoted \begin{tikzpicture}[anchorbase]
\draw[ultra thick] (0,-.2) \botlabel{a+b} to (0,.2);
\multcloseddot{0,0}{west}{\varpi_a};
\end{tikzpicture}
in \cite{SongWang} is equal to
$$
\begin{tikzpicture}[anchorbase,scale=.8]
\draw[-,line width=2pt] (0.08,-.8) \botlabel{a+b} to (0.08,-.5);
\draw[-,line width=2pt] (0.08,.3) to (0.08,.6);
\draw[-,thick] (0.1,-.51) to [out=45,in=-45] (0.1,.31);
\draw[-,thick] (0.06,-.51) to [out=135,in=-135] (0.06,.31);
\strand{-.18,-.45}{a};
\pin{-.1,-.1}{-.7,-.1}{e_a};
\end{tikzpicture}
=
\sum_{w \in \left(\S_{a+b} / \S_a \times \S_{b}\right)_{\min}}
\begin{tikzpicture}[anchorbase,scale=.77]
\draw[-,line width=2pt] (0.08,-.8) \botlabel{a+b}to (0.08,.6);
\pin{.08,-.1}{1.45,-.1}{w \bully (x_1 \cdots x_a)};
\end{tikzpicture}
$$ 
in our notation. 
When $a,b\geq 1$, this is rather a complicated, inhomogeneous symmetric polynomial, but
it is equal to the elementary symmetric polynomial 
$e_a(x_1,\dots,x_{a+b})$ plus terms of lower degree, which is all that really matters.
To see that 
the monoidal category $\aSchur$ as we have defined it is indeed isomorphic to the monoidal category from \cite[Def.~2.1]{SongWang},
using the relations in $\aSchur$ from \cref{believers}, it follows easily that there is a strict $\kk$-linear monoidal functor
from the Song-Wang category to
$\aSchur$ mapping 
$\begin{tikzpicture}[anchorbase]
\draw[ultra thick] (0,-.2) \botlabel{r} to (0,.2);
\multcloseddot{0.01,0}{west}{\varpi_r};
\end{tikzpicture}\mapsto\begin{tikzpicture}[anchorbase]
\draw[ultra thick] (0,-.2) \botlabel{r} to (0,.2);
\pin{.01,0}{.5,0}{e_r};
\end{tikzpicture}$
and the other generators 
to the morphisms represented by the same diagrams in $\aSchur$.
This functor is an isomorphism because it maps the spanning sets
for morphism spaces from \cite[Prop.~3.6]{SongWang} to particular bases for morphism spaces in $\aSchur$ arising from \cref{basechange}.
\end{remark}

With generators and relations in hand, it follows 
that $\aSchur$ (hence, also $\Schur$ and $\gSchur$) has two natural symmetries
\begin{align}\label{symmetries}
\div:\aSchur^\op &\rightarrow \aSchur,&
\vid
:\aSchur^\rev &\rightarrow \aSchur.
\end{align}
The first of these is defined on string diagrams by reflecting in a horizontal axis; it takes $\xi_{A,f}$ to $\xi_{f,A^\transpose}$.
The second reflects in a vertical axis then multiplies by $(-1)^d$
where $d$ is the total 
degree of all of the symmetric polynomials present in the diagram (assumed homogeneous); it takes $\xi_{A,f}$ to $(-1)^{\deg(f)} \xi_{A^\dagger, f^\dagger}$ where $A^\dagger$ is obtained from $A$ by reversing the order of rows and columns and $f^\dagger$ is obtained from $f$ by replacing $x_1,\dots,x_r$ by $x_r,\dots,x_1$ (the number $r$ of variables is the sum of the entries of $A$).

\setcounter{section}{4}

\section{Further relations}\label{s5-rels}

In this section, we prove a couple more relations in $\aSchur$ which require some more sophisticated technique.
The main point is to work systematically with generating functions, which typically will be formal Laurent series in an auxiliary variable $u$. For example,
working in $\P_r\lround u^{-1} \rround$, 
$$
(u-x_1)\cdots (u-x_r)
= u^r - e_1(x_1,\dots,x_r) u^{n-1}+\cdots +(-1)^r
e_r(x_1,\dots,x_r)
$$
is the generating function for the elementary 
symmetric polynomials, and
$$
\frac{1}{(u-x_1)\cdots (u-x_r)}
= 
u^{-r} + u^{-r-1} h_1(x_1,\dots,x_r)
+ u^{-r-2} h_2(x_1,\dots,x_r)+\cdots
$$
is the generating function for the complete symmetric polynomials.
We also use the convention that $e_0(x_1,\dots,x_r) = h_0(x_1,\dots,x_r) = 1$ for any $r \geq 0$.

We introduce the following shorthands for the pins involving these
generating functions:
\begin{align}\label{kward}
\begin{tikzpicture}[centerzero]
\draw[ultra thick] (0,-.4)\botlabel{r} to (0,.4);
\circledinverse{0,0}{u};
\end{tikzpicture}
&:=
\begin{tikzpicture}[centerzero]
\draw[ultra thick] (0,-.4)\botlabel{r} to (0,.4);
\pin{0,0}{1.4,0}{(u-x_1)\cdots (u-x_r)};
\end{tikzpicture}\ ,
&
\begin{tikzpicture}[centerzero]
\draw[ultra thick] (0,-.4)\botlabel{r} to (0,.4);
\circled{0,0}{u};
\end{tikzpicture}
&:=
\begin{tikzpicture}[centerzero]
\draw[ultra thick] (0,-.4)\botlabel{r} to (0,.4);
\pin{0,0}{1.4,0}{\frac{1}{(u-x_1)\cdots (u-x_r)}};
\end{tikzpicture}\ .
\end{align}
These are elements of $\End_{\aSchur}\big((r)\big)\lround u^{-1}\rround$.
They also make sense if $r=0$, when they are both equal to the identity endomorphism of the unit object $\one$.
By the coproduct relations 
\cref{fridaynight}, we have that
\begin{align}\label{game}
\begin{tikzpicture}[baseline = -.5mm,scale=1.4]
\draw[-,line width=1pt] (0.28,-.3)\botlabel{b} to (0.08,0.04);
\draw[-,line width=1pt] (-0.12,-.3)\botlabel{a} to (0.08,0.04);
\draw[-,line width=2pt] (0.08,.3) to (0.08,0);
\circledinverse{.2,-.15}{u};
\circledinverse{-.04,-.15}{u};
\end{tikzpicture} 
&=\begin{tikzpicture}[baseline = -.5mm,scale=1.4]
\draw[-,line width=1pt] (0.28,-.3)\botlabel{b} to (0.08,0.04);
\draw[-,line width=1pt] (-0.12,-.3)\botlabel{a} to (0.08,0.04);
\draw[-,line width=2pt] (0.08,.3) to (0.08,0);
\circledinverse{.08,.15}{u};
\end{tikzpicture}\ ,&
\begin{tikzpicture}[baseline = .5mm,scale=1.4]
\draw[-,line width=1pt] (0.28,.3)\toplabel{b} to (0.08,-0.04);
\draw[-,line width=1pt] (-0.12,.3)\toplabel{a} to (0.08,-0.04);
\draw[-,line width=2pt] (0.08,-.3) to (0.08,0);
\circledinverse{.2,.15}{u};
\circledinverse{-.04,.15}{u};
\end{tikzpicture} 
&=\begin{tikzpicture}[baseline = .5mm,scale=1.4]
\draw[-,line width=1pt] (0.28,.3)\toplabel{b} to (0.08,-0.04);
\draw[-,line width=1pt] (-0.12,.3)\toplabel{a} to (0.08,-0.04);
\draw[-,line width=2pt] (0.08,-.3) to (0.08,0);
\circledinverse{.08,-.15}{u};
\end{tikzpicture}\ ,\\
\begin{tikzpicture}[baseline = -.5mm,scale=1.4]
\draw[-,line width=1pt] (0.28,-.3)\botlabel{b} to (0.08,0.04);
\draw[-,line width=1pt] (-0.12,-.3)\botlabel{a} to (0.08,0.04);
\draw[-,line width=2pt] (0.08,.3) to (0.08,0);
\circled{.2,-.15}{u};
\circled{-.04,-.15}{u};
\end{tikzpicture} 
&=\begin{tikzpicture}[baseline = -.5mm,scale=1.4]
\draw[-,line width=1pt] (0.28,-.3)\botlabel{b} to (0.08,0.04);
\draw[-,line width=1pt] (-0.12,-.3)\botlabel{a} to (0.08,0.04);
\draw[-,line width=2pt] (0.08,.3) to (0.08,0);
\circled{.08,.15}{u};
\end{tikzpicture}\ ,&
\begin{tikzpicture}[baseline = .5mm,scale=1.4]
\draw[-,line width=1pt] (0.28,.3)\toplabel{b} to (0.08,-0.04);
\draw[-,line width=1pt] (-0.12,.3)\toplabel{a} to (0.08,-0.04);
\draw[-,line width=2pt] (0.08,-.3) to (0.08,0);
\circled{.2,.15}{u};
\circled{-.04,.15}{u};
\end{tikzpicture} 
&=\begin{tikzpicture}[baseline = .5mm,scale=1.4]
\draw[-,line width=1pt] (0.28,.3)\toplabel{b} to (0.08,-0.04);
\draw[-,line width=1pt] (-0.12,.3)\toplabel{a} to (0.08,-0.04);
\draw[-,line width=2pt] (0.08,-.3) to (0.08,0);
\circled{.08,-.15}{u};
\end{tikzpicture}\ .
\label{istanbul}
\end{align}
Note also that the symmetry $\div$
from \cref{symmetries} fixes both of the generating functions in \cref{kward}, while $\vid$ maps them to
\begin{align}
(-1)^r \begin{tikzpicture}[centerzero]
\draw[ultra thick] (0,-.4)\botlabel{r} to (0,.4);
\ovaledinverse{0,0}{-u};
\end{tikzpicture}
&=
\begin{tikzpicture}[centerzero]
\draw[ultra thick] (0,-.4)\botlabel{r} to (0,.4);
\pin{0,0}{1.4,0}{(u+x_1)\cdots (u+x_r)};
\end{tikzpicture}\ ,
&
(-1)^r \begin{tikzpicture}[centerzero]
\draw[ultra thick] (0,-.4)\botlabel{r} to (0,.4);
\ovaled{0,0}{-u};
\end{tikzpicture}
&=
\begin{tikzpicture}[centerzero]
\draw[ultra thick] (0,-.4)\botlabel{n} to (0,.4);
\pin{0,0}{1.4,0}{\frac{1}{(u+x_1)\cdots (u+x_r)}};
\end{tikzpicture}\ ,
\end{align}
respectively.

\begin{theorem}\label{bingley}
The following hold in $\aSchur[u]$ for $a,b \geq 0$:
\begin{enumerate}
\item
$\displaystyle
\begin{tikzpicture}[anchorbase,scale=1.2,centerzero=.4]
\draw[-,line width=1.1pt] (0.58,0) to (.02,1);
\draw[-,line width=1.1pt] (0.02,0) to (.58,1);
\strand{0,-.1}{a};
\strand{0.6,-.1}{b};
\circledinverse{.14,.22}{u};
\end{tikzpicture}
=
\sum_{s=0}^{\min(a,b)}
(-1)^s s!
\begin{tikzpicture}[anchorbase,scale=1.2,centerzero=.4]
\draw[-,thick] (0.58,0) to (0.58,.2) to (.02,.8) to (.02,1);
\draw[-,thick] (0.02,0) to (0.02,.2) to (.58,.8) to (.58,1);
\draw[-,thin] (0,0) to (0,1);
\draw[-,thin] (0.6,0) to (0.6,1);
\strand{0,-.1}{a};
\strand{0.6,-.1}{b};
\strand{-0.1,.5}{s};
\strand{0.75,.5}{s};
\circledinverse{.45,.65}{u};
\end{tikzpicture}$.
\item
$\displaystyle
\begin{tikzpicture}[anchorbase,scale=1.2,centerzero=.4]
\draw[-,line width=1.1pt] (0.58,0) to (.02,1);
\draw[-,line width=1.1pt] (0.02,0) to (.58,1);
\strand{0,-.1}{a};
\strand{0.6,-.1}{b};
\circledinverse{.14,.78}{u};
\end{tikzpicture}
=
\sum_{s=0}^{\min(a,b)}
(-1)^s s!
\begin{tikzpicture}[anchorbase,scale=1.2,centerzero=.4]
\draw[-,thick] (0.58,0) to (0.58,.2) to (.02,.8) to (.02,1);
\draw[-,thick] (0.02,0) to (0.02,.2) to (.58,.8) to (.58,1);
\draw[-,thin] (0,0) to (0,1);
\draw[-,thin] (0.6,0) to (0.6,1);
\strand{0,-.1}{a};
\strand{0.6,-.1}{b};
\strand{-0.1,.5}{s};
\strand{0.75,.5}{s};
\circledinverse{.45,.35}{u};
\end{tikzpicture}$.
\item
$\displaystyle
\begin{tikzpicture}[anchorbase,scale=1.2,centerzero=.4]
\draw[-,line width=1.1pt] (0.58,0) to (.02,1);
\draw[-,line width=1.1pt] (0.02,0) to (.58,1);
\strand{0,-.1}{a};
\strand{0.6,-.1}{b};
\circledinverse{.46,.22}{u};
\end{tikzpicture}
=
\sum_{s=0}^{\min(a,b)}s!
\begin{tikzpicture}[anchorbase,scale=1.2,centerzero=.4]
\draw[-,thick] (0.58,0) to (0.58,.2) to (.02,.8) to (.02,1);
\draw[-,thick] (0.02,0) to (0.02,.2) to (.58,.8) to (.58,1);
\draw[-,thin] (0,0) to (0,1);
\draw[-,thin] (0.6,0) to (0.6,1);
\strand{0,-.1}{a};
\strand{0.6,-.1}{b};
\strand{-0.1,.5}{s};
\strand{0.75,.5}{s};
\circledinverse{.15,.65}{u};
\end{tikzpicture}$.
\item
$\displaystyle
\begin{tikzpicture}[anchorbase,scale=1.2,centerzero=.4]
\draw[-,line width=1.1pt] (0.58,0) to (.02,1);
\draw[-,line width=1.1pt] (0.02,0) to (.58,1);
\strand{0,-.1}{a};
\strand{0.6,-.1}{b};
\circledinverse{.46,.78}{u};
\end{tikzpicture}
=
\sum_{s=0}^{\min(a,b)}s!
\begin{tikzpicture}[anchorbase,scale=1.2,centerzero=.4]
\draw[-,thick] (0.58,0) to (0.58,.2) to (.02,.8) to (.02,1);
\draw[-,thick] (0.02,0) to (0.02,.2) to (.58,.8) to (.58,1);
\draw[-,thin] (0,0) to (0,1);
\draw[-,thin] (0.6,0) to (0.6,1);
\strand{0,-.1}{a};
\strand{0.6,-.1}{b};
\strand{-0.1,.5}{s};
\strand{0.75,.5}{s};
\circledinverse{.15,.35}{u};
\end{tikzpicture}$.
\end{enumerate}
\end{theorem}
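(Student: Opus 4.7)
The four assertions of the theorem split into two pairs related by the contravariant symmetry $\div$ of $\cref{symmetries}$. Applying $\div$ to (1) and then relabeling the free variables $a \leftrightarrow b$ yields (2); analogously, $\div$ relates (3) and (4). The crucial point is that $\div$ fixes each pin $\circledinverse{}{u}$, since these pins are labelled by symmetric polynomials, and $\div$ reflects diagrams vertically, moving a pin below a crossing to the corresponding position above it. Hence it suffices to prove (1) and (3), neither of which is a simple consequence of the other because the identities concern $\circledinverse{}{u}$ pinned to different strands (thicknesses $a$ and $b$ respectively) and carry different signs.

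I would prove (1) by induction on $a+b$. The base case $a=b=1$ rewrites the thin-strand relation $\cref{demazurerel}$ in the form ``$(u-x)$ on lower-left of a thin crossing equals $(u-x)$ on upper-right minus the parallel strands,'' which is exactly the generating function content of (1) in that case. For the inductive step, split the thickness-$a$ strand into $(1, a-1)$ using $\cref{mergesplit}$; distribute $\circledinverse{}{u}$ across the split via the coproduct relations $\cref{game,istanbul}$ (which assert that $\circledinverse{}{u}$ distributes multiplicatively over splits and merges); apply the inductive hypothesis separately to the thickness-$1$ and thickness-$(a-1)$ subportions; and recombine using the associativity relations $\cref{assrel}$, the sliding relations $\cref{sliders}$, the absorption relations $\cref{swallows}$, and the deformed shuffle relation $\cref{deformedshuffle}$. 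Part (3) is proved by an entirely analogous induction, splitting the thickness-$b$ strand instead. This mirrors the explicit inductive computation carried out by Song and Wang for the special case $d=a$ of $\cref{jonrel}$, but the generating function formulation handles all $d$ uniformly and avoids the ad hoc casework of a coefficient-by-coefficient proof.

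The main obstacle is verifying that the combinatorial coefficient $(-1)^s s!$ in (1) (respectively $s!$ in (3)) emerges correctly at each step of the induction. The alternating sign descends from the $(-1)^s$ coefficients in the expansion $\cref{tourists}$ of a thick crossing; the factorial $s!$ arises from the interplay between $\cref{deformedshuffle}$ and the binomial coefficients in $\cref{mergesplit}$ when recombining thickness-$s$ subportions that carry polynomial pins. The generating-function shorthands $\circledinverse{}{u}$ and $\circled{}{u}$ make this bookkeeping tractable, because both generating functions factor multiplicatively over splits and merges by $\cref{game,istanbul}$, so one can track how the single symbol $\circledinverse{}{u}$ propagates through the inductive step rather than juggling the individual elementary symmetric polynomials of various degrees that appear in the equivalent relations $\cref{jonrel,otherjon}$.
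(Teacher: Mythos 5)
Your overall strategy matches the paper's: prove via induction on $a+b$ starting from the base case $a=b=1$ given by $\cref{demazurerel}$ rewritten in generating-function form, and cut the work using the symmetries of $\cref{symmetries}$. Your observation that $\div$ relates (1)$\leftrightarrow$(2) and (3)$\leftrightarrow$(4) is correct, since $\div$ fixes $\circledinverse{u}$. However, you have missed a further reduction: $\vid$ together with the substitution $u\mapsto -u$ also derives (3) from (1). Tracking it through: $\vid$ reflects left-right and sends $\circledinverse{u}$ on a thickness-$r$ strand to $(u+x_1)\cdots(u+x_r)$; after relabelling $a\leftrightarrow b$ and replacing $u$ by $-u$, the factor $(-1)^{a-s}$ on the pin in the RHS combines with your $(-1)^s$ to produce $(-1)^a\cdot s!$, and the matching factor $(-1)^a$ from the LHS then cancels, giving exactly (3). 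So your stated reason that (1) and (3) ``carry different signs'' is not an obstruction, and proving both independently is redundant (though not wrong).

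The more serious issue is that your inductive step, as described, does not work over a general commutative ground ring $\kk$. Splitting the thickness-$a$ strand into $(1,a-1)$ and recombining via $\cref{mergesplit}$ introduces a factor of $a$, so to reproduce the original strand one must divide by $a$. The paper's inductive calculations for (1) use exactly this: at each stage it multiplies by $\tfrac1a$ or $\tfrac1b$. You give no mechanism for making sense of these fractions in $\aSchur$, which is defined over an arbitrary commutative ring, and the inductive argument simply does not run there. The paper handles this by first reducing to $\kk=\Z$ (using the freeness of morphism spaces and base change), then extending scalars to $\Q$ before running the induction. Your proposal would need to say this explicitly; as written, the step fails for, e.g., $\kk=\Z$ or a field of positive characteristic, even though the \emph{statement} being proved has integer coefficients.

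One smaller point: you attribute the sign $(-1)^s$ in (1) to the alternating signs in the thick-crossing expansion $\cref{tourists}$, but the paper's proof of (1) never invokes $\cref{tourists}$. The sign originates in the base case: $\cref{demazurerel}$ applied to $u-x$ on a thin strand gives the minus sign in front of the parallel-strand correction term, and the $(-1)^s$ then propagates through the induction. This is a secondary misstatement of mechanism, not a logical gap, but it suggests you may not have worked through the inductive step in detail.
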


\begin{proof}
We prove (1). The proofs of (2)--(4) are similar, or they can be deduced from (1) by applying the symmetries $\div$ and $\vid$ from \cref{symmetries}.
It suffices to prove the relation in the case that the ground ring $\kk$ is $\Z$---the relation over any other ground ring follows from this case by basis change.
In turn, to prove it over $\Z$, we can extend scalars to $\Q$.
We assume this from now on, and proceed to prove (1) 
by induction on $a+b$. 
The relation is trivial if $a=0$ or $b=0$. The base case $a=b=1$
follows easily from \cref{demazurerel}.
The following inductive calculation 
proves the relation for $a=1$ and $b > 1$:
\begin{align*}
\begin{tikzpicture}[anchorbase,scale=1.2]
\draw (-.3,-.5) to (.3,.5);
\circledinverse{-.15,-.25}{u};
\draw[line width=2pt] (-.3,.5) to (.3,-.5)\botlabel{b};
\end{tikzpicture}&\ \substack{\cref{mergesplit}\\\scriptstyle =\\\cref{sliders}}\ 
\frac{1}{b}\ 
\begin{tikzpicture}[anchorbase,scale=1.2]
\draw (-.3,-.5) to (.3,.5);
\circledinverse{-.19,-.31}{u};
\draw (-.333,.48) to (-.23,.28) to [out=-90,in=165,looseness=1] (.16,-.3) to (.27,-.515);
\draw[line width=1.9pt] 
(-.3,.5) to (-.2,.3) to [out=-15,in=90,looseness=1] (.2,-.3) to (.3,-.5) \botlabel{b};
\end{tikzpicture}
\stackrel{\cref{demazurerel}}{=}\frac{1}{b}\ 
\begin{tikzpicture}[anchorbase,scale=1.2]
\draw (-.3,-.5) to (.3,.5);
\circledinverse{-.02,-.03}{u};
\draw (-.3333,.48) to (-.23,.28) to [out=-90,in=165,looseness=1] (.16,-.3) to (.27,-.515);
\draw[line width=1.9pt] 
(-.3,.5) to (-.2,.3) to [out=-15,in=90,looseness=1] (.2,-.3) to (.3,-.5) \botlabel{b};
\end{tikzpicture}
-
\frac{1}{b}
\begin{tikzpicture}[anchorbase,scale=1.2]
\draw (-.333,.48) to (-.23,.28) to [out=-90,in=90,looseness=1] (-.3,-.5);
\draw[line width=1.9pt] 
(-.3,.5) to (-.2,.3) to [out=-15,in=90,looseness=1] (.2,-.3) to (.3,-.5) \botlabel{b};
\draw (.17,-.3) to [out=175,in=-90,looseness=1.5] (.3,.5);
\end{tikzpicture}
\stackrel{\cref{swallows}}{=}
\frac{1}{b}
\begin{tikzpicture}[anchorbase,scale=1.2]
\draw (-.3,-.5) to (.3,.5);
\circledinverse{.2,.34}{u};
\draw (-.333,.48) to (-.23,.28) to [out=-90,in=165,looseness=1] (.16,-.3) to (.27,-.515);
\draw[line width=1.9pt] 
(-.3,.5) to (-.2,.3) to [out=-15,in=90,looseness=1] (.2,-.3) to (.3,-.5) \botlabel{b};
\end{tikzpicture}-
\frac{1}{b}
\begin{tikzpicture}[anchorbase,scale=1.2]
\draw (-.325,-.5) to (-.325,.5);
\draw (.325,-.5) to (.325,.5);
\draw[line width=1.8] (.3,-.5)\botlabel{b} to (.3,-.2) to (-.3,.2) to (-.3,.5);
\draw[thin] (.28,-.5) to (.28,-.35) to (-.3,0) to [out=150,in=-170,looseness=2] (-.32,.35) to (-.32,.5);
\end{tikzpicture}
-
\frac{1}{b}
\begin{tikzpicture}[anchorbase,scale=1.2]
\draw (-.325,-.5) to (-.325,.5);
\draw (.325,-.5) to (.325,.5);
\draw[line width=1.8] (.3,-.5)\botlabel{b} to (.3,-.25) to (-.3,.25) to (-.3,.5);
\end{tikzpicture}\\
&\ \substack{\cref{assrel}\\\scriptstyle =\\ \cref{mergesplit}}\ 
\begin{tikzpicture}[anchorbase,scale=1.2]
\draw (-.3,-.5) to (.3,.5);
\circledinverse{.17,.29}{u};
\draw[line width=1.9pt] 
(-.3,.5) to (.3,-.5) \botlabel{b};
\end{tikzpicture}-
\frac{1}{b}
\begin{tikzpicture}[anchorbase,scale=1.2]
\draw (-.32,-.5) to (-.32,.5);
\draw (.325,-.5) to (.325,.5);
\draw[line width=1.8] (.3,-.5)\botlabel{b} to (.3,0) to (-.3,.4) to (-.3,.5);
\draw[thin] (.28,-.5) to (.28,-.35) to (-.3,-.1) to [out=160,in=-170,looseness=2] (-.325,.25) to (-.325,.5);
\end{tikzpicture}
-
\frac{1}{b}
\begin{tikzpicture}[anchorbase,scale=1.2]
\draw (-.325,-.5) to (-.325,.5);
\draw (.325,-.5) to (.325,.5);
\draw[line width=1.8] (.3,-.5)\botlabel{b} to (.3,-.25) to (-.3,.25) to (-.3,.5);
\end{tikzpicture}
\stackrel{\cref{swallows}}{=}
\begin{tikzpicture}[anchorbase,scale=1.2]
\draw (-.3,-.5) to (.3,.5);
\circledinverse{.17,.29}{u};
\draw[line width=1.9pt] 
(-.3,.5) to (.3,-.5) \botlabel{b};
\end{tikzpicture}-
\frac{1}{b}
\begin{tikzpicture}[anchorbase,scale=1.2]
\draw (-.325,-.5) to (-.325,.5);
\draw (.325,-.5) to (.325,.5);
\draw[line width=1.8] (.3,-.5)\botlabel{b} to (.3,0) to (-.3,.35) to (-.3,.5)\toplabel{b};
\draw[thin] (.28,-.5) to (.28,-.35) to (-.32,0) to (-.32,.5);
\end{tikzpicture}
-
\frac{1}{b}
\begin{tikzpicture}[anchorbase,scale=1.2]
\draw (-.325,-.5) to (-.325,.5);
\draw (.325,-.5) to (.325,.5);
\draw[line width=1.8] (.3,-.5)\botlabel{b} to (.3,-.25) to (-.3,.25) to (-.3,.5)\toplabel{b};
\end{tikzpicture}
\ \substack{\cref{assrel}\\\scriptstyle =\\\cref{mergesplit}}\ 
\begin{tikzpicture}[anchorbase,scale=1.2]
\draw (-.3,-.5) to (.3,.5);
\circledinverse{.17,.29}{u};
\draw[line width=1.9pt] 
(-.3,.5) to (.3,-.5) \botlabel{b};
\end{tikzpicture}-
\begin{tikzpicture}[anchorbase,scale=1.2]
\draw (-.325,-.5) to (-.325,.5);
\draw (.325,-.5) to (.325,.5);
\draw[line width=1.8] (.3,-.5)\botlabel{b} to (.3,-.25) to (-.3,.25) to (-.3,.5)\toplabel{b};
\end{tikzpicture}\ .
\end{align*}
Then the following inductive calculation proves
the relation for $a > 1$ and $b \geq 1$:
\begin{align*}
\begin{tikzpicture}[anchorbase,centerzero=.5,scale=1.2]
\draw[-,line width=1.5pt] (0.58,0) to (.02,1);
\draw[-,line width=1.5pt] (0.02,0) to (.58,1);
\strand{0,-.1}{a};
\strand{0.6,-.1}{b};
\circledinverse{.14,.22}{u};
\end{tikzpicture}
&\ \substack{\cref{deformedshuffle}\\\scriptstyle =\\\cref{sliders}}\ 
\frac{1}{a}\ 
\begin{tikzpicture}[anchorbase,scale=1.2]
\draw[line width=1.4pt] (.3,-.5)\botlabel{b} to (-.3,.5);
\draw[line width=1.4pt] (.32,.5) to (.23,.3) to [out=-90,in=15,looseness=1] (-.17,-.3) to (-.28,-.5)\botlabel{a};
\draw 
(.3,.511) to (.205,.3) to [out=195,in=90,looseness=1] (-.19,-.29) to (-.3,-.49);
\circledinverse{-.156,-.04}{u};
\circledinverse{-.03,-.24}{u};
\end{tikzpicture}
=
\frac{1}{a}
\begin{tikzpicture}[anchorbase,scale=1.2]
\draw[line width=1.4pt] (.3,-.5)\botlabel{b} to (-.3,.5);
\draw[line width=1.4pt] (.32,.5) to (.23,.3) to [out=-90,in=15,looseness=1] (-.17,-.3) to (-.28,-.5)\botlabel{a};
\draw 
(.3,.511) to (.205,.3) to [out=195,in=90,looseness=1] (-.19,-.29) to (-.3,-.49);
\circledinverse{.06,.24}{u};
\circledinverse{-.03,-.24}{u};
\end{tikzpicture}
-\frac{1}{a}\ 
\begin{tikzpicture}[anchorbase,scale=.8]
\draw (-.325,-1) to (-.325,.5);
\draw (.325,-1) to (.325,.5);
\draw[line width=1.8] (.3,-1)\botlabel{b} to (.3,-.25) to (-.3,.25) to (-.3,.5);
\draw[line width=1.6] (-.285,-1) \botlabel{a\,}to (-.285,-.8) to [out=30,in=-30,looseness=2] (.355,.2) to (.355,.5);
\circledinverse{-.03,-.66}{u};
\end{tikzpicture}\!\!
\ \substack{\cref{sliders}\\\scriptstyle = \\\cref{swallows}}\ 
\frac{1}{a}
\begin{tikzpicture}[anchorbase,scale=1.2]
\draw[line width=1.4pt] (.3,-.5)\botlabel{b} to (-.3,.5);
\draw[line width=1.4pt] (.32,.5) to (.23,.3) to [out=-90,in=15,looseness=1] (-.17,-.3) to (-.28,-.5)\botlabel{a};
\draw 
(.3,.511) to (.205,.3) to [out=195,in=90,looseness=1] (-.19,-.29) to (-.3,-.49);
\circledinverse{.06,.24}{u};
\circledinverse{-.03,-.24}{u};
\end{tikzpicture}
-\frac{1}{a}\begin{tikzpicture}[anchorbase,scale=1.2,centerzero=.6]
\draw[-,line width=1.1] (0.58,0) to (0.58,.2) to (.02,.8) to (.02,1);
\draw[-,line width=1.1] (0.02,0) to (0.02,.2) to (.58,.8) to (.58,1);
\draw[-,thin] (0,0) to (0,1);
\draw[-,thin] (0.6,0) to (0.6,1);
\strand{0,-.1}{a};
\strand{0.6,-.1}{b};
\circledinverse{.18,.37}{u};
\end{tikzpicture}\\
&=
\frac{1}{a}
\sum_{s=0}^{\min(a-1,b)}
(-1)^s s!
\begin{tikzpicture}[anchorbase,scale=1.2,centerzero=.6]
\draw[-,line width=1.1] (0.58,0) to (0.58,.25) to (.02,.6) to (.02,1);
\draw[-,line width=1.1] (0.02,0) to (0.02,.25) to (.58,.6) to (.58,1);
\draw[-,thin] (0,0) to (0,1);
\draw[-,thin] (0.6,0) to (0.6,1);
\strand{0,-.1}{a};
\strand{0.6,-.1}{b};
\strand{-0.08,.45}{s};
\strand{0.7,.45}{s};
\draw (-.01,0) to (-.01,.14) to [out=170,in=200,looseness=1.5] (-.01,.7) to (.56,.9) to (.56,1);
\circledinverse{.45,.52}{u};
\circledinverse{.4,.84}{u};
\end{tikzpicture}
-\frac{1}{a}
\sum_{s=0}^{\min(a-1,b-1)}
(-1)^s s!
\begin{tikzpicture}[anchorbase,scale=1.2,centerzero=.6]
\draw[-,line width=1.1] (0.58,0) to (0.58,.3) to (.02,.7) to (.02,1);
\draw[-,line width=1.1] (0.02,0) to (0.02,.3) to (.58,.7) to (.58,1);
\draw[-,line width=.8] (0,0) to (0,1);
\draw[-,line width=.8] (0.6,0) to (0.6,1);
\strand{0,-.1}{a};
\strand{0.6,-.1}{b};
\strand{-0.08,.5}{s};
\strand{0.7,.5}{s};
\draw (-.01,0) to (-.01,.15) to [out=160,in=200,looseness=1.5] (-.01,.85) to (-.01,1);
\draw (.61,0) to (.61,.15) to [out=20,in=-20,looseness=1.5] (.61,.85) to (.61,1);
\circledinverse{.45,.6}{u};
\end{tikzpicture}
\\
&\ \substack{\cref{assrel}\\\scriptstyle =\\\cref{swallows}}\ 
\frac{1}{a}
\sum_{s=0}^{\min(a-1,b)}
(-1)^s s!
\begin{tikzpicture}[anchorbase,scale=1.2,centerzero=.6]
\draw[-,line width=1.1] (0.58,0) to (0.58,.1) to (.01,.6) to (.01,1);
\draw[-,line width=1.1] (0.014,0) to (0.014,.1) to (.58,.45) to (.58,1);
\draw[-,thin] (-0.01,0) to (-0.01,1);
\draw[-,thin] (0.6,0) to (0.6,1);
\strand{0,-.1}{a};
\strand{0.6,-.1}{b};
\strand{-0.08,.5}{s};
\strand{0.7,.3}{s};
\draw (0,0) to (0,.4) to (.565,.75) to (.565,1);
\circledinverse{.46,.39}{u};
\circledinverse{.35,.62}{u};
\end{tikzpicture}
-\frac{1}{a}
\sum_{s=0}^{\min(a-1,b-1)}
(-1)^s s!
\begin{tikzpicture}[anchorbase,scale=1.2,centerzero=.6]
\draw[-,line width=1.1] (0.58,0) to (0.58,.15) to (.02,.85) to (.02,1);
\draw[-,line width=1.1] (0.02,0) to (0.02,.15) to (.58,.85) to (.58,1);
\draw[-,line width=.8] (0,0) to (0,1);
\draw[-,line width=.8] (0.6,0) to (0.6,1);
\strand{0,-.1}{a};
\strand{0.6,-.1}{b};
\strand{0.1,.5}{s};
\strand{0.5,.5}{s};
\draw (-.01,0) to (-.01,.3) to [out=160,in=200,looseness=1.5] (-.01,.7) to (-.01,1);
\draw (.61,0) to (.61,.3) to [out=20,in=-20,looseness=1.5] (.61,.7) to (.61,1);
\circledinverse{.45,.67}{u};
\end{tikzpicture}\\
&\ \substack{\cref{assrel}\\\scriptstyle =\\\cref{deformedshuffle}}\ 
\frac{1}{a}
\sum_{s=0}^{\min(a-1,b)}
(-1)^s s!(a-s)
\begin{tikzpicture}[anchorbase,scale=1.2,centerzero=.6]
\draw[-,line width=1.1] (0.58,0) to (0.58,.25) to (.01,.75) to (.01,1);
\draw[-,line width=1.1] (0.014,0) to (0.014,.25) to (.58,.75) to (.58,1);
\draw[-,line width=.8] (-0.01,0) to (-0.01,1);
\draw[-,line width=.8] (0.6,0) to (0.6,1);
\strand{0,-.1}{a};
\strand{0.6,-.1}{b};
\strand{-0.08,.5}{s};
\strand{0.7,.5}{s};
\circledinverse{.45,.63}{u};
\end{tikzpicture}
+\frac{1}{a}
\sum_{s=0}^{\min(a-1,b-1)}
(-1)^{s+1} (s+1)!(s+1)\begin{tikzpicture}[anchorbase,scale=1.2,centerzero=.6]
\draw[-,line width=1.1] (0.58,0) to (0.58,.25) to (.01,.75) to (.01,1);
\draw[-,line width=1.1] (0.014,0) to (0.014,.25) to (.58,.75) to (.58,1);
\draw[-,line width=.8] (-0.01,0) to (-0.01,1);
\draw[-,line width=.8] (0.6,0) to (0.6,1);
\strand{0,-.1}{a};
\strand{0.6,-.1}{b};
\strand{-0.16,.5}{s\!+\!1};
\strand{0.84,.5}{s\!+\!1};
\circledinverse{.45,.63}{u};
\end{tikzpicture}\\
&=\frac{1}{a}
\sum_{s=0}^{\min(a,b)}
(-1)^s s!(a-s)
\begin{tikzpicture}[anchorbase,scale=1.2,centerzero=.6]
\draw[-,line width=1.1] (0.58,0) to (0.58,.25) to (.01,.75) to (.01,1);
\draw[-,line width=1.1] (0.014,0) to (0.014,.25) to (.58,.75) to (.58,1);
\draw[-,line width=.8] (-0.01,0) to (-0.01,1);
\draw[-,line width=.8] (0.6,0) to (0.6,1);
\strand{0,-.1}{a};
\strand{0.6,-.1}{b};
\strand{-0.09,.5}{s};
\strand{0.7,.5}{s};
\circledinverse{.45,.63}{u};
\end{tikzpicture}
+\frac{1}{a}
\sum_{s=0}^{\min(a,b)}
(-1)^s s!s\begin{tikzpicture}[anchorbase,scale=1.2,centerzero=.6]
\draw[-,line width=1.1] (0.58,0) to (0.58,.25) to (.01,.75) to (.01,1);
\draw[-,line width=1.1] (0.014,0) to (0.014,.25) to (.58,.75) to (.58,1);
\draw[-,line width=.8] (-0.01,0) to (-0.01,1);
\draw[-,line width=.8] (0.6,0) to (0.6,1);
\strand{0,-.1}{a};
\strand{0.6,-.1}{b};
\strand{-0.09,.5}{s};
\strand{0.7,.5}{s};
\circledinverse{.45,.63}{u};
\end{tikzpicture}
=
\sum_{s=0}^{\min(a,b)}
(-1)^s s!\begin{tikzpicture}[anchorbase,scale=1.2,centerzero=.6]
\draw[-,line width=1.1] (0.58,0) to (0.58,.25) to (.01,.75) to (.01,1);
\draw[-,line width=1.1] (0.014,0) to (0.014,.25) to (.58,.75) to (.58,1); =
\draw[-,line width=.8] (-0.01,0) to (-0.01,1);
\draw[-,line width=.8] (0.6,0) to (0.6,1);
\strand{0,-.1}{a};
\strand{0.6,-.1}{b};
\strand{-0.09,.5}{s};
\strand{0.7,.5}{s};
\circledinverse{.45,.63}{u};
\end{tikzpicture}\ .
\end{align*}
\end{proof}

\begin{corollary}\label{rome}
$\begin{tikzpicture}[anchorbase,scale=1,centerzero=.4]
\draw[-,line width=1.1pt] (0.58,0) to (.02,1);
\draw[-,line width=1.1pt] (0.02,0) to (.58,1);
\strand{0,-.1}{a};
\strand{0.6,-.1}{b};
\circledinverse{.14,.22}{u};
\circledinverse{.46,.22}{u};
\end{tikzpicture}
=
\begin{tikzpicture}[anchorbase,scale=1,centerzero=.4]
\draw[-,line width=1.1pt] (0.58,0) to (.02,1);
\draw[-,line width=1.1pt] (0.02,0) to (.58,1);
\strand{0,-.1}{a};
\strand{0.6,-.1}{b};
\circledinverse{.46,.78}{u};
\circledinverse{.14,.78}{u};
\end{tikzpicture}$
and 
$\begin{tikzpicture}[anchorbase,scale=1,centerzero=.4]
\draw[-,line width=1.1pt] (0.58,0) to (.02,1);
\draw[-,line width=1.1pt] (0.02,0) to (.58,1);
\strand{0,-.1}{a};
\strand{0.6,-.1}{b};
\circled{.14,.22}{u};
\circled{.46,.22}{u};
\end{tikzpicture}
=
\begin{tikzpicture}[anchorbase,scale=1,centerzero=.4]
\draw[-,line width=1.1pt] (0.58,0) to (.02,1);
\draw[-,line width=1.1pt] (0.02,0) to (.58,1);
\strand{0,-.1}{a};
\strand{0.6,-.1}{b};
\circled{.46,.78}{u};
\circled{.14,.78}{u};
\end{tikzpicture}$.
\end{corollary}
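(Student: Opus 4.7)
The plan is to reduce both identities to the centrality of the symmetric polynomial $P(u) := \prod_{i=1}^{a+b}(u-x_i)$ in $\aH_{a+b}$. By \cref{basechange}, a morphism from $(a,b)$ to $(b,a)$ in $\aSchur\lround u^{-1}\rround$ is determined by its image of the generator $v_{(a,b)}\otimes 1$ of the cyclic right $\aH_{a+b}$-module $M((a,b))\otimes_{\kk\S_{a+b}}\aH_{a+b}$, so it suffices to evaluate both sides on this element.

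For the first (elementary) identity, the pin $\begin{tikzpicture}[centerzero,scale=.8]\draw[ultra thick] (0,-.3) to (0,.3); \circledinverse{0,0}{u};\end{tikzpicture}$ on a strand of thickness $r$ equals $\xi_{[r],(u-x_1)\cdots(u-x_r)}$, which by \cref{inbits} acts on $M((r))\otimes_{\kk\S_r}\aH_r$ by right multiplication by $(u-x_1)\cdots(u-x_r)\in Z(\aH_r)=\P^{(r)}$. Via the embedding $\aH_a\otimes\aH_b\hookrightarrow\aH_{a+b}$ built into the tensor product $\star$, the pin $\circledinverse\star\circledinverse$ acts on both $M((a,b))\otimes_{\kk\S_{a+b}}\aH_{a+b}$ and $M((b,a))\otimes_{\kk\S_{a+b}}\aH_{a+b}$ as right multiplication by the same central element $P(u)\in Z(\aH_{a+b})$. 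The crossing $X:(a,b)\to(b,a)$ is right $\aH_{a+b}$-linear and sends $v_{(a,b)}\otimes 1\mapsto v_{(b,a)}\otimes w$, where $w$ is the minimal length double coset representative associated to the matrix $\left[\begin{smallmatrix}0&b\\a&0\end{smallmatrix}\right]$. Evaluating both sides on $v_{(a,b)}\otimes 1$ and using that $P(u)$ is central (and therefore commutes with $w$), both yield $v_{(b,a)}\otimes wP(u)$, establishing the first identity.

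For the second (complete) identity, observe that the pins $\begin{tikzpicture}[centerzero,scale=.8]\draw[ultra thick] (0,-.3) to (0,.3); \circled{0,0}{u};\end{tikzpicture}$ and $\begin{tikzpicture}[centerzero,scale=.8]\draw[ultra thick] (0,-.3) to (0,.3); \circledinverse{0,0}{u};\end{tikzpicture}$ are mutually inverse in the localization $\End_{\aSchur}((r))\lround u^{-1}\rround$, since $\frac{1}{(u-x_1)\cdots(u-x_r)}$ is the formal inverse of $(u-x_1)\cdots(u-x_r)$. Pre- and post-composing both sides of the first identity with $(\circled\star\circled)$ on the $(a,b)$ and $(b,a)$ sides respectively, and applying the inverse relation strand by strand, immediately yields the second. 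No real obstacle arises; the whole argument collapses to the centrality of $P(u)$ in $\aH_{a+b}$.
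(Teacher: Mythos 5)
Your proof is correct, and it takes a genuinely different route from the paper's. The paper derives the first identity by a diagrammatic computation: it slides the two generating-function labels through the crossing one at a time using \cref{bingley}(1),(3), reorganizes with the coproduct relations \cref{game} and the merge-split relation, and then collapses the resulting double sum using $\sum_{s=0}^n(-1)^s\binom{n}{s}=(1-1)^n=0$ for $n\geq 1$. You instead work directly in the concrete realization of $\aSchur$ as a category of right $\aH_r$-module homomorphisms: since any morphism out of the cyclic module $M((a,b))\otimes_{\kk\S_{a+b}}\aH_{a+b}$ is determined by its value on the generator $\v_{(a,b)}\otimes 1$, it suffices to observe that one side sends this generator to $\v_{(b,a)}\otimes d_A P(u)$ and the other to $\v_{(b,a)}\otimes P(u)\, d_A$, where $P(u):=\prod_{i=1}^{a+b}(u-x_i)$ and $d_A$ is the minimal double coset representative attached to the thick crossing; these agree because $P(u)\in Z(\aH_{a+b})$. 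Your step deducing the second (complete) identity from the first, by composing above and below with the mutually inverse pins, is the same as the paper's. The upshot is that your argument is shorter and more conceptual, sidestepping the machinery of \cref{bingley} entirely; what the paper's derivation offers in exchange is that it stays within the purely diagrammatic calculus, so it would carry over verbatim to any category satisfying the same relations without appealing to the endomorphism-algebra model.
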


\begin{proof}
\cref{bingley}(1),(3) and \cref{game} give that
\begin{align*}
\begin{tikzpicture}[anchorbase,scale=1.2,centerzero=.4]
\draw[-,line width=1.1pt] (0.58,0) to (.02,1);
\draw[-,line width=1.1pt] (0.02,0) to (.58,1);
\strand{0,-.1}{a};
\strand{0.6,-.1}{b};
\circledinverse{.14,.22}{u};
\circledinverse{.46,.22}{u};
\end{tikzpicture}&=
\sum_{s=0}^{\min(a,b)}
(-1)^s s!
\begin{tikzpicture}[anchorbase,scale=1.2,centerzero=.4]
\draw[-,thick] (0.58,-.1)\botlabel{b} to (0.58,.2) to (.02,.8) to (.02,1.1);
\draw[-,thick] (0.02,-.1)\botlabel{a} to (0.02,.2) to (.58,.8) to (.58,1.1);
\draw[-,thin] (-0.0,-.1) to (-0.0,1.1);
\draw[-,thin] (0.6,-.1) to (0.6,1.1);
\strand{-0.1,.5}{s};
\strand{0.7,.5}{s};
\circledinverse{.45,.65}{u};
\circledinverse{.58,.05}{u};
\end{tikzpicture}=
\sum_{s=0}^{\min(a,b)}
(-1)^s s!
\begin{tikzpicture}[anchorbase,scale=1.2,centerzero=.4]
\draw[-,thick] (0.58,-.1)\botlabel{b} to (0.58,.2) to (.02,.8) to (.02,1.1);
\draw[-,thick] (0.02,-.1)\botlabel{a} to (0.02,.2) to (.58,.8) to (.58,1.1);
\draw[-,thin] (-0.0,-.1) to (-0.0,1.1);
\draw[-,thin] (0.6,-.1) to (0.6,1.1);
\strand{-0.1,.5}{s};
\strand{0.7,.7}{s};
\circledinverse{.42,.35}{u};
\circledinverse{.42,.65}{u};
\circledinverse{.61,.5}{u};
\end{tikzpicture}=
\sum_{s=0}^{\min(a,b)}
(-1)^s s!
\begin{tikzpicture}[anchorbase,scale=1.2,centerzero=.4]
\draw[-,thick] (0.58,-.1)\botlabel{b} to (0.58,.2) to (.02,.8) to (.02,1.1);
\draw[-,thick] (0.02,-.1)\botlabel{a} to (0.02,.2) to (.58,.8) to (.58,1.1);
\draw[-,thin] (-0.0,-.1) to (-0.0,1.1);
\draw[-,thin] (0.6,-.1) to (0.6,1.1);
\strand{-0.1,.5}{s};
\strand{0.7,.5}{s};
\circledinverse{.42,.35}{u};
\circledinverse{.6,.95}{u};
\end{tikzpicture}\\
&=\!\!\!\sum_{s=0}^{\min(a,b)} \sum_{t=0}^{\min(a,b)-s}
\!\!\!(-1)^s s! t!
\begin{tikzpicture}[anchorbase,scale=1.2,centerzero=.4]
\draw[-,thick] (1.08,-.1)\botlabel{b} to (1.08,.2) to (-.48,.8) to (-.48,1.1);
\draw[-,thick] (-0.48,-.1)\botlabel{a} to (-0.48,.2) to (1.08,.8) to (1.08,1.1);
\draw[-,thin] (1.1,-.1) to (1.1,1.1);
\draw[-,thin] (-0.5,-.1) to (-0.5,1.1);
\draw[-,thin] (-.49,-.1) to (-.49,.21) to (-.1,.36) to (-.1,.64) to (-.49,.79) to (-.49,1.1);
\draw[-,thin] (1.09,-.1) to (1.09,.21) to (.7,.36) to (.7,.64) to (1.09,.79) to (1.09,1.1);
\strand{-0.6,.5}{s};
\strand{1.2,.5}{s};
\strand{-0.2,.5}{t};
\strand{.8,.5}{t};
\circledinverse{1.1,.95}{u};
\circledinverse{0.08,.58}{u};
\end{tikzpicture}
\!=\!\!\!\sum_{s=0}^{\min(a,b)} \sum_{t=0}^{\min(a,b)-s}
\!\!\!(-1)^s (s+t)! \binom{s+t}{s}\!\!\!
\begin{tikzpicture}[anchorbase,scale=1.2,centerzero=.4]
\draw[-,thick] (0.58,-.1)\botlabel{b} to (0.58,.2) to (.02,.8) to (.02,1.1);
\draw[-,thick] (0.02,-.1)\botlabel{a} to (0.02,.2) to (.58,.8) to (.58,1.1);
\draw[-,thin] (-0.0,-.1) to (-0.0,1.1);
\draw[-,thin] (0.6,-.1) to (0.6,1.1);
\strand{-0.25,.5}{s+t};
\strand{0.85,.5}{s+t};
\circledinverse{.15,.65}{u};
\circledinverse{.58,.95}{u};
\end{tikzpicture}
\\&=
\sum_{n=0}^{\min(a,b)} n! \left(\sum_{s=0}^{n}
(-1)^s \binom{n}{s}\right)
\begin{tikzpicture}[anchorbase,scale=1.2,centerzero=.4]
\draw[-,thick] (0.58,-.1)\botlabel{b} to (0.58,.2) to (.02,.8) to (.02,1.1);
\draw[-,thick] (0.02,-.1)\botlabel{a} to (0.02,.2) to (.58,.8) to (.58,1.1);
\draw[-,thin] (-0.0,-.1) to (-0.0,1.1);
\draw[-,thin] (0.6,-.1) to (0.6,1.1);
\strand{-0.15,.5}{n};
\strand{0.75,.5}{n};
\circledinverse{.15,.65}{u};
\circledinverse{.58,.95}{u};
\end{tikzpicture}
=
\sum_{n=0}^{\min(a,b)} n! (1-1)^n
\begin{tikzpicture}[anchorbase,scale=1.2,centerzero=.4]
\draw[-,thick] (0.58,-.1)\botlabel{b} to (0.58,.2) to (.02,.8) to (.02,1.1);
\draw[-,thick] (0.02,-.1)\botlabel{a} to (0.02,.2) to (.58,.8) to (.58,1.1);
\draw[-,thin] (-0.0,-.1) to (-0.0,1.1);
\draw[-,thin] (0.6,-.1) to (0.6,1.1);
\strand{-0.15,.5}{n};
\strand{0.75,.5}{n};
\circledinverse{.15,.65}{u};
\circledinverse{.58,.95}{u};
\end{tikzpicture}
=\begin{tikzpicture}[anchorbase,scale=1.2,centerzero=.4]
\draw[-,line width=1.1pt] (0.58,0) to (.02,1);
\draw[-,line width=1.1pt] (0.02,0) to (.58,1);
\strand{0,-.1}{a};
\strand{0.6,-.1}{b};
\circledinverse{.14,.78}{u};
\circledinverse{.46,.78}{u};
\end{tikzpicture}.
\end{align*}
This proves the first identity.
The second follows from the first on composing on the bottom 
with 
$\begin{tikzpicture}[scale=1.1,centerzero]
\draw[line width=1.2pt] (0.1,-.23)\botlabel{b} to (0.1,.23);
\draw[line width=1.2pt] (-.3,-.23)\botlabel{a} to (-0.3,.23);
\circled{-.3,0}{u};
\circled{0.1,0}{u};
\end{tikzpicture}$ and on the top with
with 
$\begin{tikzpicture}[scale=1.1,centerzero]
\draw[line width=1.2pt] (0.1,-.23)\botlabel{a} to (0.1,.23);
\draw[line width=1.2pt] (-.3,-.23)\botlabel{b} to (-0.3,.23);
\circled{-.3,0}{u};
\circled{0.1,0}{u};
\end{tikzpicture}$.
\end{proof}

\begin{corollary}\label{averagedotslide}
The following hold in $\aSchur\llbracket u^{-1}\rrbracket$ for $r \geq 1$:
\begin{enumerate}
\item
$\displaystyle
\begin{tikzpicture}[anchorbase,scale=1.4,centerzero=.6]
\draw[-,thin] (0.58,0.2) to (.02,.8);
\draw[-,line width=1.3pt] (0.3,0.2)\botlabel{r} to (.3,.8);
\circled{0.14,.68}{u};
\end{tikzpicture}
=
\begin{tikzpicture}[anchorbase,scale=1.4,centerzero=.6]
\draw[-,thin] (0.58,0.2) to (.02,.8);
\draw[-,line width=1.3pt] (0.3,0.2)\botlabel{r} to (.3,.8);
\circled{0.46,.32}{u};
\end{tikzpicture}+
\begin{tikzpicture}[anchorbase,scale=1.4,centerzero=.6]
\draw[-,thin] (0.78,0.2) to (.315,.7) to (.315,.8);
\draw[-,thin] (0.285,.2) to (0.285,.3) to (-.18,.8);
\draw[-,line width=1.1pt] (0.3,.2)\botlabel{r} to (.3,.8);
\circled{0.49,.5}{u};
\circled{0.11,.5}{u};
\end{tikzpicture}\ .
$
\item
$\displaystyle
\begin{tikzpicture}[anchorbase,scale=1.4,centerzero=.6]
\draw[-,thin] (0.58,.8) to (.02,0.2);
\draw[-,line width=1.3pt] (0.3,.2)\botlabel{r} to (.3,.8);
\circled{0.14,.32}{u};
\end{tikzpicture}
=
\begin{tikzpicture}[anchorbase,scale=1.4,centerzero=.6]
\draw[-,thin] (0.58,.8) to (.02,0.2);
\draw[-,line width=1.3pt] (0.3,0.2)\botlabel{r} to (.3,.8);
\circled{0.46,.67}{u};
\end{tikzpicture}+
\begin{tikzpicture}[anchorbase,scale=1.4,centerzero=.6]
\draw[-,thin] (0.78,.8) to (.315,.3) to (.315,.2);
\draw[-,thin] (0.285,.8) to (0.285,.7) to (-.18,.2);
\draw[-,line width=1.1pt] (0.3,.2)\botlabel{r} to (.3,.8);
\circled{0.49,.5}{u};
\circled{0.11,.5}{u};
\end{tikzpicture}\ .
$
\end{enumerate}
\end{corollary}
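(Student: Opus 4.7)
The strategy is to deduce part~(1) from \cref{bingley}(3) with $a = r$ and $b = 1$, then ``invert'' the identity on the thin strand via the formal-inverse relation $\circled{u} \circ \circledinverse{u} = \id$ on a thickness-$1$ string.

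With these parameters, the sum in \cref{bingley}(3) runs only over $s \in \{0, 1\}$. For the $s = 1$ term, the inner diagonal coming from the $b$-string has thickness $b - s = 0$, so by the convention of \cref{kward} (under which $\circledinverse{u}$ and $\circled{u}$ on a thickness-zero string both equal $\id_\one$) this pin trivializes. What remains is the bare split-merge structure $R:(r,1)\to(1,r)$: split a thin strand off the bottom of the $r$-thick (dropping its thickness to $r-1$ in the middle), then merge a thin strand back in at the top. Thus \cref{bingley}(3) specializes to
\begin{equation*}
C \circ (\id \star \circledinverse{u}) \;=\; (\circledinverse{u} \star \id) \circ C \;+\; R,
\end{equation*}
where $C$ denotes the crossing of the $r$-thick and the thin.

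Post-composing with $\circled{u} \star \id$ (placing $\circled{u}$ on the top-left thin strand) and pre-composing with $\id \star \circled{u}$ (placing $\circled{u}$ on the bottom-right thin strand), both $\circledinverse{u}$ factors cancel via the thin-string identity $(u-x)\cdot\tfrac{1}{u-x}=1$, yielding
\begin{equation*}
(\circled{u} \star \id) \circ C \;=\; C \circ (\id \star \circled{u}) \;+\; (\circled{u} \star \id) \circ R \circ (\id \star \circled{u}).
\end{equation*}
The rightmost term is $R$ with $\circled{u}$ pinned on each of its two thin strands; by \cref{alghom} pins slide freely along strands, so one can reposition them in the middle of the strands to match the pictured correction, establishing part~(1). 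Part~(2) then follows from part~(1) by applying the reflection symmetry $\div$ of \cref{symmetries}, which swaps top and bottom while fixing pins of the form $\circled{u}$.

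The main subtlety is recognizing that the $s = 1$ pin trivializes: the generic picture draws a $\circledinverse{u}$, but it is attached to a vacuous thickness-zero strand and so equals $\id_\one$. As a sanity check, for $r = 1$ the structure $R$ is the identity on $(1,1)$, and a direct computation in $\aH_2$ gives $\tfrac{1}{u-x_1}\, s_1 - s_1\,\tfrac{1}{u-x_2} = \tfrac{1}{(u-x_1)(u-x_2)}$, which matches exactly two $\circled{u}$'s pinned on two parallel thin strands.
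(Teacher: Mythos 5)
Your proof is correct, and it follows a mild but genuine variant of the paper's route. The paper applies \cref{bingley}(1) with $a = r$, $b = 1$, so the $(u-x_1)\cdots(u-x_r)$-pin sits on the thick $r$-strand; to cancel it one pre-composes with the reciprocal pin on \emph{both} strands at the bottom and then invokes \cref{rome} (to slide the pair of reciprocal pins past the crossing) together with the coproduct relation \cref{istanbul} (to split the reciprocal pin on the thick strand and cancel the one in the middle of the $s=1$ correction term). You instead apply \cref{bingley}(3), which places the pin on the \emph{thin} strand, and cancel it by composing with the reciprocal pin on the thin strand alone, one at the top and one at the bottom --- so no appeal to \cref{rome} or the coproduct relations is needed. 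Your key observation, that in \cref{bingley}(3) the $s = 1$ pin lands on a strand of thickness $b - s = 0$ and so trivialises, is correct by the remark following \cref{kward}. The deduction of (2) from (1) via $\div$ matches the paper, and the $r = 1$ sanity check against $\End_{\aSchur}\big((1,1)\big) \cong \aH_2$ is also correct.
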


\begin{proof}
\cref{bingley}(1) with $a=r$ and $b=1$ gives
$\displaystyle
\begin{tikzpicture}[anchorbase,scale=1.4,centerzero=.6]
\draw[-,thin] (0.58,0.2) to (.02,.8);
\draw[-,line width=1.3pt] (0.3,.2)\botlabel{r} to (.3,.8);
\circledinverse{0.3,.32}{u};
\end{tikzpicture}
=
\begin{tikzpicture}[anchorbase,scale=1.4,centerzero=.6]
\draw[-,thin] (0.58,0.2) to (.02,.8);
\draw[-,line width=1.3pt] (0.3,0.2)\botlabel{r} to (.3,.8);
\circledinverse{0.3,.68}{u};
\end{tikzpicture}
-
\begin{tikzpicture}[anchorbase,scale=1.4,centerzero=.6]
\draw[-,thin] (0.78,.2) to (.315,.7) to (.315,.8);
\draw[-,thin] (.285,.2) to (0.285,.3) to (-.18,.8);
\draw[-,line width=1.1pt] (0.3,.2)\botlabel{r} to (.3,.8);
\circledinverse{0.3,.5}{u};
\end{tikzpicture}\ .
$
The identity (1) follows from this by composing on the bottom with $\begin{tikzpicture}[scale=1.1,centerzero]
\draw[thin] (0.1,-.23) to (0.1,.23);
\draw[line width=1.2pt] (-.3,-.23)\botlabel{r} to (-0.3,.23);
\circled{-.3,0}{u};
\circled{0.1,0}{u};
\end{tikzpicture}$ and using the previous corollary.
Then (2) follows by applying $\div$.
\end{proof}

\begin{lemma}\label{inductionex}
$\displaystyle 
1+(1 + s_1 + s_2 s_1 + \cdots + s_{r-1} \cdots s_2 s_1)
\bully \frac{1}{u-x_1} = 
\frac{(u+1-x_1)\cdots (u+1-x_r)}{(u-x_1)\cdots (u-x_r)}.
$
\end{lemma}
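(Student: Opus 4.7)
The plan is to prove the identity by a direct computation of each summand, followed by a telescoping argument.

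First I would prove by induction on $k = 1, \ldots, r$ that
\[
(s_{k-1}\cdots s_1)\bully \frac{1}{u-x_1} = \frac{(u+1-x_1)\cdots(u+1-x_{k-1})}{(u-x_1)\cdots(u-x_k)}.
\]
The base case $k=1$ is trivial. For the inductive step, write $g_k := \prod_{i=1}^{k-1}(u+1-x_i)$, which depends on neither $x_k$ nor $x_{k+1}$, so $s_k$ fixes it. Letting $h_k := \prod_{i=1}^{k}(u-x_i)^{-1}$, one computes directly from \cref{demazureoperator} that
\[
\partial_k\!\left(\frac{1}{u-x_k}\right) = \frac{1}{(u-x_k)(u-x_{k+1})},
\]
so $\partial_k(g_k h_k) = g_k /((u-x_1)\cdots(u-x_{k-1})(u-x_k)(u-x_{k+1}))$, while $s_k(g_k h_k)$ just swaps $(u-x_k)^{-1}$ for $(u-x_{k+1})^{-1}$. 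Summing these according to \cref{polyaction} and combining over a common denominator produces the extra factor $(u-x_k)+1 = u+1-x_k$ in the numerator, yielding the formula with $k$ replaced by $k+1$.

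Next I would sum these expressions and establish the polynomial identity
\[
\prod_{i=1}^r(u-x_i) + \sum_{k=1}^r \prod_{i=1}^{k-1}(u+1-x_i) \prod_{j=k+1}^{r}(u-x_j) = \prod_{i=1}^r(u+1-x_i).
\]
This is a one-line telescoping: setting $a_i := u-x_i$ and $b_i := u+1-x_i = a_i + 1$, the $k$th summand on the left equals $b_1\cdots b_{k-1}(b_k-a_k)a_{k+1}\cdots a_r = b_1\cdots b_k a_{k+1}\cdots a_r - b_1\cdots b_{k-1} a_k \cdots a_r$. Summing over $k$ collapses to $b_1\cdots b_r - a_1\cdots a_r$, which rearranges to the desired identity. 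Dividing through by $\prod_{i=1}^r(u-x_i)$ gives the lemma.

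The only mildly fiddly step is the induction in Step 1, but it is just a straightforward application of the definition \cref{polyaction} and the partial-fraction expansion above; no serious obstacle is expected.
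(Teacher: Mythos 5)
Your proof is correct. It differs from the paper's route: the paper runs a single outer induction on $r$, observing that $1 + \tfrac{1}{u-x_1} = \tfrac{u+1-x_1}{u-x_1}$ factors out of the entire remaining sum (since that factor is invariant under $s_2,\dots,s_{r-1}$ and the $\bully$-action has a Leibniz-type multiplicativity over invariant factors), thereby reducing to the same lemma in the $r-1$ variables $x_2,\dots,x_r$. You instead establish a closed form for each individual summand
\[
(s_{k-1}\cdots s_1)\bully \frac{1}{u-x_1} = \frac{(u+1-x_1)\cdots(u+1-x_{k-1})}{(u-x_1)\cdots(u-x_k)},
\]
via an inner induction on the word length $k$, and then sum everything with a purely algebraic telescoping identity. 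The paper's proof is slightly more compact; yours yields strictly more information, namely the value of each summand separately, which is a nice by-product. Both exploit the same elementary facts (the partial-fraction computation of $\partial_k$ of $\tfrac{1}{u-x_k}$, and $s_k$-invariance of the factors not involving $x_k,x_{k+1}$), so the core ingredients coincide; the two inductions are just organized along different axes.
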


\begin{proof}
This is an induction exercise!
It is easily checked in the case $r=1$,
using that 
\begin{equation}\label{itallstartshere}
s_1 \bully \frac{1}{u-x_1} = 
\frac{1}{u-x_2}+\frac{1}{(u-x_1)(u-x_2)}
=
\frac{u+1-x_1}{(u-x_1)(u-x_2)}
\end{equation}
by \cref{polyaction,demazureoperator}. 
For $r>1$, using induction for the equality $(*)$, we have that
\begin{align*}
1+(1 + s_1 + s_2 s_1 + &\cdots + s_{r-1} \cdots s_2 s_1)
\bully \frac{1}{u-x_1} \\
&=
1+\frac{1}{u-x_1} + (1+s_2 + \cdots + s_{r-1} \cdots s_2) \bully \frac{u+1-x_1}{(u-x_1)(u-x_2)}\\
&=
\frac{u+1-x_1}{u-x_1} + \frac{u+1-x_1}{u-x_1}\left((1+s_2 + \cdots + s_{r-1} \cdots s_2) \bully 
\frac{1}{u-x_2}\right)\\
&\stackrel{(*)}{=}
\frac{u+1-x_1}{u-x_1} + \frac{u+1-x_1}{u-x_1}\left(\frac{(u+1-x_2)\cdots (u+1-x_r)}{(u-x_2)\cdots(u-x_r)}-1\right)\\
&=
\frac{(u+1-x_1)(u+1-x_2)\cdots (u+1-x_r)}{(u-x_1)(u-x_2)\cdots(u-x_r)}.
\end{align*}
\end{proof}

We introduce symmetric polynomials $\tilde{p}_d(x_1,\dots,x_r)$
defined from the expansion of the expression appearing in \cref{inductionex}, setting
\begin{align}\label{personofinterest}
1 + \sum_{d \geq 0} \tilde{p}_d(x_1,\dots,x_r)
u^{-d-1}
&:=
\frac{(u+1-x_1)\cdots (u+1-x_r)}{(u-x_1)\cdots (u-x_r)}.
\end{align}
This definition 
makes sense even if $r=0$, in which case 
$\tilde p_d(x_1,\dots,x_r) = 0$ for all $d$.
The right hand side of \cref{personofinterest} can also be written as
\begin{align}
\left(1+\frac{1}{u-x_1}\right)
\left(1+\frac{1}{u-x_2}\right)\cdots
\left(1+\frac{1}{u-x_r}\right)
&=
\sum_{s \geq 0} \sum_{1 \leq p_1 < \cdots < p_s \leq r}
\frac{1}{(u-x_{p_1})\cdots (u-x_{p_s})}.\label{cheap}
\end{align}
From this, it is easy to see that
$\tilde{p}_0(x_1,\dots,x_r) = r$, and
$\tilde{p}_d(x_1,\dots,x_r)$ is equal to 
the usual power sum
$p_d(x_1,\dots,x_r) = x_1^d+\cdots+x_r^d$ 
plus an inhomogeneous symmetric polynomial of strictly smaller degree.
We call
$\tilde p_d(x_1,\dots,x_r)$ the {\em deformed power sum}.

\begin{lemma}\label{jonsid}
For $d \geq 1$, we have that
$\tilde p_d(x_1,\dots,x_r)
=
p_d(x_1,\dots,x_r)+
p_{<d}(x_1,\dots,x_r)$
for inhomogeneous symmetric polynomials
$p_{<d}(x_1,\dots,x_r)$ 
of degree $(d-1)$ defined recursively by
\begin{align}\label{jonsidentity}
p_{<d}(x_1,\dots,x_r)
=
{\textstyle\binom{r}{d+1}}+
\sum_{i=1}^{d-1}(-1)^{i-1}
\left[p_{<(d-i)}(x_1,\dots,x_r)
-{\textstyle\binom{r-i}{d+1-i}}
\right]
e_i(x_1,\dots,x_r).
\end{align}
\end{lemma}

\begin{proof}
Fix the number $r\geq 1$ of variables and write simply
$e_d$ for $e_d(x_1,\dots,x_r)$; in particular, $e_0 = 1$
and $e_d=0$ for $d > r$.
Let $e(u) := u^r - u^{r-1} e_1 + \cdots+(-1)^r e_r$,
then define $\bar p(u) = \bar p_0 + \bar p_1 u^{-1} + \bar p_2 u^{-2}\cdots \in \P^{(r)}[h]\llbracket u^{-1}\rrbracket$ 
by
\begin{equation}\label{water}
\bar p(u) e(u)  = u \frac{e(u+h)-e(u)}{h}.
\end{equation}
Here, $h$ is a gratuitous 
new variable (we will only be interested in the cases $h=0$ and $h=1$).

Equating coefficients of $u^{r-d}$ on both sides of \cref{water} gives the identity
\begin{equation}\label{earth}
\sum_{i=0}^d (-1)^i e_i \bar p_{d-i}
=
\sum_{i=0}^{d} (-1)^i
{\textstyle\binom{r-i}{d+1-i}} h^{d-i} e_i
\end{equation}
for $d \geq 0$.
The $d=0$ case implies that $\bar p_0 = r$.
Using this, the identity can be rearranged to obtain
\begin{equation}\label{rareearth}
\bar p_d =
(-1)^{d-1} d e_d 
+\sum_{i=1}^{d-1} (-1)^{i-1} e_i \bar p_{d-i}+
\sum_{i=0}^{d-1}
(-1)^{i} {\textstyle\binom{r-i}{d+1-i}} h^{d-i} e_i
\end{equation}
for $d > 0$.

Now let $p(u) = p_0 + p_1 u^{-1} + \cdots$
be obtained from $\bar p(u)$ by setting $h=0$;
by \cref{water}, we have that 
$p(u) e(u) = u e'(u)$. 
We still have that $p_0 = r$.
When we set $h=0$ in \cref{rareearth}, 
we obtain
\begin{equation}\label{newtonsidentity}
p_d =
(-1)^{d-1} d e_d 
+\sum_{i=1}^{d-1} (-1)^{i-1} e_i p_{d-i}
\end{equation}
for $d \geq 1$.
This is exactly Newton's identity relating power sums to elementary symmetric polynomials, so we have that
$p_d = p_d(x_1,\dots,x_r)$.

Finally, let $\tilde p(u) = \tilde p_0 + \tilde p_1 u^{-1}+\cdots$ 
be obtained from $\bar p(u)$ by setting $h=1$. The identity
\cref{water} implies that
$1+ u^{-1} \tilde p(u) = \frac{e(u+1)}{e(u)}$.
Comparing with \cref{personofinterest}, it follows that 
$\tilde p_d = \tilde p_d(x_1,\dots,x_r)$.
So $p_{<d} := \tilde p_d - p_d$
is the symmetric polynomial $p_{< d}(x_1,\dots,x_r)$ in the  statement of the lemma.
Subtracting
\cref{newtonsidentity} from
the identity \cref{rareearth} at $h=1$ gives
$$
p_{<d}=
\sum_{i=1}^{d-1} (-1)^{i-1} e_i p_{<(d-i)}
+
\sum_{i=0}^{d-1}
(-1)^{i} {\textstyle\binom{r-i}{d+1-i}} e_i.
$$
This is easily rearranged to produce \cref{jonsidentity}.
\end{proof}

The generating function \cref{personofinterest}
pinned to a string of thickness $r$
can be represented diagrammatically by
$\begin{tikzpicture}[centerzero]
\draw[ultra thick] (0,-.4)\botlabel{r} to (0,.4);
\ovaledinverse{0,.15}{u+1};
\circled{0,-.15}{u};
\end{tikzpicture}$. 
In view of \cref{deformedshuffle}, 
\cref{inductionex} implies the relation
\begin{equation}\label{skiving}
\begin{tikzpicture}[anchorbase,scale=.8]
\draw[-,line width=2pt] (0.08,-.7) \botlabel{r}to (0.08,.5);
\end{tikzpicture}
\ +\ \begin{tikzpicture}[anchorbase,scale=.8]
\draw[-,line width=2pt] (-0.08,-.7)\botlabel{r} to (-0.08,-.49);
\draw[-,line width=2pt] (-0.08,.29) to (-0.08,.5);
\draw[-] (-0.09,-.51) to [out=135,in=-135] (-0.09,.31);
\draw[-,line width=1.5pt] (-0.095,-.52) to [out=45,in=-45] (-0.095,.32);
\circled{-.26,-.1}{u};
\end{tikzpicture}
= 
\begin{tikzpicture}[anchorbase,scale=.8]
\draw[-,line width=2pt] (0.08,-.7) \botlabel{r}to (0.08,.5);
\ovaledinverse{.08,.1}{u+1};
\circled{.08,-.3}{u};
\end{tikzpicture}
\end{equation}
for any $r \geq 1$.
Applying $\vid$ gives also the relation
\begin{equation}\label{skiving2}
\begin{tikzpicture}[anchorbase,scale=.8]
\draw[-,line width=2pt] (0.08,-.7) \botlabel{r}to (0.08,.5);
\end{tikzpicture}
\ -\ \begin{tikzpicture}[anchorbase,scale=.8]
\draw[-,line width=2pt] (-0.08,-.7)\botlabel{r} to (-0.08,-.49);
\draw[-,line width=2pt] (-0.08,.29) to (-0.08,.5);
\draw[-] (-0.07,-.51) to [out=45,in=-45] (-0.055,.31);
\draw[-,line width=1.5pt] (-0.055,-.52) to [out=135,in=-135] (-0.07,.32);
\circled{.1,-.1}{u};
\end{tikzpicture}
= 
\begin{tikzpicture}[anchorbase,scale=.8]
\draw[-,line width=2pt] (0.08,-.7) \botlabel{r}to (0.08,.5);
\ovaledinverse{.08,.1}{u-1};
\circled{.08,-.3}{u};
\end{tikzpicture}
\end{equation}
for $r \geq 1$.
The following theorem gives some generalizations.

\begin{theorem}\label{crazy}
The following hold in $\aSchur\lround u^{-1}\rround$ for $a,b \geq 0$:
\begin{enumerate}
\item
$\displaystyle
\begin{tikzpicture}[anchorbase,scale=.8]
\draw[-,line width=2pt] (-0.08,-.8)\botlabel{\phantom{a+b}} to (-0.08,-.49);
\draw[-,line width=2pt] (-0.08,.29) to (-0.08,.6);
\draw[-,line width=1.2pt] (-0.08,-.51) to [out=135,in=-135] (-0.08,.31);
\draw[-,line width=1.2pt] (-0.08,-.52) to [out=45,in=-45] (-0.08,.32);
\circled{-.26,.05}{u};
\strand{-.4,-.3}{a};
\strand{.2,-.32}{b};
\end{tikzpicture}
= 
\frac{1}{a!}
\sum_{i=0}^a (-1)^{a-i} \binom{a}{i}
\begin{tikzpicture}[anchorbase,scale=.8]
\draw[-,line width=2pt] (0.08,-.8) \botlabel{a+b}to (0.08,.6);
\ovaledinverse{.08,.1}{u+i};
\circled{.08,-.3}{u};
\end{tikzpicture}$.
\item
$\displaystyle
\begin{tikzpicture}[anchorbase,scale=.8]
\draw[-,line width=2pt] (-0.08,-.8)\botlabel{\phantom{a+b}} to (-0.08,-.49);
\draw[-,line width=2pt] (-0.08,.29) to (-0.08,.6);
\draw[-,line width=1.2pt] (-0.08,-.51) to [out=135,in=-135] (-0.08,.31);
\draw[-,line width=1.2pt] (-0.08,-.52) to [out=45,in=-45] (-0.08,.32);
\circled{.08,.1}{u};
\strand{-.4,-.3}{a};
\strand{.2,-.32}{b};
\end{tikzpicture}
= 
\frac{1}{b!}
\sum_{i=0}^b (-1)^{i} \binom{b}{i}
\begin{tikzpicture}[anchorbase,scale=.8]
\draw[-,line width=2pt] (0.08,-.8) \botlabel{a+b}to (0.08,.6);
\ovaledinverse{.08,.1}{u-i};
\circled{.08,-.3}{u};
\end{tikzpicture}$.
\end{enumerate}
(The right hand sides here involve division by some factorials. This should be interpreted by working first over $\Z$, when the right hand side can be rewritten as a linear combination involving only integer coefficients. Then one can base change to obtain a valid formula for any $\kk$.)
\end{theorem}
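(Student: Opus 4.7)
I will prove part~(1); part~(2) is proved by an analogous argument (or deduced by applying the symmetry $\vid$ of~\cref{symmetries} together with a substitution $u \mapsto -u$ that flips the sign of the shifts). The first step is to multiply both sides of the claimed identity on the right by $\ovaledinverse{u}$ pinned to the $(a+b)$-strand. On the right-hand side, $\ovaledinverse{u}\circled{u} = \id$ cancels the trailing $\circled{u}$, leaving $\tfrac{1}{a!}\sum_{i=0}^a (-1)^{a-i}\binom{a}{i}\ovaledinverse{u+i}$ pinned on the $(a+b)$-strand. On the left-hand side, apply the coproduct relation~\cref{game} to distribute $\ovaledinverse{u}$ into the bubble: it splits as $\ovaledinverse{u}$ on the $a$-strand times $\ovaledinverse{u}$ on the $b$-strand. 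The factor on the $a$-strand combines with the $\circled{u}$ already there to give $\id$, so the bubble is reduced to the one with $\ovaledinverse{u}$ pinned to the $b$-strand in the middle.

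Now apply the deformed shuffle relation~\cref{deformedshuffle} with $f = 1$ on the $a$-strand and $g = \ovaledinverse{u} = \prod_{k=a+1}^{a+b}(u - x_k)$ on the $b$-strand. The reduced left-hand side becomes the pin
\[ \sum_{w \in (S_{a+b}/S_a \times S_b)_{\min}} w \bully \prod_{k=a+1}^{a+b}(u - x_k) \]
on the $(a+b)$-strand. Part~(1) is thus equivalent to the symmetric polynomial identity in $\P^{(a+b)}[u]$
\[ \sum_{w \in (S_{a+b}/S_a \times S_b)_{\min}} w \bully \prod_{k=a+1}^{a+b}(u - x_k) \;=\; \frac{1}{a!}\sum_{i=0}^a (-1)^{a-i}\binom{a}{i}\prod_{k=1}^{a+b}(u + i - x_k). \]

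I would prove this algebraic identity by induction on $a$. The case $a = 0$ is immediate, and the case $a = 1$ is a telescoping identity proved exactly as~\cref{inductionex}: one checks inductively using~\cref{poof,demazureoperator} that $s_{p-1}\cdots s_1 \bully \prod_{k=2}^{b+1}(u - x_k) = \prod_{k < p}(u+1 - x_k)\prod_{k > p}(u - x_k)$, and the sum telescopes to $\prod_k(u+1-x_k) - \prod_k(u-x_k)$. For the inductive step, both sides satisfy the Pascal-type recursion
\[ a \cdot (\text{value at }(a,b)) \;=\; (\text{value at }(a-1,b+1))\big|_{u \mapsto u+1} \;-\; (\text{value at }(a-1,b+1)), \]
which on the right is immediate from $\binom{a}{i} = \binom{a-1}{i-1} + \binom{a-1}{i}$ together with the reindexing $i \mapsto i-1$ in the first sum; on the left, it requires decomposing $(S_{a+b}/S_a \times S_b)_{\min}$ via the intermediate chain $S_{a-1} \times S_1 \times S_b \leq S_a \times S_b \leq S_{a+b}$ and analyzing how the $\bully$-action factorizes across this decomposition.

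\textbf{Main obstacle.} The tricky step is verifying the recursion for the left-hand side: one must track how the Demazure correction terms coming from the $\bully$-action distribute across the shuffle coset decomposition, and confirm that the resulting sum at parameters $(a,b)$ really is the difference-at-$(u{+}1)$-minus-$(u)$ of the sum at $(a-1, b+1)$. An equivalent but perhaps cleaner alternative is to show that both sides expand, after dividing by $\prod_k(u - x_k)$, as $\sum_{S \subseteq \{1,\dots,a+b\},\, |S| \geq a} S(|S|, a) \prod_{k \in S}\frac{1}{u - x_k}$, where $S(n, a)$ denotes the Stirling number of the second kind; the right-hand expansion follows from $\prod_k\bigl(1 + \tfrac{i}{u - x_k}\bigr) = \sum_S i^{|S|}\prod_{k\in S}\frac{1}{u - x_k}$ and the identity $\frac{1}{a!}\sum_i(-1)^{a-i}\binom{a}{i}i^n = S(n, a)$, while the left-hand expansion requires a combinatorial description of $w \bully \frac{1}{(u-x_1)\cdots(u-x_a)}$ in terms of the subsets of $\{1, \dots, a+b\}$ reachable by the shuffle $w$.
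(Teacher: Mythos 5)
Your reduction is valid, and your overall strategy matches the paper's: both proceed by induction on $a$, with base case $a=1$ equivalent to \cref{inductionex} (equivalently \cref{skiving}), and an inductive step that collapses via Pascal's identity. The difference is presentational: you first strip away the diagrammatics by multiplying through by the polynomial $(u-x_1)\cdots(u-x_{a+b})$, invoking the coproduct and deformed shuffle relations to reduce to a pure identity in $\P^{(a+b)}[u]$, whereas the paper carries out the same induction directly inside $\aSchur\lround u^{-1}\rround$ using the merge-split relation, \cref{skiving}, and the coproduct relations. Your base cases $a=0,1$ are correct (your telescoping formula for $s_{p-1}\cdots s_1\bully\prod_{k\geq 2}(u-x_k)$ is right and reproves \cref{inductionex} after dividing by $e(u)$), and your Pascal computation for the right-hand side is correct.

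The gap is the one you flag yourself: you do not verify the Pascal-type recursion for the shuffle sum $L(a,b) := \sum_{w\in(\S_{a+b}/(\S_a\times\S_b))_{\min}} w\bully\prod_{k=a+1}^{a+b}(u-x_k)$. It does go through, and it is worth recording how, because the verification is exactly the polynomial shadow of the paper's diagrammatic step. Since $\prod_{k>a}(u-x_k)$ is $\S_a\times\S_b$-invariant under $\bully$, refining the left coset decomposition from $(\S_{a+b}/(\S_a\times\S_b))_{\min}$ to $(\S_{a+b}/(\S_{a-1}\times\S_1\times\S_b))_{\min}$ simply multiplies the sum by $a$. Now regroup through the other chain $\S_{a-1}\times\S_1\times\S_b\leq\S_{a-1}\times\S_{b+1}\leq\S_{a+b}$: every $w\in(\S_{a+b}/(\S_{a-1}\times\S_1\times\S_b))_{\min}$ factors uniquely as $w=x'y'$ with $x'\in(\S_{a+b}/(\S_{a-1}\times\S_{b+1}))_{\min}$ and $y'\in(\S_{b+1}/(\S_1\times\S_b))_{\min}$, and the inner sum over $y'$ is the $a=1$ case of your identity applied to the last $b+1$ variables $x_a,\dots,x_{a+b}$, giving $\prod_{k\geq a}(u+1-x_k)-\prod_{k\geq a}(u-x_k)$. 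Hence $aL(a,b)=L(a-1,b+1)|_{u\mapsto u+1}-L(a-1,b+1)$, as you predicted. This is precisely what the paper's sequence of pictures encodes: splitting off a thin strand from the $a$-thick one to produce the factor $1/a$, regrouping as $(a-1)+(b+1)$, applying \cref{skiving}, and then the induction hypothesis. So your proposal is essentially the paper's proof translated into polynomial algebra; the step you leave open is genuinely needed but readily supplied, and the Stirling-number alternative is unnecessary.
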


\begin{proof}
We prove (1). Then (2) follows by applying $\vid$.
We proceed by induction on $a$. The case $a=0$ is trivial, while
the $a=1$ case follows from \cref{skiving}.
For the induction step, for $a > 1$, we have that
\begin{align*}
\begin{tikzpicture}[anchorbase,scale=1.2]
\draw[-,line width=2pt] (-0.08,-.8)\botlabel{\phantom{a+b}} to (-0.08,-.49);
\draw[-,line width=2pt] (-0.08,.29) to (-0.08,.6);
\draw[-,line width=1.2pt] (-0.08,-.51) to [out=135,in=-135] (-0.08,.31);
\draw[-,line width=1.2pt] (-0.08,-.52) to [out=45,in=-45] (-0.08,.32);
\circled{-.26,0}{u};
\strand{-.35,-.34}{a};
\strand{.18,-.32}{b};
\end{tikzpicture}\!
&=
\frac{1}{a}
\begin{tikzpicture}[anchorbase,scale=1.3]
\draw[-,line width=2pt] (-0.08,-.8)\botlabel{\phantom{a+b}} to (-0.08,-.49);
\draw[-,line width=2pt] (-0.08,.29) to (-0.08,.6);
\draw[-,line width=1.2pt] (-0.08,-.51) to [out=135,in=-135] (-0.08,.31);
\draw[-,line width=1.2pt] (-0.08,-.52) to [out=45,in=-45] (-0.08,.32);
\draw (-.055,-.51) to [out=135,in=-60] (-.145,-.4) to[out=45,in=-45] (-.145,.2) to [out=60,in=-135] (-.055,.31);
\circled{-.26,0}{u};
\circled{-.05,-.18}{u};
\strand{-.48,-.24}{a-1};
\strand{.16,-.32}{b};
\end{tikzpicture}
=
\frac{1}{a}
\begin{tikzpicture}[anchorbase,scale=1.3]
\draw (-.1,-.51) to [out=45,in=-120] (-0.01,-.4) to[out=135,in=-135] (-0.01,.2) to [out=120,in=-45] (-.1,.31);
\draw[-,line width=2pt] (-0.08,-.8)\botlabel{\phantom{a+b}} to (-0.08,-.49);
\draw[-,line width=2pt] (-0.08,.29) to (-0.08,.6);
\draw[-,line width=1.2pt] (-0.08,-.51) to [out=135,in=-135] (-0.08,.31);
\draw[-,line width=1.2pt] (-0.08,-.52) to [out=45,in=-45] (-0.08,.32);
\circled{-.26,0}{u};
\circled{-.1,-.2}{u};
\strand{-.45,-.32}{a-1};
\strand{.32,-.15}{b+1};
\end{tikzpicture}\!\stackrel{\cref{skiving}}{=}
\frac{1}{a}
\begin{tikzpicture}[anchorbase,scale=1.3]
\draw[-,line width=2pt] (-0.08,-.8)\botlabel{\phantom{a+b}} to (-0.08,-.49);
\draw[-,line width=2pt] (-0.08,.29) to (-0.08,.6);
\draw[-,line width=1.2pt] (-0.08,-.51) to [out=135,in=-135] (-0.08,.31);
\draw[-,line width=1.2pt] (-0.08,-.52) to [out=45,in=-45] (-0.08,.32);
\circled{-.26,-0.05}{u};
\circled{.1,-.15}{u};
\ovaledinverse{.08,0.1}{u+1};
\strand{-.45,-.4}{a-1};
\strand{.3,-.4}{b+1};
\end{tikzpicture}\!\!\!
-
\frac{1}{a}
\begin{tikzpicture}[anchorbase,scale=1.3]
\draw[-,line width=2pt] (-0.08,-.8)\botlabel{\phantom{a+b}} to (-0.08,-.49);
\draw[-,line width=2pt] (-0.08,.29) to (-0.08,.6);
\draw[-,line width=1.2pt] (-0.08,-.51) to [out=135,in=-135] (-0.08,.31);
\draw[-,line width=1.2pt] (-0.08,-.52) to [out=45,in=-45] (-0.08,.32);
\circled{-.26,-0.05}{u};
\strand{-.45,-.38}{a-1};
\strand{.3,-.38}{b+1};
\end{tikzpicture}=
\frac{1}{a}
\begin{tikzpicture}[anchorbase,scale=1.3]
\draw[-,line width=2pt] (-0.08,-.8)\botlabel{\phantom{a+b}} to (-0.08,-.49);
\draw[-,line width=2pt] (-0.08,.29) to (-0.08,.6);
\draw[-,line width=1.2pt] (-0.08,-.51) to [out=135,in=-135] (-0.08,.31);
\draw[-,line width=1.2pt] (-0.08,-.52) to [out=45,in=-45] (-0.08,.32);
\ovaled{-.26,-0.05}{u+1};
\circled{-.08,-.65}{u};
\ovaledinverse{-.08,0.45}{u+1};
\strand{-.45,-.4}{a-1};
\strand{.3,-.4}{b+1};
\end{tikzpicture}\!\!\!
-
\frac{1}{a}
\begin{tikzpicture}[anchorbase,scale=1.3]
\draw[-,line width=2pt] (-0.08,-.8)\botlabel{\phantom{a+b}} to (-0.08,-.49);
\draw[-,line width=2pt] (-0.08,.29) to (-0.08,.6);
\draw[-,line width=1.2pt] (-0.08,-.51) to [out=135,in=-135] (-0.08,.31);
\draw[-,line width=1.2pt] (-0.08,-.52) to [out=45,in=-45] (-0.08,.32);
\circled{-.26,-0.05}{u};
\strand{-.45,-.38}{a-1};
\strand{.3,-.38}{b+1};
\end{tikzpicture}\!.
\end{align*}
The two terms at the end here can now be rewritten using the induction hypothesis (with $u$ replaced by $u+1$ for the first one). The result can then be simplified using Pascal's identity.
\end{proof}
\setcounter{section}{5}

\section{The center of the degenerate affine Schur algebra}\label{s6-center}

Let $Z(\aS(n,r))$ be the center of the degenerate affine Schur algebra.
In this section, we prove that 
$Z(\aS(n,r))\cong \P^{(r)}$, the algebra of symmetric polynomials
in $x_1,\dots,x_r$, for all $n \geq 1$.
We will also determine the center $Z(\aSchur)$ of the category $\aSchur$, that is, the algebra of endomorphisms of the identity functor $\id_{\aSchur}$, and the centers of each of the endomorphism algebras $\End_{\aSchur}(\lambda)$.

For $\lambda \in \Lambda(n,r)$ and $f \in \P^\lambda$, we start now to use the shorthand
\begin{equation}\label{global}
f 1_\lambda = 1_\lambda f := \xi_{\diag(\lambda_1,\dots,\lambda_n),f} = \xi_{f,\diag(\lambda_1,\dots,\lambda_n)} \in \End_{\aSchur}(\lambda)
\end{equation}
The diagram for this is just $f$ pinned to the diagram for $1_\lambda$, that is, $n$ parallel vertical strings of thicknesses $\lambda_1,\dots,\lambda_n$.
We use similar shorthand for $\varsigma_{\diag(\lambda_1,\dots,\lambda_n),f}$ in $\End_{\gSchur}(\lambda)$.
The identity element of $\aS(n,r)$ is $1_{n,r} := \sum_{\lambda 
\in\Lambda(n,r)} 1_\lambda$.
For $f \in \P^{(r)}$, we let 
\begin{equation}\label{glueball}
f 1_{n,r} := 
\sum_{\lambda \in \Lambda(n,r)} f 1_\lambda.
\end{equation}

\begin{lemma}\label{interesting}
For any $r\geq 0$ and $f \in \P^{(r)}$, there is a natural transformation
$(\delta_{r,|\lambda|}f 1_\lambda)_{\lambda \in \Lambda}$
in $Z(\aSchur)$.
\end{lemma}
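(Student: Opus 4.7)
The plan is to identify each $f 1_\lambda$ (for $|\lambda| = r$) with a manifestly central operation on the induced module defining $\lambda$, and then deduce naturality automatically from the $\aH_r$-linearity of morphisms. First observe that for compositions $\mu,\lambda$ with $|\mu| \neq |\lambda|$ the morphism space $\Hom_{\aSchur}(\mu,\lambda)$ is zero, so the naturality square for any such morphism is trivial; likewise, if $|\lambda| = |\mu| = s \neq r$ then both endpoints of the alleged natural transformation vanish. Hence the substance of the claim reduces to the statement that for each fixed $r \geq 0$ and each $f \in \P^{(r)}$, the collection $(f 1_\lambda)_{\lambda}$, as $\lambda$ ranges over $\coprod_n \Lambda(n,r)$, is a natural transformation of the identity endofunctor on the full subcategory of $\aSchur$ spanned by these objects.

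To verify this, I would use the definitions directly. Specializing \cref{inbits} to $A = \diag(\lambda_1,\dots,\lambda_n)$, for which $d_A = 1$ and $\mu(A) = \lambda$, the endomorphism $f 1_\lambda$ from \cref{global} is exactly the right $\aH_r$-module homomorphism $M(\lambda)\otimes_{\kk \S_r}\aH_r \to M(\lambda)\otimes_{\kk \S_r}\aH_r$ sending $\v_\lambda \otimes 1 \mapsto \v_\lambda \otimes f$, i.e., right multiplication by $f$ in the second tensor factor. Because $\P^{(r)} = Z(\aH_r)$ by \cref{bernsteincenter}, this right multiplication is indeed $\aH_r$-linear. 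Now for any morphism $g \in \Hom_{\aSchur}(\mu,\lambda) = \Hom_{\dash \aH_r}\bigl(M(\mu)\otimes_{\kk \S_r}\aH_r,\, M(\lambda)\otimes_{\kk \S_r}\aH_r\bigr)$, its $\aH_r$-linearity gives $g(m \cdot f) = g(m) \cdot f$ for every $m$ in the domain, which is precisely the required naturality equation $g \circ f 1_\mu = f 1_\lambda \circ g$.

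I do not expect any real obstacle: the lemma is essentially tautological once one recognizes $f 1_\lambda$ as right multiplication by a central element of $\aH_r$. A more laborious alternative would be to use the monoidal presentation of \cref{songwangpresentation} and check centrality on each generating morphism: the merge and split cases would follow from the coproduct relations \cref{fridaynight}, the thick-crossing case from the deformed shuffle relation \cref{deformedshuffle} (together with \cref{alghom} to reduce to elementary symmetric polynomials and then \cref{jonrel,otherjon}), and the cases of spots and pins on thick strings would be immediate. This second approach is considerably more work than the endomorphism-algebra argument, which sidesteps all of it.
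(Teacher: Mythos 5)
Your proof is correct, and it takes a genuinely different route from the paper's. The paper reduces (via \cref{alghom}) to the case where $f$ is an elementary symmetric polynomial, then assembles these into the generating functions of \cref{kward} and invokes the coproduct relations \cref{game} together with \cref{rome} (which itself rests on the nontrivial computation in \cref{bingley}) to verify naturality against the generating morphisms. Your argument bypasses the entire diagrammatic apparatus of \cref{s5-rels}: you observe from \cref{inbits} that $f 1_\lambda$ is simply right multiplication by the central element $f \in \P^{(r)} = Z(\aH_r)$ on $M(\lambda)\otimes_{\kk\S_r}\aH_r$, and then naturality is the tautology that every morphism in $\aSchur$ is by definition a right $\aH_r$-module homomorphism, hence commutes with right multiplication by anything. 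This is conceptually cleaner and logically more primitive. What the paper's approach buys instead is a confirmation that the relations of \cref{s5-rels} really do encode centrality, which matters because the authors are simultaneously developing $\aSchur$ as a presented monoidal category (where the endomorphism-algebra picture is not a priori available to someone working purely from generators and relations); it also sets up the parallel computations needed for the surjectivity argument in \cref{weiqiang}. One small imprecision in your write-up: for $A = \diag(\lambda_1,\dots,\lambda_n)$, the right redundancy $\mu(A)$ is literally a length-$n^2$ composition padded with zeros rather than $\lambda$ itself, though of course $\S_{\mu(A)} = \S_\lambda$ and $\P^{\mu(A)} = \P^\lambda$, so nothing breaks.
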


\begin{proof}
Since $\P^{(r)}$ is generated by elementary symmetric polynomials,
it suffices to show that
$\big(\delta_{r,|\lambda|} e_d(x_1,\dots,x_r) 1_\lambda\big)_{\lambda \in \Lambda}$
is a natural transformation in $Z(\aSchur)$ for each $d \geq 1$.
This follows from \cref{game,rome}.
\end{proof}

\begin{corollary}\label{interestinger}
For $f \in \P^{(r)}$ and $\lambda \in \Lambda(n,r)$,
$f 1_\lambda$ is central in $\End_{\aSchur}(\lambda)$,
and $f 1_{n,r}$ is central in $\aS(n,r)$.
\end{corollary}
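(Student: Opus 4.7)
The plan is to derive \cref{interestinger} as an essentially immediate formal consequence of \cref{interesting}; the hard work was already done there, and here we just need to unpack the definition of natural transformation in $Z(\aSchur)$.

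First I would invoke \cref{interesting} to produce the natural transformation $\eta = \big(\eta_\mu\big)_{\mu \in \Lambda}$ of $\id_{\aSchur}$ given by $\eta_\mu := \delta_{r,|\mu|} f 1_\mu$. By definition of natural transformation, for every morphism $g : \mu \to \nu$ in $\aSchur$ we have the square
\begin{equation*}
\eta_\nu \circ g = g \circ \eta_\mu.
\end{equation*}

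For the first assertion, I would specialize $\mu = \nu = \lambda$ with $|\lambda| = r$ and let $g$ range over $\End_{\aSchur}(\lambda)$. The displayed equation becomes $(f 1_\lambda) \circ g = g \circ (f 1_\lambda)$, which is exactly the statement that $f 1_\lambda$ lies in the center of $\End_{\aSchur}(\lambda)$.

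For the second assertion, recall that $\aS(n,r)$ is the path algebra of the full subcategory of $\aSchur$ on $\Lambda(n,r)$, so $\aS(n,r) = \bigoplus_{\mu,\nu \in \Lambda(n,r)} \Hom_{\aSchur}(\nu,\mu)$, and $f 1_{n,r} = \sum_{\mu \in \Lambda(n,r)} \eta_\mu$ since $|\mu| = r$ for all such $\mu$. Given $g \in 1_\mu \aS(n,r) 1_\nu$, naturality yields $(f 1_\mu) g = g (f 1_\nu)$. Since the idempotents $\{1_\lambda\}_{\lambda \in \Lambda(n,r)}$ are orthogonal with sum $1_{n,r}$, both $(f 1_{n,r})\cdot g$ and $g \cdot (f 1_{n,r})$ reduce to their $(\mu,\nu)$-components, namely $(f 1_\mu) g$ and $g (f 1_\nu)$, which agree. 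Summing over all $\mu,\nu$ and extending linearly proves that $f 1_{n,r}$ commutes with every element of $\aS(n,r)$.

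There is no serious obstacle here; the only thing to be careful about is the bookkeeping with the Kronecker delta $\delta_{r,|\mu|}$ in \cref{interesting}, which ensures that the natural transformation vanishes on objects of the wrong total degree and therefore restricts cleanly to give a central element of $\aS(n,r)$.
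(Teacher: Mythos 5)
Your proof is correct and matches the intended argument: the paper states this as a \emph{corollary} of \cref{interesting} with no written proof precisely because the deduction is the routine unpacking of naturality that you have spelled out. Nothing to add.
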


The following lemma proves \cite[Conj.~3.14]{SongWang}.

\begin{lemma}\label{weiqiang}
For any $n,r\geq 0$ and $\lambda \in \Lambda(n,r)$,
the map 
\begin{align}
\P^{(r)} &\rightarrow Z(\End_{\aSchur}(\lambda)), &f &\mapsto f 1_\lambda
\end{align}
is an algebra isomorphism.
\end{lemma}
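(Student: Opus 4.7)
The plan is to verify well-definedness, injectivity, and that the image lies in the center (all straightforward), and then to prove surjectivity via a generic-fibre computation combined with integral closure. Well-definedness and centrality of the image follow from \cref{interestinger}. For injectivity, I would use the basis from \cref{basechange}: taking $A := \diag(\lambda_1,\ldots,\lambda_n) \in \Mat{\lambda}{\lambda}$, one has $\mu(A) = \lambda$, and the elements $f 1_\lambda = \xi_{A,f}$ for $f$ in a basis of $\P^{(r)} \subseteq \P^\lambda$ appear as part of the free $\kk$-basis of $\End_{\aSchur}(\lambda)$.

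For surjectivity, set $E := \End_{\aSchur}(\lambda)$, $Z := Z(E)$, and $K := \operatorname{Frac}(\P^{(r)})$. By \cref{basechange}, $E$ is a free finitely generated $\P^{(r)}$-module, hence torsion-free and flat over $\P^{(r)}$. The crux is to show $Z \otimes_{\P^{(r)}} K = K$. A direct computation using \cref{poof} gives $\End_{\dash \aH_r}(\P_r) = \P^{(r)}$: any right $\aH_r$-linear endomorphism of the polynomial representation $\P_r$ is $\P_r$-linear, hence multiplication by some $c \in \P_r$, and $s_i$-equivariance combined with the nonvanishing of $x_i - x_{i+1} - 1$ in $\P_r$ forces $s_i(c) = c$. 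Since $\P_r$ is faithful as a right $\aH_r$-module (classical, see~\cite{Kleshchev}) and both $\aH_r$ and $\End_{\P^{(r)}}(\P_r) \cong \mathrm{Mat}_{r!}(\P^{(r)})$ are free $\P^{(r)}$-modules of rank $(r!)^2$, the action is an injection of free modules of equal rank, localizing to an isomorphism $\aH_r \otimes_{\P^{(r)}} K \cong \mathrm{Mat}_{r!}(K)$. This matrix algebra is simple with center $K$ and has $\P_r \otimes_{\P^{(r)}} K$ as its unique simple module, so any right $\aH_r \otimes K$-module is a direct sum of copies of it, with a matrix endomorphism algebra over $K$. Applied to the finitely presented module $M(\lambda) \otimes_{\kk \S_r} \aH_r$, flat base change yields $Z \otimes_{\P^{(r)}} K = Z(E \otimes_{\P^{(r)}} K) = K$.

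To conclude, $Z$ is a commutative $\P^{(r)}$-subalgebra of $E$ that is finitely generated as a $\P^{(r)}$-module (as a submodule of the Noetherian module $E$), hence integral over $\P^{(r)}$. Being torsion-free of $\P^{(r)}$-rank one, $Z$ embeds into $Z \otimes_{\P^{(r)}} K = K$, realizing $Z$ as a finite integral $\P^{(r)}$-subalgebra of $K$ containing $\P^{(r)}$. Since $\P^{(r)} = \kk[e_1,\ldots,e_r]$ is a polynomial ring and hence integrally closed in its field of fractions, this forces $Z = \P^{(r)}$. The main technical input in this plan is the generic-fibre identification $\aH_r \otimes K \cong \mathrm{Mat}_{r!}(K)$, resting on the classical faithfulness of the polynomial representation; everything else is formal.
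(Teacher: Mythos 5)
Your proposal is correct, but it takes a genuinely different route from the paper's and buys less generality. The paper proves surjectivity by passing to the associated graded category $\gSchur$ (using that $\gr Z(\End_{\aSchur}(\lambda)) \subseteq Z(\End_{\gSchur}(\lambda))$), then does a direct combinatorial computation with the basis $\varsigma_{A,f}$ from \cref{bots}: commutation with the monomials $x_I 1_\lambda$ kills $f_A$ for every non-diagonal $A$ (since the difference of two distinct monomials is a non-zerodivisor in $\P_r$ over \emph{any} commutative ring, by McCoy's theorem), and commutation with thin crossings forces the surviving coefficient to be $\S_r$-invariant. That argument is entirely elementary and valid over an arbitrary commutative ground ring $\kk$, which matters since the paper's conventions allow such $\kk$ and this lemma is stated without further restriction.

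Your approach instead localizes at the generic fibre: the identification $\End_{\dash\aH_r}(\P_r) = \P^{(r)}$ via \cref{poof}, together with faithfulness of the right polynomial representation and the rank count $(r!)^2$, gives $\aH_r \otimes_{\P^{(r)}} K \cong \mathrm{Mat}_{r!}(K)$; finitely presented modules and flat base change then give $Z(E)\otimes K = K$; and integral closedness of $\P^{(r)} = \kk[e_1,\ldots,e_r]$ pins down $Z(E) = \P^{(r)}$. This is clean and structural, and it completely avoids the explicit basis computations, but it requires $\kk$ to be (at least) a Noetherian integrally closed domain: you need a domain to form $K = \operatorname{Frac}(\P^{(r)})$, Noetherianity for $\aH_r$ to be Noetherian and for $Z$ to be module-finite over $\P^{(r)}$, and normality in the final step. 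That is fine for the main case of interest (a field of characteristic $0$), but falls short of the paper's generality. It is also worth recording the small technical points that make your argument go through: the $s_i$-equivariance computation gives $(c - s_i(c))(x_i - x_{i+1} - 1) = 0$; the right action of $\aH_r$ yields an anti-homomorphism into $\End_{\P^{(r)}}(\P_r)$, which is harmless since $\mathrm{Mat}_{r!}(\P^{(r)})^{\op} \cong \mathrm{Mat}_{r!}(\P^{(r)})$; and the equality $Z(E)\otimes K = Z(E\otimes K)$ uses that $Z(E)$ is the kernel of a $\P^{(r)}$-linear map $E \to E^{\oplus m}$ (for module generators $g_1,\ldots,g_m$ of $E$), which localization preserves.
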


\begin{proof}
\cref{interestinger} implies that $f 1_\lambda\:(f \in \P^{(r)})$ 
belongs to $Z(\End_{\aSchur}(\lambda))$, so the map makes sense.
It is also clear that it is an algebra homomorphism, and its injectivity follows from \cref{bits}.
To show that it is surjective,
we pass to the associated graded algebra
$\gr \End_{\aSchur}(\lambda)$, which 
is identified with $\End_{\gSchur}(\lambda)$. 
Since $\gr Z(\End_{\aSchur}(\lambda)) \subseteq Z(\End_{\gSchur}(\lambda)))$, it suffices to show that
\begin{equation*}
Z(\End_{\gSchur}(\lambda)) \subseteq
\{f 1_\lambda\:|\:f \in \P^{(r)}\}.
\end{equation*}
Take a central element of $\End_{\gSchur}(\lambda)$. It can be expressed in terms of the basis from \cref{bots} as
$$
z= \sum_{A \in \Mat{\lambda}{\lambda}}
\varsigma_{A,f_A}
$$
Consider $A \in \Mat{\lambda}{\lambda}$ which is {\em not} a diagonal matrix. Let $i$ be minimal such that $a_{i,j} \neq 0$ for some $j \neq i$.
We then have that $a_{i,k} \neq 0$ and $a_{k',i} \neq 0$ for 
some $i < k,k' \leq n$.
Let \begin{align*}
I &:= \left\{\lambda_{<i}+1,
\dots,\lambda_{\leq i}\right\},\\
J &:= \coprod_{j=1}^n
\left\{\lambda_{<j}+a_{1,j}+\cdots+a_{i-1,j}+1,\dots,\lambda_{<j}+a_{1,j}+\cdots+a_{i-1,j}+a_{i,j}\right\}.
\end{align*}
Let $x_I := \prod_{i \in I} x_i \in \P^\lambda$ and $x_J := \prod_{j \in J} x_j \in \P^{{\mu(A)}}$.
We have that 
$(x_I 1_\lambda) \varsigma_{A,f_A}
= \varsigma_{A, f_A x_J}$ and $\varsigma_{A,f_A} (x_I 1_\lambda)
= \varsigma_{A,f_A x_I}$.
Using $(x_I 1_\lambda) z = z (x_I 1_\lambda)$,
we deduce that $f_A x_I = f_A x_J$.
The choice of $i$ ensures that $x_I \neq x_J$, so
it follows that $f_A = 0$.

We have now proved that $z = f 1_\lambda$
for $f \in \P^\lambda$.
It remains to show that in fact $f$ belongs to $\P^{(r)} \subseteq \P^\lambda$.
This follows if we can show that $s_k(f) = f$ for each $1 \leq k < n$ of the form
$k=\lambda_{\leq i}$.
Given such a $k$, we can choose $1 \leq i < j \leq n$ so that
$k = \lambda_{\leq i}$,
$\lambda_i \neq 0$, $\lambda_{i+1}=\cdots=\lambda_{j-1}=0$
and $\lambda_j \neq 0$.
Let $A \in \Mat{\lambda}{\lambda}$ be the matrix $\diag(\lambda_1,\dots,\lambda_n) + e_{i,j}+e_{j,i}-e_{i,i}-e_{j,j}$.
The corresponding double coset diagram has a thin crossing 
between its $i$th and $j$th vertical strings.
We have that $(f 1_\lambda) \xi_A = \xi_{A, s_k(f)}$ and $\xi_A (f 1_\lambda) = \xi_{A,f}$.
The centrality of $f 1_\lambda$ implies that these are equal,
hence, $s_k(f)= f$.
\end{proof}

\begin{theorem}\label{whatIexpect}
For $n \geq 1$, 
the map $\P^{(r)} \rightarrow Z(\aS(n,r)), f \mapsto f 1_{n,r}$
is an algebra isomorphism.
\end{theorem}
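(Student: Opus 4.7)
The plan is to exploit the decomposition of a central element across the idempotents $1_\lambda$ and then use a ``merge morphism'' to the distinguished block $\lambda_0 := (r,0,\dots,0) \in \Lambda(n,r)$ (which exists since $n \geq 1$) in order to glue together the blockwise data provided by Lemma~\ref{weiqiang}.

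Well-definedness of $f \mapsto f 1_{n,r}$ as a map into the center is Corollary~\ref{interestinger}, and it is clearly an algebra homomorphism. Injectivity is immediate: the composition with the projection $Z(\aS(n,r)) \to \End_{\aSchur}(\lambda_0),\ z \mapsto 1_{\lambda_0} z$, sends $f \mapsto f 1_{\lambda_0}$, which is injective by Lemma~\ref{weiqiang}.

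For surjectivity, take $z \in Z(\aS(n,r))$. Since $z$ commutes with each idempotent $1_\lambda$, the element $z$ decomposes as $z = \sum_{\lambda \in \Lambda(n,r)} z_\lambda$ with $z_\lambda := 1_\lambda z 1_\lambda \in \End_{\aSchur}(\lambda)$, and for any $a \in \End_{\aSchur}(\lambda)$ centrality gives $z_\lambda a = a z_\lambda$, so $z_\lambda$ is central in $\End_{\aSchur}(\lambda)$. By Lemma~\ref{weiqiang}, there exist unique $f_\lambda \in \P^{(r)}$ with $z_\lambda = f_\lambda 1_\lambda$. It then remains to show that $f_\lambda = f_{\lambda_0}$ for all $\lambda \in \Lambda(n,r)$.

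Fix $\lambda \in \Lambda(n,r)$ and let $A \in \Mat{\lambda_0}{\lambda}$ be the matrix whose first row is $(\lambda_1,\dots,\lambda_n)$ and whose remaining rows are zero. Then $\mu(A) = \lambda$, $d_A = e$, and $\xi := \xi_{A,1} \in 1_{\lambda_0} \aS(n,r) 1_\lambda$ satisfies $\xi(\v_\lambda \otimes 1) = \v_{\lambda_0} \otimes 1$, by \cref{inbits} (the summation is over the singleton $(\S_{\mu(A)}\backslash \S_\lambda)_{\min}$). Applying the centrality identity $z \circ \xi = \xi \circ z$ to $\v_\lambda \otimes 1$ and using that $f_\lambda \in \P^{(r)} = Z(\aH_r)$ acts by right multiplication, one obtains
\begin{equation*}
\v_{\lambda_0} \otimes f_{\lambda_0}
= (f_{\lambda_0} 1_{\lambda_0})\bigl(\v_{\lambda_0} \otimes 1\bigr)
= \xi\bigl((f_\lambda 1_\lambda)(\v_\lambda \otimes 1)\bigr)
= \xi(\v_\lambda \otimes 1) \cdot f_\lambda
= \v_{\lambda_0} \otimes f_\lambda.
\end{equation*}
The map $\P^{(r)} \to M(\lambda_0)\otimes_{\kk \S_r} \aH_r,\ f \mapsto \v_{\lambda_0} \otimes f$, is injective, as follows from \cref{bits} applied with $\lambda = \mu = \lambda_0$ and $A = \diag(r,0,\dots,0)$: the basis elements $\xi_{A,f}$ (for $f$ running over a $\kk$-basis of $\P^{\mu(A)} = \P^{(r)}$) are linearly independent and each is determined by its value on $\v_{\lambda_0} \otimes 1$. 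Hence $f_\lambda = f_{\lambda_0}$, so $z = f_{\lambda_0} 1_{n,r}$, completing the proof. There is no serious obstacle here: the structural input from Lemmas~\ref{weiqiang} and~\ref{bits} carries essentially all of the weight, and the only new ingredient is the explicit merge morphism $\xi$ that transports the symmetric polynomial $f_{\lambda_0}$ to $f_\lambda$.
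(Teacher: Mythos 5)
Your proof is correct and takes essentially the same approach as the paper: both decompose a central element $z$ into blockwise pieces $f_\lambda 1_\lambda$ using \cref{weiqiang}, then use a morphism between a general block $\lambda$ and the distinguished block $(r,0,\dots,0)$ to force all the $f_\lambda$ to coincide. The only cosmetic difference is that you transport via the merge morphism in $1_{\lambda_0}\aS(n,r)1_\lambda$, while the paper uses the split morphism in $1_\lambda\aS(n,r)1_{\lambda_0}$; these are interchangeable, and both arguments close by invoking the basis from \cref{bits}.
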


\begin{proof}
\cref{interestinger} implies that $f 1_{n,r}$ is central for each $f \in \P^{(r)}$, so the map is well defined. It is clearly an algebra homomorphism, and it is injective by \cref{bits}.
To show that it is surjective, take a central element
$z \in Z(\aS(n,r))$. 
Using that $1_\lambda z = z 1_\lambda$, it follows easily that
$z = \sum_{\lambda \in \Lambda(n,r)} z_\lambda$ with
$z_\lambda \in 1_\lambda \aS(n,r) 1_\lambda$.
The centrality of $z$ in $\aS(n,r)$
implies that each $z_\lambda$ is central in 
$\End_{\aSchur}(\lambda)$.
Hence, by \cref{weiqiang}, we have that $z_\lambda = f_\lambda 1_\lambda$
for $f_\lambda \in \P^{(r)}$.
Finally, we let $\mu := (r,0,\dots,0) \in \Lambda(n,r)$ 
and $f := f_\mu \in \P^{(r)}$.
Then take any $\lambda\in \Lambda(n,r)$
and let 
$A$ be the unique element of $\Mat{\lambda}{\mu}$; the basis vector
$\xi_A$ splits a single string of thickness $r$ into strings of thicknesses $\lambda_1,\dots,\lambda_n$.
Using $\xi_A z = z \xi_A$, we get that $\xi_{A,f} = \xi_{A, f_\lambda}$. Hence, $f_\lambda = f$ for all $\lambda \in \Lambda(n,r)$.
This shows that $z = f 1_{n,r}$, completing the proof.
\end{proof}

\begin{corollary}\label{overcenter}
$\aS(n,r)$ is free of finite rank as a module over its center.
\end{corollary}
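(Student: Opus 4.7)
The plan is to leverage the basis theorem \cref{bits} together with the identification of the center given by \cref{whatIexpect} to realize $\aS(n,r)$ as a direct sum of invariant rings $\P^{\mu(A)}$, each of which is a free module of finite rank over the base invariant ring $\P^{(r)}$.

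First I would verify that the identification $Z(\aS(n,r)) \cong \P^{(r)}$ from \cref{whatIexpect} acts on the standard basis vectors in a transparent way: for any $f \in \P^{(r)}$, $\lambda, \mu \in \Lambda(n,r)$, $A \in \Mat{\lambda}{\mu}$ and $g \in \P^{\mu(A)}$, a direct computation using \cref{inbits} and the fact that $d_A$ commutes with every fully symmetric polynomial (since $d_A^{-1} f d_A = d_A^{-1}(f) = f$ when $f \in \P^{(r)}$) gives
\[
(f 1_{n,r}) \circ \xi_{A,g} \;=\; \xi_{A,\,fg}.
\]
Therefore $1_\lambda \aS(n,r) 1_\mu$ is a $\P^{(r)}$-module, and it decomposes as
\[
1_\lambda \aS(n,r) 1_\mu \;\cong\; \bigoplus_{A \in \Mat{\lambda}{\mu}} \P^{\mu(A)},
\]
where the summand indexed by $A$ is sent isomorphically to $\{\xi_{A,g} \mid g \in \P^{\mu(A)}\}$, and the $\P^{(r)}$-action on the right corresponds to ordinary multiplication inside $\P^{\mu(A)}$. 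This is immediate from \cref{bits} once we know the formula above, because that result gives a $\kk$-basis of $1_\lambda \aS(n,r) 1_\mu$ by letting $g$ range over a $\kk$-basis of $\P^{\mu(A)}$.

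Next I would invoke the classical fact that for a Young subgroup $\S_{\mu(A)} \leq \S_r$, the invariant ring $\P^{\mu(A)} = \kk[x_1,\dots,x_r]^{\S_{\mu(A)}}$ is a free module over $\P^{(r)}$ of rank $[\S_r : \S_{\mu(A)}]$. This holds over any commutative ring $\kk$, e.g., by exhibiting an explicit $\P^{(r)}$-basis (one may use products of Schubert polynomials indexed by minimal length coset representatives of $\S_r/\S_{\mu(A)}$, or reduce to the case $\kk = \Z$ and then base change). Granting this, choosing such a basis $\{f_1^{(A)},\dots,f_{k_A}^{(A)}\}$ of $\P^{\mu(A)}$ over $\P^{(r)}$ yields a free $\P^{(r)}$-basis $\{\xi_{A, f_i^{(A)}}\}$ for the summand indexed by $A$.

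Summing over the finitely many triples $(\lambda,\mu,A)$ with $\lambda,\mu \in \Lambda(n,r)$ and $A \in \Mat{\lambda}{\mu}$, we conclude that $\aS(n,r)$ is a free $Z(\aS(n,r))$-module of rank $\sum_{\lambda,\mu} \sum_{A \in \Mat{\lambda}{\mu}} [\S_r : \S_{\mu(A)}]$, which is finite. There is no substantial obstacle here; the only nontrivial ingredient is the freeness of $\P^{\mu(A)}$ over $\P^{(r)}$, which should be cited rather than reproved. Everything else is a bookkeeping consequence of \cref{bits} and \cref{whatIexpect}.
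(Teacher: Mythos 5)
Your proof is correct and takes essentially the same approach as the paper: the paper's proof is a one-line invocation of exactly the same ingredients, namely the basis $\xi_{A,f}$ from \cref{bits}, the identification $Z(\aS(n,r)) \cong \P^{(r)}$ from \cref{whatIexpect}, and the classical freeness of Young-subgroup invariant rings over $\P^{(r)}$. You have simply spelled out the intermediate step that the central action on the basis is $(f1_{n,r})\circ\xi_{A,g} = \xi_{A,fg}$, which the paper leaves implicit.
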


\begin{proof}
For $\lambda \in \Lambda(n,r)$, $P^\lambda$ is a free $P^{(r)}$-module of rank $r! / \lambda_1!\cdots \lambda_n!$, so
this follows from \cref{whatIexpect,bits}.
\end{proof}

\begin{corollary}\label{impossible}
The center $Z(\aSchur)$ of the degenerate affine Schur category is 
isomorphic to $\prod_{r \geq 0} \P^{(r)}$
via the map sending $(f_r)_{r \geq 0}\in \prod_{r \geq 0} \P^{(r)}$ to 
the natural transformation $(f_{|\lambda|} 1_\lambda)_{\lambda \in \Lambda}$.
\end{corollary}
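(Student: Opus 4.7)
The plan is to upgrade \cref{whatIexpect} (which handles each individual algebra $\aS(n,r)$) to the categorical level, where the new content is matching the scalar symmetric polynomials across different $n$ via naturality.

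\emph{Well-definedness and the algebra homomorphism property.} \cref{interesting} provides, for each $r \geq 0$ and $f \in \P^{(r)}$, a central natural transformation $(\delta_{r,|\lambda|} f 1_\lambda)_{\lambda \in \Lambda}$. Since $\Hom_{\aSchur}(\mu,\lambda) = 0$ whenever $|\mu| \neq |\lambda|$, any natural transformation of $\id_{\aSchur}$ decomposes uniquely according to the value of $r = |\lambda|$, so summing over $r$ gives a well-defined assignment $(f_r)_{r\geq 0} \mapsto (f_{|\lambda|} 1_\lambda)_\lambda$. The algebra homomorphism property is immediate from $f g 1_\lambda = (f 1_\lambda)(g 1_\lambda)$ in $\End_{\aSchur}(\lambda)$.

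\emph{Injectivity.} If $(f_{|\lambda|} 1_\lambda)_\lambda = 0$ then, specializing to $\lambda = (r) \in \Lambda(1,r)$, we get $f_r 1_{(r)} = 0$ in $\End_{\aSchur}\big((r)\big)$; \cref{weiqiang} says $f \mapsto f 1_{(r)}$ is an algebra isomorphism $\P^{(r)} \stackrel{\sim}{\rightarrow} Z(\End_{\aSchur}((r)))$, so $f_r = 0$ for every $r$.

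\emph{Surjectivity.} Let $\eta = (\eta_\lambda)_\lambda \in Z(\aSchur)$. Naturality with respect to endomorphisms of $\lambda$ forces $\eta_\lambda \in Z(\End_{\aSchur}(\lambda))$, so \cref{weiqiang} produces $f_\lambda \in \P^{(|\lambda|)}$ with $\eta_\lambda = f_\lambda 1_\lambda$. The main step, and the main obstacle, is to show that $f_\lambda$ depends only on $r := |\lambda|$; once this is done we set $f_r := f_{(r)}$ and we are finished. Fix $r \geq 1$ and $\lambda \in \Lambda(n,r)$ with $n \geq 1$, and let $A \in \Mat{\lambda}{(r)}$ be the unique element (the single-column matrix with entries $\lambda_1,\dots,\lambda_n$). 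Then $d_A$ is the identity and $\mu(A) = \lambda(A) = \lambda$. Naturality applied to $\xi_A : (r) \rightarrow \lambda$ gives
\begin{equation*}
\xi_A \circ (f_{(r)} 1_{(r)}) \;=\; (f_\lambda 1_\lambda) \circ \xi_A,
\end{equation*}
which, unpacked via \cref{inbits,inbuts}, says that
\begin{equation*}
\xi_{A,f_{(r)}} \;=\; \xi_{f_\lambda, A},
\end{equation*}
both sides being the right $\aH_r$-module homomorphism sending $\v_{(r)} \otimes 1$ to $\sum_{y \in (\S_\lambda \backslash \S_r)_{\min}} \v_\lambda \otimes g y$ with $g$ equal to $f_{(r)}$ or $f_\lambda$ respectively (there is no ambiguity since $f_{(r)}, f_\lambda \in \P^{(r)} \subseteq \P^{\mu(A)} = \P^{\lambda(A)}$). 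Applying the $\xi_{A,g}$-basis of $\Hom_{\aSchur}\big((r),\lambda\big)$ from \cref{basechange} then yields $f_\lambda = f_{(r)}$, as required. The case $r = 0$ is handled the same way, noting that $\P^{(0)} = \kk$ and that any $\lambda \in \Lambda$ with $|\lambda| = 0$ is connected to $\one = ()$ by iterated compositions of spots \cref{spots}, forcing all $f_\lambda$ to be the same scalar.
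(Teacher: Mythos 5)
Your proof is correct and follows essentially the same route as the paper, which simply cites \cref{interesting} and \cref{whatIexpect} without elaboration; you have filled in the detail that the paper leaves implicit, namely using naturality against the canonical split $\xi_A:(r)\to\lambda$ (together with $d_A=e$, $\lambda(A)=\mu(A)=\lambda$ and linear independence of the $\xi_{A,g}$ from \cref{basechange}) to match the symmetric polynomials $f_\lambda$ across all compositions of a given size.
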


\begin{proof}
This follows from \cref{interesting} and \cref{whatIexpect}.
\end{proof}

\begin{remark}
When $n \geq r$, \cref{whatIexpect} also follows from the double centralizer property \cref{dcp} and the already known description of $Z(\aH_r)$.
\end{remark}
\setcounter{section}{6}

\section{Drinfeld's homomorphism from Yangians to degenerate affine Schur algebras}\label{s7-drinfeld}

The exposition in this section is based on the beautiful paper \cite{Arakawa}. 
An important point is that Arakawa works with a different definition of the degenerate affine Hecke algebra $\aH_r$ to us.
An isomorphism from our
version to his is given by mapping $x_i$ to $\epsilon_i$ and $w \in \S_r$ to $(-1)^{\ell(w)} w$.
We have systematically translated the results in \cite{Arakawa} taking this additional sign twist into account, but also repeat the proof of \cref{reality} in order to be self-contained.

As at the end of \cref{s2-reminders}, let $V$ be the natural representation of $\g := \gl_n(\kk)$ 
with standard basis $v_1,\dots,v_n$.
Let
\begin{align}
P &:= \sum_{i,j=1}^n e_{i,j} \otimes e_{j,i} \in \End_\kk(V)^{\otimes 2},
&
Q &:= \sum_{i,j=1}^n e_{i,j} \otimes e_{i,j} \in \End_\kk(V)^{\otimes 2}.
\end{align}
Note that $P$ acts on $V \otimes V$ as the tensor flip: $P(v_i \otimes v_j) = v_j \otimes v_i$.
In $\End_\kk(V)^{\otimes 3}$,
we have that
\begin{align}
P^{[2,3]} Q^{[1,3]} &= Q^{[1,2]} P^{[2,3]} = Q^{[1,2]}Q^{[1,3]},&
P^{[2,3]} Q^{[1,2]} &= Q^{[1,3]} P^{[2,3]} = Q^{[1,3]}Q^{[1,2]}.
\end{align}
Here, $P^{[2,3]}$ denotes $1 \otimes P$, that is, $P$ 
in the tensor positions 2 and 3 with the identity in the first position,
$Q^{[1,3]}$ denotes $Q$ in tensor positions 1 and 3 with the identity
in the second position, etc..
Using these identities and \cref{daharels}, 
one can check the following:

\begin{lemma}\label{learnt}
In the algebra $\End_\kk(V)^{\otimes 3} \otimes \aH_2[u]$,
we have that
\begin{multline*}
\big(u-x_1+Q^{[1,2]}\big)\big(u-x_2+Q^{[1,3]}\big)\big(P^{[2,3]}-s_1\big)
=\\ \big(P^{[2,3]}-s_1\big)\left(
\big(u-x_1+Q^{[1,3]}\big)\big(u-x_2+Q^{[1,2]}\big) + s_1 \big(Q^{[1,2]}-Q^{[1,3]}\big)
\right).
\end{multline*}
(In this equation, $x_1, x_2$ and $s_1$ denote these 
elements of $\aH_2$
identified with the subalgebra $1 \otimes 1 \otimes 1 \otimes \aH_2$
of $\End_\kk(V)^{\otimes 3} \otimes \aH_2[u]$
in the obvious way.)
\end{lemma}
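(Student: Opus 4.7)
The plan is to prove this identity by direct expansion, exploiting the fact that the three tensor factors---$\End_\kk(V)^{\otimes 3}$ carrying the $P$'s and $Q$'s, the polynomial part $\kk[x_1,x_2]$, and the group algebra part spanned by $s_1$---interact in very controlled ways. Specifically, $P^{[2,3]}$ commutes with each $x_i$ and with $s_1$, and $s_1$ commutes with every $Q^{[i,j]}$; so the only nontrivial commutations come from (i) the two displayed identities stating that $P^{[2,3]}$ transposes the upper indices of $Q^{[1,2]}$ and $Q^{[1,3]}$, and (ii) the dAHA relations $s_1 x_1 = x_2 s_1 + 1$, $x_1 s_1 = s_1 x_2 + 1$, rewritten as $(u-x_1)s_1 = s_1(u-x_2)-1$ and $(u-x_2)s_1 = s_1(u-x_1)+1$.

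First I would handle the $P^{[2,3]}$ contribution on each side. Applying $P^{[2,3]}Q^{[1,2]} = Q^{[1,3]}P^{[2,3]}$ and $P^{[2,3]}Q^{[1,3]} = Q^{[1,2]}P^{[2,3]}$ term by term, one sees immediately that
\begin{equation*}
(u-x_1+Q^{[1,2]})(u-x_2+Q^{[1,3]})\,P^{[2,3]}
= P^{[2,3]}\,(u-x_1+Q^{[1,3]})(u-x_2+Q^{[1,2]}),
\end{equation*}
since moving $P^{[2,3]}$ across simply swaps the two upper indices $2$ and $3$. So the $P^{[2,3]}$-parts of the two sides of the lemma match up without further work, and the task reduces to verifying
\begin{equation*}
(u-x_1+Q^{[1,2]})(u-x_2+Q^{[1,3]})\,s_1
= s_1(u-x_1+Q^{[1,3]})(u-x_2+Q^{[1,2]}) + s_1(Q^{[1,2]}-Q^{[1,3]})P^{[2,3]} + (Q^{[1,2]}-Q^{[1,3]}).
\end{equation*}

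To check this, I would push $s_1$ from right to left through $(u-x_2+Q^{[1,3]})$ and then through $(u-x_1+Q^{[1,2]})$, using only that $s_1$ commutes with the $Q$'s and the two dAHA relations above. A short, bookkeeping-heavy calculation produces the $\pm 1$ terms that telescope to give
\begin{equation*}
(u-x_1+Q^{[1,2]})(u-x_2+Q^{[1,3]})\,s_1
= s_1(u-x_2+Q^{[1,2]})(u-x_1+Q^{[1,3]}) + (Q^{[1,2]} - Q^{[1,3]}).
\end{equation*}
After canceling the extra additive term, and canceling the leading $s_1$ (which is invertible), what remains to verify is the commutator identity
\begin{equation*}
[Q^{[1,2]},\,Q^{[1,3]}] = (Q^{[1,2]}-Q^{[1,3]})\,P^{[2,3]},
\end{equation*}
because the terms involving $(u-x_i)$ all cancel, $x_1,x_2$ being central with respect to the $Q$'s and commuting with each other.

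This final identity is immediate from the two given strings of equalities: subtracting $Q^{[1,3]}P^{[2,3]}=Q^{[1,3]}Q^{[1,2]}$ from $Q^{[1,2]}P^{[2,3]}=Q^{[1,2]}Q^{[1,3]}$ gives exactly what is needed. The main obstacle in this argument is purely organizational, namely keeping track of which $Q$-label sits in which tensor slot after each commutation and being careful with the signs produced by the dAHA relations; no ingredients beyond the two displays preceding the statement are required.
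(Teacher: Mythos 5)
Your proof is correct, and since the paper leaves the verification to the reader (it says only that "one can check the following" using the stated $P$--$Q$ identities and \cref{daharels}), your computation is precisely the intended argument, carried out in full. I checked each step: the $P^{[2,3]}$-parts match because conjugation by $P^{[2,3]}$ swaps the $[1,2]$ and $[1,3]$ labels; pushing $s_1$ to the left using $(u-x_2)s_1 = s_1(u-x_1)+1$ and $(u-x_1)s_1 = s_1(u-x_2)-1$ indeed produces the residue $Q^{[1,2]}-Q^{[1,3]}$; and after cancelling $s_1$ the remaining task is exactly $[Q^{[1,2]},Q^{[1,3]}] = (Q^{[1,2]}-Q^{[1,3]})P^{[2,3]}$, which follows at once from $Q^{[1,2]}P^{[2,3]} = Q^{[1,2]}Q^{[1,3]}$ and $Q^{[1,3]}P^{[2,3]} = Q^{[1,3]}Q^{[1,2]}$.
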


Let $\Yn$ be the Yangian associated to $\g$.
The quickest way to define this algebra is via the RTT presentation: it has generators $T_{i,j}^{(d)}\:(1 \leq i,j \leq n, d \geq 1)$ subject to the relations 
\begin{equation}\label{mr}
[T_{i,j}^{(a)}, T_{k,l}^{(b)}]
= \sum_{c=0}^{\min(a,b)-1}
\left( 
T_{i,l}^{(a+b-1-c)}T_{k,j}^{(b)} -
T_{i,l}^{(c)}T_{k,j}^{(a+b-1-c)}
\right)
\end{equation}
for every $1 \leq i,j,k,l \leq n$ and $a,b \geq 1$,
where $T_{i,j}^{(0)} := \delta_{i,j}$.
These relations can be written equivalently in terms of generating functions as
\begin{equation}\label{RTT}
R^{[1,2]}(u-v) T^{[1,3]}(u) T^{[2,3]}(v)
= 
T^{[2,3]}(v) T^{[1,3]}(u) R^{[1,2]}(u-v),
\end{equation}
equality in $\End_\kk(V)^{\otimes 2} \otimes \Yn\lround u^{-1}, v^{-1}\rround$.
This needs a little more explanation (see also \cite{MNO} for a fuller account):
the superscript notation indicates tensor positions like in the opening paragraph;
the variables $u$ and $v$ are indeterminates; and
$$
R(u):=
u + P
\in \End_\kk(V)^{\otimes 2}[u],
$$
\begin{align*}
T_{i,j}(u) &:= \sum_{t \geq 0} T_{i,j}^{(t)} u^{-t} \in \Yn\llbracket u^{-1}\rrbracket,&
T(u) := \sum_{i,j=1}^n e_{i,j} \otimes T_{i,j}(u)
\in \End_\kk(V) \otimes \Yn \llbracket u^{-1} \rrbracket.
\end{align*}

\begin{lemma}\label{easygen}
The Yangian $\Yn$ is generated as an algebra
by the elements $T_{1,1}^{(d)}\:(d \geq 1)$ and $T_{i,j}^{(1)}\:(1 \leq i,j \leq n)$.
\end{lemma}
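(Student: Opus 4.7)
The plan is to use the defining relation \cref{mr} specialized to $b=1$, which reduces to a very simple form, and then to run a three-step cascade starting from $T_{1,1}^{(d)}$ to pick up every other $T_{i,j}^{(d)}$.

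First, I would specialize \cref{mr} with $b=1$. Then $\min(a,b)-1 = 0$, so only the $c=0$ term survives, and after using $T_{i,j}^{(0)} = \delta_{i,j}$ the right-hand side collapses to
\[
[T_{i,j}^{(a)}, T_{k,l}^{(1)}] = \delta_{j,k}\, T_{i,l}^{(a)} - \delta_{i,l}\, T_{k,j}^{(a)}
\]
for all $1 \leq i,j,k,l \leq n$ and $a \geq 1$. (Equivalently, this is the coefficient of $v^{-1}$ in \cref{RTT}.) Let $A$ denote the subalgebra of $\Yn$ generated by the elements $T_{1,1}^{(d)}\:(d\geq 1)$ and $T_{i,j}^{(1)}\:(1\leq i,j\leq n)$. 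The proof is then just to show, for each $d \geq 1$, that every $T_{i,j}^{(d)}$ lies in $A$, by induction on nothing more than the number of cascade steps.

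The cascade has three steps. Step one: for $l \neq 1$, take $(i,j,k)=(1,1,1)$ in the boxed identity to obtain
\[
[T_{1,1}^{(d)}, T_{1,l}^{(1)}] = T_{1,l}^{(d)},
\]
which places every $T_{1,l}^{(d)}$ into $A$. Step two: for $k \neq 1$ and any $l$ with $l \neq k$ (in particular for $l \neq 1$), apply the identity with $(i,j,k',l')=(1,l,k,1)$ to get
\[
[T_{1,l}^{(d)}, T_{k,1}^{(1)}] = \delta_{k,l}\, T_{1,1}^{(d)} - T_{k,l}^{(d)},
\]
from which $T_{k,l}^{(d)}\in A$ whenever $k \neq l$. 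Step three: in the same identity setting $k=l$ and using $T_{1,1}^{(d)}\in A$ by hypothesis yields $T_{l,l}^{(d)}\in A$ for each $l=2,\dots,n$. This exhausts the RTT generators, so $A = \Yn$.

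There is essentially no obstacle: the only calculation is the collapse of \cref{mr} at $b=1$, and everything else is bookkeeping. The slight subtlety, if any, is simply that the generator $T_{l,1}^{(1)}$ used in step two must already be available; but it is, since $T_{i,j}^{(1)}$ for \emph{all} $i,j$ is among the postulated generators.
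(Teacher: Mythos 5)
Your proof is correct and follows essentially the same argument as the paper's: both specialize \cref{mr} at $b=1$ and run the cascade $T_{1,1}^{(d)} \rightsquigarrow T_{1,l}^{(d)}\rightsquigarrow T_{k,l}^{(d)}$ by commuting against the degree-one generators. The paper routes the last step through $T_{i,1}^{(d)}$ while you route it through $T_{1,l}^{(d)}$, but this is just a transposed bookkeeping choice, not a different method.
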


\begin{proof}
By \cref{mr}, $[T_{1,1}^{(d)}, T_{1,j}^{(1)}]$
and $[T_{i,1}^{(1)}, T_{1,1}^{(d)}]$ yield
$T_{1,j}^{(d)}\:(j > 1)$ and $T_{i,1}^{(d)}\:(i> 1)$, respectively.
Then $[T_{i,1}^{(d)}, T_{1,j}^{(1)}]+ \delta_{i,j} T_{1,1}^{(d)}$
yields $T_{i,j}^{(d)}\:(i,j > 1)$.
\end{proof}

There are some useful symmetries 
(see \cite[Prop.~1.12]{MNO}):
\begin{itemize}
\item ({\em Translation}) For $c \in \kk$, let $\eta_c:\Yn 
\rightarrow \Yn$
be the automorphism defined by $\eta_c(T_{i,j}(u)) = T_{i,j}(u+c)$, i.e.,
$\eta_c(T_{i,j}^{(d)}) = \sum_{s=0}^{d-1}
\binom{t-1}{s} (-c)^s T_{i,j}^{(d-s)}$.
\item ({\em Multiplication by a
power series}) For $f(u) \in 1 + u^{-1} \kk[[u^{-1}]]$, let 
$\mu_f:\Yn \rightarrow \Yn$ be the automorphism defined by
$\mu_f(T_{i,j}(u)) = f(u) T_{i,j}(u)$, i.e.,
$\mu_f(T_{i,j}^{(d)}) = \sum_{r=0}^d a_r T_{i,j}^{(d-r)}$
if $f(u) = \sum_{r \geq 0} a_r u^{-r}$.
\item ({\em Transposition})
Let $\tau:\Yn\rightarrow \Yn$
be the antiautomorphism of order 2
defined from
$\tau(T_{i,j}(u)) = T_{j,i}(u)$, i.e.,
$\tau(T_{i,j}^{(t)}) = T_{j,i}^{(t)}$.
\item ({\em Inversion})
Let $\omega_n:\Yn \rightarrow \Yn$ be the
automorphism of order $2$ defined from the equation
$(\id_{\End_\kk(V)} \otimes \omega_n)(T(u)) = T(-u)^{-1}$.
\end{itemize}
The Yangian is a Hopf algebra with comultiplication
$\Delta$ defined by
\begin{equation}\label{comultiplication}
(\id_{\End_\kk(V)} \otimes \Delta)(T(u)) := T^{[1,2]}(u) T^{[1,3]}(u) \in \End_\kk(V) \otimes \Yn^{\otimes 2}\llbracket u^{-1}\rrbracket.
\end{equation}
Also, for any scalar $c$, 
there is the {\em evaluation homomorphism} $\ev_c:\Yn \rightarrow \End_\kk(V)$ defined by
\begin{equation}\label{evaluation}
(\id_{\End_\kk(V)} \otimes \ev_c)(T(u)) :=
1 + \frac{Q}{u-c} \in \End_\kk(V)^{\otimes 2}\llbracket u^{-1}\rrbracket.
\end{equation}
Evaluation homomorphisms can be defined more generally: for any algebra $A$ and any $c \in A$ there is an algebra homomorphism
$\ev_c:\Yn \rightarrow \End_\kk(V) \otimes A$ defined by the same formula \cref{evaluation}. 
Identifying $\End_\kk(V) \otimes A$ with
$\End_{\dash A}(V \otimes A)$, this makes $V \otimes A$ into a $(\Yn,A)$-bimodule. 

Now let $\Delta^{(r)}:\Yn \rightarrow \Yn^{\otimes r}$ be the $r$th iterated comultiplication. Similar to the previous paragraph, 
using also that $x_1,\dots,x_r \in \aH_r$ commute,
there is an algebra homomorphism
\begin{equation}
\DRINFELD_{n,r}
:= (\ev_{x_1} \barotimes \cdots \barotimes \ev_{x_r}) \circ \Delta^{(r)}:\Yn \rightarrow \End_\kk(V)^{\otimes r} \otimes \aH_r
\end{equation}
defined in terms of generating functions by
\begin{align}\label{formalism}
(\id_{\End_\kk(V)}\otimes \DRINFELD_{n,r})(T(u))
&:=
\left(1+\frac{Q^{[1,2]}}{u-x_1}\right)\cdots \left(1+\frac{Q^{[1,r+1]}}{u-x_r}\right),
\end{align}
equality in $\End_\kk(V)^{\otimes (r+1)} \otimes \aH_r\llbracket u^{-1}\rrbracket$.
The definition implies that
\begin{align}
\DRINFELD_{n,r}(T_{i,j}(u)) &= \sum_{\substack{\bi \in I(n,r)\\i_r = j}}
\left(\delta_{i,i_1} + \frac{e_{i,i_1}^{[1]}}{u-x_1}
\right) 
\left(\delta_{i_1,i_2} + \frac{e_{i_1,i_2}^{[2]}}{u-x_2}
\right) 
\cdots
\left(\delta_{i_{r-1},i_r} + \frac{e_{i_{r-1},i_r}^{[r]}}{u-x_r}
\right)\label{hard}\\
&=
\delta_{i,j}+\!\!\!\!
\sum_{\substack{s \geq 1\\1 \leq p_1 < \cdots < p_s \leq r}}
\sum_{\substack{\bi \in I(n,s)\\ i_s = j}}
\frac{e_{i,i_1}^{[p_1]} e_{i_1,i_2}^{[p_2]}
\cdots e_{i_{s-1},i_s}^{[p_s]}}
{(u-x_{p_1})\cdots (u-x_{p_s})}\in \End_\kk(V)^{\otimes r}\otimes \aH_r\llbracket u^{-1}\rrbracket.\label{harder}
\end{align}
Identifying $\End_\kk(V)^{\otimes r} \otimes \aH_r$
with $\End_{\dash \aH_r}(V^{\otimes r} \otimes \aH_r)$,
the homomorphism $\DRINFELD_{n,r}$
makes $V^{\otimes r} \otimes \aH_r$ into a $(\Yn,\aH_r)$-bimodule.

\begin{lemma}\label{reality}
The action of $\Yn$ on $V^{\otimes r} \otimes \aH_r$
induces an action on the quotient
$V^{\otimes r} \otimes_{\kk S_r} \aH_r$.
\end{lemma}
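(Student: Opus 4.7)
The plan is to show that the right $\aH_r$-submodule $N \subset V^{\otimes r} \otimes \aH_r$ whose quotient is $V^{\otimes r} \otimes_{\kk \S_r} \aH_r$ is stable under the $\Yn$-action. By induction on the length of $s \in \S_r$, $N$ is generated as a right $\aH_r$-submodule by the elements $(P_i v) \otimes 1 - v \otimes s_i$ for $v \in V^{\otimes r}$ and $1 \leq i \leq r-1$, where $P_i \in \End_\kk(V)^{\otimes r}$ swaps the $i$-th and $(i+1)$-st tensor factors (i.e., realizes the right action of $s_i \in \S_r$ on $V^{\otimes r}$). Identifying $P_i$ and $s_i$ with the elements $P_i \otimes 1$ and $1 \otimes s_i$ of $\End_\kk(V)^{\otimes r} \otimes \aH_r$, and using that each $\DRINFELD_{n,r}(y)$ is a right $\aH_r$-module endomorphism, the task reduces to showing that $\DRINFELD_{n,r}(y) \cdot (P_i - s_i)(v \otimes 1) \in N$ for all $v, y, i$.

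The key observation is that if
\[
\DRINFELD_{n,r}(y) \cdot (P_i - s_i) = (P_i - s_i) \cdot Z_y
\]
in $\End_\kk(V)^{\otimes r} \otimes \aH_r$ for some $Z_y$, then $\DRINFELD_{n,r}(y) \cdot (P_i - s_i)(v \otimes 1) = (P_i - s_i)(Z_y(v \otimes 1)) \in N$, since $(P_i - s_i)(w) \in N$ for every $w$. The set of $y \in \Yn$ admitting such a $Z_y$ is a subalgebra of $\Yn$, so it suffices to verify the commutation relation on an algebra generating set. Since $\Yn$ is generated by the coefficients of $T(u)$, I would establish the commutation relation for $(\id_{\End_\kk(V)} \otimes \DRINFELD_{n,r})(T(u))$. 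Clearing denominators in \cref{formalism} and working in $\End_\kk(V)^{\otimes(r+1)} \otimes \aH_r[u]$ with position $1$ the auxiliary matrix position, this becomes equivalent to showing
\[
F_1 F_2 \cdots F_r \cdot (P^{[i+1,i+2]} - s_i) = (P^{[i+1,i+2]} - s_i) \cdot W
\]
for some $W$, where $F_j := u - x_j + Q^{[1,j+1]}$ and $P^{[i+1,i+2]}$ acts on auxiliary-indexed positions $i+1, i+2$ (matching $P_i$ on $V^{\otimes r}$).

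The remaining calculation is largely automatic. For $j \notin \{i, i+1\}$, the factor $F_j$ commutes with $(P^{[i+1,i+2]} - s_i)$: $x_j$ commutes with $s_i$ in $\aH_r$, and $Q^{[1,j+1]}$ commutes with $P^{[i+1,i+2]}$ because the tensor positions involved are disjoint. Sliding all such factors past $(P^{[i+1,i+2]} - s_i)$ reduces the problem to the single identity
\[
F_i F_{i+1} \cdot (P^{[i+1,i+2]} - s_i) = (P^{[i+1,i+2]} - s_i) \cdot \bigl[(u - x_i + Q^{[1,i+2]})(u - x_{i+1} + Q^{[1,i+1]}) + s_i (Q^{[1,i+1]} - Q^{[1,i+2]})\bigr],
\]
which is precisely \cref{learnt} in the case $i = 1$; the general case follows by the identical argument (equivalently, embed $\aH_2 \hookrightarrow \aH_r$ via $x_1, x_2, s_1 \mapsto x_i, x_{i+1}, s_i$ and relabel tensor positions $2, 3$ of \cref{learnt} to $i+1, i+2$). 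Assembling these pieces yields the required commutation, and hence the $\Yn$-action descends to $V^{\otimes r} \otimes_{\kk \S_r} \aH_r$. The technical heart of the proof is \cref{learnt}, whose otherwise mysterious-looking identity is engineered precisely for this purpose.
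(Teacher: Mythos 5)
Your proof is correct and takes essentially the same approach as the paper's: both reduce the preservation of the kernel to the commutation identity of \cref{learnt} after clearing the central denominator $(u-x_1)\cdots(u-x_r)$. The paper is terser — it simply asserts that the full product $\big(u-x_1+Q^{[1,2]}\big)\cdots\big(u-x_r+Q^{[1,r+1]}\big)\big(P^{[i+1,i+2]}-s_i\big)$ factors through $\big(P^{[i+1,i+2]}-s_i\big)$ ``by \cref{learnt}'' — whereas you spell out why this follows (the factors $F_j$ with $j\notin\{i,i+1\}$ commute with $P^{[i+1,i+2]}-s_i$, so slide past it, leaving exactly the two-factor identity of \cref{learnt} up to relabelling), and you also make explicit the preliminary reduction that the kernel is generated as a right $\aH_r$-module by $(P_i v)\otimes 1 - v\otimes s_i$. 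These are just details left implicit in the paper; no genuinely different route is taken.
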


\begin{proof}
In this proof, which follows \cite[Prop.~2]{Arakawa},
we use $s_i$ and $x_j$ to denote the endomorphisms of $V^{\otimes r} \otimes \aH_r$
defined by left multiplication by these elements on the last tensor factor $\aH_r$.
The endomorphism of $V^{\otimes r} \otimes \aH_r$ defined by the right action of 
$s_i$ on $V^{\otimes r}$ is the operator $P^{[i,i+1]}$.
We need to show that the subspace
$\sum_{i=1}^{r-1} \Im (P^{[i,i+1]}-s_i)$ is a $\Yn$-submodule
of $V^{\otimes r} \otimes \aH_r$.
Using \cref{formalism}, this follows if we show
for $i=1,\dots,r-1$ that
$$
\frac{
\big(u-x_1+Q^{[1,2]}\big)\cdots \big(u-x_r+Q^{[1,r+1]}\big)}{(u-x_1) \cdots (u-x_r)}:V^{\otimes (r+1)} \otimes \aH_r\llbracket u^{-1}
\rrbracket \rightarrow V^{\otimes (r+1)} \otimes \aH_r\llbracket u^{-1}
\rrbracket
$$
maps $\Im (P^{[i+1,i+2]}-s_i)$ into $\Im (P^{[i+1,i+2]}-s_i)$.
Since the coefficients of $(u-x_1)\cdots(u-x_r)$ are in the center of $\aH_r$, we can ignore the denominator. Then the conclusion follows since 
$$
\big(u-x_1+Q^{[1,2]}\big)\cdots \big(u-x_r+Q^{[1,r+1]}\big)
\big(P^{[i+1,i+2]}-s_i\big)
= \big(P^{[i+1,i+2]}-s_i\big) X
$$
for some $X \in \End_\kk(V)^{\otimes (r+1)} \otimes \aH_r[u]$
by \cref{learnt}.
\end{proof}

To avoid potential confusion, 
we will use the notation $v \barotimes h$
to denote the image of $v \otimes h \in V^{\otimes r} \otimes \aH_r$
under the quotient map
$\pi:V^{\otimes r} \otimes \aH_r \twoheadrightarrow
V^{\otimes r}\otimes_{\kk S_r} \aH_r$.
We have now made both of these spaces into
$(\Yn, \aH_r)$-bimodules in such a way that $\pi$ is a bimodule homomorphism.
Now recall from \cref{nitpick1} that
$\aS(n,r) = \End_{\dash \aH_r}\big(V^{\otimes r} \otimes_{\kk \S_r} \aH_r\big)$.
So the action of $\Yn$ on $V^{\otimes r}\otimes_{\kk \S_r} \aH_r$
induces an algebra homomorphism
\begin{equation}
\Drinfeld_{n,r}:\Yn \rightarrow \aS(n,r),
\end{equation}
which we call the {\em Drinfeld homomorphism}.
Recall also the homomorphism $\drinfeld_{n,r}:\Un\rightarrow \S(n,r)$ from \cref{notdrinfeld}.

\begin{lemma}\label{silly}
The following diagram commutes:
$$
\begin{tikzcd}
\arrow[d,hookrightarrow]
\Un\arrow[r,"\drinfeld_{n,r}" above]&\S(n,r)\arrow[d,hookrightarrow]\\
\Yn\arrow[r,"\Drinfeld_{n,r}" below]&\aS(n,r).
\end{tikzcd}
$$
Here,
the left hand vertical map is the natural embedding
$e_{i,j} \mapsto T_{i,j}^{(1)}$, and the
right hand vertical map is the inclusion from \cref{easy}.
\end{lemma}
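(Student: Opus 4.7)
Since $\Un$ is generated as an algebra by the matrix units $e_{i,j}$, and both compositions in the square are algebra homomorphisms, it suffices to check that $\Drinfeld_{n,r}(T_{i,j}^{(1)}) = \iota(\drinfeld_{n,r}(e_{i,j}))$ for all $1 \leq i,j \leq n$. The plan is to compute the left hand side explicitly from the generating function formula \cref{formalism} and identify it with the right hand side using the definition of $\drinfeld_{n,r}$ in \cref{folly}.

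First I would extract the coefficient of $u^{-1}$ from the generating function identity \cref{formalism}. Each factor $1 + Q^{[1,p+1]}/(u-x_p)$ contributes $Q^{[1,p+1]} u^{-1}$ to first order, with all other contributions appearing at order $u^{-2}$ or lower. Thus, at the level of the coefficient of $u^{-1}$ in $(\id_{\End_\kk(V)}\otimes \DRINFELD_{n,r})(T(u))$, we obtain
\[
\sum_{p=1}^{r} Q^{[1,p+1]} = \sum_{p=1}^{r} \sum_{k,l=1}^{n} e_{k,l}^{[1]} \otimes e_{k,l}^{[p+1]}.
\]
Reading off the coefficient of $e_{i,j}$ in the first tensor position yields
\[
\DRINFELD_{n,r}(T_{i,j}^{(1)}) = \sum_{p=1}^{r} e_{i,j}^{[p]} \in \End_\kk(V)^{\otimes r} \otimes \aH_r,
\]
where $e_{i,j}^{[p]}$ now denotes the operator acting as $e_{i,j}$ in the $p$th tensor slot of $V^{\otimes r}$ and as the identity elsewhere (including on $\aH_r$). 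Passing to the quotient via $\pi:V^{\otimes r}\otimes \aH_r \twoheadrightarrow V^{\otimes r}\otimes_{\kk \S_r}\aH_r$ and invoking \cref{reality}, the element $\Drinfeld_{n,r}(T_{i,j}^{(1)}) \in \aS(n,r)$ is the $\aH_r$-linear endomorphism of $V^{\otimes r}\otimes_{\kk \S_r}\aH_r$ acting by $\sum_{p=1}^r e_{i,j}^{[p]}$ on the tensor factor $V^{\otimes r}$.

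On the other side, the derived action of $\gl_n$ on $V^{\otimes r}$ defining $\drinfeld_{n,r}$ in \cref{notdrinfeld} sends $e_{i,j}$ to exactly the operator $\sum_{p=1}^r e_{i,j}^{[p]} \in \End_{\dash \kk \S_r}(V^{\otimes r}) = \S(n,r)$; the explicit basis-element form of this endomorphism was recorded in \cref{folly}. The embedding $\iota:\S(n,r) \hookrightarrow \aS(n,r)$ from \cref{easy} arises from applying $-\otimes_{\kk \S_r} \aH_r$, so $\iota(\drinfeld_{n,r}(e_{i,j}))$ is precisely the $\aH_r$-linear endomorphism of $V^{\otimes r}\otimes_{\kk \S_r}\aH_r$ acting as $\sum_{p=1}^r e_{i,j}^{[p]}$ on $V^{\otimes r}$ and trivially on $\aH_r$. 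This matches the formula just computed.

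The only real subtlety is bookkeeping between the two uses of the superscript $[p]$ notation (tensor positions in $\End_\kk(V)^{\otimes (r+1)}$ versus in $\End_\kk(V)^{\otimes r}$, shifted by one because of the auxiliary first slot carrying the matrix coefficient of $T(u)$). Once this is made precise, the calculation is routine and the commutativity of the square on generators---and hence on all of $\Un$---follows immediately.
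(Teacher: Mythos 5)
Your proof is correct and follows essentially the same route as the paper's: extract the $u^{-1}$-coefficient of the generating function (you use \cref{formalism}, the paper cites the equivalent expansion \cref{harder}) to get $\Drinfeld_{n,r}(T_{i,j}^{(1)}) = \sum_{p=1}^r e_{i,j}^{[p]}$, then match this against $\drinfeld_{n,r}(e_{i,j})$ via \cref{folly}. The paper compresses this to two sentences but the substance is identical; your added remark about the index shift between $\End_\kk(V)^{\otimes(r+1)}$ and $\End_\kk(V)^{\otimes r}$ is a sensible bit of bookkeeping that the paper leaves implicit.
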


\begin{proof}
Taking $u^{-1}$-coefficients in \cref{harder} gives that
$\Drinfeld_{n,r}(T_{i,j}^{(1)}) = \sum_{p=1}^r 
e_{i,j}^{[p]}$. This acts
in the same way as $\drinfeld_{n,r}(e_{i,j})$
on $V^{\otimes r} \otimes_{\kk \S_r} \aH_r$
by \cref{fooly,folly}.
\end{proof}

\setcounter{section}{7}

\section{Diagrams for the Drinfeld homomorphism}\label{s8-diags}

We would like to find a formula expressing the image of $T_{i,j}(u) \in \Yn\llbracket u^{-1}\rrbracket$ under the homomorphism $\Drinfeld_{n,r}:\Yn \rightarrow \aS(n,r)$ in terms of the basis vectors $\xi_{A,f}$ of the degenerate
affine Schur algebra. To do this, 
since the vectors $v_{\bi^\mu} \barotimes 1$ generate $V^{\otimes r} \otimes_{\kk \S_r} \aH_r$ as a right $\aH_r$-module, we should think about how $T_{i,j}(u)$ acts on $v_{\bi^\mu} \barotimes 1 \in V^{\otimes r} \otimes_{\kk \S_r} \aH_r$ for 
$\mu \in \Lambda(n,r)$.
By the definition of the action, this is the image of $T_{i,j}(u) (v_{\bi^\mu} \otimes 1) \in V^{\otimes r} \otimes \aH_r$ 
under the quotient map
$\pi: V^{\otimes r} \otimes \aH_r
\twoheadrightarrow  V^{\otimes r} \otimes_{\kk \S_r} \aH_r,
v \otimes h \mapsto v \barotimes h$.

Recall that $e_{i,j}^{[p]}\:(1 \leq i,j \leq n, 1 \leq p \leq r)$ denotes the endomorphism of $V^{\otimes r}\otimes \aH_r$
that is $e_{i,j}$ acting on the $p$th tensor position,
and elements of $\aH_r$ are viewed as endomorphisms of this $\kk$-module acting by left multiplication on $\aH_r$ in 
the last tensor factor.
For $1 \leq a, b \leq r$, let
\begin{align}\label{ready}
e_{i,j}^{[a,b]}(u) := 
\begin{dcases}
\sum_{p=a}^b 
\frac{e_{i,j}^{[p]}}{u-x_p}\left(1+\frac{1}{u-x_{p+1}}\right)
\cdots \left(1+\frac{1}{u-x_b}\right)
&\text{if $i \neq j$}\\
\left(1+\frac{1}{u-x_{a}}\right)
\left(1+\frac{1}{u-x_{a+1}}\right)
\cdots \left(1+\frac{1}{u-x_b}\right)
&\text{if $i=j$.}
\end{dcases}
\end{align}
This is $\delta_{i,j}$ if $a > b$.

\begin{lemma}\label{claimylaimy}
Let $\bi = \bi^\mu$ for $\mu \in \Lambda(n,r)$.
Let $a_j := \mu_{<j}+1$
and $b_j := \mu_{\leq j}$; when $\mu_j > 0$, these index the first and last entries that equal $j$ in the increasing multi-index $\bi$, respectively.
For $1 \leq i,j \leq n$,
$T_{i,j}(u)$ acts on $v_{\bi} \otimes 1$ in the same way as
\begin{equation}\label{claimyl}
\sum_{\substack{t \geq 1\\1 \leq j_1<\dots<j_t =j\\ i\geq j_1}}
e_{i,j_1}^{[a_{j_1},b_{j_1}]}(u) e_{j_1,j_2}^{[a_{j_2},b_{j_2}]}(u) \cdots e_{j_{t-1},j_t}^{[a_{j_t},b_{j_t}]}(u).
\end{equation}
\end{lemma}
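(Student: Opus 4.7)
The plan is to expand both sides of the asserted equality as elements of $V^{\otimes r}\otimes \aH_r\llbracket u^{-1}\rrbracket$ after applying them to $v_{\bi^\mu}\otimes 1$, and to match the two expansions combinatorially. For the left-hand side, formula \cref{harder} writes $\Drinfeld_{n,r}(T_{i,j}(u))$ as $\delta_{i,j}$ plus a sum over strictly increasing tuples $p_1<\cdots<p_s$ in $\{1,\dots,r\}$ and $\bk\in I(n,s)$ with $k_s=j$ of $e_{i,k_1}^{[p_1]}\cdots e_{k_{s-1},k_s}^{[p_s]}/\prod_\ell(u-x_{p_\ell})$. Acting on $v_{\bi^\mu}$, the matrix-unit condition forces $k_\ell=i^\mu_{p_\ell}$ on every nonzero summand, and weak monotonicity of $\bi^\mu$ then lets me reorganise each such term by its distinct value profile $j^{(1)}<\cdots<j^{(t)}=j$ together with nonempty subsets $P_\alpha\subseteq[a_{j^{(\alpha)}},b_{j^{(\alpha)}}]$ collecting the chosen positions of each value. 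A direct check shows that such a configuration contributes $\prod_\alpha\prod_{p\in P_\alpha}(u-x_p)^{-1}$ times the tensor obtained from $v_{\bi^\mu}$ by changing the index at $\min P_1$ from $j^{(1)}$ to $i$ and at $\min P_\alpha$ from $j^{(\alpha)}$ to $j^{(\alpha-1)}$ for each $\alpha\geq 2$; the non-minimum positions within each $P_\alpha$ produce only scalar factors, since $e_{j^{(\alpha)},j^{(\alpha)}}$ stabilises $v_{j^{(\alpha)}}$.

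On the right-hand side, I would expand each factor of \cref{claimyl} using \cref{ready} by distributing the product $\prod_{q=p+1}^{b}\bigl(1+(u-x_q)^{-1}\bigr)$: in the non-diagonal case this rewrites $e_{i',j'}^{[a,b]}(u)$ as a sum over nonempty subsets $S\subseteq[a,b]$ of $e_{i',j'}^{[\min S]}\prod_{q\in S}(u-x_q)^{-1}$, and in the diagonal case $i'=j'$ as the same sum with $S=\emptyset$ allowed. Applying this to $v_{\bi^\mu}$ produces coefficient--tensor data indexed by chains $j_1<\cdots<j_t=j$ with $j_1\leq i$ together with subsets $P_\alpha\subseteq[a_{j_\alpha},b_{j_\alpha}]$ which are nonempty for $\alpha\geq 2$ (automatic from $j_{\alpha-1}<j_\alpha$) and nonempty or empty for $\alpha=1$ (empty being permitted exactly when $i=j_1$ puts the first factor into the diagonal case).

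The proof is then completed by a bijection between the two parametrising sets. Given a direct configuration $(j^{(1)},\dots,j^{(t)};P_1,\dots,P_t)$, if $j^{(1)}\leq i$ take $(j_1,\dots,j_t)=(j^{(1)},\dots,j^{(t)})$ with the $P_\alpha$ unchanged; if instead $j^{(1)}>i$, prepend $j_1:=i$ with $P_1:=\emptyset$ and shift the existing data by one, which is legitimate precisely because the new $\alpha=1$ factor is diagonal ($j_0=i=j_1$) and so permits an empty subset. The inverse strips any leading $P_1=\emptyset$, which the expansion forces to occur only when $i=j_1$. One then verifies that coefficient and resulting tensor agree under this correspondence, and that the leftover $\delta_{i,j}v_{\bi^\mu}$ summand of \cref{harder} matches the $t=1$, $j_1=j=i$, $P_1=\emptyset$ term of \cref{claimyl}. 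The main obstacle is the bookkeeping in this last step: confirming that the $i\geq j_1$ constraint in \cref{claimyl} exactly absorbs the direct contributions with $j^{(1)}>i$ via the empty-$P_1$ device, without double counting or omission.
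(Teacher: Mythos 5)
Your proposal is correct, and the underlying combinatorics is the same as the paper's: both arguments reorganise the sum coming from \cref{harder} by the set of distinct values $j^{(1)}<\cdots<j^{(t)}=j$ attained at the selected positions together with the subsets $P_\alpha\subseteq[a_{j^{(\alpha)}},b_{j^{(\alpha)}}]$ of chosen positions per value, then match against the subset expansion of each factor in \cref{claimyl}. Where you differ is the mechanism by which the constraint $i\geq j_1$ in \cref{claimyl} is reconciled with the unconstrained first value $j^{(1)}$ on the \cref{harder} side. The paper handles this by a preliminary purely algebraic identity: it first proves \cref{claimyl} equals the intermediate expression \cref{claimyr}, in which the constraint $i\geq j_1$ is dropped at the cost of replacing the first factor by $e_{i,j_1}^{[a_{j_1},b_{j_1}]}(u)-\delta_{i,j_1}$ and separating off a leftover $\delta_{i,j}$; only afterwards does it compare with \cref{harder} via the factorisation in \cref{done}. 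You instead absorb that preliminary step into the bijection, via the empty-$P_1$/prepend-$j_1:=i$ device. This is exactly what the $\delta_{i,j_1}$-subtraction accomplishes in \cref{claimyr}, since $e_{i,j_1}^{[a,b]}(u)-\delta_{i,j_1}$ is precisely the sum over \emph{nonempty} $P_1$ (when $i=j_1$ the subtraction removes the $P_1=\emptyset$ term, and when $i\neq j_1$ the subtraction is vacuous). So both arguments are valid and of comparable length; your version is slightly more economical in not naming the intermediate form, while the paper's two-step structure separates the formal manipulation of generating functions from the combinatorics of acting on $v_{\bi^\mu}\otimes 1$, which makes the factorisation step cleaner to state.
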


\begin{proof}
We first show that the expression \cref{claimyl} equals
\begin{equation}\label{claimyr}
\delta_{i,j}+\!\!\!\!
\sum_{\substack{t \geq 1\\1 \leq j_1<\dots<j_t =j}}\!\!\!\!\!
\left(e_{i,j_1}^{[a_{j_1},b_{j_1}]}(u)-\delta_{i,j_1}\right)
\prod_{k=2}^t e_{j_{k-1},j_k}^{[a_{j_k},b_{j_k}]}(u).
\end{equation}
To see this, \cref{claimyr}
is equal to $\delta_{i,j}+A+B+C$ where
$A$, $B$ and $C$ are the sums of the terms of the summation with 
$i=j_1$, $i>j_1$ and $i<j_1$, respectively.
We have that
$\delta_{i,j}+A = A_1-A_2$ where
\begin{align*}
A_1&=\!\!\!\!\!
\sum_{\substack{t \geq 1\\1 \leq j_1<\dots<j_t =j\\i=j_1}}\!\!\!\!
e_{i,j_1}^{[a_{j_1},b_{j_1}]}(u) \prod_{k=2}^t e_{j_{k-1},j_k}^{[a_{j_k},b_{j_k}]}(u),&
A_2 &=\!\!\!\!\!
\sum_{\substack{t \geq 2\\1 \leq j_1<\dots<j_t =j\\  i=j_1}}\!\!\!\!
e_{i,j_2}^{[a_{j_2},b_{j_2}]}(u)
\prod_{k=3}^t e_{j_{k-1},j_k}^{[a_{j_k},b_{j_k}]}(u).\\\intertext{Also}
B&=\!\!\!\!\!
\sum_{\substack{t \geq 1\\1 \leq j_1<\dots<j_t =j\\ i>j_1}}\!\!\!\!
e_{i,j_1}^{[a_{j_1},b_{j_1}]}(u) 
\prod_{k=2}^t e_{j_{k-1},j_k}^{[a_{j_k},b_{j_k}]}(u),&
C&=\!\!\!\!\!
\sum_{\substack{t \geq 1\\1 \leq j_1<\dots<j_t =j\\i<j_1}}\!\!\!\!
e_{i,j_1}^{[a_{j_1},b_{j_1}]}(u)
\prod_{k=2}^t e_{j_{k-1},j_k}^{[a_{j_k},b_{j_k}]}(u).
\end{align*}
It remains to observe that $A_1+B$ is equal to \cref{claimyl}
and $A_2=C$.

By \cref{harder}, $T_{i,j}(u)$
acts on $v_\bi \otimes 1$ in the same way as
\begin{equation}\label{claimy}
\delta_{i,j}+\!\!\!\!\!\!
\sum_{\substack{s\geq 1\\1 \leq p_1 < \cdots < p_s \leq r\\
i_{p_s} = j}}\!\!\!\!\!\!\!\!\!\!
\frac{e_{i,i_{p_1}}^{[p_1]} e_{i_{p_1},i_{p_2}}^{[p_2]}
\cdots e_{i_{p_{s-1}},i_{p_s}}^{[p_s]}}
{(u-x_{p_1})\cdots (u-x_{p_s})}.
\end{equation}
Using the claim established in the previous paragraph, to complete the proof, it suffices 
to show that the expressions \cref{claimy} and \cref{claimyr}
act on $v_\bi \otimes 1$ in the same way.
For $s \geq 1$ and 
$1 \leq p_1<\cdots<p_s\leq r$ with $i_{p_s} = j$, we have that
$\{i_{p_1},\dots,i_{p_s}\} = \{j_1,\dots,j_t\}$
for unique $t \geq 1$ and $1 \leq j_1 < \cdots < j_t = j$.
Consequently, the proof reduces further to showing for any
$t \geq 1$ and $1 \leq j_1 < \cdots < j_t = j$ that
\begin{equation}\label{done}
\sum_{\substack{s\geq 1 \\1 \leq p_1 < \cdots < p_s \leq r\\
\{i_{p_1},\dots,i_{p_s}\} = \{j_1,\dots,j_t\}}}\!\!\!\!\!\!\!\!\!\!\!\!\!\!
\frac{e_{i,i_{p_1}}^{[p_1]} e_{i_{p_1},i_{p_2}}^{[p_2]}
\cdots e_{i_{p_{s-1}},i_{p_s}}^{[p_s]}}
{(u-x_{p_1})\cdots (u-x_{p_s})}(v_\bi \otimes 1)
=
\left(e_{i,j_1}^{[a_{j_1},b_{j_1}]}(u)-\delta_{i,j_1}\right) \prod_{k=2}^t e_{j_{k-1},j_k}^{[a_{j_k},b_{j_k}]}(u) (v_\bi \otimes 1).
\:\:\end{equation}

It remains to prove \cref{done}.
Each $e_{i_{p_{q-1}},i_{p_q}}^{[p_q]}$ on the left hand side
with $i_{p_{q-1}}=i_{p_q}$ 
acts as the identity on $v_\bi \otimes 1$,
so these terms can be omitted.
Then we factor to see that the left hand side equals
$$
\left(\sum_{\substack{s \geq 1 \\ 1 \leq p_1 < \cdots < p_s \leq r\\
i_{p_1}=\cdots=i_{p_s}=j_1}}\!\!\!\!\!\!\!\!
\frac{e_{i,j_1}^{[p_1]}}
{(u-x_{p_1})\cdots (u-x_{p_s})}\right)\times
\prod_{k=2}^t
\left(\sum_{\substack{s \geq 1 \\ 1 \leq p_1 < \cdots < p_s \leq r\\
i_{p_1}=\cdots=i_{p_s}=j_k}}\!\!\!\!\!\!\!\!
\frac{e_{j_{k-1},j_k}^{[p_k]}}{(u-x_{p_1})\cdots (u-x_{p_s})}\right) (v_\bi \otimes 1).
$$
The expression in the first big bracket here is equal to 
$$
\begin{dcases}
\sum_{\substack{s\geq 1 \\ a_{j_1} \leq p_1 < \cdots < p_s \leq b_{j_1}}}\!\!\!\!\!\!\!
\frac{e_{i,j_1}^{[p_1]}}{u-x_{p_1}}
\frac{1}{(u-x_{p_2})\cdots (u-x_{p_s})}&\text{if $i \neq j_1$}\\
\sum_{\substack{s \geq 1 \\ a_{j_1} \leq p_1 < \cdots < p_s \leq b_{j_1}}}\!\!\!\!\!\!\!
\frac{1}
{(u-x_{p_1})\cdots (u-x_{p_s})}
&\text{if $i=j_1$},
\end{dcases}
$$
which is equal to $e_{i,j_1}^{[a_{j_1},b_{j_1}]}(u)-\delta_{i,j_1}$ by \cref{cheap}.
Similarly, the $k$th big bracket in the product is equal to
$e_{j_{k-1},j_k}^{[a_{j_k},b_{j_k}]}(u)$.
This shows that the left hand side of \cref{done} equals the right hand side.
\end{proof}

\begin{lemma}\label{gettingclose}
Suppose that $1 \leq i, j \leq n$ and
$\mu \in \Lambda(n,r)$ with $\mu_j > 0$.
Let $a := \mu_{<j}+1$
and $b:= \mu_{\leq j}$.
The following hold 
for any $\bi \in I(n,r)$ with $i_a = i_{a+1}=\cdots = i_b = j$:
\begin{enumerate}
\item
If $i < j$ then 
$$
\pi\left(e_{i,j}^{[a,b]}(u) (v_\bi \otimes 1)\right)
=
\sum_{d \in (\S_\nu \backslash \S_\mu)_{\min}}
v_\bh \barotimes 
\frac{1}{u-x_a} d
$$
where $v_{\bh} := e_{i,j}^{[a]} v_\bi$ and
$\nu := (\mu_1,\dots,\mu_{j-1},1,\mu_j-1,\mu_{j+1},\dots,\mu_r)$.
\item
If $i = j$ then 
$$
\pi\left(e_{i,j}^{[a,b]}(u) (v_\bi \otimes 1)\right)
=
v_\bi \barotimes 
\prod_{p=a}^b \left(1+\frac{1}{u-x_p}\right).
$$
\item
If $i > j$ then
$$
\pi\left(e_{i,j}^{[a,b]}(u) (v_\bi \otimes 1)\right)
=
\sum_{d \in (\S_\nu \backslash \S_\mu)_{\min}}
v_\bh \barotimes 
\left[\prod_{p=a}^{b-1} \left(1+\frac{1}{u-x_p}\right)\right] \frac{1}{u-x_b}d
$$
where $v_\bh := e_{i,j}^{[b]} v_\bi$ and $\nu := (\mu_1,\dots,\mu_{j-1},\mu_j-1,1,\mu_{j+1},\dots,\mu_r)$.
\end{enumerate}
\end{lemma}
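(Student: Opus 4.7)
The plan is to dispatch case (2) directly, then reduce cases (1) and (3) to algebraic identities in $\aH_r\lround u^{-1}\rround$ modulo a stabilizer ideal. For case (2), since $e_{j,j}^{[a,b]}(u)$ contains no $e^{[p]}$ operators, it is a scalar in $\aH_r\llbracket u^{-1}\rrbracket$ which acts on $v_\bi\otimes 1$ purely by left multiplication on the $\aH_r$-tensorand, so the claimed formula follows upon projection.

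For cases (1) and (3) I would expand
\[
e_{i,j}^{[a,b]}(u) = \sum_{p=a}^b \frac{e_{i,j}^{[p]}}{u-x_p}\prod_{q=p+1}^b\!\left(1+\tfrac{1}{u-x_q}\right),
\]
apply this to $v_\bi\otimes 1$, and reorganise. Since $i_a=\cdots=i_b=j$, applying $e_{i,j}^{[p]}$ to $v_\bi$ produces a vector $v_{\bi^{(p)}}$ that differs from $v_\bh$ only by a right $\S_\mu$-action: in case (1) by the cycle $c_p := s_a s_{a+1}\cdots s_{p-1}$ (sending $p\mapsto a$), and in case (3) by $c'_p := s_{b-1} s_{b-2}\cdots s_p$ (sending $p\mapsto b$). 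As $p$ ranges over $\{a,\dots,b\}$, these are exactly the elements of $(\S_\nu\backslash\S_\mu)_{\min}$ for the respective $\nu$. Using $v_\bh w\otimes h = v_\bh\otimes wh$, the proof reduces to verifying the $\aH_r\lround u^{-1}\rround$-identity
\[
\sum_{p=a}^b c_p\,\frac{1}{u-x_p}\prod_{q=p+1}^b\!\left(1+\tfrac{1}{u-x_q}\right) \equiv \frac{1}{u-x_a}\sum_{p=a}^b c_p \pmod{I_\nu}
\]
in case (1), together with its mirror-image analogue in case (3) (obtained by replacing $c_p$ with $c'_p$ and inserting the left factor $\prod_{q=a}^{b-1}(1+\tfrac{1}{u-x_q})\cdot\tfrac{1}{u-x_b}$); here $I_\nu := \sum_{s_k\in\S_\nu}(1-s_k)\aH_r\lround u^{-1}\rround$ is the right ideal annihilating $v_\bh\otimes-$.

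I would prove these identities by induction on the block width $\mu_j = b-a+1$, the base case $\mu_j = 1$ being trivial. For the inductive step in case (1), the plan is to peel off the $p=a$ summand $\tfrac{1}{u-x_a}\prod_{q=a+1}^b(1+\tfrac{1}{u-x_q})$, factor $c_p = s_a\cdot(s_{a+1}\cdots s_{p-1})$ in the remaining terms, invoke the induction hypothesis on the sub-block $[a+1,b]$ (whose stabilizer ideal sits inside $I_\nu$), and then transport $s_a$ past $\tfrac{1}{u-x_{a+1}}$ via the basic commutation relation $s_a\cdot\tfrac{1}{u-x_{a+1}} = \tfrac{1}{u-x_a}s_a - \tfrac{1}{(u-x_a)(u-x_{a+1})}$ (a consequence of \cref{poof} and \cref{demazureoperator}). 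The secondary fractions that appear combine with part of the $p=a$ summand via the auxiliary congruence
\[
\prod_{q=a+1}^b\!\left(1+\tfrac{1}{u-x_q}\right) \equiv 1 + \tfrac{1}{u-x_{a+1}}\!\sum_{p=a+1}^b s_{a+1}\cdots s_{p-1} \pmod{(1-s_{a+1})\aH_r\lround u^{-1}\rround},
\]
which is itself a one-step consequence of the same commutation rule. Case (3) is handled by the symmetric argument with ``leftmost'' replaced by ``rightmost'' throughout.

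The main obstacle will be the bookkeeping in the inductive step: every transposition past a $\tfrac{1}{u-x_q}$ produces a secondary rational fraction that must either cancel against a term coming from another $c_p$, or else be shown to lie in $I_\nu$. The key structural point making this work is that $c_p$ (respectively $c'_p$) was chosen as the minimum length coset representative, so the trailing factors $s_{a+1}\cdots s_{p-1}$ (respectively $s_{b-1}\cdots s_{p+1}$) produced by the factorisation are composed entirely of generators lying in $\S_\nu$; each ``leftover'' from a commutation therefore falls into $I_\nu$ rather than escaping it.
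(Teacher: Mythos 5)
Your proposal follows essentially the same route as the paper: identify the minimal coset representatives for $(\S_\nu\backslash\S_\mu)_{\min}$ as the cycles $c_p=s_as_{a+1}\cdots s_{p-1}$ (resp.\ $c'_p=s_{b-1}\cdots s_p$), record the formula $\pi\bigl(e_{i,j}^{[a,b]}(u)(v_\bi\otimes 1)\bigr)=v_\bh\barotimes\sum_p c_p\frac{1}{u-x_p}\prod_{q>p}\bigl(1+\frac{1}{u-x_q}\bigr)$, and induct on the block width, factoring $c_p=s_a\cdot(s_{a+1}\cdots s_{p-1})$ and pushing $s_a$ past $\frac{1}{u-x_{a+1}}$ via $s_a\frac{1}{u-x_{a+1}}=\frac{1}{u-x_a}s_a-\frac{1}{(u-x_a)(u-x_{a+1})}$. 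The only real difference in presentation is that you package the residual verification as a congruence in $\aH_r\lround u^{-1}\rround$ modulo a stabilizer ideal $I_\nu$, where the paper works with the vectors $v_\bh\barotimes(\cdots)$ directly, applies the induction hypothesis a second time to the secondary-fraction term, and finishes with the telescoping identity \cref{cheap}. These are the same argument in two dialects.

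Two imprecisions to flag. First, your auxiliary congruence is stated modulo $(1-s_{a+1})\aH_r\lround u^{-1}\rround$ only; already for three factors ($b=a+3$) the difference of its two sides contains summands such as $(1-s_{a+1}s_{a+2})\frac{1}{u-x_{a+3}}$ and $(1-s_{a+2})\frac{1}{(u-x_{a+1})(u-x_{a+3})}$, which lie in the full ideal $\sum_{k=a+1}^{b-1}(1-s_k)\aH_r\lround u^{-1}\rround$ but not in the part indexed by $s_{a+1}$ alone. Since $\{s_{a+1},\dots,s_{b-1}\}\subseteq\S_\nu$, the congruence with the corrected modulus is all you need, and it is still absorbed by $I_\nu$ after left multiplication by $\frac{1}{u-x_a}$ (which commutes with $s_k$ for $k\geq a+1$). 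Second, the claim that the auxiliary congruence is ``a one-step consequence of the same commutation rule'' is too optimistic: its proof is itself a short induction on the number of factors (peel off $1+\frac{1}{u-x_{a+1}}$, apply the shifted hypothesis, move one $s_{a+1}$ past one fraction), or, equivalently, a second invocation of the main induction hypothesis followed by \cref{cheap}, which is what the paper does. Once these two points are tightened your argument coincides with the paper's.
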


\begin{proof}
(1) Note that the sum $\sum_{d \in (\S_\nu \backslash \S_\mu)_{\min}} d$ is
$\sum_{p=a}^b s_a s_{a+1} \cdots s_{p-1}$.
From the definition \cref{ready}, it is easy to see that
\begin{equation}\label{have}
\pi\left(e_{i,j}^{[a,b]}(u) (v_\bi \otimes 1)\right)
=
v_\bh \barotimes
\sum_{p=a}^b
 s_a s_{a+1} \cdots s_{p-1}
\frac{1}{u-x_p} \left(1+\frac{1}{u-x_{p+1}}\right)
\cdots
\left(1+\frac{1}{u-x_b}\right)
\end{equation}
where $v_\bh := e_{i,j}^{[a]} v_\bi$.
To complete the proof, 
we use induction on $(b-a)$ to show for any $\bh$ such that
$h_{a+1}=\cdots=h_b$ that the right hand side of \cref{have} equals
\begin{equation}\label{want}
v_{\bh} \barotimes \frac{1}{u-x_a}\sum_{p=a}^b s_a s_{a+1} \cdots s_{p-1}.
\end{equation}
The case $a=b$ is trivial. For the induction step, we have by induction that
\begin{multline*}
v_{\bh \cdot s_a}  \barotimes 
\sum_{p=a+1}^b s_{a+1}s_{a+2} \cdots s_{p-1}
\frac{1}{u-x_p} \left(1+\frac{1}{u-x_{p+1}}\right)
\cdots \left(1+\frac{1}{u-x_{b}}\right)
=\\
v_{\bh\cdot s_a} \barotimes
\frac{1}{u-x_{a+1}} 
\sum_{p=a+1}^b s_{a+1}s_{a+2} \cdots s_{p-1}.
\end{multline*}
Using this, we deduce that \cref{have} equals
$$
v_\bh \barotimes
\frac{1}{u-x_a} \left(1+\frac{1}{u-x_{a+1}}\right)
\cdots
\left(1+\frac{1}{u-x_b}\right)
+
v_{\bh} \barotimes
s_a \frac{1}{u-x_{a+1}} 
\sum_{p=a+1}^b s_{a+1}s_{a+2} \cdots s_{p-1}.
$$
Then we use the commutation relation
$s_a \frac{1}{u-x_{a+1}} 
=
\frac{1}{u-x_{a}} s_a 
- \frac{1}{u-x_a}\frac{1}{u-x_{a+1}}$,
which may be proved using \cref{poof},
to deduce that this equals
\begin{multline*}
v_\bh \barotimes
\frac{1}{u-x_a} \left(1+\frac{1}{u-x_{a+1}}\right)
\cdots
\left(1+\frac{1}{u-x_b}\right)
+
v_{\bh} \barotimes
 \frac{1}{u-x_{a}} 
\sum_{p=a+1}^b s_a s_{a+1} \cdots s_{p-1}\\
-
v_{\bh} \barotimes
 \frac{1}{u-x_{a}}  \frac{1}{u-x_{a+1}} 
\sum_{p=a+1}^b s_{a+1} \cdots s_{p-1}.
\end{multline*}
We apply the induction hypothesis to rewrite the third term in this expression to obtain
\begin{multline}\label{new}
v_\bh \barotimes
\frac{1}{u-x_a} \left(1+\frac{1}{u-x_{a+1}}\right)
\cdots
\left(1+\frac{1}{u-x_b}\right)
+
v_{\bh} \barotimes
 \frac{1}{u-x_{a}} 
\sum_{p=a+1}^b s_a s_{a+1} \cdots s_{p-1}\\
-
v_{\bh} \barotimes
\sum_{p=a+1}^b s_{a+1} \cdots s_{p-1}  \frac{1}{u-x_{a}} \frac{1}{u-x_{p}}\left(1+\frac{1}{u-x_{p+1}}\right)\cdots \left(1+\frac{1}{u-x_b}\right) 
.
\end{multline}
In the third term of \cref{new}, the word $s_{a+1} \cdots s_{p-1}$ fixes $v_\bh$,
so it can be removed. Then the first and third terms of \cref{new} 
together give
$v_\bh \barotimes
\frac{1}{u-x_{a}}$ multiplied on the right by
$$
\left(1+\frac{1}{u-x_{a+1}}\right)
\cdots
\left(1+\frac{1}{u-x_b}\right)
-\sum_{p=a+1}^b  \frac{1}{u-x_{p}}\left(1+\frac{1}{u-x_{p+1}}\right)\cdots \left(1+\frac{1}{u-x_b}\right),
$$
which is simply equal to $1$ by \cref{cheap}.
Thus, we have $v_\bh \barotimes
\frac{1}{u-x_{a}}$ plus the second term of \cref{new}, which is the required \cref{want}.

\vspace{2mm}
\noindent
(2) This is obvious from \cref{ready}.

\vspace{2mm}
\noindent
(3) This is proved in a similar way to (1).
The counterpart of \cref{have} when $i > j$ is
\begin{equation}\label{havenot}
\pi\left(e_{i,j}^{[a,b]}(u) (v_\bi \otimes 1)\right)
=
v_\bh \barotimes
\sum_{p=a}^b
 s_{b-1} s_{b-2} \cdots s_{p}
\frac{1}{u-x_p}
\left(1+\frac{1}{u-x_{p+1}}\right)
\cdots
\left(1+\frac{1}{u-x_{b}}\right)
\end{equation}
where $v_\bh := e_{i,j}^{[b]} v_\bi$.
We need to show that the right hand side of this equation equals
\begin{equation}\label{wantnot}
v_{\bh} \barotimes \left(1+\frac{1}{u-x_{a}}\right)\cdots \left(1+\frac{1}{u-x_{b-1}}\right)
\frac{1}{u-x_b}\sum_{p=a}^b s_{b-1} s_{b-2} \cdots s_{p}.
\end{equation}
This follows by an induction argument like in the proof of (1).
\end{proof}

\begin{theorem}\label{SlavaTheorem}
For $1 \leq i,j \leq n$, we have that
\begin{equation}\label{crap}
\Drinfeld_{n,r}\big(T_{i,j}(u)\big)
= 
\sum_{\substack{t \geq 1 \\ 1 \leq j_1 < \cdots < j_t = j\\ i \geq j_1}}
\sum_{\substack{\mu \in \Lambda(n,r)\\
\mu_{j_1} > 0\text{ if }i \neq j_1\\
\mu_{j_2},\dots,\mu_{j_t}>0}}
\xi_{A_\mu[i\geq j_1<\cdots<j_{t}], f_\mu[i\geq j_1<\cdots<j_{t}]}
\end{equation}
where $A_\mu(i\geq j_1 < \dots < j_{t}) \in \Mat{\mu+\eps_{i}-\eps_{j}}{\mu}$
is the $n\times n$ matrix $$
\diag(\mu_1,\dots,\mu_n)+
(e_{i,j_1}-e_{j_1,j_1})+
\sum_{k=2}^t \left(e_{j_{k-1},j_k}-e_{j_k,j_k}\right),
$$
and $f_\mu(i\geq j_1<\cdots<j_t) \in \P_r$ is the polynomial 
$$
\begin{dcases}
\left[\prod_{p=a_{j_1}}^{b_{j_1}-1}\left(1+\frac{1}{u-x_p}\right)\right]
\frac{1}{u-x_{b_{j_1}}}
\left[\prod_{k=2}^t\frac{1}{u-x_{a_{j_k}}}\right]&\text{if $i > j_1$}\\
\left[\prod_{p=a_{j_1}}^{b_{j_1}}\left(1+\frac{1}{u-x_p}\right)\right]
\left[\prod_{k=2}^t\frac{1}{u-x_{a_{j_k}}}\right]
&\text{if $i = j_1$}
\end{dcases}
$$
for $a_j := \mu_{<j}+1$ and $b_j := \mu_{\leq j}$.
\end{theorem}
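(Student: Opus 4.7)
My plan is to verify \eqref{crap} by evaluating both sides as endomorphisms of the induced tensor space $V^{\otimes r}\otimes_{\kk\S_r}\aH_r$ on every generator $v_{\bi^\mu}\barotimes 1$ with $\mu\in\Lambda(n,r)$. For each fixed $\mu$, only the right-hand-side summands indexed by that particular $\mu$ contribute, since $\xi_{A_\mu[\cdots],f_\mu[\cdots]}$ lies in $1_{\mu+\eps_i-\eps_j}\aS(n,r)1_\mu$; both sides of \eqref{crap} then produce elements of $1_\lambda\bigl(V^{\otimes r}\otimes_{\kk\S_r}\aH_r\bigr)$ with $\lambda=\mu+\eps_i-\eps_j$.

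For the left-hand side, I would invoke \cref{claimylaimy} to express $T_{i,j}(u)(v_{\bi^\mu}\otimes 1)$ as a sum indexed by chains $1\le j_1<\cdots<j_t=j$ with $i\ge j_1$ of the product $\prod_{k=1}^t e_{j_{k-1},j_k}^{[a_{j_k},b_{j_k}]}(u)(v_{\bi^\mu}\otimes 1)$, with $j_0:=i$. The key structural feature is that the $t$ factors act on disjoint blocks of tensor positions and involve disjoint sets of $x_p$-variables, so they commute and the polynomial and coset-representative data produced by one application of \cref{gettingclose} sit in a block disjoint from the operators still to be applied. This lets us iterate: apply $\pi$ and use \cref{gettingclose} starting from the rightmost $k=t$ factor, passing each polynomial and permutation factor through subsequent steps via right multiplication in the $\aH_r$-slot (legitimate because $\pi$ is a right $\aH_r$-module map and the supports are disjoint). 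At every step the hypothesis of \cref{gettingclose} persists, since the block $[a_{j_k},b_{j_k}]$ of the current multi-index still consists of $j_k$'s, prior operators having touched only strictly larger-labelled blocks. Part (1) of \cref{gettingclose} applies for $k\ge 2$ and also at $k=1$ when $i>j_1$, while part (2) handles $k=1$ when $i=j_1$. The cumulative output is
\[
\sum_{(d_1,\ldots,d_t)} v_\bh\barotimes f_\mu[i\ge j_1<\cdots<j_t]\cdot d_1 d_2\cdots d_t,
\]
where $\bh$ is a specific multi-index of weight $\lambda$ and each $d_k$ ranges over $(\S_{\nu_k}\backslash\S_\mu)_{\min}$, with $\nu_k$ the single-block refinement of $\mu$ associated to $j_k$ as in \cref{gettingclose}.

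Comparing with the right-hand side via \eqref{inbits}, one has $\xi_{A,f}(v_{\bi^\mu}\barotimes 1) = \sum_{y\in(\S_{\mu(A)}\backslash\S_\mu)_{\min}} v_{\bi^\lambda}\barotimes d_A\,f\,y$ for $A=A_\mu[i\ge j_1<\cdots<j_t]$ and $f=f_\mu[i\ge j_1<\cdots<j_t]$. Reading $\mu(A)$ off the matrix $A$ column by column shows that $\S_{\mu(A)}=\S_{\nu_1}\cap\cdots\cap\S_{\nu_t}\subseteq\S_\mu$, each $\nu_k$ refining only the $j_k$-block of $\mu$ and the $\nu_k$ refining disjoint blocks. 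Consequently, the product map $(d_1,\ldots,d_t)\mapsto d_1d_2\cdots d_t$ is a length-additive bijection from $\prod_k(\S_{\nu_k}\backslash\S_\mu)_{\min}$ onto $(\S_{\mu(A)}\backslash\S_\mu)_{\min}$. The final step is then $v_\bh\barotimes h = v_{\bi^\lambda}\barotimes d_A h$, which reduces to identifying $d_A$ with the minimal-length permutation sending $\bi^\lambda$ to $\bh$, verified by direct inspection of the double coset diagram of $A_\mu[i\ge j_1<\cdots<j_t]$. The main obstacle is precisely this combinatorial identification, together with the clerical verification that $f_\mu[i\ge j_1<\cdots<j_t]$ is genuinely $\S_{\mu(A)}$-invariant and that its product with the coset sum $\sum_y y$ matches the output of the iterated \cref{gettingclose} calculation on the nose, separating the two cases $i>j_1$ and $i=j_1$.
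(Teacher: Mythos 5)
Your proposal is correct and follows essentially the same route as the paper's proof: both sides are evaluated on the generators $v_{\bi^\mu}\barotimes 1$, the left side is expanded via \cref{claimylaimy}, and \cref{gettingclose} is iterated over the factors $e_{j_{k-1},j_k}^{[a_{j_k},b_{j_k}]}(u)$, with the key structural fact (as you note) that the $t$ factors touch disjoint blocks of tensor positions and of $x_p$-variables. The only presentational difference is that the paper computes the right-hand side of \cref{weneedit} by factoring $\xi_{A,f}$ through the four horizontal strips $\alpha\circ\beta\circ\gamma\circ\delta$ of its string diagram as in \cref{notaproof}, which produces the coset sum $\sum_{d}v_\bh\barotimes fd$ in one stroke, whereas you invoke \cref{inbits} directly and then supply the (equivalent) length-additive bijection $\prod_k(\S_{\nu_k}\backslash\S_\mu)_{\min}\xrightarrow{\sim}(\S_{\mu(A)}\backslash\S_\mu)_{\min}$.
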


Before we give the proof, we explain how to work with the formula in \cref{SlavaTheorem} diagrammatically,
using the notation from \cref{kward}.
Let $\mu$ and $[i\geq j_1<\cdots<j_t]$ be as in \cref{crap}.
The double coset diagram 
for the matrix 
$A_\mu[i\geq j_1<\cdots<j_t]$ in \cref{SlavaTheorem} has vertical strings of thickness
$\mu_1,\dots,\mu_n$ at the bottom and thickness
$\mu_1,\dots,\mu_i+1,\dots,\mu_j-1,\dots,\mu_n$ at the top.
When $i>j_1$, there is 
a propagating string
$\begin{tikzpicture}[centerzero]\draw(-.2,-.2) to (.2,.2);
\end{tikzpicture}$ of thickness 1 from the $i$th vertical string at the top to the $j_1$th one at the bottom, 
and there are 
propagating strings 
$\begin{tikzpicture}[centerzero]\draw(-.2,.2) to (.2,-.2);
\end{tikzpicture}$
of thickness 1 from the $j_{k}$th vertical string at the top to the $j_{k+1}$th one at the bottom
for $k=1,\dots,t-1$.
Then the string diagram for the morphism 
$\xi_{A_\mu[i\geq j_1<\dots<j_{t}], f_\mu[i\geq j_1<\dots<j_{t}]}$
is obtained from this by
adding a label
$\begin{tikzpicture}[centerzero]
\circled{0,0}{u};
\end{tikzpicture}$
to each of the non-vertical propagating strings,
and the labels
$\begin{tikzpicture}[centerzero]
\ovaledinverse{0,.15}{u+1};
\circled{0,-.15}{u};
\end{tikzpicture}$ on the $i$th vertical string. These labels should be placed below all of the merges and crossings and above all of the splits.

\begin{example}\label{SlavaExample}
When $n=1$, the Drinfeld homomorphism $\Drinfeld_{1,r}:\Y(\gl_1) \rightarrow \aS(1,r)$ maps
$$
T_{1,1}(u) \mapsto
\begin{tikzpicture}[centerzero]
\node at (0,.8) {$\color{blue}\scriptstyle[1\geq 1]$};
\draw[ultra thick] (0,-.6)\botlabel{r} to (0,.6);
\ovaledinverse{0,.16}{u+1};
\circled{0,-.16}{u};
\end{tikzpicture}.
$$
When $n=2$, the homomorphism $\Drinfeld_{2,r}:\Y(\gl_2) \rightarrow \aS(2,r)$
maps
\begin{align*}
T_{1,1}(u) &\mapsto
\sum_{\mu \in \Lambda(2,r)}
\begin{tikzpicture}[centerzero]
\node at (.35,.8) {$\color{blue}\scriptstyle[1\geq 1]$};
\draw[ultra thick] (0,-.6)\botlabel{\mu_1} to (0,.6);
\ovaledinverse{0,.15}{u+1};
\circled{0,-.15}{u};
\draw[ultra thick] (.7,-.6)\botlabel{\mu_2} to (.7,.6);
\end{tikzpicture}\ ,&
T_{1,2}(u) &\mapsto
\sum_{\substack{\mu \in \Lambda(2,r)\\\mu_2 > 0}}
\begin{tikzpicture}[centerzero]
\node at (.5,.8) {$\color{blue}\scriptstyle[1\geq 1<2]$};
\draw[ultra thick] (0,-.6)\botlabel{\mu_1} to (0,.6);
\draw (.97,-.6) to (.97,-.35) to (.03,.35) to (.03,.6);
\ovaledinverse{0,-.05}{u+1};
\circled{0,-.35}{u};
\circled{0.5,0}{u};
\draw[ultra thick] (1,.6) to (1,-.6)\botlabel{\mu_2};
\end{tikzpicture}\ ,\\
T_{2,1}(u) &\mapsto
\sum_{\substack{\mu \in \Lambda(2,r)\\\mu_1 > 0}}
\begin{tikzpicture}[centerzero]
\node at (.5,.8) {$\color{blue}\scriptstyle(2\geq 1)$};
\draw[ultra thick] (0,-.6)\botlabel{\mu_1} to (0,.6);
\draw (.97,.6) to (.97,.35) to (.03,-.35) to (.03,-.6);
\ovaledinverse{0,.35}{u+1};
\circled{0,.05}{u};
\circled{0.5,0}{u};
\draw[ultra thick] (1,.6) to (1,-.6)\botlabel{\mu_2};
\end{tikzpicture}\ ,&
T_{2,2}(u) &\mapsto
\sum_{\mu \in \Lambda(2,r)}
\begin{tikzpicture}[centerzero]
\node at (.35,.8) {$\color{blue}\scriptstyle[2\geq 2]$};
\draw[ultra thick] (.7,-.6)\botlabel{\mu_2} to (.7,.6);
\draw[ultra thick] (0,-.6)\botlabel{\mu_1} to (0,.6);
\ovaledinverse{.7,.15}{u+1};
\circled{.7,-.15}{u};
\end{tikzpicture}
\ +\ 
\sum_{\substack{\mu \in \Lambda(2,r)\\\mu_1, \mu_2 > 0}}
\begin{tikzpicture}[centerzero]
\node at (.65,.8) {$\color{blue}\scriptstyle[2\geq 1< 2]$};
\draw[ultra thick] (0,-.6)\botlabel{\mu_1} to (0,.6);
\draw (1.27,.6) to (1.27,.5) to (.03,-.5) to (.03,-.6);
\draw (1.27,-.6) to (1.27,-.5) to (.03,.5) to (.03,.6);
\ovaledinverse{0,.15}{u+1};
\circled{0,-.15}{u};
\circled{.4,-.2}{u};
\circled{.9,-.2}{u};
\draw[ultra thick] (1.3,.6) to (1.3,-.6)\botlabel{\mu_2};
\end{tikzpicture}\ .\end{align*}
Also, when $n=3$, the homomorphism $\Drinfeld_{3,r}:\Y(\gl_3) \rightarrow \aS(3,r)$ maps
\begin{align*}
T_{3,1}(u)&\mapsto\sum_{\substack{\mu \in \Lambda(3,r)\\\mu_1 > 0}}
\begin{tikzpicture}[centerzero]
\node at (0,.8) {$\color{blue}\scriptstyle[3\geq 1]$};
\draw[ultra thick] (-.65,-.6) \botlabel{\mu_1} to (-.65,.6);
\draw[ultra thick] (0,-.6) \botlabel{\mu_2} to (0,.6);
\draw[ultra thick] (.65,-.6) \botlabel{\mu_3} to (.65,.6);
\draw (-0.62,-.6) to (-0.62,-.25) to (0.62,.25) to (0.62,.6);
\ovaledinverse{-.65,.35}{u+1};
\circled{-.65,0.04}{u};
\circled{-0.325,-0.125}{u};
\end{tikzpicture},\qquad\qquad
T_{1,3}(u)\mapsto
\sum_{\substack{\mu \in \Lambda(3,r)\\\mu_3 > 0}}
\begin{tikzpicture}[centerzero]
\node at (0,.8) {$\color{blue}\scriptstyle[1\geq 1<3]$};
\draw[ultra thick] (-.65,-.6) \botlabel{\mu_1} to (-.65,.6);
\draw[ultra thick] (0,-.6) \botlabel{\mu_2} to (0,.6);
\draw[ultra thick] (.65,-.6) \botlabel{\mu_3} to (.65,.6);
\draw (0.62,-.6) to (0.62,-.25) to (-.62,.25) to (-.62,.6);
\ovaledinverse{-.65,-.08}{u+1};
\circled{-.65,-0.4}{u};
\circled{0.325,-0.125}{u};
\end{tikzpicture}+\sum_{\substack{\mu \in \Lambda(3,r)\\\mu_2,\mu_3>0}}
\begin{tikzpicture}[centerzero]
\node at (0,.8) {$\color{blue}\scriptstyle[1\geq 1< 2< 3]$};
\draw[ultra thick] (-.65,-.6) \botlabel{\mu_1} to (-.65,.6);
\draw[ultra thick] (0,-.6) \botlabel{\mu_2} to (0,.6);
\draw[ultra thick] (.65,-.6) \botlabel{\mu_3} to (.65,.6);
\draw (-0.03,-.6) to (-0.03,-.05) to (-.62,.45) to (-.62,.6);
\draw (0.62,-.6) to (0.62,-.05) to (0.03,.45) to (0.03,.6);
\ovaledinverse{-.65,-.07}{u+1};
\circled{-.65,-0.4}{u};
\circled{-0.325,0.2}{u};
\circled{0.325,0.2}{u};
\end{tikzpicture}.
\end{align*}
(We have also written the label $[i\geq j_1<\cdots<j_t]$ from \cref{SlavaTheorem} above each diagram.)
\end{example}

\begin{proof}[Proof of \cref{SlavaTheorem}]
It suffices to show that the left and right hand sides of \cref{crap} act in the same way on $v_{\bi^\mu}i\barotimes 1$
for each $\mu \in \Lambda(n,r)$. We fix such a choice of $\mu$ and $\bi$ from now on and let $\bi := \bi^\mu$,
$a_j := \mu_{<j}+1$ and $b_j := \mu_{\leq j}$
as in \cref{claimylaimy}.
Note that both \cref{claimyl,crap}
involve the same summation over
$t \geq 1$ and $1 \leq j_1<\cdots<j_t=j$ with $i \geq j_1$.
Also
$e_{i,j_1}^{[a_{j_1},b_{j_1}]}
e_{j_1,j_2}^{[a_{j_2},b_{j_2}]}
\cdots e_{j_{t-1},j_t}^{[a_{j_t},b_{j_t}]}(v_\bi \otimes 1) = 0$
if
$\mu_{j_1}=0$ and $i \neq j_1$ or if any of
$\mu_{j_2},\dots,\mu_{j_t}$ are equal to 0.
Consequently, applying \cref{claimylaimy}, the theorem follows if we can show that
\begin{equation}\label{weneedit}
\pi\big(
e_{i,j_1}^{[a_{j_1},b_{j_1}]}
e_{j_1,j_2}^{[a_{j_2},b_{j_2}]}
\cdots 
e_{j_{t-1},j_t}^{[a_{j_{t}},b_{j_t}]}
(v_\bi \otimes 1)\big)
= 
\xi_{A_\mu[i\geq j_1 < \cdots < j_t],f_\mu[i\geq j_1<\cdots<j_t]}
(v_\bi \barotimes 1),
\end{equation}
for
$t \geq 1$ and $1 \leq j_1<\cdots<j_t=j$ such that $i \geq j_1$,
$\mu_{j_1}>0$ if $i \neq  j_1$, and $\mu_{j_1},\dots,\mu_{j_t}>0$.

To prove \cref{weneedit},
let $\lambda := \mu + \eps_i - \eps_j$,
$A := A_\mu[i\geq j_1 < \cdots < j_t] \in \Mat{\lambda}{\mu}$ 
and $f := f_\mu[i\geq j_1<\cdots<j_t]$.
As in \cref{notaproof}, we have that $\xi_{A,f} = \alpha\circ\beta\circ\gamma\circ\delta$
where $\delta \in 1_{\mu(A)} \aS(n,r) 1_\mu$ is defined by the bottom horizontal strip of the string diagram of $\xi_{A,f}$, i.e., the splits,
$\gamma \in 1_{\mu(A)} \aS(n,r)1_{\mu(A)} $ is the defined by the next horizontal strip up, i.e., the pin labelled by $f$,
$\beta\in 1_{\lambda(A)} \aS(n,r)1_{\mu(A)}$ comes from the strip above that, i.e., the crossings
of propagating strings, and $\alpha\in 1_\lambda \aS(n,r) 1_{\lambda(A)}$ is 
defined by the top horizontal strip, i.e., the merges.
Similarly to \cref{notaproof}, we have that
$$
\delta \circ \gamma (v_\bi \barotimes 1)
= \sum_{d \in (\S_\mu / \S_{\mu(A)})_{\min}}
v_{\bi^{\mu(A)}} \barotimes f d.
$$
By the definitions, we have that 
$$
\alpha\circ \beta (v_{\bi^{\mu(A)}} \barotimes 1) = v_\bh \barotimes 1
$$
for $\bh \in I(n,r)$ defined so that 
$v_\bh = e_{i,j_1}^{[a_{j_1}]}
e_{j_1,j_2}^{[a_{j_2}]} \cdots e_{j_{t-1},j_t}^{[a_{j_t}]} v_{\bi}$.
Hence, the right hand side of \cref{weneedit} is equal to
$$
\sum_{d \in (\S_\mu / \S_{\mu(A)})_{\min}}
v_\bh \barotimes f d.
$$
This is equal to the left hand side of \cref{weneedit}
by \cref{gettingclose}.
\end{proof}

\begin{corollary}\label{trans}
The following diagram commutes
$$
\begin{tikzcd}
\arrow[d,"\Drinfeld_{n,r}" left]\Yn\arrow[r,"\tau" above]&\Yn\arrow[d,"\Drinfeld_{n,r}" right]\\
\aS(n,r)\arrow[r,"\div" below]&\aS(n,r).
\end{tikzcd}
$$
\end{corollary}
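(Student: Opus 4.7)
The plan is to observe that both $\Drinfeld_{n,r}\circ\tau$ and $\div\circ\Drinfeld_{n,r}$ are antihomomorphisms $\Yn\to\aS(n,r)$ (each being the composition of a homomorphism and an antihomomorphism), so it suffices to check that they agree on an algebra generating set. By \cref{easygen}, I would use the generators $T_{1,1}^{(d)}\:(d\geq 1)$ and $T_{i,j}^{(1)}\:(1\leq i,j\leq n)$; the antihomomorphism property will then force equality on all of $\Yn$. The reason for restricting to this particular generating set is to sidestep the potentially awkward feature of the general formula in \cref{SlavaTheorem}: horizontally reflecting a diagram indexed by a path $[i\geq j_1<\cdots<j_t=j]$ produces a diagram whose ``reversed'' path $j=j_t\to j_{t-1}\to\cdots\to j_1\to i$ is strictly decreasing in the middle, so it does not literally appear in the expansion of $T_{j,i}(u)$. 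Generating sets of low ``path-length'' avoid this issue entirely.

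For $T_{1,1}^{(d)}$, note that $\tau$ fixes these, so it remains to check that $\div$ fixes $\Drinfeld_{n,r}(T_{1,1}(u))$. By \cref{SlavaTheorem}, with $i=j=1$ the only admissible path is $[1\geq 1]$, and the formula reduces to
$$\Drinfeld_{n,r}(T_{1,1}(u)) = \sum_{\mu\in\Lambda(n,r)} \xi_{\diag(\mu_1,\ldots,\mu_n),\,f_\mu(u)}$$
for a symmetric generating function $f_\mu(u)$ in $x_1,\ldots,x_{\mu_1}$ (namely, the deformed power sum series). Each matrix here is diagonal and so equal to its own transpose; and when $A$ is diagonal, a direct comparison of \cref{bits,buts} shows that $\xi_{A,f}=\xi_{f,A}=f1_\mu$ in the notation of \cref{global}. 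Hence $\div(\xi_{A,f})=\xi_{f,A^\transpose}=\xi_{A,f}$, and each summand is fixed.

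For $T_{i,j}^{(1)}$, by \cref{silly} and \cref{folly} we have $\Drinfeld_{n,r}(T_{i,i}^{(1)})=\sum_{\mu}\mu_i 1_\mu$, which is fixed by both $\div$ and $\tau$ as above. For $i\neq j$, $\Drinfeld_{n,r}(T_{i,j}^{(1)})=\sum_{\mu:\mu_j>0}\xi_{\diag(\mu_1,\ldots,\mu_n)+e_{i,j}-e_{j,j}}$. Applying $\div$ replaces the matrix by its transpose $\diag(\mu_1,\ldots,\mu_n)+e_{j,i}-e_{j,j}$. Reindexing by $\nu:=\mu+\eps_i-\eps_j$, which gives a bijection between $\{\mu\in\Lambda(n,r):\mu_j>0\}$ and $\{\nu\in\Lambda(n,r):\nu_i>0\}$, an entry-by-entry check gives
$$\diag(\mu_1,\ldots,\mu_n)+e_{j,i}-e_{j,j}=\diag(\nu_1,\ldots,\nu_n)+e_{j,i}-e_{i,i},$$
so the sum becomes $\sum_{\nu:\nu_i>0}\xi_{\diag(\nu_1,\ldots,\nu_n)+e_{j,i}-e_{i,i}}=\Drinfeld_{n,r}(T_{j,i}^{(1)})=\Drinfeld_{n,r}(\tau(T_{i,j}^{(1)}))$, as required. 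The only subtle step is this reindexing and the accompanying matrix identity; once that is in hand, the rest is a direct reading of the formulas.
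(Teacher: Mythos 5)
Your proof is correct and takes essentially the same route as the paper: both arguments invoke \cref{easygen} to reduce the check to the generators $T_{1,1}^{(d)}$ and $T_{i,j}^{(1)}$, and both then unwind the explicit formulas \cref{crap,folly,silly}. The paper's published proof is a three-sentence sketch; you have simply filled in the details (the antihomomorphism observation, the fact that diagonal matrices give $\xi_{A,f}=\xi_{f,A}=f1_\mu$ via \cref{global}, and the reindexing $\nu = \mu + \eps_i - \eps_j$ together with the matrix identity $\diag(\mu)+e_{j,i}-e_{j,j}=\diag(\nu)+e_{j,i}-e_{i,i}$), and your remark about why reflecting the general path $[i\geq j_1<\cdots<j_t]$ does not literally produce a path of the same form is a nice motivation for the choice of generating set (the paper records the dual "decreasing path" formula \cref{crapper} for exactly this reason, in the remark immediately after the corollary).
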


\begin{proof}
By \cref{easygen}, it suffices to check that
$\Drinfeld_{n,r}(\tau(X))
= \Drinfeld_{n,r}(X)^\div$
just for $X= T_{1,1}(u)$ and for $X = T_{i,j}^{(1)}$.
When $X = T_{1,1}(u)$, this follows from the form of \cref{crap}.
When $X = T_{i,j}^{(1)}$, it follows using \cref{folly,silly}.
\end{proof}

\begin{remark}
Using \cref{trans,SlavaTheorem}, one obtains another formula describing $\Drinfeld_{n,r}$ on the RTT generators: we have that
\begin{equation}\label{crapper}
\Drinfeld_{n,r}\big(T_{i,j}(u)\big)
= 
\sum_{\substack{t \geq 1 \\ i=  i_1 > \cdots > i_t \geq 1\\ i_t \leq j}}
\sum_{\substack{\mu \in \Lambda(n,r)\\ \mu_{i_1},\dots,\mu_{i_{t-1}}>0\\\mu_t > 0\text{ if }i \neq j_t}}
\xi_{f_\mu[i_1>\dots>i_{t}\leq j],A_\mu[i_1> \cdots>i_{t}\leq j]}
\end{equation}
where $A_\mu[i_1>\cdots>i_t\leq j] \in \Mat{\mu+\eps_{i}-\eps_{j}}{\mu}$
is the $n\times n$ matrix $$
\diag(\mu_1,\dots,\mu_n)+
\sum_{k=1}^{t-1} \left(e_{i_{k},i_{k+1}}-e_{i_k,i_k}\right)+(e_{i_t,j}-e_{i_t,i_t}),
$$
and $f_\mu[i_1>\cdots>i_t\leq j] \in \P_r$ is the polynomial 
$$
\begin{dcases}
\left[\prod_{k=1}^{t-1}\frac{1}{u-x_{a_{i_k}}}\right]
\left[\prod_{p=a_{i_t}}^{b_{i_t}-1}\left(1+\frac{1}{u-x_p}\right)\right]
\frac{1}{u-x_{b_{i_t}}}&\text{if $i_t < j$}\\
\left[\prod_{k=1}^{t-1}\frac{1}{u-x_{a_{i_1}}}\right]
\left[\prod_{p=a_{i_t}}^{b_{i_t}}\left(1+\frac{1}{u-x_p}\right)\right]
&\text{if $i_t = j$}
\end{dcases}
$$
for $a_i := \mu_{<i}+1$ and $b_i := \mu_{\leq i}$.
For example,
$\Drinfeld_{3,r}:\Y(\gl_3) \rightarrow \aS(3,r)$ maps
\begin{align*}
T_{3,1}(u)&\mapsto\sum_{\substack{\mu \in \Lambda(3,r)\\\mu_1 > 0}}
\begin{tikzpicture}[centerzero]
\node at (0,.8) {$\color{blue}\scriptstyle[3> 1 \leq 1]$};
\draw[ultra thick] (-.65,-.6) \botlabel{\mu_1} to (-.65,.6);
\draw[ultra thick] (0,-.6) \botlabel{\mu_2} to (0,.6);
\draw[ultra thick] (.65,-.6) \botlabel{\mu_3} to (.65,.6);
\draw (-0.62,-.5) to (-0.62,-.25) to (0.62,.25) to (0.62,.6);
\ovaledinverse{-.65,.35}{u+1};
\circled{-.65,0.04}{u};
\circled{0.325,0.125}{u};
\end{tikzpicture}+\sum_{\substack{\mu \in \Lambda(3,r)\\\mu_1>0,\mu_2>0}}
\begin{tikzpicture}[centerzero]
\node at (0,.8) {$\color{blue}\scriptstyle[3 > 2> 1\leq 1]$};
\draw[ultra thick] (-.65,-.6) \botlabel{\mu_1} to (-.65,.6);
\draw[ultra thick] (0,-.6) \botlabel{\mu_2} to (0,.6);
\draw[ultra thick] (.65,-.6) \botlabel{\mu_3} to (.65,.6);
\draw (0.03,-.6) to (0.03,-.45) to (.62,.05) to (.62,.6);
\draw (-0.62,-.6) to (-0.62,-.45) to (-0.03,.05) to (-0.03,.6);
\ovaledinverse{-.65,.35}{u+1};
\circled{-.65,0.04}{u};
\circled{-0.325,-0.2}{u};
\circled{0.325,-0.2}{u};
\end{tikzpicture},\qquad\qquad
T_{1,3}(u)\mapsto
\sum_{\substack{\mu \in \Lambda(3,r)\\\mu_3 > 0}}
\begin{tikzpicture}[centerzero]
\node at (0,.8) {$\color{blue}\scriptstyle[1 \leq 3]$};
\draw[ultra thick] (-.65,-.6) \botlabel{\mu_1} to (-.65,.6);
\draw[ultra thick] (0,-.6) \botlabel{\mu_2} to (0,.6);
\draw[ultra thick] (.65,-.6) \botlabel{\mu_3} to (.65,.6);
\draw (0.62,-.6) to (0.62,-.15) to (-.62,.35) to (-.62,.6);
\ovaledinverse{-.65,-.08}{u+1};
\circled{-.65,-0.4}{u};
\circled{-0.325,0.225}{u};
\end{tikzpicture}.
\end{align*}
(The expressions in this example 
can also be derived directly from the ones for
$T_{3,1}(u)$ and $T_{1,3}(u)$ from \cref{SlavaExample} using \cref{averagedotslide}.)
\end{remark}

Take $m \geq 0$.
Tensoring with the object $(m)$ either on the right or the left defines
$\kk$-linear functors $-\star(m):\aSchur \rightarrow \aSchur$
and $(m)\star-:\aSchur\rightarrow \aSchur$.
On string diagrams, $-\star(m)$ adds a vertical string of thickness $m$ on the right hand side, and $(m)\star -$ adds such a string on the left hand side.
Recalling that $\aS(n,r)$ is the path algebra of the full subcategory of $\aSchur$ with object set $\Lambda(n,r)$, these functors
induce a pair of algebra homomorphisms
\begin{align}
\varphi_m:\aS(n,r) &\rightarrow \aS(n+1,r+m),&
\xi_{A,f} &\mapsto \xi_{\diag(A,(m)),f \otimes 1},\\
\psi_m:\aS(n,r) &\rightarrow \aS(n+1,m+r),&
\xi_{A,f} &\mapsto \xi_{\diag((m),A),1\otimes f}.
\end{align}
There are also homomorphisms 
\begin{align}
\varphi:\Yn &\rightarrow \Y(\gl_{n+1}),&
\psi:\Yn &\rightarrow \Y(\gl_{n+1}).
\end{align}
The first is the natural embedding
taking $T_{i,j}(u)$ to $T_{i,j}(u)$ for $1 \leq i,j \leq n$.
The second is defined in \cite{NT} by the formula
\begin{equation}
\psi := \omega_{n+1} \circ \varphi \circ \omega_n.
\end{equation}
Because of this formula, 
we sometimes call $\psi$ the {\em unnatural} embedding.
By \cite[Lem.~4.2]{BKdrinfeld}, we have that
\begin{equation}\label{hurt}
\psi(T_{i,j}(u)) = T_{i+1,j+1}(u)- 
T_{i+1,1}(u) T_{1,1}(u)^{-1} T_{1,j+1}(u).
\end{equation}

The following theorem explains a sense in which $\varphi$ corresponds to $\varphi_m$ and $\psi$ corresponds to $\psi_m$.
For the statement, recall from the previous section 
that $\eta_{-1}$ is the shift automorphism mapping $T_{i,j}(u)$ to $T_{i,j}(u-1)$.

\begin{lemma}\label{shifty}
For any $m,n,r\geq 1$, the following diagrams commute
$$
\begin{tikzcd}
\Yn\arrow[r,"\varphi" above]\arrow[d,"\Drinfeld_{n,r}" left]
&\Y(\gl_{n+1})\arrow[d,"\Drinfeld_{n+1,r+m}" right]\\
\aS(n,r)\arrow[r,"\varphi_m" below]&\aS(n+1,r+m)
\end{tikzcd},
\qquad
\begin{tikzcd}
\Yn\arrow[r,"\psi\circ \eta_{-1}" above]\arrow[d,"\Drinfeld_{n,r}" left]
&\Y(\gl_{n+1})\arrow[d,"\Drinfeld_{n+1,m+r}" right]\\
\aS(n,r)\arrow[r,"\psi_m " below]&\aS(n+1,m+r)
\end{tikzcd}.
$$
\end{lemma}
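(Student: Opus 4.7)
The plan is to verify both commutative diagrams by checking them on a generating set of $\Yn$. By \cref{easygen}, it suffices to check each diagram on $T_{1,1}^{(d)}$ for $d \geq 1$ together with $T_{i,j}^{(1)}$ for $1 \leq i,j \leq n$.

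For generators $T_{i,j}^{(1)}$, the check reduces to a classical statement about ordinary Schur algebras: by \cref{silly}, $\Drinfeld_{n,r}(T_{i,j}^{(1)}) = \drinfeld_{n,r}(e_{i,j})$. Since $\varphi$ sends $T_{i,j}^{(1)}$ for $i,j \leq n$ to itself, corresponding to the standard embedding $\gl_n \hookrightarrow \gl_{n+1}$, the first diagram follows from the explicit formula \cref{folly} together with the compatibility of $\iota:\Schur\hookrightarrow\aSchur$ with the monoidal shift $-\star(m)$. For the second diagram, extracting the $u^{-1}$-coefficient of \cref{hurt} shows that $\psi$ carries $T_{i,j}^{(1)}$ to $T_{i+1,j+1}^{(1)}$, and since $\eta_{-1}$ fixes this coefficient, $\psi\circ\eta_{-1}$ realizes the shifted embedding $\gl_n\hookrightarrow\gl_{n+1},\ e_{i,j}\mapsto e_{i+1,j+1}$, which is exactly the map induced on Schur algebras by the left monoidal shift $(m)\star-$.

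For generators $T_{1,1}^{(d)}$, use the explicit formula of \cref{SlavaTheorem}. When $i=j=1$, only the summand with $t=1$ and $j_1=1$ contributes, giving
\[
\Drinfeld_{n,r}\big(T_{1,1}(u)\big) \;=\; \sum_{\mu\in\Lambda(n,r)}\xi_{\diag(\mu_1,\ldots,\mu_n),\, f_\mu},
\]
where $f_\mu = \prod_{p=1}^{\mu_1}\big(1+(u-x_p)^{-1}\big)$ is pinned on the first string. Applying $\varphi_m$ appends a vertical string of thickness $m$ on the right without altering $f_\mu$, so the resulting diagrams match the summands of $\Drinfeld_{n+1,r+m}(T_{1,1}(u))$ indexed by compositions whose last part equals $m$. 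For the second diagram, Gauss factorization identifies $\psi(T_{1,1}(u))$ with $D_2(u) \in \Y(\gl_{n+1})$, so $\psi\circ\eta_{-1}\big(T_{1,1}(u)\big) = D_2(u-1)$; substituting $u \mapsto u-1$ into the formula for $D_2(u)$ stated in the introduction yields a pin $\prod_{p=1}^{\mu_2}\big(1+(u-x_p)^{-1}\big)$ on the second string, which coincides with $\psi_m$ applied to the corresponding first-string pin in $\aS(n,r)$.

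The principal bookkeeping challenge is to ensure that the shift $\eta_{-1}$ in the second diagram precisely compensates for the $(u+i)$-versus-$(u+i-1)$ pairing in the diagrammatic formula for $\Drinfeld(D_i(u))$, and that the monoidal shift of strings (left for $\psi_m$, right for $\varphi_m$) correctly moves the ``active'' index from $\mu_1$ to $\mu_2$. Once this matching of generating functions on the single distinguished string is established, the remaining strings of both sides of each diagram agree by the definitions of $\varphi_m$ and $\psi_m$, and the summand-by-summand comparison completes the argument.
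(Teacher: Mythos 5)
Your reduction to the generators $T_{1,1}^{(d)}$ and $T_{i,j}^{(1)}$ via \cref{easygen}, and your treatment of the first diagram and the degree-one generators, all match the paper's approach and are fine.

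The serious gap is in the $T_{1,1}^{(d)}$ case of the second diagram. You write that $\psi\circ\eta_{-1}(T_{1,1}(u)) = D_2(u-1)$ and then \emph{substitute into the formula for $D_2(u)$ stated in the introduction}, namely \cref{Dform}. But \cref{Dform} is proved in \cref{def}, and that proof uses \cref{shifty} — in particular it uses exactly the second diagram you are trying to establish to propagate the formula from $D_1$ to $D_2, D_3, \dots$ via the unnatural embedding. So your argument is circular: you cannot invoke the diagrammatic description of $\Drinfeld_{n+1,m+r}(D_2(u-1))$ without first knowing the lemma. The content that your citation short-circuits is precisely the hard part of the paper's proof: the paper rewrites $\psi\circ\eta_{-1}(T_{1,1}(u))$ in terms of RTT generators using \cref{hurt}, $\psi(T_{1,1}(u)) = T_{2,2}(u) - T_{2,1}(u)T_{1,1}(u)^{-1}T_{1,2}(u)$, then substitutes the formulas from \cref{SlavaExample} (which follow from \cref{SlavaTheorem}, proved independently of \cref{shifty}), and performs a non-trivial computation with the coproduct, split-merge, dot-slide, and \cref{skiving2} relations to establish the identity \cref{cheesecake}. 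That computation is the actual work, and your proposal omits it rather than replaces it. To repair the argument you would need to compute $\Drinfeld_{2,m+r}\bigl(T_{2,2}(u-1) - T_{2,1}(u-1)T_{1,1}(u-1)^{-1}T_{1,2}(u-1)\bigr)$ directly, as in the paper.

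A smaller but worth-noting point: the paper first uses the commutativity of the \emph{first} diagram to reduce the $T_{1,1}^{(d)}$-check of the second diagram to the case $n=1$ (i.e. to $\aS(1,r)$ on the source and $\aS(2,m+r)$ on the target). This makes the remaining computation two-strand and manageable. Your proposal does not perform this reduction, so even if the circularity were removed you would be facing a considerably more cumbersome verification for general $n$.
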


\begin{proof}
The commutativity of the first diagram follows because the formula for $\Drinfeld_{n,r}(T_{i,j}(u))$ from \cref{crap} only involves $j_1,\dots,j_t \leq \min(i,j)$, i.e., it
is the same for any $n \geq \max(i,j)$.

To prove that the second diagram commutes, in view of \cref{easygen},
it suffices to show that \begin{equation}\label{sufficient}
\Drinfeld_{n+1,m+r}\circ \psi\circ \eta_{-1}(X)
= \psi_m \circ \Drinfeld_{n,r}(X)
\end{equation}
for $X = T_{1,1}^{(d)}\:(d \geq 1)$ and for 
$X = T_{i,j}^{(1)}\:(1 \leq i,j \leq n)$.
To check the equality \cref{sufficient} 
for $X = T_{i,j}^{(1)}$, we have that
$\Drinfeld_{n+1,m+r}\circ \psi\circ \eta_{-1}(T_{i,j}^{(1)})
= \Drinfeld_{n+1,m+r}(T_{i+1,j+1}^{(1)})
= \drinfeld_{n+1,m+r}(e_{i+1,j+1})$
and
$\psi_m \circ \Drinfeld_{n,r}(T_{i,j}^{(1)}) 
= \psi_m(\drinfeld_{n+1,m+r}(e_{i,j})$,
using \cref{silly}. These are easily seen to be equal by \cref{folly}.
It remains to check the equality \cref{sufficient}
when $X = T_{1,1}^{(d)}\:(d \geq 1)$. Using the commutativity of the first diagram,
this follows if we can show that
$$
\Drinfeld_{2,m+r}\circ \psi\circ \eta_{-1}(T_{1,1}(u))
= \psi_m \circ \Drinfeld_{1,r}(T_{1,1}(u)).
$$
By \cref{hurt}, the left hand side is $\Drinfeld_{2,m+r}\left(T_{2,2}(u-1)-
T_{2,1}(u-1)T_{1,1}(u-1)^{-1}T_{1,2}(u-1)\right)$.
Using the information in \cref{SlavaExample}, 
we are reduced to showing that
\begin{equation}\label{cheesecake}
\sum_{\substack{\mu \in \Lambda(2,r)\\\mu_1, \mu_2 > 0}}
\begin{tikzpicture}[anchorbase]
\draw[ultra thick] (0,-1.2)\botlabel{\mu_1} to (0,1.2);
\draw (1.27,1.2) to (1.27,.9) to (.03,-.9) to (.03,-1.2);
\draw (1.27,-1.2) to (1.27,-.9) to (.03,.9) to (.03,1.2);
\circledinverse{0,.37}{u};
\ovaled{0,.05}{u-1};
\ovaled{.44,-.24}{u-1};
\ovaled{.93,-.49}{u-1};
\draw[ultra thick] (1.3,1.2) to (1.3,-1.2)\botlabel{\mu_2};
\end{tikzpicture}\ 
-
\sum_{\substack{\mu \in \Lambda(2,r)\\\mu_2 > 0}}
\begin{tikzpicture}[anchorbase]
\draw[ultra thick] (-.2,-1.6)\botlabel{\mu_1} to (-0.2,.8);
\draw (.97,.8) to (.97,.4) to (-.17,-.15) to (-.17,-.9) to (.97,-1.45) to (.97,-1.6);
\circledinverse{-0.2,.6}{u};
\ovaled{-0.2,.3}{u-1};
\circled{-.2,-.37}{u};
\ovaledinverse{-0.2,-.68}{u-1};
\circledinverse{-.2,-1.1}{u};
\ovaled{-0.2,-1.4}{u-1};
\ovaled{0.4,-1.2}{u-1};
\ovaled{0.4,.1}{u-1};
\draw[ultra thick] (1,.8) to (1,-1.6)\botlabel{\mu_2};
\end{tikzpicture}
=\sum_{\mu \in \Lambda(2,r)}
\begin{tikzpicture}[anchorbase]
\draw[ultra thick] (0,-1.6)\botlabel{\mu_1} to (0,.8);
\draw[ultra thick] (.7,-1.6)\botlabel{\mu_2} to (.7,.8);
\ovaledinverse{0.7,-.24}{u+1};
\circled{0.7,-.56}{u};
\end{tikzpicture}\ -
\sum_{\mu \in \Lambda(2,r)}
\begin{tikzpicture}[anchorbase]
\draw[ultra thick] (.7,-1.2)\botlabel{\mu_2} to (.7,1.2);
\draw[ultra thick] (0,-1.2)\botlabel{\mu_1} to (0,1.2);
\circledinverse{.7,.16}{u};
\ovaled{.7,-.16}{u-1};
\end{tikzpicture}\ .
\end{equation}
Using the coproduct relations \cref{game,istanbul},
then the $r=1$ case of \cref{averagedotslide}(1), then \cref{skiving2}, the first term on the left hand side 
of \cref{cheesecake} equals
\begin{align*}
\sum_{\substack{\mu \in \Lambda(2,r)\\\mu_1, \mu_2 > 0}}
\begin{tikzpicture}[anchorbase]
\draw[ultra thick] (0,-.9)\botlabel{\mu_1} to (0,.9);
\draw (1.27,.9) to (1.27,.4) to (.03,-.4) to (.03,-.9);
\draw (1.27,-.9) to (1.27,-.4) to (.03,.4) to (.03,.9);
\circledinverse{0,.63}{u};
\ovaled{0,-.63}{u-1};
\circled{.38,.18}{u};
\ovaled{.9,-.18}{u-1};
\draw[ultra thick] (1.3,.9) to (1.3,-.9)\botlabel{\mu_2};
\end{tikzpicture}\ 
&=
\sum_{\substack{\mu \in \Lambda(2,r)\\\mu_1, \mu_2 > 0}}
\begin{tikzpicture}[anchorbase]
\draw[ultra thick] (0,-.9)\botlabel{\mu_1} to (0,.9);
\draw (1.27,.9) to (1.27,.7) to (.03,0) to (.03,-.9);
\draw (1.27,-.9) to (1.27,-.4) to (.03,.3) to (.03,.9);
\circledinverse{0,.63}{u};
\ovaled{0,-.63}{u-1};
\circled{.55,0.03}{u};
\ovaled{.94,-.2}{u-1};
\draw[ultra thick] (1.3,.9) to (1.3,-.9)\botlabel{\mu_2};
\end{tikzpicture}
+
\sum_{\substack{\mu \in \Lambda(2,r)\\\mu_1, \mu_2 > 0}}
\begin{tikzpicture}[anchorbase]
\draw[ultra thick] (0,-.9)\botlabel{\mu_1} to (0,.9);
\draw (1.27,.9) to (1.27,.4) to[out=-150,in=150,looseness=2] (1.27,-.4) to(1.27,-.9);
\draw (.03,-.9) to (.03,-.4) to[out=30,in=-30,looseness=2] (.03,.4) to (.03,.9);
\circledinverse{0,.63}{u};
\ovaled{0,-.63}{u-1};
\circled{.34,.18}{u};
\circled{.96,.18}{u};
\ovaled{.92,-.18}{u-1};
\draw[ultra thick] (1.3,.9) to (1.3,-.9)\botlabel{\mu_2};
\end{tikzpicture}\\
&=
\sum_{\substack{\mu \in \Lambda(2,r)\\\mu_1, \mu_2 > 0}}
\begin{tikzpicture}[anchorbase]
\draw[ultra thick] (0,-.9)\botlabel{\mu_1} to (0,.9);
\draw (1.27,.9) to (1.27,.7) to (.03,0) to (.03,-.9);
\draw (1.27,-.9) to (1.27,-.4) to (.03,.3) to (.03,.9);
\circledinverse{0,.63}{u};
\ovaled{0,-.63}{u-1};
\circled{.55,0.03}{u};
\ovaled{.94,-.2}{u-1};
\draw[ultra thick] (1.3,.9) to (1.3,-.9)\botlabel{\mu_2};
\end{tikzpicture}
+
\sum_{\substack{\mu \in \Lambda(2,r)\\\mu_2 > 0}}
\begin{tikzpicture}[anchorbase]
\draw[ultra thick] (0,-.9)\botlabel{\mu_1} to (0,.9);
\draw (1.27,.9) to (1.27,.4) to[out=-150,in=150,looseness=2] (1.27,-.4) to(1.27,-.9);
\circledinverse{0,.16}{u};
\ovaled{0,-.16}{u-1};\circled{.96,.18}{u};
\ovaled{.92,-.18}{u-1};
\draw[ultra thick] (1.3,.9) to (1.3,-.9)\botlabel{\mu_2};
\end{tikzpicture}
-
\sum_{\substack{\mu \in \Lambda(2,r)\\\mu_2 > 0}}
\begin{tikzpicture}[anchorbase]
\draw[ultra thick] (0,-.9)\botlabel{\mu_1} to (0,.9);
\draw (1.27,.9) to (1.27,.4) to[out=-150,in=150,looseness=2] (1.27,-.4) to(1.27,-.9);
\circled{.96,.18}{u};
\ovaled{.92,-.18}{u-1};
\draw[ultra thick] (1.3,.9) to (1.3,-.9)\botlabel{\mu_2};
\end{tikzpicture}\ .\\\intertext{Using the coproduct relations then the merge-split relation \cref{mergesplit}, the second term on the left hand side of \cref{cheesecake} equals}
\sum_{\substack{\mu \in \Lambda(2,r)\\\mu_2 > 0}}
\begin{tikzpicture}[anchorbase]
\draw[ultra thick] (-.2,-1.6)\botlabel{\mu_1} to (-0.2,.2);
\draw (.97,.2) to (.97,0) to (-.17,-.55) to (-.17,-.9) to (.97,-1.45) to (.97,-1.6);
\circledinverse{-0.2,-.2}{u};
\ovaled{-0.2,-1.3}{u-1};
\circled{.1,-1.03}{u};
\ovaled{0.55,-1.25}{u-1};
\draw[ultra thick] (1,.2) to (1,-1.6)\botlabel{\mu_2};
\end{tikzpicture}&=
\sum_{\substack{\mu \in \Lambda(2,r)\\\mu_1, \mu_2 > 0}}
\begin{tikzpicture}[anchorbase]
\draw[ultra thick] (0,-.9)\botlabel{\mu_1} to (0,.9);
\draw (1.27,.9) to (1.27,.7) to (.03,0) to (.03,-.9);
\draw (1.27,-.9) to (1.27,-.4) to (.03,.4) to (.03,.9);
\circledinverse{0,.63}{u};
\ovaled{0,-.63}{u-1};
\circled{.55,0.03}{u};
\ovaled{.94,-.2}{u-1};
\draw[ultra thick] (1.3,.9) to (1.3,-.9)\botlabel{\mu_2};
\end{tikzpicture}
+
\sum_{\substack{\mu \in \Lambda(2,r)\\\mu_2 > 0}}
\begin{tikzpicture}[anchorbase]
\draw[ultra thick] (0,-.9)\botlabel{\mu_1} to (0,.9);
\draw (1.27,.9) to (1.27,.4) to[out=-150,in=150,looseness=2] (1.27,-.4) to(1.27,-.9);
\circledinverse{0,.16}{u};
\ovaled{0,-.16}{u-1};
\circled{.96,.18}{u};
\ovaled{.92,-.18}{u-1};
\draw[ultra thick] (1.3,.9) to (1.3,-.9)\botlabel{\mu_2};
\end{tikzpicture}\ .
\end{align*}
Subtracting 
gives that the left hand side of \cref{cheesecake} equals
$$
-\sum_{\substack{\mu \in \Lambda(2,r)\\\mu_2 > 0}}
\begin{tikzpicture}[anchorbase]
\draw[ultra thick] (0.3,-.6)\botlabel{\mu_1} to (0.3,.6);
\draw (1.27,.6) to (1.27,.4) to[out=-150,in=150,looseness=2] (1.27,-.4) to(1.27,-.6);
\circled{.96,.18}{u};
\ovaled{.92,-.18}{u-1};
\draw[ultra thick] (1.3,.6) to (1.3,-.6)\botlabel{\mu_2};
\end{tikzpicture}
=\sum_{\substack{\mu \in \Lambda(2,r)\\\mu_2 > 0}}
\begin{tikzpicture}[anchorbase]
\draw[ultra thick] (0.3,-.6)\botlabel{\mu_1} to (0.3,.6);
\draw (1.27,.6) to (1.27,.4) to[out=-150,in=150,looseness=2] (1.27,-.4) to(1.27,-.6);
\circled{.9,0}{u};
\draw[ultra thick] (1.3,.6) to (1.3,-.6)\botlabel{\mu_2};
\end{tikzpicture}-
\sum_{\substack{\mu \in \Lambda(2,r)\\\mu_2 > 0}}
\begin{tikzpicture}[anchorbase]
\draw[ultra thick] (0.3,-.6)\botlabel{\mu_1} to (0.3,.6);
\draw (1.27,.6) to (1.27,.4) to[out=-150,in=150,looseness=2] (1.27,-.4) to(1.27,-.6);
\ovaled{.9,0}{u-1};
\draw[ultra thick] (1.3,.6) to (1.3,-.6)\botlabel{\mu_2};
\end{tikzpicture}\ .
$$
For the equality here, we used that
$-\frac{1}{(u-x)(u-1-x)} = \frac{1}{u-x}-\frac{1}{u-1-x}$.
This is equal to the right hand side of \cref{cheesecake} 
thanks to \cref{skiving}.
\end{proof}

Now we switch from the RTT generators $T_{i,j}^{(d)}$ for the Yangian to the Drinfeld
generators $D_i^{(d)} (d \geq 0, i=1,\dots,n)$ and
$E_i^{(d)}, F_i^{(d)} (d \geq 1, i=1,\dots,n-1)$. 
These generate $\Yn$ subject to relations which are recorded\footnote{We cite this relatively recent paper because the version of the relations recorded there are valid even if $2=0$ in the ground ring $\kk$.} 
in \cite[Th.~4.3]{BT}.
We briefly recall their definition
following \cite[Sec.~5]{BKdrinfeld}:
since the leading minors of the matrix $T(u)$ are invertible, 
it possesses a Gauss factorization
\begin{equation}\label{gfact}
T(u) = F(u) D(u) E(u)
\end{equation}
for unique matrices
$$
D(u) = \left(
\begin{array}{cccc}
D_{1}(u) & 0&\cdots&0\\
0 & D_{2}(u) &\cdots&0\\
\vdots&\vdots&\ddots&\vdots\\
0&0 &\cdots&D_{n}(u)
\end{array}
\right),
$$$$
E(u) = 
\left(
\begin{array}{cccc}
1 & E_{1,2}(u) &\cdots&E_{1,n}(u)\\
0 & 1 &\cdots&E_{2,n}(u)\\
\vdots&\vdots&\ddots&\vdots\\
0&0 &\cdots&1
\end{array}
\right),\qquad
F(u) = \left(
\begin{array}{cccc}
1 & 0 &\cdots&0\\
F_{1,2}(u) & 1 &\cdots&0\\
\vdots&\vdots&\ddots&\vdots\\
F_{1,n}(u)&F_{2,n}(u) &\cdots&1
\end{array}
\right).
$$
This defines the formal power series
$D_i(u) = \sum_{d \geq 0} D_i^{(d)} u^{-d}$, 
$E_{i,j}(u) = \sum_{d \geq 1} E_{i,j}^{(d)} u^{-d}$ and 
$F_{i,j}(u) = \sum_{d \geq 1} F_{i,j}^{(d)} u^{-d}$.
In particular, we have that $D_i^{(0)} = 1$.
Finally, we let $E_i(u) = \sum_{d \geq 1} E_i^{(d)} u^{-d}
:= E_{i,i+1}(u)$ and $F_i(u) = \sum_{d \geq 1} F_i^{(d)} u^{-d}:= 
F_{i,i+1}(u)$ for short,
and have constructed the Drinfeld generators
from the RTT generators.
It is obvious from the definition that
\begin{align}\label{easyembed}
\varphi(D_i(u)) &= D_i(u),&
\varphi(E_i(u))&=E_i(u)),&
\varphi(F_i(u))&=F_i(u).\intertext{Less obvious is that}
\psi(D_i(u))&=D_{i+1}(u),&
\psi(E_i(u))&=E_{i+1}(u)),&
\psi(F_i(u))&=F_{i+1}(u);\label{hardembed}
\end{align}
e.g., see \cite[Lem.~5.1]{BKdrinfeld}.

\begin{theorem}\label{def}
The Drinfeld homomorphism $\Drinfeld_{n,r}:\Yn \rightarrow \aS(n,r)$ maps
\begin{align}\label{Dform}
D_i(u)&\mapsto
\sum_{\lambda \in \Lambda(n,r)}
\begin{tikzpicture}[centerzero,scale=.9]
\draw[ultra thick] (-1.5,-.6)\botlabel{\lambda_1} to (-1.5,.6);
\node at (-1.08,0) {$\dots$};
\draw[ultra thick] (-.7,-.6)\botlabel{\lambda_{i\!-\!1}\ } to (-.7,.6);
\draw[ultra thick] (0,-.6)\botlabel{\lambda_i} to (0,.6);
\draw[ultra thick] (.7,-.6)\botlabel{\ \lambda_{i\!+\!1}} to (.7,.6);
\node at (1.12,0) {$\dots$};
\draw[ultra thick] (1.5,-.6)\botlabel{\lambda_n} to (1.5,.6);
\ovaledinverse{0,.18}{u+i};
\ovaled{0,-.18}{u+i-1};
\end{tikzpicture}\end{align}
for $i=1,\dots,n$, and
\begin{align}\label{Eform}
E_i(u)&\mapsto
\!\sum_{\substack{\mu \in \Lambda(n,r)\\\mu_{i+1}>0}}
\begin{tikzpicture}[centerzero,scale=.9]
\draw[ultra thick] (-1.5,-.6)\botlabel{\mu_1} to (-1.5,.6);
\node at (-1.08,0) {$\dots$};
\draw[ultra thick] (-.7,-.6)\botlabel{\mu_{i\!-\!1}\ } to (-.7,.6);
\draw[ultra thick] (-.2,-.6)\botlabel{\mu_i} to (-.2,.6);
\draw[ultra thick] (0.8,-.6)\botlabel{\mu_{i\!+\!1}\ } to (0.8,.6);
\draw[ultra thick] (1.3,-.6)\botlabel{\ \mu_{i\!+\!2}} to (1.3,.6);
\draw (.77,-.6) to (.77,-.3) to (-0.17,.3) to (-.17,.6);
\node at (1.72,0) {$\dots$};
\draw[ultra thick] (2.1,-.6)\botlabel{\mu_n} to (2.1,.6);
\ovaled{0.3,0}{u+i};
\end{tikzpicture}\ ,&
F_i(u)&\mapsto\!
\sum_{\substack{\mu \in \Lambda(n,r)\\\mu_{i}>0}}
\begin{tikzpicture}[centerzero,scale=.9]
\draw[ultra thick] (-1.5,-.6)\botlabel{\mu_1} to (-1.5,.6);
\node at (-1.08,0) {$\dots$};
\draw[ultra thick] (-.7,-.6)\botlabel{\mu_{i\!-\!1}\ } to (-.7,.6);
\draw[ultra thick] (-.2,-.6)\botlabel{\mu_i} to (-.2,.6);
\draw[ultra thick] (0.8,-.6)\botlabel{\mu_{i\!+\!1}\ } to (0.8,.6);
\draw[ultra thick] (1.3,-.6)\botlabel{\ \mu_{i\!+\!2}} to (1.3,.6);
\draw (.77,.6) to (.77,.3) to (-0.17,-.3) to (-.17,-.6);
\node at (1.72,0) {$\dots$};
\draw[ultra thick] (2.1,-.6)\botlabel{\mu_n} to (2.1,.6);
\ovaled{0.3,0}{u+i};
\end{tikzpicture}
\end{align}
for $i=1,\dots,n-1$.
\end{theorem}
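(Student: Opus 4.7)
The plan is to establish the formulas for $i = 1$ by direct computation from \cref{SlavaTheorem}, then bootstrap to general $i$ via the unnatural embedding $\psi$ using \cref{hardembed,shifty}.

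The $i = 1$ case of \cref{Dform} is immediate: since $T_{1,1}(u) = D_1(u)$ from the Gauss decomposition $T = FDE$, \cref{SlavaTheorem} applied to $(i, j) = (1, 1)$ forces the sum to a single term ($t = 1, j_1 = 1$), producing $\sum_\mu \xi_{\diag(\mu), f_\mu}$ with $f_\mu = \prod_{p=1}^{\mu_1}\left(1 + \tfrac{1}{u - x_p}\right) = \frac{(u+1-x_1) \cdots (u+1-x_{\mu_1})}{(u-x_1) \cdots (u-x_{\mu_1})}$, which by \cref{kward} is precisely the pair of coupons $\ovaledinverse{u+1}$ and $\ovaled{u}$ on the first string. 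For the $i = 1$ case of $E_1$, I would use $E_1(u) = D_1(u)^{-1} T_{1,2}(u)$: the block of $\Drinfeld_{n,r}(D_1(u))^{-1}$ on $\lambda = \mu + \eps_1 - \eps_2$ consists of coupons $\ovaled{u+1}\ovaledinverse{u}$ on the top of the first string (of thickness $\mu_1 + 1$) of the $\mu$-summand of $\Drinfeld_{n,r}(T_{1,2}(u))$ from \cref{SlavaExample}. Using the coproduct relations \cref{game,istanbul}, these coupons split through the merge onto the thick first string (of thickness $\mu_1$) and onto the thin propagating string. On the first string, the new coupons cancel the existing $\ovaledinverse{u+1}\ovaled{u}$; on the thin crossing, the three factors $\ovaledinverse{u}\ovaled{u+1}\ovaled{u}$ collapse to the single coupon $\ovaled{u+1}$, producing exactly the diagram in \cref{Eform}.

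The formula for $F_1(u)$ follows from \cref{trans}: the antiautomorphism $\tau:\Yn \rightarrow \Yn$ defined by $T_{i,j}(u) \mapsto T_{j,i}(u)$ fixes each $D_i$ and swaps $E_i$ with $F_i$ (verified for $i = 1$ using the $2 \times 2$ Gauss decomposition $T_{11} = D_1$, $T_{12} = D_1 E_1$, $T_{21} = F_1 D_1$). Since $\div$ reflects diagrams in a horizontal axis, applying $\div$ to the $E_1$ formula turns the upper-right-to-lower-left crossing into a lower-left-to-upper-right crossing, yielding the $F_1$ formula.

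For the inductive step $i \mapsto i + 1$, recall from \cref{hardembed} that $\psi(D_i(u)) = D_{i+1}(u)$, $\psi(E_i(u)) = E_{i+1}(u)$, and $\psi(F_i(u)) = F_{i+1}(u)$. Applying \cref{shifty} with ground data $(n, r) \leadsto (n-1, r - m)$: on the corner idempotent $\sum_{\lambda \in \Lambda(n-1, r-m)} 1_{(m, \lambda)} \subseteq \aS(n, r)$, we have $\Drinfeld_{n,r}(D_{i+1}(u)) = \psi_m(\Drinfeld_{n-1,r-m}(D_i(u+1)))$. By the induction hypothesis this equals $\psi_m(\tilde D_i(u+1))$, and substituting $u \mapsto u + 1$ in the formula for $\tilde D_i$ converts the coupons $\ovaledinverse{u+i}, \ovaled{u+i-1}$ to $\ovaledinverse{u+i+1}, \ovaled{u+i}$; after prepending a thickness-$m$ string via $\psi_m$, these now sit on the $(i+1)$-st string of an $(n+1)$-composition with first part $m$, which matches \cref{Dform} for $\tilde D_{i+1}(u)$ on this corner. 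Letting $m$ range over all admissible values covers all of $\aS(n, r)$ via the corner idempotents $\sum_{\lambda_1 = m} 1_\lambda$, and the remaining boundary case $m = 0$ is handled by the analogous argument with the thickness-$0$ string (or by directly invoking $\varphi$-compatibility via \cref{easyembed}). The formulas for $E_{i+1}(u)$ and $F_{i+1}(u)$ follow by the same induction.

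The main obstacle will be the $i = 1$ calculation for $E_1$: ensuring that pushing the $D_1^{-1}$ coupons through the merge via the coproduct relations produces exactly the claimed cancellation on the thick string and the specific simplification $\ovaledinverse{u}\ovaled{u+1}\ovaled{u} = \ovaled{u+1}$ on the thin crossing. Once this base case is in hand, the induction step is a cosmetic bookkeeping argument involving only the index shift $u \mapsto u + 1$.
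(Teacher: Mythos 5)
Your proposal is correct and follows essentially the same route as the paper: establish the $i=1$ cases via \cref{SlavaTheorem} (the paper first does $n=1$ then invokes $\varphi$, which amounts to the same computation), then bootstrap $i\mapsto i+1$ using $\psi$, $\eta_{-1}$, \cref{hardembed} and \cref{shifty}. The one small stylistic divergence is that you obtain the $F_1$ formula via the $\tau/\div$ symmetry of \cref{trans}, whereas the paper verifies $\Drinfeld_{2,r}(F_1(u))$ directly from \cref{SlavaExample}; both are valid, and your coupon-cancellation bookkeeping for $E_1$ checks out.
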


\begin{proof}
We first prove \cref{Dform}.
When $n=1$, 
it is true since
$D_1(u) = T_{1,1}(u)$, and we computed 
$\Drinfeld_{1,r}(T_{1,1}(u))$ in \cref{SlavaExample}.
Using \cref{shifty} for the natural embedding $\varphi$ and \cref{easyembed}, it follows
that \cref{Dform} holds for $i=1$ and all $n \geq 1$.
Using \cref{shifty} for the unnatural embedding $\psi$ and \cref{hardembed},
for any $0 \leq m \leq r$, $n > 1$ and $1 \leq i \leq n-1$,
we have that
$$
\Drinfeld_{n,r}(D_{i+1}(u))
=
\Drinfeld_{n,r} \circ \psi \circ \eta_{-1} (D_i(u+1))
=
\psi_m \circ \Drinfeld_{n-1,r-m}(D_i(u+1)).
$$
Using this identity, \cref{Dform}
for $i > 1$ follows by induction on $i$.

A similar induction argument can be used to prove \cref{Eform}, reducing the proofs of these to checking them just for
$\Drinfeld_{2,r}(E_1(u))$ and $\Drinfeld_{2,r}(F_1(u))$.
Since $E_1(u) = T_{1,1}(u)^{-1} T_{1,2}(u)$
and $F_1(u) = T_{2,1}(u) T_{1,1}(u)^{-1}$ by the definitions, 
these special cases may be checked using the $n=2$ examples in \cref{SlavaExample}, \cref{game,istanbul}.
\end{proof}

\begin{corollary}\label{EF}
For $1 \leq i < j \leq n$, the Drinfeld homomorphism $\Drinfeld_{n,r}$ maps 
\begin{align}\label{EF1}
E_{i,j}(u)&\mapsto \sum_{\substack{\mu\in\Lambda(n,r)\\\mu_j >0}}
\xi_{1/(u+i-x_{\mu_{\leq i}+1}),\ \diag(\mu_1,\dots,\mu_n)+e_{i,j}-e_{j,j}},\\\label{EF2}
F_{i,j}(u)&\mapsto \sum_{\substack{\mu\in\Lambda(n,r)\\\mu_i >0}}
\xi_{\diag(\mu_1,\dots,\mu_n)+e_{j,i}-e_{i,i},\ 1/(u+i-x_{\mu_{\leq i}})}.
\end{align}
\end{corollary}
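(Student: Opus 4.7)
The plan is to prove \cref{EF1} by induction on $j-i$. The base case $j=i+1$ coincides with the formula for $E_i(u)=E_{i,i+1}(u)$ from \cref{def}. For the inductive step, I will invoke the standard Gauss-decomposition identity in $\Yn$ (derivable from the defining relations for the Drinfeld generators, compare \cite[Lem.~5.11]{BKdrinfeld}):
\[
E_{i,j}(u) = [E_{i,j-1}(u),E_{j-1}^{(1)}],\qquad F_{i,j}(u) = [F_{j-1}^{(1)},F_{i,j-1}(u)].
\]
By inductive hypothesis, $\Drinfeld_{n,r}(E_{i,j-1}(u))$ is a sum (over suitable $\mu$) of diagrams carrying a single thin diagonal from bottom position $j-1$ to top position $i$, decorated with pin $1/(u+i-x_{\mu_{\leq i}+1})$. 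By \cref{def}, $\Drinfeld_{n,r}(E_{j-1}^{(1)})$ (being the $u^{-1}$-coefficient of a generating function whose pin is $\ovaled{}{u+j-1}$ on a thickness-$1$ propagating string) is a sum of diagrams with a plain thin diagonal from $j$ to $j-1$ and no pin.

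The inductive step then amounts to computing the commutator of these two diagrams. One vertical composition yields terms in which the diagonal from $j$ to $j-1$ fuses with the diagonal from $j-1$ to $i$ into a single diagonal from $j$ to $i$, plus ``non-fusing'' terms in which the two diagonals merely cross without interacting; the opposite composition yields only non-fusing terms. Applying the straightening algorithm from \cref{s4-pres} together with the rational slide identities of \cref{averagedotslide,bingley} to move pins past crossings, the non-fusing contributions cancel pairwise after subtraction. What survives is precisely the single-diagonal diagram from $j$ to $i$; tracking the pin through the slides shows it simplifies to $1/(u+i-x_{\mu_{\leq i}+1})$, establishing the inductive step.

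The formula \cref{EF2} for $F_{i,j}(u)$ follows either by the analogous inductive argument with the commutator identity above, or by appealing to \cref{trans}: since $\tau(T_{i,j}(u))=T_{j,i}(u)$ and the Gauss factorization is unique, one verifies $\tau(E_{i,j}(u))=F_{i,j}(u)$, and then applying $\div$ to \cref{EF1} (which reflects diagrams vertically and interchanges the roles of top and bottom pins) yields \cref{EF2}; the index shift $\mu_{\leq i}+1\leadsto \mu_{\leq i}$ in the pin variable reflects that the pin has migrated from the top endpoint of the propagating strand (the first variable of the $(\mu_i+1)$-thick block at top) to its bottom endpoint (the last variable of the $\mu_i$-thick block at bottom).

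The main obstacle is the bookkeeping inside the commutator computation: one must check that every non-fusing contribution produced by stacking the two diagrams in one order is matched exactly by a corresponding contribution in the reversed order, and that the pin simplification on the surviving fused diagram really yields the single-factor rational function $1/(u+i-x_{\mu_{\leq i}+1})$ rather than a more complicated product. Once these two verifications are executed, the induction closes cleanly.
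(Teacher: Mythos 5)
Your proposal follows essentially the same route as the paper's proof: induction on $j-i$ with base case from \cref{def}, the recursion $E_{i,j}(u)=[E_{i,j-1}(u),E_{j-1,j}^{(1)}]$, a commutator collapse encoded (in the paper) by a single special case of the merge-split relation \cref{mergesplit}, and the $\tau$/$\div$ symmetry via \cref{trans} to pass from $E$ to $F$. The one substantive difference is presentational: where you narrate "non-fusing terms cancel and the pin tracks through," the paper isolates a single explicit local identity
\[
\begin{tikzpicture}[centerzero]
\draw[ultra thick](0,-.5) to (0,.5);
\draw[thin] (-.4,.5) to (-.03,.1) to (-.03,-.5);
\draw[thin] (.03,.5) to (.03,-.1) to (.4,-.5);
\end{tikzpicture}\ -\
\begin{tikzpicture}[centerzero]
\draw[ultra thick](0,-.5) to (0,.5);
\draw[thin] (-.5,.5) to (-.03,-.14) to (-.03,-.5);
\draw[thin] (.03,.5) to (.03,.14) to (.5,-.5);
\end{tikzpicture}\ =\
\begin{tikzpicture}[centerzero]
\draw[ultra thick](0,-.5) to (0,.5);
\draw[thin] (-.45,.5) to (.45,-.5);
\end{tikzpicture}
\]
which does the cancellation in one stroke, so no appeal to \cref{averagedotslide} or \cref{bingley} is in fact needed for the pin bookkeeping.
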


The diagrams for \cref{EF1,EF2} are similar to the ones in \cref{Eform}, but the diagonal string connects the $i$th vertical string to the $j$th vertical string, like in the following examples:
\begin{align*}
E_{1,3}(u)&\mapsto
\sum_{\substack{\mu \in \Lambda(n,r)\\\mu_3>0}}
\begin{tikzpicture}[centerzero,scale=.9]
\draw[ultra thick] (-.5,-.6)\botlabel{\mu_1} to (-.5,.6);
\draw[ultra thick] (0.3,-.6)\botlabel{\mu_2} to (0.3,.6);
\draw[ultra thick] (1.1,-.6)\botlabel{\mu_3} to (1.1,.6);
\draw (1.07,-.6) to (1.07,-.3) to (-0.47,.3) to (-.47,.6);
\ovaled{-0.1,0.16}{u+1};
\end{tikzpicture}\ ,&
F_{1,3}(u)&\mapsto
\sum_{\substack{\mu \in \Lambda(n,r)\\\mu_1>0}}
\begin{tikzpicture}[centerzero,scale=.9]
\draw[ultra thick] (-.5,-.6)\botlabel{\mu_1} to (-.5,.6);
\draw[ultra thick] (0.3,-.6)\botlabel{\mu_2} to (0.3,.6);
\draw[ultra thick] (1.1,-.6)\botlabel{\mu_3} to (1.1,.6);
\draw (1.07,.6) to (1.07,.3) to (-0.47,-.3) to (-.47,-.6);
\ovaled{-0.1,-0.16}{u+1};
\end{tikzpicture}\ .
\end{align*}

\begin{proof}[Proof of \cref{EF}]
We prove this for $E_{i,j}(u)$ by induction on $j$.
The base case $j=i+1$ follows from \cref{Eform}.
The induction step uses the recursive formula\footnote{This is well known; e.g., see \cite[(4.9)]{BT} which gives some justification.}
\begin{equation}
E_{i,j}(u) = 
\big[E_{i,j-1}(u),E_{j-1,j}^{(1)}\big],
\end{equation}
and the following diagrammatic relation,
which is a special case of the merge-split relation from \cref{mergesplit}:
\begin{equation}
\begin{tikzpicture}[centerzero]
\draw[ultra thick](0,-.5) to (0,.5);
\draw[thin] (-.4,.5) to (-.03,.1) to (-.03,-.5);
\draw[thin] (.03,.5) to (.03,-.1) to (.4,-.5);
\end{tikzpicture}\ 
-
\begin{tikzpicture}[centerzero]
\draw[ultra thick](0,-.5) to (0,.5);
\draw[thin] (-.5,.5) to (-.03,-.14) to (-.03,-.5);
\draw[thin] (.03,.5) to (.03,.14) to (.5,-.5);
\end{tikzpicture}\ 
=
\begin{tikzpicture}[centerzero]
\draw[ultra thick](0,-.5) to (0,.5);
\draw[thin] (-.45,.5) to (.45,-.5);
\end{tikzpicture}\ .
\end{equation}
The result for $F_{i,j}(u)$ can be deduced 
from the one for $E_{i,j}(u)$ using \cref{trans},
noting also that $F_{i,j}(u) = \tau(E_{i,j}(u))$.
\end{proof}

\setcounter{section}{8}

\section{Presenting degenerate affine Schur algebras}\label{s9-presenting}

Let $\gl_n[x]$ be the current Lie algebra, that is, 
$\gl_n \otimes \kk[x]$. We use the notation
\begin{align}
e_{i,j;d} &:= e_{i,j} \otimes x^d.
\end{align}
The Lie bracket satisfies 
$[e_{i,j;a}, e_{k,l;b}] = \delta_{j,k} e_{i,l;a+b}
- \delta_{i,l} e_{k,j;a+b}$.
Let $V[x] := V \otimes \kk[x]$ be the natural $\gl_n[x]$-module
with basis $v_{i;d} := v_i \otimes x^d\:(i=1,\dots,n, d \geq 0)$.
The action of $\gl_n[x]$ on $V[x]$ is given explicitly 
by $e_{i,j;a} v_{k;b} = \delta_{j,k} v_{i;a+b}$.
The tensor space $V[x]^{\otimes r}$ is a
$(\gYn,\kk S_r \circltimes \P_r)$-bimodule in a natural way.
The action of $S_r$ is by permuting tensors, and the action of $x_i \in \P_r$ is by multiplication by $x$ on the $i$th tensor factor.
The proof of the following fundamental lemma depends on Maschke's theorem for the symmetric group.

\begin{lemma}\label{antslem}
If $\kk$ is a field of characteristic 0, 
the homomorphism 
$$
\rho_{n,r}:\gYn \rightarrow \End_{\kk S_r \circltimes P_r}\left(V[x]^{\otimes r}\right)
$$ induced by the natural action of $\gl_n[x]$ on $V[x]$ is surjective.
\end{lemma}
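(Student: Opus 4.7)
The plan is to identify the target algebra explicitly, introduce a natural spanning set by $S_r$-symmetrization, and realize each spanning element inside the image through an explicit computation.

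First, I would observe that $V[x]^{\otimes r}\cong V^{\otimes r}\otimes P_r$ as a $(\kk S_r\circltimes P_r)$-module, with $P_r$ acting by multiplication on the polynomial factor and $S_r$ permuting the tensor slots of both factors in parallel. As a $P_r$-module this is free of rank $n^r$, so
$$
\End_{\kk S_r\circltimes P_r}(V[x]^{\otimes r})=(\End(V^{\otimes r})\otimes P_r)^{S_r},
$$
where $S_r$ acts on $\End(V^{\otimes r})=\End(V)^{\otimes r}$ by conjugation via the permutation representation. Since $\kk$ has characteristic 0, Maschke's theorem ensures this invariant space is spanned by the symmetrizations
$$
\mathrm{Sym}(e_{I,J}\otimes x^{\mathbf{d}}):=\sum_{w\in S_r}w\cdot(e_{I,J}\otimes x^{\mathbf{d}}),
$$
where $e_{I,J}:=e_{i_1,j_1}^{[1]}\cdots e_{i_r,j_r}^{[r]}$ runs over the standard basis of $\End(V^{\otimes r})$ and $x^{\mathbf{d}}:=x_1^{d_1}\cdots x_r^{d_r}$ over the monomial basis of $P_r$.

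Next, I would prove by induction on $s$ for $1\leq s\leq r$ the auxiliary claim that, for any sequence $(i'_1,j'_1,d'_1),\ldots,(i'_s,j'_s,d'_s)$, the partial symmetrization
$$
\sum_{\substack{K\in\{1,\ldots,r\}^s\\K\text{ injective}}}e_{i'_1,j'_1}^{[k_1]}\cdots e_{i'_s,j'_s}^{[k_s]}\otimes x_{k_1}^{d'_1}\cdots x_{k_s}^{d'_s}
$$
lies in the image of $\rho_{n,r}$. The base $s=1$ is immediate from $\rho_{n,r}(e_{i,j;d})=\sum_{k=1}^{r}e_{i,j}^{[k]}\otimes x_{k}^{d}$. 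For the inductive step, I would compute
$$
\rho_{n,r}(e_{i'_1,j'_1;d'_1}\cdots e_{i'_s,j'_s;d'_s})=\sum_{K\in\{1,\ldots,r\}^s}e_{i'_1,j'_1}^{[k_1]}\cdots e_{i'_s,j'_s}^{[k_s]}\otimes x_{k_1}^{d'_1}\cdots x_{k_s}^{d'_s},
$$
split the sum by the set partition of $\{1,\ldots,s\}$ induced by the fibers of $K$, and note that within any block of a coarser partition the same-slot matrix units consolidate via $e_{i,j}e_{k,l}=\delta_{j,k}e_{i,l}$ (after commuting disjoint-slot factors apart), turning the coarser-partition contribution into a partial symmetrization attached to a strictly shorter generator sequence, hence in the image by the inductive hypothesis. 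Subtracting all coarser-partition contributions from $\rho_{n,r}(e_{i'_1,j'_1;d'_1}\cdots e_{i'_s,j'_s;d'_s})$ then isolates the discrete-partition contribution, which is the target injective sum.

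At $s=r$, injective $K$ must be a bijection, so the partial symmetrization becomes $\mathrm{Sym}(e_{I,J}\otimes x^{\mathbf{d}})$; since these span the target, surjectivity of $\rho_{n,r}$ follows. The hard part will be the bookkeeping for the inductive step: identifying each coarser-partition contribution precisely with a partial symmetrization of a specific shorter generator sequence (including all matrix-multiplication deltas, the consolidated exponents, and the reorderings needed to bring same-slot factors together), and then combining these across all set partitions of $\{1,\ldots,s\}$. Although each individual simplification is elementary, marshalling them cleanly is the technical heart of the argument.
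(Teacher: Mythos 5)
Your proof is correct and takes essentially the same route as the paper: you identify $\End_{\kk S_r\circltimes P_r}(V[x]^{\otimes r})$ with $(\End(V)^{\otimes r}\otimes P_r)^{S_r}$ and then prove, via the Reynolds operator in characteristic 0 plus an inclusion--exclusion argument over set partitions, that every $S_r$-symmetrization lies in the image. The paper instead cites this final step from Antor's Lemma~2.47 and Corollary~2.48 (stated there for the loop algebra, with the same argument applying to $\gl_n[x]$), so your proposal amounts to making the referenced argument self-contained.
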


\begin{proof}
See the proof of \cite[Cor.~2.48]{Antor}, which proves the analogous result with the current algebra $\gl_n[x]$ replaced by the loop algebra $\gl_n[t,t^{-1}]$.
Exactly the same argument can be used in our polynomial setting.
Thus, one uses the isomorphism
$$
\End_{\kk S_r \circltimes P_r}\left(V[x]^{\otimes r}\right)
\cong
\left(\End_{\kk[x]}\big(V[x]\big)^{\otimes r}\right)^{S_r},
$$
which is analogous to \cite[(21)]{Antor},
plus \cite[Lem.~2.47]{Antor} applied to the algebra
$A := \End_{\kk[x]}(V[x]) \cong \mathfrak{gl}_n[x]$.
\end{proof}

To explain the relevance of \cref{antslem}, 
recall that there is a filtration on $\Yn$
in which the generator
$T_{i,j}^{(d+1)}$ is of filtered degree $d$. The associated 
graded $\gr \Yn$ is identified with the universal enveloping algebra $\gYn$ so that
$\gr_d T_{i,j}^{(d+1)}=e_{i,j;d}$.
We have also defined a filtration on $\aS(n,r)$
such that $\gr \aS(n,r)$ is identified with the current Schur algebra $\gS(n,r)$; cf. \cref{bots}.
The Drinfeld homomorphism is filtered, so it 
induces $\gr \Drinfeld_{n,r}:\gYn\rightarrow \gS(n,r)$.

\begin{theorem}\label{ontoness}
When $\kk$ is a field of characteristic 0, 
$\gr \Drinfeld_{n,r}$ is surjective.
\end{theorem}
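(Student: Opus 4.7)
The plan is to deduce surjectivity of $\gr \Drinfeld_{n,r}$ from \cref{antslem} by identifying $\gr \Drinfeld_{n,r}$ with $\rho_{n,r}$ under a natural bimodule isomorphism. The two main tasks are to set up this identification and then to check equality of the two homomorphisms on a convenient set of generators of $\gYn$.

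First I would construct a natural $\kk$-linear isomorphism $V[x]^{\otimes r} \stackrel{\sim}{\rightarrow} V^{\otimes r}\otimes \P_r$ sending $(v_{k_1}\otimes x^{a_1})\otimes \cdots \otimes (v_{k_r}\otimes x^{a_r})$ to $(v_{k_1}\otimes \cdots \otimes v_{k_r})\otimes x_1^{a_1}\cdots x_r^{a_r}$. A direct check shows that this intertwines the natural action of $\gl_n[x]$ (via the iterated coproduct) with $\sum_{p=1}^r e_{i,j}^{[p]} x_p^d$ for $e_{i,j;d}$, the action of $S_r$ by permuting tensor factors with the twisted action $(v\otimes f)w = vw \otimes w^{-1}(f)$, and the action of $\P_r$ by multiplying the $p$th tensor factor by $x$ with right multiplication by $x_p$. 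Thus the isomorphism is one of $(\gYn, \kk \S_r\circltimes \P_r)$-bimodules, and under it the right-hand side is precisely $V^{\otimes r}\circltimes \P_r$. Consequently $\End_{\kk S_r \circltimes \P_r}(V[x]^{\otimes r}) \cong \gS(n,r)$, so $\rho_{n,r}$ becomes an algebra homomorphism $\rho_{n,r}:\gYn \twoheadrightarrow \gS(n,r)$, which is surjective by \cref{antslem}.

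Next, I would verify the equality $\gr \Drinfeld_{n,r} = \rho_{n,r}$. Since $\gYn$ is generated as an algebra by the elements $e_{i,j;d}$ of $\gl_n[x]$, it suffices to check the equality on these generators. Using $\gr_d T_{i,j}^{(d+1)} = e_{i,j;d}$, I would extract the coefficient of $u^{-d-1}$ in the formula \cref{harder}. The $s=1$ terms contribute exactly $\sum_{p=1}^r e_{i,j}^{[p]} x_p^d$, which lies in $F_d\, \aS(n,r)$ and, under the identification of $\gr \aS(n,r)$ with $\gS(n,r)$ from \cref{bots}, is precisely $\rho_{n,r}(e_{i,j;d})$. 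For $s \geq 2$, the polynomial coefficient of $u^{-d-1}$ in $\frac{1}{(u-x_{p_1})\cdots(u-x_{p_s})}$ has total degree $d+1-s \leq d-1$, so the corresponding terms lie in $F_{d-1}\,\aS(n,r)$ and vanish in $\gr_d$. This yields $\gr_d \Drinfeld_{n,r}(T_{i,j}^{(d+1)}) = \rho_{n,r}(e_{i,j;d})$, completing the identification and hence the proof.

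The main obstacle is not genuinely difficult; it is the careful bookkeeping in step one to ensure that the bimodule structures match with the particular twisted action of $\S_r$ on $V^{\otimes r}\circltimes \P_r$. Once that identification is in place, the filtered-degree bound on the $s\geq 2$ terms is a transparent consequence of \cref{harder}, and surjectivity follows immediately from \cref{antslem}.
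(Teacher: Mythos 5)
Your proposal is correct and follows essentially the same route as the paper: set up the bimodule isomorphism $\theta$ between $V^{\otimes r}\circltimes\P_r$ and $V[x]^{\otimes r}$, identify $\gr\Drinfeld_{n,r}$ with $\rho_{n,r}$ through it, and invoke \cref{antslem}. The one difference worth noting is how you carry out the agreement check: you extract the $u^{-d-1}$ coefficient directly from \cref{harder} and observe that only the $s=1$ terms survive in top filtered degree, yielding $\sum_{p} e_{i,j}^{[p]}x_p^d$. The paper instead first records the image of $e_{i,j;d}$ explicitly in terms of the basis $\varsigma_{A,f}$ (deducing this from \cref{crap} together with the leading-term property of the deformed power sums from \cref{jonsid}) and then verifies that $\theta$ is a $\gYn$-module homomorphism. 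Your version is arguably a bit cleaner since it sidesteps the detour through the explicit basis description and the deformed power sums, relying only on the elementary degree bound $\deg h_{d+1-s} \leq d-1$ for $s \geq 2$.
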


\begin{proof}
From the formula \cref{crap} and \cref{jonsid}, one checks that 
$\gr \Drinfeld_{n,r}$ maps
\begin{equation}\label{friends}
e_{i,j;d} \mapsto 
\begin{dcases}
\sum_{\lambda \in \Lambda(n,r)}
p_d\left(x_{\lambda_{<i}+1},
\dots,x_{\lambda_{\leq i}}\right) 1_\lambda&\text{if $i=j$}\\
\sum_{\substack{\mu \in \Lambda(n,r)\\\mu_j > 0}}
\varsigma_{\diag(\mu_1,\dots,\mu_n)+e_{i,j}-e_{j,j},\
x_{\mu_{<j}+1}^d}&\text{if $i < j$}
\\
\sum_{\substack{\mu \in \Lambda(n,r)\\\mu_j > 0}}
\varsigma_{\diag(\mu_1,\dots,\mu_n)+e_{i,j}-e_{j,j},\  x_{\mu_{\leq j}}^d} &\text{if $i > j$.}
\end{dcases}
\end{equation}
Recalling \cref{nitpick2}, this is an endomorphism
of the right 
$\kk S_r \circltimes\P_r$-module 
$V^{\otimes r} \circltimes \P_r$.
There is an obvious isomorphism of $\kk S_r \circltimes \P_r$-modules
\begin{align}\label{whysolate3}
\theta
:V^{\otimes r} \circltimes \P_r 
&\stackrel{\sim}{\rightarrow}
V[x]^{\otimes r},&
v_\bi \otimes x_1^{d_1} \cdots x_r^{d_r}
&\mapsto
v_{i_1;d_1} \otimes\cdots\otimes v_{i_r;d_r}.
\end{align}
Using \cref{friends}, one checks that $\theta$ is also a left
$\gYn$-module homomorphism. Hence, the following diagram commutes:
\begin{equation}
\begin{tikzcd}
&\gYn\arrow[dl,"\gr\Drinfeld_{n,r}" above left]\arrow[dr,"\rho_{n,r}" above right]\\
\End_{\kk S_r \circltimesintikzcd \P_r}\big(V^{\otimes d}\circltimesintikzcd \P_r\big)
\arrow[rr,"\sim" above,"f \mapsto \theta\circ f \circ \theta^{-1}" below]&&\End_{\kk S_r \circltimesintikzcd \P_r}\left(V[x]^{\otimes d}\right)
\end{tikzcd}
\end{equation}
The surjectivity of $\gr\Drinfeld_{n,r}$ follows from this and \cref{antslem}.
\end{proof}

\begin{corollary}\label{tomorrow}
When $\kk$ is a field of characteristic zero,
$\Drinfeld_{n,r}:\Yn \rightarrow \aS(n,r)$ is surjective.
\end{corollary}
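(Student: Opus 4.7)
The plan is to deduce this from the surjectivity of the associated graded map $\gr \Drinfeld_{n,r}$ established in \cref{ontoness}, via the standard filtered lifting argument. Both the Yangian $\Yn$ and the degenerate affine Schur algebra $\aS(n,r)$ carry exhaustive ascending filtrations, and the Drinfeld homomorphism is filtered, as already noted immediately before \cref{ontoness}. Exhaustiveness of the filtration on $\aS(n,r)$ is immediate from the basis in \cref{bits}, since each $\xi_{A,f}$ lies in $F_{\deg f}\, \aS(n,r)$, so every element of $\aS(n,r)$ belongs to $F_d\, \aS(n,r)$ for some $d \geq 0$.

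The argument then proceeds by induction on $d \geq 0$, showing that $F_d\, \aS(n,r) \subseteq \Im \Drinfeld_{n,r}$. The base case $d = 0$ follows from $F_{-1}\, \aS(n,r) = \{0\}$ (alternatively, from the classical surjectivity of $\drinfeld_{n,r}$ in characteristic 0 combined with \cref{easy} and \cref{silly}, which identify $F_0\, \aS(n,r)$ with $\S(n,r)$ and show that $\Drinfeld_{n,r}$ restricted to $\Un \subseteq \Yn$ yields $\drinfeld_{n,r}$). For the inductive step, given $b \in F_d\, \aS(n,r)$, \cref{ontoness} produces $a \in F_d\, \Yn$ with $\gr_d \Drinfeld_{n,r}(\gr_d a) = \gr_d b$ in $\gS(n,r)$; this forces $\Drinfeld_{n,r}(a) - b \in F_{d-1}\, \aS(n,r)$, and the inductive hypothesis then supplies the remainder as an element of $\Im \Drinfeld_{n,r}$.

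The main (and essentially only) nontrivial input is \cref{ontoness} itself, which rested on \cref{antslem}. So there is no genuine obstacle at this stage: the corollary is a purely formal consequence of the graded surjectivity together with the compatibility of the filtrations.
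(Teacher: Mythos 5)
Your proof is correct and follows the same route the paper intends: the paper states \cref{tomorrow} immediately after \cref{ontoness} with no explicit argument, because surjectivity of $\Drinfeld_{n,r}$ is exactly this standard filtered lifting argument from the surjectivity of $\gr\Drinfeld_{n,r}$. The details you supply (exhaustiveness of the filtration on $\aS(n,r)$ from the basis $\xi_{A,f}$, the inductive reduction from $F_d$ to $F_{d-1}$) are precisely the implicit content.
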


The {\em Harish-Chandra center} $Z_{HC}(\Yn)$ of the Yangian is the central subalgebra of $\Yn$ which is
freely generated by the
elements $C_n^{(d)}\:(d \geq 1)$ defined from
\begin{equation}\label{osaka}
C_n(u) = \sum_{d \geq 0} C_n^{(d)} u^{-d} := D_1(u) D_2(u-1)\cdots D_n(u-n+1).
\end{equation}
This can also be expressed as a certain quantum determinant; e.g., see \cite[Th.~8.6]{BT}.
If $\kk$ is a field of characteristic 0, it is known that the Harish-Chandra center is the entire center of $\Yn$; see \cite[Th.~2.13]{MNO} or \cite[Th.~7.2]{BKdrinfeld}.

\begin{lemma}\label{mamma}
We have that
\begin{equation}\label{debut}
\Drinfeld_{n,r}(C_n(u))
=
\frac{(u+1-x_1)(u+1-x_2)\cdots(u+1-x_r)}{(u-x_1)(u-x_2)\cdots(u-x_r)} 1_{n,r}.
\end{equation}
Hence, 
$\Drinfeld_{n,r}(C_n^{(d+1)}) = 
\tilde p_d(x_1,\dots,x_r) 1_{n,r}\in Z(\aS(n,r))$
for $d \geq 0$.
\end{lemma}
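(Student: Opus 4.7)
The plan is to compute $\Drinfeld_{n,r}(C_n(u))$ directly by multiplying out the images of the factors $D_i(u-i+1)$ supplied by \cref{def}. Substituting $u \mapsto u-i+1$ in \cref{Dform} shows that the two oval labels on the $i$-th thick string, namely $u+i$ and $u+i-1$ in the image of $D_i(u)$, become $u+1$ and $u$ respectively in the image of $D_i(u-i+1)$, and crucially these values are the same for every $i = 1,\ldots,n$. By the shorthand \cref{kward}, the $\lambda$-summand in $\Drinfeld_{n,r}(D_i(u-i+1))$ is therefore the scalar
$$\prod_{j=\lambda_{<i}+1}^{\lambda_{\leq i}} \frac{u+1-x_j}{u-x_j}$$
(acting on the variables attached to the $i$-th string of $1_\lambda$) times the idempotent $1_\lambda$.

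Next, I would multiply the $n$ resulting sums together. Since each factor is diagonal with respect to the orthogonal decomposition $1_{n,r} = \sum_\lambda 1_\lambda$, all cross terms vanish and the product collapses to
$$\sum_{\lambda \in \Lambda(n,r)} \left(\prod_{i=1}^n \prod_{j=\lambda_{<i}+1}^{\lambda_{\leq i}} \frac{u+1-x_j}{u-x_j}\right) 1_\lambda.$$
For each fixed $\lambda$, the intervals $\{\lambda_{<i}+1,\ldots,\lambda_{\leq i}\}$ for $i=1,\ldots,n$ partition $\{1,\ldots,r\}$, so the double product simplifies to $\prod_{j=1}^r \frac{u+1-x_j}{u-x_j}$, which is independent of $\lambda$. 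Pulling this scalar out and using $\sum_\lambda 1_\lambda = 1_{n,r}$ yields the identity \cref{debut}.

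For the second assertion, comparing the coefficient of $u^{-d-1}$ on both sides of \cref{debut} with the generating function defining the deformed power sum in \cref{personofinterest} gives $\Drinfeld_{n,r}(C_n^{(d+1)}) = \tilde p_d(x_1,\ldots,x_r)\, 1_{n,r}$, while centrality in $\aS(n,r)$ is immediate from \cref{interestinger}. There is no real obstacle to this argument once \cref{def} is in hand; the whole proof rests on the single observation that the specialization $u \mapsto u-i+1$ strips away the $i$-dependence of the oval labels, so that the product over $i$ assembles into one rational function in $u$.
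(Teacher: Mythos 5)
Your proof is correct and follows essentially the same route as the paper: apply the formula \cref{Dform} for $\Drinfeld_{n,r}(D_i(u))$, observe that the shift $u\mapsto u-i+1$ in the factor $D_i(u-i+1)$ normalizes the two oval labels to $u+1$ and $u$ on every string, note the factors are block-diagonal over the idempotents $1_\lambda$, and assemble the resulting products over the intervals partitioning $\{1,\dots,r\}$ into the stated rational function. The paper's proof is terser (it simply displays the resulting diagram with ovals $u+1$ and $u$ on every string) but the underlying computation and the derivation of the second assertion from \cref{personofinterest} and the centrality result are identical.
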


\begin{proof}
By \cref{Dform} and the definition \cref{osaka}, we have that
$$
\Drinfeld_{n,r}(C_n(u))=
\sum_{\lambda \in \Lambda(n,r)}
\begin{tikzpicture}[centerzero]
\draw[ultra thick] (-1.5,-.5)\botlabel{\lambda_1} to (-1.5,.5);
\draw[ultra thick] (-.8,-.5)\botlabel{\lambda_2} to (-.8,.5);
\node at (-.05,0) {$\dots$};
\draw[ultra thick] (.6,-.5)\botlabel{\lambda_n} to (.6,.5);
\ovaledinverse{-1.5,.15}{u+1};
\circled{-1.5,-.15}{u};
\ovaledinverse{.6,.15}{u+1};
\circled{.6,-.15}{u};
\ovaledinverse{-0.8,.15}{u+1};
\circled{-0.8,-.15}{u};
\end{tikzpicture}\ .
$$
This proves \cref{debut}. The last assertion follows from the definition of the deformed power sums in \cref{personofinterest}, together with \cref{interesting} which establishes the centrality.
\end{proof}

\begin{lemma}\label{ontocenter}
When $\kk$ is a field of characteristic 0, 
the Drinfeld homomorphism maps the center of $\Yn$ surjectively onto $Z(\aS(n,r))$.
\end{lemma}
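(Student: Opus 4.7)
The plan is to combine \cref{mamma} with the classical fact (cited just before the statement of the lemma) that in characteristic zero the Harish-Chandra center $Z_{HC}(\Yn)$ coincides with the full center $Z(\Yn)$. Since $Z_{HC}(\Yn)$ is freely generated by the elements $C_n^{(d)}$ for $d \geq 1$, and since \cref{mamma} gives $\Drinfeld_{n,r}(C_n^{(d+1)}) = \tilde p_d(x_1,\dots,x_r) 1_{n,r}$ for each $d \geq 0$, the problem reduces via \cref{whatIexpect} to showing that the deformed power sums $\tilde p_d(x_1,\dots,x_r)\:(d \geq 1)$ generate the algebra $\P^{(r)}$ of symmetric polynomials in $x_1,\dots,x_r$.

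To see this, first I would invoke \cref{jonsid} to write $\tilde p_d = p_d + g_d(e_0,e_1,\dots,e_{d-1})$ for some polynomial $g_d$, and then use Newton's identity \cref{newtonsidentity}, which expresses each $e_i$ as a polynomial in $p_1,\dots,p_i$ (valid in characteristic zero), to conclude that $\tilde p_d = p_d + h_d(p_1,\dots,p_{d-1})$ for some polynomial $h_d$. An easy induction on $d$ then shows that $p_d$ lies in the subalgebra of $\P^{(r)}$ generated by $\tilde p_1,\dots,\tilde p_d$ for every $d \geq 1$. Since the classical power sums $p_1,\dots,p_r$ already generate $\P^{(r)}$ as a $\kk$-algebra (by Newton's identity once more), the same holds for the deformed power sums $\tilde p_d(x_1,\dots,x_r)\:(d \geq 1)$.

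Combining these observations, the image $\Drinfeld_{n,r}(Z_{HC}(\Yn))$ contains $\{f 1_{n,r}\:|\:f \in \P^{(r)}\}$, which by \cref{whatIexpect} is precisely $Z(\aS(n,r))$; the reverse containment is automatic since $\Drinfeld_{n,r}$ is an algebra homomorphism carrying central elements to central elements. Using $Z(\Yn) = Z_{HC}(\Yn)$, this yields $\Drinfeld_{n,r}(Z(\Yn)) = Z(\aS(n,r))$. There is no substantive obstacle here: the only hypothesis actually used is that $\kk$ has characteristic zero, which enters both in the identification of the Harish-Chandra center with the full center and in the invertibility of the small integers required by Newton's identity.
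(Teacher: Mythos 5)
Your proof is correct and follows the same strategy as the paper's: identify $Z(\Yn)$ with the Harish-Chandra center, use \cref{mamma} to compute the images of the generators $C_n^{(d)}$ as deformed power sums, argue via Newton's identity that the deformed power sums generate $\P^{(r)}$, and conclude with \cref{whatIexpect}. The only difference is that you spell out the triangularity argument (via \cref{jonsid}) showing the deformed power sums generate $\P^{(r)}$, which the paper leaves implicit.
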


\begin{proof}
When $\kk$ is a field of characteristic 0,
Newton's identity implies that
$\P^{(r)}$ is generated by the power
sums $p_1(x_1,\dots,x_r),\dots,p_r(x_1,\dots,x_r)$.
Hence, $\P^{(r)}$ is also generated by the deformed power sums
$\tilde p_1(x_1,\dots,x_r),\dots,\tilde p_r(x_1,\dots,x_r)$.
Using this, the result follows from
\cref{whatIexpect,mamma}.
\end{proof}

When $\kk$ is a 
field of characteristic $p>0$, $Z(\Yn)$ is much larger
than in characteristic 0. It is generated by the Harish-Chandra center $Z_{HC}(\Yn)$ together with the
{\em $p$-center} $Z_p(\Yn)$, which is the central subalgebra freely generated by the coefficients $B_i^{(pd)}$
for $1 \leq i \leq n$ and $d \geq 1$
defined by setting
\begin{equation}
B_i(u) = \sum_{d \geq 0} B_i^{(d)} u^{-d} := D_i(u) D_i(u-1) \cdots D_i(u-p+1)
\end{equation}
for $i=1,\dots,n$,
together with the coefficents $P_{i,j}^{(pd)}, Q_{i,j}^{(pd)}$
for $1 \leq i < j \leq n$ and $d \geq 1$
defined by
\begin{align}
P_{i,j}(u) &= \sum_{d \geq p} P_{i,j}^{(d)} u^{-d} := E_{i,j}(u)^p,
\\
Q_{i,j}(u) &= \sum_{t \geq p} Q_{i,j}^{(d)} u^{-d} := F_{i,j}(u)^p.
\end{align}
In fact, {\em all} of the coefficients $B_i^{(d)}, P_{i,j}^{(d)}$
and $Q_{i,j}^{(d)}$ for $d \geq 1$ belong to the $p$-center.
This is proved in \cite[Th.~5.4 and Th.~5.11(2)]{BT}.
The {\em restricted Yangian} $\resYn$ 
is the quotient of $Y_n$ by the two-sided ideal generated by $B_i^{(d)}, P_{i,j}^{(d)}$
and $Q_{i,j}^{(d)}$ for $d \geq 1$, i.e., 
the generators of the $p$-center.
This definition is due to Goodwin and Topley \cite[Sec.~4.3]{GTrestricted}. 

\begin{lemma}\label{poschar}
If $\kk$ is a field of characteristic $p > 0$,
the Drinfeld homomorphism $\Drinfeld_{n,r}$
maps $B_i(u)$ to $1_{n,r}$, and it maps $P_{i,j}(u)$ and $Q_{i,j}(u)$ to 0. Hence,
$\Drinfeld_{n,r}$ factors through the quotient to induce a homomorphism
\begin{equation}
\Drinfeld^{[p]}_{n,r}:\resYn \rightarrow \aS(n,r).
\end{equation}
\end{lemma}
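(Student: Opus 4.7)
The theorem has two independent assertions: (1) $\Drinfeld_{n,r}(B_i(u)) = 1_{n,r}$, which forces $B_i^{(d)} \mapsto 0$ for $d \geq 1$; and (2) $\Drinfeld_{n,r}(P_{i,j}(u)) = \Drinfeld_{n,r}(Q_{i,j}(u)) = 0$. Both together imply that the defining ideal of $\resYn$ in $\Yn$ lies in the kernel of $\Drinfeld_{n,r}$, so the factorization is automatic.

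For (1), I would substitute the formula from \cref{def} for $\Drinfeld_{n,r}(D_i(u)) \cdot 1_\lambda = \prod_k \frac{u+i-x_k}{u+i-1-x_k} \cdot 1_\lambda$, with $k$ running over the $i$-chunk of $\lambda$-variables, into the defining product $B_i(u) = D_i(u) D_i(u-1) \cdots D_i(u-p+1)$. On each $\lambda$-component, the intermediate numerator and denominator factors cancel in pairs, leaving $\prod_k \frac{u+i-x_k}{u-p+i-x_k} \cdot 1_\lambda$. In characteristic $p$ the integer $p$ vanishes in $\kk$, so $u - p = u$ and each factor equals $1$. Hence $\Drinfeld_{n,r}(B_i(u)) \cdot 1_\lambda = 1_\lambda$, and summing over $\lambda \in \Lambda(n,r)$ yields $1_{n,r}$.

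For (2), I plan a direct diagrammatic computation of $\Drinfeld_{n,r}(E_{i,j}(u))^p$. For $\mu \in \Lambda(n,r)$ with $\mu_j < p$, the composition $\Drinfeld_{n,r}(E_{i,j}(u))^p \cdot 1_\mu$ vanishes trivially because some intermediate factor annihilates. For $\mu_j \geq p$, stacking the $p$ single-diagonal diagrams from \cref{EF} produces a diagram with $p$ parallel thin diagonals from column $j$ at the bottom to column $i$ at the top, carrying pin labels $\frac{1}{u+i-x_{\mu_{\leq i}+k}}$ for $k = 1,\dots,p$. Iteratively invoking the merge-split relation \cref{mergesplit}, the deformed shuffle relation \cref{deformedshuffle}, and the coproduct relations \cref{fridaynight}, these $p$ thin diagonals can be bundled into a single thick diagonal of thickness $p$ carrying the combined pin label $\sum_{w \in \S_p} w \bully \prod_{k=1}^p \frac{1}{u+i-x_{\mu_{\leq i}+k}}$. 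Since this product is symmetric in the variables $x_{\mu_{\leq i}+1},\dots,x_{\mu_{\leq i}+p}$, the deformed action $w \bully$ fixes it---the Demazure operators vanish on symmetric polynomials---so each summand equals the product itself, and the sum totals $p!$ times the symmetric product. In characteristic $p$, $p! = 0$, so the bundled label is zero and $\Drinfeld_{n,r}(E_{i,j}(u))^p \cdot 1_\mu = 0$. Summing gives $\Drinfeld_{n,r}(P_{i,j}(u)) = 0$, so $P_{i,j}^{(d)} \mapsto 0$ for all $d$. The case of $Q_{i,j}(u) = F_{i,j}(u)^p$ follows either by the parallel argument on the $F$ side, or via \cref{trans} together with the standard identity $\tau(E_{i,j}(u)) = F_{i,j}(u)$, which give $\Drinfeld_{n,r}(Q_{i,j}(u)) = \Drinfeld_{n,r}(P_{i,j}(u))^\div = 0$.

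The main technical obstacle is carefully executing the iterative bundling of the $p$ thin diagonals into a single thick diagonal of thickness $p$. Each bundling step requires invoking the deformed shuffle relation together with the coproduct relations to slide pin labels past merges and splits; while each individual application is routine, the cumulative bookkeeping must be managed with precision to confirm that the combined label takes the claimed shuffle-sum form.
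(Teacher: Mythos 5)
Your proof is correct and follows essentially the same approach as the paper: telescoping of the $D_i$-generating functions followed by reduction modulo $p$ for $B_i(u)$, and bundling the $p$ thin diagonals into a digon with combined labels so that the $p!$ produced by the split-merge/deformed-shuffle relation vanishes in characteristic $p$. The paper only sketches the argument by drawing the $p=3$ case diagrammatically, whereas you phrase the key step for $P_{i,j}(u)$ via the deformed shuffle relation together with the observation that $\bully$ fixes the symmetric product $\prod_k \frac{1}{u+i-x_k}$---an equivalent packaging of the paper's ``combine labels via coproduct, then apply the undeformed digon relation'' step---and you give both the direct $F$-side argument and the $\tau$/$\div$ shortcut for $Q_{i,j}(u)$, either of which is fine.
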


\begin{proof}
This is straightforward. For ease of drawing diagrams, we just illustrate the idea by treating the case $p=3$.
By \cref{def}, the image of $B_i(u)$ is
$$
\sum_{\mu \in \Lambda(n,r)}
\begin{tikzpicture}[centerzero]
\draw[ultra thick] (-1.5,-1)\botlabel{\mu_1} to (-1.5,1);
\node at (-1.1,0) {$\dots$};
\draw[ultra thick] (-.7,-1)\botlabel{\mu_{i\!-\!1}\ } to (-.7,1);
\draw[ultra thick] (0,-1)\botlabel{\mu_i} to (0,1);
\draw[ultra thick] (.7,-1)\botlabel{\ \mu_{i\!+\!1}} to (.7,1);
\node at (1.1,0) {$\dots$};
\draw[ultra thick] (1.5,-1)\botlabel{\mu_n} to (1.5,1);
\circledinverse{0,.75}{u};
\ovaled{0,.45}{u+2};
\ovaledinverse{0,.15}{u+2};
\ovaled{0,-.15}{u+1};
\ovaledinverse{0,-.45}{u+1};
\circled{0,-.75}{u};
\end{tikzpicture}
$$
which equals $1_{n,r}$ because $\begin{tikzpicture}[centerzero]
\draw[ultra thick] (0,-.32) to (0,.32);
\circledinverse{0,.15}{u};
\circled{0,-.15}{u};
\end{tikzpicture} = \begin{tikzpicture}[centerzero]
\draw[ultra thick] (0,-.32) to (0,.32);
\end{tikzpicture}\ $.
The arguments for $P_{i,j}(u)$ and $Q_{i,j}(u)$ are similar. Again, we just illustrate with one example, namely, $P_{1,3}(u)\in\Y_3$ in characteristic 3. By \cref{EF}, 
$\Drinfeld_{3,r}\big(P_{1,3}(u)\big)$ equals
$$
\sum_{\substack{\mu \in \Lambda(3,r)\\\mu_3 \geq 3}}
\begin{tikzpicture}[centerzero]
\draw[ultra thick] (-.5,-.8)\botlabel{\mu_1} to (-.5,.9);
\draw[ultra thick] (0.3,-.8)\botlabel{\mu_2} to (0.3,.9);
\draw[ultra thick] (1.1,-.8)\botlabel{\mu_3} to (1.1,.9);
\draw (1.07,-.8) to (1.07,.1) to (-0.45,.7) to (-.45,.9);
\ovaled{-0.1,0.56}{u+1};
\draw (1.06,-.8) to (1.06,-.3) to (-0.46,.3) to (-.46,.9);
\ovaled{-0.1,0.16}{u+1};
\draw (1.05,-.8) to (1.05,-.7) to (-0.47,-.1) to (-.47,.9);
\ovaled{-0.1,-0.24}{u+1};
\end{tikzpicture}
=
\sum_{\substack{\mu \in \Lambda(3,r)\\\mu_3 \geq 3}}
\begin{tikzpicture}[centerzero]
\draw[ultra thick] (-.5,-.8)\botlabel{\mu_1} to (-.5,.9);
\draw[ultra thick] (1.5,-.8)\botlabel{\mu_2} to (1.5,.9);
\draw[ultra thick] (2,-.8)\botlabel{\mu_3} to (2,.9);
\draw[thick] (1.97,-.8) to (1.97,-.4) to (1.2,-.1);
\draw[thick] (-.47,.9) to (-.47,.52) to (-.17,0.41);
\draw (1.2,-.1) to (-.17,.41);
\draw (1.2,-.11) to [out=-150,in=-80,looseness=1.2] (-.17,.4);
\draw (1.2,-.09) to [out=100,in=30,looseness=1.2] (-.17,.42);
\ovaled{0.5,0.56}{u+1};
\ovaled{0.5,0.16}{u+1};
\ovaled{0.5,-0.24}{u+1};
\end{tikzpicture}
=
\sum_{\substack{\mu \in \Lambda(3,r)\\\mu_3 \geq 3}}
\begin{tikzpicture}[centerzero]
\draw[ultra thick] (-.5,-.8)\botlabel{\mu_1} to (-.5,.9);
\draw[ultra thick] (1.5,-.8)\botlabel{\mu_2} to (1.5,.9);
\draw[ultra thick] (2,-.8)\botlabel{\mu_3} to (2,.9);
\draw[thick] (1.97,-.8) to (1.97,-.4) to (1.2,-.1);
\draw[thick] (-.47,.9) to (-.47,.52) to (.31,0.24);
\draw (1.2,-.1) to (.3,.24);
\draw (1.2,-.11) to [out=-150,in=-80,looseness=1] (.3,.23);
\draw (1.2,-.09) to [out=100,in=30,looseness=1] (.31,.25);
\ovaled{-0.1,0.4}{u+1};
\end{tikzpicture},
$$
which equals 0 because 
$\begin{tikzpicture}[centerzero]
\draw[thick] (0,-.3) to (0,-.2);
\draw[thick] (0,.3) to (0,.2);
\draw (.01,.2) to[out=-45,in=45,looseness=1] (0.01,-.2);
\draw (0,.2) to (0,-.2);
\draw (-0.01,.2) to[out=-135,in=135,looseness=1] (-0.01,-.2);
\end{tikzpicture} = 3!\ \begin{tikzpicture}[centerzero]
\draw[thick] (0,-.3) to (0,.3);
\end{tikzpicture}$ by the split-merge relation.
\end{proof}

In the remainder of the section, we assume that $\kk$ is a field of characteristic 0, so that $\Drinfeld_{n,r}$ is surjective.
What can be said about its kernel?
To make a precise statement, we are going to
replace $\Yn$ with a modified form $\Y_{n,r}$
via a construction which is similar in spirit to the passage from $\Un$ to $\U_{n,r}$ in \cref{DGthm}.
Fix a choice of $r$ from now on, and let 
\begin{equation}
\P := \bigoplus_{\lambda \in \Lambda(n,r)} \P^\lambda 1_\lambda
\end{equation}
The root grading $\Yn = \bigoplus_{\alpha \in X(n)} \Y_\alpha$ is defined so that 
$T_{i,j}^{(d)}$ is of weight $\eps_i - \eps_j$;
equivalently, $E_i^{(d)}$ is of weight $\alpha_i$ and
$F_i^{(d)}$ is of weight $-\alpha_i$.
Let 
\begin{equation}\label{ponky}
\widetilde\Y_{n,r} := \bigoplus_{\lambda,\mu \in \Lambda(n,r)}
\P^\lambda \otimes \Y_{\lambda-\mu} \otimes \P^\mu
\end{equation}
viewed as a $(\P,\P)$-bimodule
so that 
$(f 1_\lambda) a (g 1_\mu) = ff_\lambda\otimes a_{\lambda,\mu}\otimes  g_\mu g$
for $f \in \P^\lambda, g \in \P^\mu$ and $a = \sum_{\lambda,\mu \in \Lambda(n,r)} f_\lambda \otimes a_{\lambda,\mu} \otimes g_\mu \in \widetilde\Y_{n,r}$.
Then we define
$\Y_{n,r}$ to be the quotient of the tensor algebra 
\begin{equation}\label{tellytubbies}
T_\P(\widetilde{\Y}_{n,r})
= \P \ \oplus\ \widetilde{\Y}_{n,r} \ \oplus\  \widetilde{\Y}_{n,r} \otimes_{\P} \widetilde{\Y}_{n,r}\  \oplus\  \widetilde{\Y}_{n,r} \otimes_{\P} \widetilde{\Y}_{n,r} \otimes_{\P} \widetilde{\Y}_{n,r}\ \oplus\ \cdots
\end{equation}
by the two-sided ideal generated by the relations
\begin{align}\label{tellyrellies}
1_\lambda (1 \otimes a \otimes 1) 1_\mu \otimes 1_\mu (1 \otimes b \otimes 1) 1_\nu
&= 1_\lambda (1 \otimes ab \otimes 1)1_\nu,\\
1_\lambda(1\otimes  D_i^{(d+1)} \otimes 1) 1_\lambda &= 
\tilde p_d\big(x_{\lambda_{<i}+1},
\dots,x_{\lambda_{\leq i}}\big) 1_\lambda,\label{bellyrellies}
\end{align}
for all $\lambda,\mu,\nu \in \Lambda(n,r), a \in \Y_{\lambda-\mu}, b \in \Y_{\mu-\nu}$, $i=1,\dots,n$ and $d \geq 0$.
As we are in characteristic 0, any symmetric polynomial can be expressed in terms of the deformed power sums. Hence, the relations \cref{tellyrellies,bellyrellies} imply that $1_\lambda \Y_{n,r} 1_\mu$ is spanned by images 
$1_\lambda(\overline{1 \otimes a \otimes 1})1_\mu$
of elements of the form $1_\lambda (1 \otimes a \otimes 1) 1_\mu$
for $a \in \Y_{\lambda-\mu}$.
The Drinfeld homomorphism $\Drinfeld_{n,r}$ induces a homomorphism
\begin{align}
\overline{\Drinfeld}_{n,r}:
\Y_{n,r}
&\rightarrow \aS(n,r),&
1_\lambda(\overline{f \otimes a \otimes g})1_\mu &\mapsto (f 1_\lambda) \Drinfeld_{n,r}(a) (g 1_\mu)
\end{align}
for $f \in \P^\lambda, a \in \Y_{\lambda-\mu}$ and $g \in \P^\mu$.
By \cref{tomorrow}, this homomorphism is surjective.

\begin{theorem}\label{startrek}
If  $n > r$ then $\overline{\Drinfeld}_{n,r}$
is an isomorphism.
\end{theorem}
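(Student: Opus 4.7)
Since surjectivity of $\overline{\Drinfeld}_{n,r}$ follows from \cref{tomorrow} (applied to the defining presentation of $\Y_{n,r}$), the plan is to establish injectivity when $n > r$ via a Morita-style reduction to a corner algebra, whose structure is controlled by Drinfeld's theorem. Throughout, set $\omega := (1^r, 0^{n-r}) \in \Lambda(n,r)$.

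\textbf{Step 1 (Fullness of $1_\omega$).} First I would show that $1_\omega$ is a full idempotent in $\Y_{n,r}$, i.e., $\Y_{n,r} = \Y_{n,r}\, 1_\omega\, \Y_{n,r}$. This reduces to showing each $1_\lambda \in \Y_{n,r}\, 1_\omega\, \Y_{n,r}$. In $\aS(n,r)$ the analogous statement is the identity $1_\lambda = \lambda_1!\cdots\lambda_n!\,\xi_A\cdot 1_\omega\cdot\xi_{A^{\transpose}}$ used in the proof of \cref{equivcats}, where $\xi_A$ and $\xi_{A^{\transpose}}$ correspond to merge/split diagrams. By \cref{silly} and \cref{def} (specifically the formulas for $\Drinfeld_{n,r}(E_i^{(1)})$ and $\Drinfeld_{n,r}(F_i^{(1)})$), these merge/split diagrams are images of explicit products of $E_i^{(1)}$'s and $F_i^{(1)}$'s, so the factorizations of $1_\lambda$ lift to $\Y_{n,r}$.

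\textbf{Step 2 (Corner identification).} The central step is to show that $\overline{\Drinfeld}_{n,r}$ restricts to an isomorphism $1_\omega \Y_{n,r} 1_\omega \stackrel{\sim}{\rightarrow} 1_\omega \aS(n,r) 1_\omega = \aH_r$, using the identification from \cref{easybit}. Surjectivity is automatic. For injectivity, I would construct a splitting $\phi:\aH_r \to 1_\omega\Y_{n,r}1_\omega$ of the restricted map on generators: set $\phi(x_i) := D_i^{(2)}\cdot 1_\omega$, which by \cref{bellyrellies} and the computation $\tilde p_1(x_i) = x_i$ is consistent; and define $\phi(s_j)$ via a weight-zero Yangian expression (e.g.\ a suitable renormalization involving $F_j^{(1)} E_j^{(1)}$ and polynomial corrections) whose image under $\overline{\Drinfeld}_{n,r}$ is $s_j$. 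The relations of $\aH_r$ must be checked in $1_\omega \Y_{n,r} 1_\omega$; this is where Drinfeld's theorem enters: for $n > r$, the equivalence identifies $\End_{\SYn}(V^{\otimes r} \otimes_{\kk \S_r} \aH_r) = \aH_r$, and this identification is exactly what forces the relations among $\phi(x_i)$ and $\phi(s_j)$ to hold in $\Y_{n,r}$. Since $\overline{\Drinfeld}_{n,r}\circ\phi = \mathrm{id}_{\aH_r}$, the map $\phi$ is injective; its image is then all of $1_\omega\Y_{n,r}1_\omega$ because the latter surjects onto $\aH_r$ and is generated, modulo the relations \cref{bellyrellies}, by weight-zero Yangian elements whose images under $\overline{\Drinfeld}_{n,r}$ lie in $\phi(\aH_r)$.

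\textbf{Step 3 (Morita conclusion).} Let $K := \ker(\overline{\Drinfeld}_{n,r})$. By Step 1, $\Y_{n,r} 1_\omega \Y_{n,r} = \Y_{n,r}$, so
\[
K \;=\; \Y_{n,r}\, K\, \Y_{n,r} \;=\; (\Y_{n,r}\,1_\omega\,\Y_{n,r})\,K\,(\Y_{n,r}\,1_\omega\,\Y_{n,r}) \;\subseteq\; \Y_{n,r}\cdot 1_\omega K 1_\omega \cdot \Y_{n,r},
\]
where the inclusion uses that $K$ is two-sided (so each intermediate $\Y_{n,r}\,K\,\Y_{n,r}$ collapses into $K$). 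By Step 2, $1_\omega K 1_\omega = 0$, hence $K = 0$, completing the proof.

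\textbf{Main obstacle.} The hard part is Step 2: concretely exhibiting a splitting $\phi:\aH_r\to 1_\omega\Y_{n,r}1_\omega$, which requires identifying Yangian expressions representing the Coxeter generators $s_j$ at the $\omega$-weight and verifying the braid relations. The essential input—needed to rule out unexpected relations in $1_\omega\Y_{n,r}1_\omega$ beyond those of $\aH_r$—is precisely Drinfeld's theorem for $n > r$, which pins down $\End_{\SYn}(V^{\otimes r}\otimes_{\kk \S_r}\aH_r)$ to be exactly $\aH_r$, thereby constraining $1_\omega\Y_{n,r}1_\omega$ to be no larger than $\aH_r$.
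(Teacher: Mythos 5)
Your overall Morita strategy (Steps 1 and 3) is sound, and the reduction to showing $1_\omega K 1_\omega=0$ with $K:=\ker\overline{\Drinfeld}_{n,r}$ is a reasonable alternative to the paper's route (which works instead with the $\sl_n$-analog $\SY_{n,r}$ of $\Y_{n,r}$ and a purely module-theoretic argument). However, Step 2 contains a logical gap that your appeal to Drinfeld's theorem does not close.

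The problem is the final inference in Step 2. Even granting the existence of a splitting $\phi:\aH_r\to 1_\omega\Y_{n,r}1_\omega$ with $\overline{\Drinfeld}_{n,r}\circ\phi=\id$, this only yields a vector-space decomposition $1_\omega\Y_{n,r}1_\omega = \phi(\aH_r)\oplus 1_\omega K 1_\omega$; it gives no upper bound on the corner and hence no reason for $1_\omega K 1_\omega$ to vanish. Your justification that the corner ``is generated by weight-zero Yangian elements whose images lie in $\phi(\aH_r)$'' is not a valid argument for two reasons. First, the corner $1_\omega\Y_{n,r}1_\omega$ of a quotient of a tensor algebra over $\P$ is not generated by $1_\omega a 1_\omega$ for $a$ of weight zero; it contains products $1_\omega a_1 1_{\mu_1} a_2\cdots 1_\omega$ that pass through other idempotents $1_\mu$, and these need not factor through weight-zero elements. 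Second, the fact that the image in $\aS(n,r)$ of any such product lands in $\aH_r$ merely restates that $\overline{\Drinfeld}_{n,r}$ restricts to the corner; it does not say the product itself is in $\phi(\aH_r)$, which is exactly the question. What you actually need is an \emph{upper bound} on $1_\omega\Y_{n,r}1_\omega$, and ``Drinfeld's theorem constrains it to be no larger than $\aH_r$'' is an assertion, not a proof: Drinfeld's theorem is a statement about the functor $\Res^{\Yn}_{\SYn}\circ(V^{\otimes r}\otimes_{\kk\S_r}-)$ on module categories, and translating it into an algebraic bound on the corner of $\Y_{n,r}$ requires (a) passing to an $\sl_n$-analog $\SY_{n,r}$ whose modules can be identified with the target category of Drinfeld's theorem via the $\sl_n$-analog of the Doty--Giaquinto theorem, and (b) showing $\SY_{n,r}\to\Y_{n,r}$ is surjective when $n>r$. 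This is precisely the content of the paper's proof, and it is not present in your Step 2. Without it, the claim $1_\omega K 1_\omega=0$ is unsubstantiated, and the proof does not go through.
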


\begin{proof}
We begin by defining another
algebra $\SY_{n,r}$ which is the analog of $\Y_{n,r}$ for $\SYn$.
Recall that $\SYn$ is
the subalgebra of $\Yn$ generated by the coefficients of $E_i(u)$, $F_i(u)$ and
\begin{equation}
H_i(u) = \sum_{d \geq 0} H_i^{(d)} u^{-d} := - \frac{D_{i+1}(u)}{D_i(u)}
\end{equation}
for $i=1,\dots,n-1$; e.g., see \cite[Sec.~6.1]{BT}.
The root grading of $\SYn$ is a grading 
$$
\SYn = \bigoplus_{\bar\lambda \in \overline{X}(n)} \SY_{\bar\lambda}
$$
by the quotient group $\overline{X}(n) := X(n) / (\eps_1+\cdots+\eps_n)$. 
We denote the image of $\lambda\in X(n)$ in $\overline{X}(n)$
by $\bar\lambda$.
Let $\KK$ be as in \cref{lotsofrain}. Let 
$$
\widetilde{\SY}_{n,r} := \bigoplus_{\lambda,\mu \in \Lambda(n,r)}
\SY_{\bar\lambda-\bar\mu}
$$
viewed as a $(\KK,\KK)$-bimodule so that
$1_\lambda a 1_\mu$ is the projection $a_{\lambda,\mu}$ 
of $a=\sum_{\lambda,\mu \in \Lambda(n,r)} a_{\lambda,\mu} \in \widetilde{\SY}_{n,r}$ onto the $(\lambda,\mu)$th summand.
Then we define $\SY_{n,r}$ to be the quotient of
the tensor algebra $T_\KK(\widetilde{\SY}_{n,r})$ by the two-sided ideal generated by the relations
\begin{align}\label{smellies}
1_\lambda a 1_\mu \otimes 1_\mu b 1_\nu
&= 1_\lambda ab1_\nu,&
1_\lambda H_i^{(1)} 1_\lambda &= (\lambda_i-\lambda_{i+1}) 1_\lambda,
\end{align}
for all $\lambda,\mu,\nu \in \Lambda(n,r), a \in \SY_{\bar\lambda-\bar\mu}, b \in \SY_{\bar\mu-\bar\nu}$, 
and $1,\dots,n-1$.

The inclusions of $\SYn$ into $\Yn$ and $\KK$ into $\P$ induce an algebra homomorphism
\begin{equation}
\inc:
\SY_{n,r} \rightarrow \Y_{n,r}.
\end{equation}
One can show directly from the definitions that $\inc$ is injective. We will not need to use this here so omit the details; in the case $n>r$, arguments in the next paragraphs prove more, namely, that $\inc$ is an isomorphism.

Assume from now on that $n > r$.
We claim that $\inc$ is surjective. 
To prove this, we know already that 
$\Y_{n,r}$ is generated by the coefficients of $1_{\mu+\alpha_i} (1 \otimes \overline{E_i(u)} \otimes 1) 1_\mu$ and
$1_\mu (1 \otimes \overline{F_i(u)} \otimes 1) 1_{\mu+\alpha_i}$
for $i=1,\dots,n-1$ and $\mu$ with $\mu_{i+1}>0$, which are obviously in the image of
$\inc$,
together with the coefficients of
$1_\lambda (1 \otimes \overline{D_i(u)} \otimes 1) 1_\lambda$
for $i=1,\dots,n$ and all $\lambda$. Thus, it suffices to show for
each $i$ and $\lambda\in\Lambda(n,r)$ that all coefficients of
$1_\lambda (1 \otimes \overline{D_i(u)} \otimes 1) 1_\lambda$
are in the image of $\inc$.
Given $\lambda$, we can choose $j$ so that $\lambda_j = 0$; this is the place that the assumption $n > r$ is required.
For this $j$, we have that $1_\lambda (1 \otimes \overline{D_j(u)} \otimes 1) 1_\lambda = 1_\lambda$. 
For any $i \neq j$,
all coefficients of $\frac{D_i(u)}{D_j(u)}$ are in $\SYn$.
From these two statements, it 
follows that all coefficients of $1_\lambda (1 \otimes \overline{D_i(u)} \otimes 1) 1_\lambda$ are in the
image of $\inc$ for all $i=1,\dots,n$.
This proves the claim.

By the claim, there is a surjective homomorphism $\overline{\SDrinfeld}_{n,r}$
fitting into the commutative diagram
\begin{equation}
\begin{tikzcd}
\SY_{n,r}\arrow[dr,twoheadrightarrow,"\overline{\SDrinfeld}_{n,r}" below left]\arrow[rr,twoheadrightarrow,"\inc"]&&\Y_{n,r}\arrow[dl,twoheadrightarrow,"\bar\Drinfeld_{n,r}"]\\
&\aS(n,r)
\end{tikzcd}
\end{equation}
We complete the proof of the theorem by showing that
$\overline{\SDrinfeld}_{n,r}$ is an isomorphism.
Equivalently, we show that the pull-back
functor $\overline{\SDrinfeld}_{n,r}^*:\aS(n,r)\mod\rightarrow
\SY_{n,r}\mod$ is an equivalence of categories.
Recall the functor $\Functor_{n,r}:\aH_r\mod
\rightarrow \aS(n,r)\mod$ from \cref{schurfunctor}. It is an equivalence of categories by \cref{equivcats}. 
Therefore, $\overline{\SDrinfeld}_{n,r}^*$
is an equivalence of categories if and only if
$\overline{\SDrinfeld}_{n,r}^* \circ \Functor_{n,r}$ is an equivalence of categories. The latter statement is proved in the next paragraph.

There is an $\sl_n$ analog $\SU_{n,r}$ 
of the algebra $\U_{n,r}$, and an $\sl_n$ analog of \cref{DGthm}
which gives an isomorphism $\overline{\sdrinfeld}_{n,r}:\SU_{n,r}
\stackrel{\sim}{\rightarrow} \S(n,r)$.
It follows that an 
$\SU_{n,r}$-module is the same thing as an $\SUn$-module whose restriction to $\sl_n$ is a polynomial representation of degree $r$. There is also an inclusion $\SU_{n,r} \hookrightarrow
\SY_{n,r}$ induced by the inclusion  $\SUn\hookrightarrow\SYn$;
this induced homomorphism is injective because its composition with $\overline{\SDrinfeld}_{n,r}$ is the isomorphism $\overline{\sdrinfeld}_{n,r}$
composed with the inclusion $\S(n,r)\hookrightarrow \aS(n,r)$ from \cref{easy}.
It follows that the category appearing in the
statement of Drinfeld's theorem in the introduction is 
identified with $\SY_{n,r}\mod$. Also
$\overline{\SDrinfeld}_{n,r}^* \circ \Functor_{n,r}$
is identified the Drinfeld functor $\Res^{\Yn}_{\SYn} \circ 
(V^{\otimes d} \otimes_{\kk S_r} -)$.
Hence, $\overline{\SDrinfeld}_{n,r}^* \circ \Functor_{n,r}$ is an equivalence by Drinfeld's theorem.
\end{proof}

\begin{conjecture}\label{ourconjecture}
$\overline{\Drinfeld}_{n,r}$ is an isomorphism for all values of $n$ and $r$.
\end{conjecture}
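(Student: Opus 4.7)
The plan is to reduce the conjecture to a current-algebra analogue of \cref{DGthm} via a filtered-to-graded argument. First, equip $\Y_{n,r}$ with the filtration extending the standard Yangian filtration (in which $T_{i,j}^{(d+1)}$ has filtered degree $d$) by declaring each generator $x_j$ of $\P$ to have filtered degree $1$. The multiplication relation defining $\Y_{n,r}$ is filtered-homogeneous, while the relation $1_\lambda(1 \otimes \overline{D_i^{(d+1)}} \otimes 1) 1_\lambda = \tilde p_d(\dots) 1_\lambda$ is filtered of degree $d$ because $\tilde p_d = p_d + (\text{terms of degree} < d)$ by \cref{jonsid}. Hence the filtration descends to $\Y_{n,r}$, and $\bar\Drinfeld_{n,r}$ is filtered with respect to the filtration on $\aS(n,r)$ from \cref{s3-basis}.

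Using the PBW isomorphism $\gr \Yn \cong \gYn$ together with the identification $\gr \aS(n,r) \cong \gS(n,r)$ from \cref{bots}, the associated graded $\gr \Y_{n,r}$ is naturally identified with the analogously-defined modified form of $\gYn$, obtained from the same tensor-algebra-mod-relations recipe as $\Y_{n,r}$ but with $\Yn$ replaced by $\gYn$ and the relation $1_\lambda D_i^{(d+1)} 1_\lambda = \tilde p_d(\dots) 1_\lambda$ replaced by its leading-term counterpart $1_\lambda e_{i,i;d} 1_\lambda = p_d(\dots) 1_\lambda$. Denote this modified form by $\Y_{n,r}^{\gr}$, so that $\gr \bar\Drinfeld_{n,r}$ becomes a natural homomorphism $\bar\rho_{n,r}: \Y_{n,r}^{\gr} \to \gS(n,r)$ whose values on generators are given by \cref{friends}. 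Since the filtrations on both sides are exhaustive and separated, with finite-dimensional filtered pieces in each weight component, a standard filtered-to-graded argument will deduce the theorem once $\bar\rho_{n,r}$ is known to be an isomorphism.

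The crux is therefore to prove that $\bar\rho_{n,r}$ is an isomorphism, a current-algebra analogue of \cref{DGthm}. Surjectivity is immediate from \cref{ontoness}. For injectivity, the plan is to construct a spanning set for $\Y_{n,r}^{\gr}$ that is in bijection with the double-coset basis $\{\varsigma_{A,f}\}$ of $\gS(n,r)$ from \cref{bots}. Starting from a PBW basis of $\gYn$ in terms of the loop generators $e_{i,j;d}$, one uses the defining relation $1_\lambda e_{i,i;d} 1_\lambda = p_d(\dots) 1_\lambda$ to absorb Cartan content into polynomial pins, and then applies \cref{DGthm} (Doty-Giaquinto's identification of $\S(n,r)$) to reduce the off-diagonal root-vector content to a single double-coset diagram, with the remaining freedom in the loop directions producing the $\P^{\mu(A)}$-decoration on that diagram.

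The main obstacle is executing this spanning-set argument rigorously. Whereas the Doty-Giaquinto proof uses only the Chevalley-Serre relations among $e_i, f_i, d_i$, here one must systematically track how iterated brackets of the loop generators $e_{i,j;d}$ with $d \geq 1$ produce the specific polynomial decorations appearing in \cref{friends}, and verify that the resulting combinations match the $\P^{\mu(A)}$-freeness of the target basis. An alternative attack would be to bypass the current-algebra reduction and deduce the general case from the case $n > r$ (\cref{startrek}) by idempotent truncation: choose $N \geq \max(n, r+1)$, identify $\aS(n,r)$ with $e\,\aS(N,r)\,e$ for the idempotent $e = \sum_{\lambda \in \Lambda(n,r)} 1_{(\lambda_1,\dots,\lambda_n,0,\dots,0)}$, and show using the homomorphism induced by $\varphi$ (\cref{shifty}) that the corresponding truncation of $\Y_{N,r}$ is naturally isomorphic to $\Y_{n,r}$.
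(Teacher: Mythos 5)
You should note first that \cref{ourconjecture} is stated as a \emph{conjecture}, not a theorem: the paper proves only the case $n>r$ (\cref{startrek}) and explicitly leaves the general case open, so there is no argument in the text to compare against. Your proposal is a strategy sketch rather than a proof. The filtered-to-graded framing is structurally sound in the following sense: if you could show that $\bar\rho_{n,r}\colon \Y_{n,r}^{\gr}\to\gS(n,r)$ is injective, then the chain of surjections $\Y_{n,r}^{\gr}\twoheadrightarrow\gr\Y_{n,r}\twoheadrightarrow\gS(n,r)$ (the first being only a surjection a priori, since the leading terms of the defining relations need not generate all of $\gr$ of the ideal they generate) would force both arrows to be isomorphisms, whence $\overline{\Drinfeld}_{n,r}$ would be one too. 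But you present the identification $\gr\Y_{n,r}\cong\Y_{n,r}^{\gr}$ as though it were automatic, when it is in fact a consequence of the very thing you need to establish.

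The genuine gap is exactly the step you yourself flag as ``the main obstacle'': injectivity of $\bar\rho_{n,r}$, a current-algebra analogue of \cref{DGthm}. Your sketch---absorb Cartan content into polynomial pins, reduce the residual root-vector content via the classical \cref{DGthm}---does not specify how products of non-diagonal loop generators $e_{i,j;a}e_{k,l;b}$ with $a,b\geq 1$ get rewritten into the target basis $\{\varsigma_{A,f}\}$ from \cref{bots}. This is not bookkeeping to be filled in later; it is the substantive content of the conjecture, it is not a formal consequence of \cref{friends}, and it is not reducible to the Serre-relations argument of Doty--Giaquinto, which has no higher loop generators to track. Your alternative idempotent-truncation route is circular as stated: for $N>r$ one gets $e\,\Y_{N,r}\,e\cong\aS(n,r)$ via \cref{startrek}, but showing that the map $\Y_{n,r}\to e\,\Y_{N,r}\,e$ induced by $\varphi$ and \cref{shifty} is injective \emph{is} showing $\overline{\Drinfeld}_{n,r}$ injective, so to break the circle you would first need an independent spanning set for $\Y_{n,r}$ of the correct size---which is once again the missing ingredient.
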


\begin{remark}
A consequence of \cref{ourconjecture,equivcats} is that
the Drinfeld functor $V^{\otimes r}\otimes_{\kk \S_r}-$
is an equivalence between $\aH_r\mod$ and $\Y_{n,r}\mod$
for all $n \geq r$.
Drinfeld's theorem for $n > r$ as stated in the introduction 
can be recovered from this by reversing the argument in the proof of \cref{startrek}. 
\end{remark}

\setcounter{section}{9}

\section{Representation theory of \texorpdfstring{$\aS(n,r)$}{}}\label{s10-reps}

We assume in this section that $\kk$ is an algebraically closed field of characteristic 0, so that
$\Drinfeld_{n,r}$ is surjective
thanks to \cref{tomorrow}. 
By a {\em polynomial representation} of $\Yn$ of degree $r$,
we mean a $\Yn$-module which is the pull-back
$\Drinfeld_{n,r}^* M$ of a left $\aS(n,r)$-module $M$.
The category of polynomial representations of $\Yn$ of degree $r$ is naturally identified with $\aS(n,r)\mod$; if \cref{ourconjecture} is true
it is also the same as $\Y_{n,r}\mod$.
The goal is to classify 
irreducible polynomial representations of $\Yn$.

The characteristic 0 assumption means that there is a well-defined
partial order $\leq$
on $\kk$ defined by
$b \leq a \Leftrightarrow a-b \in \N$. 
It is also needed in order to be able to 
prove the following elementary lemma:

\begin{lemma}\label{mathcircle}
Let $f(u), g(u) \in \kk[u]$ be monic polynomials.
If
$\frac{f(u+1)}{f(u)} = \frac{g(u+1)}{g(u)}$
then $f(u) = g(u)$.
\end{lemma}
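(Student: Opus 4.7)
The plan is to rewrite the hypothesis as the polynomial identity
\[ f(u+1)\,g(u) = f(u)\,g(u+1) \]
in $\kk[u]$, and then compare the multisets of roots of the two sides. Since $\kk$ is algebraically closed, I would write $f(u) = \prod_{c\in\kk}(u-c)^{m_S(c)}$ and $g(u) = \prod_{c\in\kk}(u-c)^{m_T(c)}$ for finitely supported multiplicity functions $m_S, m_T : \kk \to \N$. Counting the multiplicity of each $c\in\kk$ as a root of both sides gives the identity
\[ m_S(c+1) + m_T(c) \;=\; m_S(c) + m_T(c+1) \qquad \text{for all } c\in\kk. \]

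Setting $h := m_S - m_T : \kk \to \Z$, this reduces to the functional equation $h(c+1) = h(c)$, so $h$ is constant along every coset of $\Z$ in the additive group $\kk$. The key observation is that $h$ is finitely supported, since it is supported on the union of the roots of $f$ and $g$. Because $\operatorname{char}\kk = 0$, each coset $c + \Z \subseteq \kk$ is infinite, so a nonzero constant value of $h$ on such a coset would force $h$ to have infinite support. Hence $h \equiv 0$, so $m_S = m_T$ and $f = g$.

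There is no real obstacle here: the characteristic $0$ assumption enters in exactly one place, namely to ensure that the translation-by-$1$ orbits in $\kk$ are infinite, while algebraic closure is used only to factor $f$ and $g$ into linear factors. One could alternatively phrase the argument as a downward induction: pick a root $a$ of $fg$ that is maximal in the partial order $\leq$ on $\kk$ recalled just before the lemma, use the multiplicity identity at $c = a$ (where $m_S(a+1) = m_T(a+1) = 0$) to deduce $m_S(a) = m_T(a)$, and iterate after dividing both $f$ and $g$ by $(u-a)^{m_S(a)}$. Either packaging is short, and the finite-support version is probably the cleanest to write.
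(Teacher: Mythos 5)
Your proof is correct and complete. The paper leaves this lemma as an exercise, so there is no official argument to compare against, but your approach---cross-multiplying to $f(u+1)g(u) = f(u)g(u+1)$, factoring over the algebraically closed field, reducing to the functional equation $h(c+1)=h(c)$ for the finitely supported integer-valued difference $h=m_S-m_T$ of root-multiplicity functions, and then observing that characteristic $0$ makes every $\Z$-coset infinite---is clean and precisely the argument the authors presumably have in mind. Both the finite-support packaging and the downward-induction-on-a-maximal-root variant you sketch are standard; the one small point worth keeping explicit in the write-up is that equality of the multiplicity functions only gives $f = cg$ for a scalar $c$, and the monic hypothesis is then what forces $c=1$.
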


\begin{proof}
Exercise. 
\end{proof}

\cref{overcenter} implies that
every irreducible polynomial representation of degree $r$
is finite-dimensional. So, in order to classify them, 
we should start by recalling the classification of finite-dimensional irreducible representations of $\Yn$ from \cite{Dnew}.
Let $$
A(u) = (A_1(u), \dots, A_n(u))$$ 
be an $n$-tuple
of formal power series $A_1(u),\dots,A_n(u) \in 1 + u^{-1} \kk\llbracket u^{-1}\rrbracket$. There is a unique (up to isomorphism) irreducible $\Yn$-module $L(A(u))$ generated by a non-zero
vector $v_+$ such that
\begin{itemize}
\item $E_i(u) v_+ = 0$ for $i=1,\dots,n-1$;
\item $D_i(u) v_+ = A_i(u) v_+$ for $i=1,\dots,n$.
\end{itemize}
The module $L(A(u))$ may be constructed as the unique irreducible quotient of a Verma-type module, which is defined using the triangular decomposition of $\Yn$ arising from 
the Drinfeld presentation.

\begin{theorem}[Drinfeld]
For $A(u)$ as above, 
$L(A(u))$ is finite-dimensional if and only if
\begin{equation}\label{drinfeldpoly}
\frac{A_i(u)}{A_{i+1}(u)} = \frac{P_i(u+1)}{P_i(u)}
\end{equation}
for monic polynomials $P_1(u),\dots,P_{n-1}(u) \in \kk[u]$ (called {\em Drinfeld polynomials}). Moreover, every finite-dimensional irreducible $\Yn$-module is isomorphic to $L(A(u))$ for a unique such $A(u)$.
\end{theorem}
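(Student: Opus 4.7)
I would follow Drinfeld's original strategy: first establish uniqueness, then reduce the necessity and sufficiency of the polynomial condition to the rank one case, which must be handled directly.

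\textbf{Uniqueness.} By definition $L(A(u))$ contains a one-dimensional subspace $\kk v_+$ annihilated by all $E_i^{(d)}$ on which $D_i(u)$ acts by the scalar power series $A_i(u)$. Any isomorphism $L(A(u)) \stackrel{\sim}{\rightarrow} L(B(u))$ must carry $v_+$ to a highest weight vector of $L(B(u))$ of the same type; comparing $D_i(u)$-eigenvalues gives $A_i(u) = B_i(u)$.

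\textbf{Necessity.} Suppose $L := L(A(u))$ is finite-dimensional. For each $i \in \{1,\dots,n-1\}$, I would consider the subalgebra $\mathrm{Y}_i \subseteq \Yn$ generated by $D_i^{(d)}, D_{i+1}^{(d)}, E_i^{(d)}, F_i^{(d)}$ for $d \geq 1$. Using the Gauss factorisation \cref{gfact} for the $(i,i+1)$-block, there is a surjection $\mathrm{Y}(\gl_2) \twoheadrightarrow \mathrm{Y}_i$ matching the Drinfeld generators. The $\mathrm{Y}_i$-submodule $M \subseteq L$ generated by $v_+$ is then a finite-dimensional highest weight $\mathrm{Y}(\gl_2)$-module of highest weight $(A_i(u), A_{i+1}(u))$. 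This reduces the claim to showing that for $n=2$, finite-dimensionality of $L(A_1(u),A_2(u))$ forces $A_1(u)/A_2(u) = P(u+1)/P(u)$ for some monic $P(u) \in \kk[u]$.

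For the $\mathrm{Y}(\gl_2)$ case, I would let $v_k := \tfrac{1}{k!}(F_1^{(1)})^k v_+$ and use finite-dimensionality to fix the largest $N$ with $v_N \neq 0$. Applying $E_1^{(d)} F_1^{(1)}$ to $v_N$ and using the Yangian commutation relations \cref{mr} gives, for each $d \geq 1$, an identity of scalars from which one extracts a recursion on the coefficients of $A_1(u)/A_2(u)$. Repackaging this recursion as a generating function identity shows that $A_1(u)/A_2(u) = P(u+1)/P(u)$ for a monic $P$ of degree $N$ whose roots record the eigenvalues of $E_1^{(d)}F_1^{(1)}$ on $v_N$. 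Uniqueness of $P$ is then \cref{mathcircle}.

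\textbf{Sufficiency.} Conversely, given monic $P_1,\dots,P_{n-1}$ with $P_i(u) = \prod_{j=1}^{m_i}(u-a_{i,j})$, I would construct a finite-dimensional $\Yn$-module with highest weight $A(u)$ from evaluation modules. Let $V(c)$ denote the $\Yn$-module pulled back from $\End_\kk(V)$ along the evaluation homomorphism $\ev_c$ of \cref{evaluation}, possibly twisted by $\eta_c$. Using the comultiplication \cref{comultiplication}, form $M := V(c_1) \otimes \cdots \otimes V(c_N)$ for an appropriate choice of parameters matched to the $a_{i,j}$. A direct computation of the action of $D_i(u)$ on the tensor product of highest weight vectors $v_+^{\otimes} \in M$ (using the tensor multiplicativity of the matrix $T(u)$) shows that the $\Yn$-submodule it generates is a highest weight module whose highest weight satisfies \cref{drinfeldpoly}. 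The irreducible quotient is then $L(A(u))$ up to the normalisation ambiguity that can be absorbed into a $\mu_f$-twist.

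\textbf{Main obstacle.} The genuine difficulty is the $\mathrm{Y}(\gl_2)$ case in the necessity argument: one needs to promote the a priori formal power series identity for $A_1(u)/A_2(u)$ to a ratio $P(u+1)/P(u)$ for a bona fide monic polynomial $P(u)$ of explicit degree $N$. This requires careful handling of the interplay between the higher Drinfeld commutators and the finite-dimensionality constraint on the $F_1^{(1)}$-chain, and is where the characteristic zero hypothesis on $\kk$ enters in earnest via \cref{mathcircle}.
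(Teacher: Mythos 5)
The paper does not actually prove this theorem: it is recalled as a classical result of Drinfeld, cited from \cite{Dnew} (``A new realization of Yangians and quantized affine algebras''), in preparation for classifying irreducible \emph{polynomial} representations in \cref{laugh}. So there is no ``paper's own proof'' against which to compare your outline.

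That said, your sketch does track the standard route taken in the literature (Drinfeld's announcement, with complete details later written up, e.g., in Molev's book on Yangians): establish uniqueness from the highest weight; reduce necessity to the rank-one case via the subalgebras generated by the Drinfeld generators $D_i^{(d)}, D_{i+1}^{(d)}, E_i^{(d)}, F_i^{(d)}$; carry out the $\mathrm{Y}(\gl_2)$ analysis on the string $(F_1^{(1)})^k v_+$ to extract a rational function identity that forces $A_1(u)/A_2(u)$ to be of the form $P(u+1)/P(u)$; and obtain sufficiency by realizing $L(A(u))$ inside a suitable tensor product of evaluation modules. A couple of small imprecisions: what you call a surjection $\mathrm{Y}(\gl_2)\twoheadrightarrow \mathrm{Y}_i$ is more properly described as an algebra homomorphism $\mathrm{Y}(\gl_2)\to\Yn$ (in fact injective) with image $\mathrm{Y}_i$, possibly composed with a translation automorphism $\eta_c$ to match the shifted Drinfeld presentation relations; and in the sufficiency step the highest weight of the tensor product $V(c_1)\otimes\cdots\otimes V(c_N)$ is computed using the Gauss decomposition of the product $T^{[1,2]}(u)T^{[1,3]}(u)\cdots$ rather than just multiplicativity of $T(u)$ entries, which is not diagonal. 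You correctly identify the genuine work as lying in the $\mathrm{Y}(\gl_2)$ necessity argument; as written, that step (extracting a polynomial $P$ of degree $N$ from the recursion, and showing the series actually telescopes) is not carried out, so your proposal is a sound outline rather than a proof.
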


In view of this, the problem of classifying irreducible polynomial representations of $\Yn$ is thus reduced to the problem of determining
which $L(A(u))$ are polynomial of degree $r$,
which is the content of the next theorem:

\begin{theorem}\label{laugh}
For $A(u)$ as above,
the irreducible $\Yn$-module
$L(A(u))$
is a polynomial representation of degree $r$
if and only if
there exists a (necessarily unique) sequence
$$
\lambda(u) = (\lambda_1(u),\dots,\lambda_n(u))
$$ 
of monic polynomials $\lambda_i(u) \in \kk[u]$ such that
\begin{enumerate}
\item $A_i(u) = \frac{\lambda_i(u+1)}{\lambda_{i}(u)}$
for $i=1,\dots,n$;
\item
$\deg \lambda_1(u) + \cdots + \deg \lambda_n(u) = r$;
\item
$\lambda_{i+1}(u)\ \big|\ \lambda_i(u)$
for $i=1,\dots,n-1$.
\end{enumerate}
Hence, over a field of characteristic 0, isomorphism classes of irreducible polynomial representations of $\Yn$ of degree $r$ are naturally indexed by sequences
$\lambda(u) = (\lambda_1(u),\dots,\lambda_n(u))$ of monic polynomials satisfying (2) and (3).
\end{theorem}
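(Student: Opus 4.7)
The approach is to establish the two directions, with uniqueness of $\lambda(u)$ being an immediate consequence of \cref{mathcircle} applied to each $A_i(u) = \lambda_i(u+1)/\lambda_i(u)$.

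For the forward implication, I would suppose $L(A(u))$ is a polynomial representation of degree $r$, so that it is an irreducible $\aS(n,r)$-module. A short calculation from \cref{Dform} shows $\Drinfeld_{n,r}(D_i^{(1)}) = \sum_{\lambda \in \Lambda(n,r)} \lambda_i 1_\lambda$, so the first-order eigenvalues $A_i^{(1)}$ single out a unique $\lambda \in \Lambda(n,r)$ with $1_\lambda v_+ = v_+$ and $\lambda_i = A_i^{(1)}$. \cref{Dform} further exhibits $D_i(u)$ on $1_\lambda L(A(u))$ as the operator $g_i(u+1)/g_i(u)$, where $g_i(u) := \prod_{j=\lambda_{<i}+1}^{\lambda_{\leq i}}(u + i - 1 - x_j) \in \P^{\lambda}[u]$ is monic of degree $\lambda_i$ in $u$. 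The key technical step will be to show that $g_i(u) v_+ = \lambda_i(u) v_+$ for a scalar monic polynomial $\lambda_i(u) \in \kk[u]$: I plan to argue that the joint eigenspace of the commuting family $\{D_j^{(k)}\}$ with the eigenvalues of $v_+$ is one-dimensional by invoking the Drinfeld-realization PBW theorem for $\Yn$ (the module $L(A(u))$ is spanned by $v_+$ together with vectors in strictly lower $\T$-weight spaces, namely images under products of $F_{i,j}^{(k)}$, and the weight shifts are detected by $D$-eigenvalues). Since each coefficient $g_i^{(k)}$ belongs to $\P^\lambda$ and therefore commutes with every $D_j^{(l)}|_{1_\lambda L(A(u))}$, it preserves the one-dimensional joint eigenspace $\kk v_+$ and acts on $v_+$ by a scalar. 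Applying the operator identity $D_i(u) g_i(u) = g_i(u+1)$ to $v_+$ then yields (1) with $\deg \lambda_i(u) = \lambda_i$, and (2) follows since $\sum_i \lambda_i = |\lambda| = r$. For (3), $\aS(n,r)$ is of finite rank over its Noetherian center by \cref{overcenter}, so $L(A(u))$ is finite-dimensional, and Drinfeld's theorem supplies monic polynomials $P_i(u)$ with $A_i(u)/A_{i+1}(u) = P_i(u+1)/P_i(u)$; then \cref{mathcircle} forces $P_i(u) = \lambda_i(u)/\lambda_{i+1}(u)$ to be a polynomial, giving divisibility.

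For the converse, given $\lambda(u)$ satisfying (2) and (3), the plan is to realize $L(A(u))$ with $A_i(u) = \lambda_i(u+1)/\lambda_i(u)$ as a polynomial representation by invoking Arakawa's analysis of the Drinfeld functor $\Functor_{n,r}: \aH_r\mod \to \aS(n,r)\mod$. The divisibility condition encodes $\lambda(u)$ as a multisegment $\boldsymbol{s}$ of total length $r$ whose columns have length at most $n$; by the Zelevinsky classification, this indexes an irreducible $\aH_r$-module $L(\boldsymbol{s})$, and Arakawa's theorem ensures that $\Functor_{n,r}(L(\boldsymbol{s}))$ is a simple $\aS(n,r)$-module whose pullback via $\Drinfeld_{n,r}$ is, by the forward direction already established, isomorphic to $L(A(u))$.

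The main obstacle will be the one-dimensionality of the joint $D$-eigenspace containing $v_+$: although the PBW/weight-separation argument is conceptually clear, writing it down carefully requires invoking the Drinfeld new-realization PBW theorem for $\Yn$ and verifying that the $\T$-weights of $v_+$ and of every nontrivial product of $F_{i,j}^{(k)}$'s applied to $v_+$ are distinct. Once this is granted, the remainder of the argument reduces to manipulation of generating functions, repeated use of \cref{mathcircle}, and translating Arakawa's classification into the $\aS(n,r)$-picture through the Morita-type setup of \cref{equivcats} combined with \cref{overcenter}.
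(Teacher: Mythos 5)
Your forward direction is essentially the paper's argument, and you supply a justification that the paper leaves implicit for the key step: you argue that the $\T$-weight space $1_\lambda L(A(u))$ is one-dimensional (via the triangular decomposition / PBW theorem, since $F$-monomials strictly lower the $\T$-weight), so that any element of the commutative algebra $\P^\lambda 1_\lambda$ must act on $v_+$ by a scalar. This is a clean way to see that $g_i(u)v_+ = \lambda_i(u) v_+$ for a \emph{scalar} monic polynomial, and the remainder of the forward direction (using \cref{mathcircle} and Drinfeld's finite-dimensionality criterion to get the divisibility) matches the paper. One small remark: you do not actually need the commutativity of $\{g_i^{(k)}\}$ with $\{D_j^{(l)}\}$ once you know $1_\lambda L(A(u)) = \kk v_+$, since $g_i^{(k)} 1_\lambda$ already maps this one-dimensional space into itself.

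Your converse direction, however, has a genuine gap. You propose to apply $\Functor_{n,r}$ to the irreducible $\aH_r$-module $L(\boldsymbol{s})$ indexed by the multisegment coming from $\lambda(u)$, and you assert that ``Arakawa's theorem'' guarantees $\Functor_{n,r}(L(\boldsymbol{s}))$ is simple and that its highest weight data is $A(u)$. The irreducibility statement (that $\Functor_{n,r}$ sends irreducibles with column lengths $\leq n$ to irreducibles) is indeed in Arakawa's paper but is not among the results cited here; more importantly, even granting it, the identification of the highest weight is nontrivial: your forward direction only tells you that $\Functor_{n,r}(L(\boldsymbol{s})) \cong L(B(u))$ for \emph{some} valid sequence $\mu(u)$, not that $\mu(u) = \lambda(u)$. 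To close this gap one needs to compute the highest weight of $\Functor_{n,r}(L(\boldsymbol{s}))$, which is essentially the content of \cref{nta}. The paper sidesteps this entirely: it forms the standard module $M(\ba,\bb)$ from $\lambda(u)$, applies \cref{cplem,alem} to identify $M(\ba,\bb) = \Functor_{n,r}(\kk_{[b_1,a_1]}\ostar\cdots\ostar\kk_{[b_m,a_m]})$ so that \cref{itspoly} shows it is polynomial of degree $r$, and then invokes \cref{nta} to identify its unique irreducible quotient as $L(A(u))$. Since $\aS(n,r)\mod$ is closed under quotients, $L(A(u))$ is polynomial of degree $r$. This route needs neither the simplicity of $\Functor_{n,r}(L(\boldsymbol{s}))$ nor a separate highest-weight identification.
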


\begin{proof}[Proof of the necessary condition $(\Rightarrow$)
in \cref{laugh}]
Suppose that $L(A(u))$ is a polynomial representation of degree $r$, i.e., it is an $\aS(n,r)$-module.
Let $\lambda \in \Lambda(n,r)$ be the weight of the
highest weight vector $v_+ \in L(A(u))$; explicitly,
$\lambda_i$ is the $u^{-1}$-coefficient of $A_i(u)$.
From $D_i(u) v_+ = A_i(u) v_+$ and \cref{Dform}, we deduce that
$(u+i-1-x_{\lambda_{<i}+1})
\cdots (u+i-1-x_{\lambda_{\leq i}})
1_\lambda \in \aS(n,r)$ acts on $v_+$
by multiplication by a monic polynomial
$\lambda_i(u) \in \kk[u]$ of degree $\lambda_i$ such that
$A_i(u) = \frac{\lambda_i(u+1)}{\lambda_i(u)}$
for $i=1,\dots,n$.
This proves (1) and (2). For (3), since $L(A(u))$ is finite-dimensional, there is are monic polynomials
$P_i(u)$ such that the equation \cref{drinfeldpoly} 
holds for $i=1,\dots,n-1$. This implies that
$$
\frac{\lambda_i(u+1)}{\lambda_i(u)} = \frac{\lambda_{i+1}(u+1)P_i(u+1)}{\lambda_{i+1}(u)P_i(u)}.
$$
Now \cref{mathcircle} gives that $\lambda_i(u) = \lambda_{i+1}(u) P_i(u)$, and (3) follows.
\end{proof}

The sufficient condition $(\Leftarrow)$ needed to complete the proof of \cref{laugh} will be proved a little later. 
To prepare for this, 
we need to recall some further results from \cite{Arakawa}.
As in \cref{equivcats}, let
\begin{equation}
\Functor_{n,r}:\aH_r\mod \rightarrow \aS(n,r)\mod \subset
\Yn\mod
\end{equation}
be the functor defined by tensoring over $\aH_r$ with
$V^{\otimes r} \otimes_{\kk S_r} \aH_r$ viewed as a $(\Yn,\aH_r)$-bimodule via the Drinfeld homomorphism. 
We refer to this as the {\em Drinfeld functor}.
For a left $\aH_r$-module $M$, there is the obvious isomorphism of vector spaces
\begin{equation}\label{freight}
V^{\otimes r} \otimes_{\kk S_r} \aH_r \otimes_{\aH_r}
M \cong V^{\otimes r} \otimes_{\kk S_r} M.
\end{equation}
It implies that $\Res^{\Yn}_{\Un} \circ \Functor_{n,r}
\cong \functor_{n,r} \circ \Res^{\aH_r}_{\kk S_r}$ where
$\functor_{n,r} := V^{\otimes r} \otimes_{\kk S_r} -:\kk S_r\mod \rightarrow \Un\mod$
is the usual Schur functor.

\begin{lemma}[Chari-Pressley]\label{cplem}
The natural tensor product on $\Yn\mod$ restricts to a functor
$$
-\otimes-:\aS(n,r)\mod \times \aS(n,s)\mod \rightarrow \aS(n,r+s)\mod.
$$
Moreover, there is an isomorphism 
$\Functor_{n,r}(-) \otimes \Functor_{n,s}(-)
\cong \Functor_{n,r+s} \circ (-\ostar-)$ of functors from
$\aH_{r}\mod\times \aH_s\mod$ to $\aS(n,r+s)\mod$.
\end{lemma}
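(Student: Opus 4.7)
The plan is to reduce both assertions to the identity
\begin{equation*}
\DRINFELD_{n,r+s} = (\DRINFELD_{n,r} \otimes \DRINFELD_{n,s}) \circ \Delta \tag{$\star$}
\end{equation*}
of homomorphisms $\Yn \to \End_\kk(V)^{\otimes(r+s)} \otimes \aH_{r+s}$, where the codomain of $\DRINFELD_{n,r} \otimes \DRINFELD_{n,s}$ is placed in $\End_\kk(V)^{\otimes(r+s)} \otimes \aH_{r+s}$ via the natural embedding $\aH_r \otimes \aH_s \hookrightarrow \aH_{r+s}$ (so that $x_1,\dots,x_r \in \aH_r$ and $x_1,\dots,x_s \in \aH_s$ become $x_1,\dots,x_r$ and $x_{r+1},\dots,x_{r+s}$ in $\aH_{r+s}$). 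First I would prove ($\star$) using the coassociativity $\Delta^{(r+s)} = (\Delta^{(r)} \otimes \Delta^{(s)}) \circ \Delta$ together with the factorization of the evaluation tensor $\ev_{x_1} \barotimes \cdots \barotimes \ev_{x_{r+s}}$ into its first $r$ and last $s$ factors, which (under the identification above) reconstitute $\DRINFELD_{n,r}$ and $\DRINFELD_{n,s}$ respectively.

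With ($\star$) in hand, the functor isomorphism follows because the two $\Yn$-actions on $V^{\otimes(r+s)} \otimes \aH_{r+s}$---one via $\Drinfeld_{n,r+s}$ and one via $\Delta$ composed with the individual Drinfeld maps---agree at the pre-quotient level, hence descend to the same $\Yn$-action on $T_{r+s} := V^{\otimes(r+s)} \otimes_{\kk S_{r+s}} \aH_{r+s}$ via \cref{reality}. The chain of isomorphisms
\begin{align*}
\Functor_{n,r+s}(M \ostar N)
&= V^{\otimes(r+s)} \otimes_{\kk S_{r+s}} \bigl(\aH_{r+s} \otimes_{\aH_r \otimes \aH_s} (M \otimes N)\bigr)\\
&\cong V^{\otimes(r+s)} \otimes_{\kk(S_r \times S_s)} (M \otimes N)\\
&\cong (V^{\otimes r} \otimes_{\kk S_r} M) \otimes (V^{\otimes s} \otimes_{\kk S_s} N)
= \Functor_{n,r}(M) \otimes \Functor_{n,s}(N)
\end{align*}
then completes the comparison, using associativity of tensor product, \cref{freight}, and the PBW-type decomposition of $\aH_{r+s}$ over $\aH_r \otimes \aH_s$.

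For closure of $\aS(n,r)\mod \times \aS(n,s)\mod$ under tensor product, take $M \in \aS(n,r)\mod$ and $N \in \aS(n,s)\mod$. The $\Yn$-action on $M \otimes N$ factors through the composition $\Yn \xrightarrow{\Delta} \Yn \otimes \Yn \xrightarrow{\Drinfeld_{n,r} \otimes \Drinfeld_{n,s}} \aS(n,r) \otimes \aS(n,s)$, so by surjectivity of $\Drinfeld_{n,r+s}$ from \cref{tomorrow}, it is enough to show $\ker\Drinfeld_{n,r+s} \subseteq \ker\bigl[(\Drinfeld_{n,r} \otimes \Drinfeld_{n,s}) \circ \Delta\bigr]$. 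This will follow from ($\star$) and faithfulness: $\aS(n,r+s)$ acts faithfully on $T_{r+s}$ by \cref{nitpick1}, and $\aS(n,r) \otimes \aS(n,s)$ acts faithfully on $T_r \otimes T_s$; any $y$ with $\Drinfeld_{n,r+s}(y) = 0$ annihilates $T_{r+s}$, hence by ($\star$) and the identifications of the previous paragraph also annihilates $T_r \otimes T_s$, forcing $(\Drinfeld_{n,r} \otimes \Drinfeld_{n,s})(\Delta(y)) = 0$.

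The main obstacle is the bimodule bookkeeping in the second paragraph, especially the vector-space identification $V^{\otimes(r+s)} \otimes_{\kk S_{r+s}} \aH_{r+s} \otimes_{\aH_r \otimes \aH_s} (-) \cong V^{\otimes(r+s)} \otimes_{\kk(S_r \times S_s)} (-)$, which relies on the PBW-style freeness of $\aH_{r+s}$ over $\aH_r \otimes \aH_s$ on minimal length coset representatives of $(S_r \times S_s) \backslash S_{r+s}$; once this freeness is in place, the rest of the manipulation is essentially formal.
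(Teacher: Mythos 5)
Your proposal is correct and takes essentially the same approach as the paper, whose entire proof is the assertion of the $(\Yn,\aH_{(r,s)})$-bimodule isomorphism $\big(V^{\otimes r} \otimes_{\kk S_r} \aH_r\big) \otimes \big(V^{\otimes s} \otimes_{\kk S_{s}} \aH_{s}\big) \cong V^{\otimes (r+s)} \otimes_{\kk S_{r+s}} \aH_{r+s}$; your identity $(\star)$ and the PBW freeness of $\aH_{r+s}$ over $\aH_r\otimes\aH_s$ are exactly what one needs to verify that isomorphism (for $\Yn$-equivariance and the underlying vector-space identification, respectively), and your faithfulness argument for the first assertion is a standard unpacking of the same bimodule isomorphism.
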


\begin{proof}
There is an isomorphism
$$
\big(V^{\otimes r} \otimes_{\kk S_r} \aH_r\big)
\otimes \big(V^{\otimes s} \otimes_{\kk S_{s}} 
\aH_{s}\big) \cong V^{\otimes (r+s)} \otimes_{\kk S_{r+s}}
\aH_{r+s}
$$ 
of $(\Yn,\aH_{(r,s)})$-bimodules.
\end{proof}

For $b \leq a$ in $\kk$ with $r=a-b+1$, 
let $\kk_{[b,a]}$ be the one-dimensional 
left $\aH_r$-module
on which $x_i\:(1 \leq i \leq r)$ acts as $b+i-1$ and $w \in S_r$ acts as $(-1)^{\ell(w)}$.
This module is a {\em segment} in the terminology of \cite{zelevinsky}.

\begin{lemma}[Arakawa]\label{alem}
For $b \leq a$ with $r:=a-b+1 \leq n$, there are $\Yn$-module isomorphisms
\begin{equation}
\textstyle
\Functor_{n,r} \kk_{[b,a]}
\cong \ev_b^* \left(\bigwedge^r V\right)
\cong L(A(u))
\end{equation}
where $A(u) := \left(\frac{\lambda_1(u+1)}{\lambda_1(u)},\dots,\frac{\lambda_n(u+1)}{\lambda_n(u)}\right)$
with $\lambda_i(u) := \begin{cases}u-b&\text{if $1\leq i \leq r$}\\
1&\text{if $r+1\leq i \leq n$.}\end{cases}$
\end{lemma}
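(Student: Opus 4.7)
The plan is to identify $\Functor_{n,r}(\kk_{[b,a]})$ with the exterior power $\bigwedge^rV$ as a $\gl_n$-module, thereby obtaining irreducibility for free, and then match the $\Yn$-highest weight against the claimed $A(u)$. First I would use the natural isomorphism from \cref{freight} to write $\Functor_{n,r}(\kk_{[b,a]}) = V^{\otimes r}\otimes_{\kk S_r}\kk_{[b,a]}$. Since $w\in S_r$ acts on $\kk_{[b,a]}$ as $(-1)^{\ell(w)}$, its restriction to $\kk S_r$ is the sign representation, so this quotient is identified with $\functor_{n,r}(\sgn) = \bigwedge^rV$, which has basis $\{v_{i_1}\wedge\cdots\wedge v_{i_r}\}_{1\leq i_1<\cdots<i_r\leq n}$ and is non-zero thanks to $r\leq n$. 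By \cref{silly}, restricting the $\Yn$-action to $\Un$ reproduces exactly this classical $\gl_n$-structure; and since $\bigwedge^rV$ is an irreducible $\gl_n$-module, $\Functor_{n,r}(\kk_{[b,a]})$ is a fortiori irreducible as a $\Yn$-module.

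Next I would compute the action on $v_+:=v_1\wedge\cdots\wedge v_r$ to verify that it is a $\Yn$-highest weight vector of weight $A(u)$. The vector $v_+$ has $\gl_n$-weight $\omega=(1^r,0^{n-r})$, so only the $\lambda=\omega$ summand of \cref{Dform} acts non-trivially. For $i\leq r$ the $i$-th string has thickness $1$ and the pin simplifies to $(u+i-x_i)/(u+i-1-x_i)$, while for $i>r$ the thickness is $0$ and the pin is $1$. After the substitution $x_i\mapsto b+i-1$ coming from the $\aH_r$-action on $\kk_{[b,a]}$, this gives $D_i(u)v_+ = \tfrac{u+1-b}{u-b}v_+$ for $i\leq r$ and $D_i(u)v_+ = v_+$ for $i>r$, exactly the claimed $\lambda_i(u+1)/\lambda_i(u)$. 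For $E_i(u)$: when $i\geq r$ the summation in \cref{Eform} is empty since $\omega_{i+1}=0$, and when $i\leq r-1$ the diagram produces a vector of weight $\omega+\eps_i-\eps_{i+1}$, whose $i$-th entry is $2$ and whose weight space in $\bigwedge^rV$ is therefore zero. Thus $E_i(u)v_+=0$ for all admissible $i$, and combined with irreducibility we conclude $\Functor_{n,r}(\kk_{[b,a]})\cong L(A(u))$.

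The remaining isomorphism with $\ev_b^*(\bigwedge^rV)$ would follow analogously once $\ev_b^*$ is interpreted: the paper's evaluation $\ev_b\colon\Yn\to\End_\kk(V)$ extends naturally through $\gl_n\hookrightarrow\End_\kk(V)$ to an algebra homomorphism $\Yn\to\Un$ sending $T_{ij}(u)\mapsto\delta_{ij}+e_{ij}/(u-b)$, and $\ev_b^*(\bigwedge^rV)$ denotes the pull-back of $\bigwedge^rV$ along this map. On $v_+$ the $\gl_n$-highest weight conditions $e_{ij}v_+=0$ for $i<j$ and $e_{ii}v_+=\omega_iv_+$ give $T_{ij}(u)v_+=0$ for $i<j$ and $T_{ii}(u)v_+=(1+\omega_i/(u-b))v_+=A_i(u)v_+$; the Gauss factorisation $T=FDE$ then yields $D_i(u)v_+=T_{ii}(u)v_+=A_i(u)v_+$ and forces $E_i(u)v_+=0$. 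The $\gl_n$-restriction is again the irreducible $\bigwedge^rV$, so $\ev_b^*(\bigwedge^rV)\cong L(A(u))$. The main obstacle is mostly administrative: one needs to keep careful track of the sign twist when passing to the coinvariants and of the substitution $x_i\mapsto b+i-1$ in the generating-function pins; once that bookkeeping is in place the argument reduces to invoking \cref{silly} and the explicit formulas of \cref{def}.
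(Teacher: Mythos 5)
Your proposal is correct and takes essentially the same approach as the paper: identify the $\gl_n$-restriction of $\Functor_{n,r}\kk_{[b,a]}$ with $\bigwedge^r V$ via \cref{freight}, deduce irreducibility, compute the $D_i(u)$-action on $v_+=v_1\wedge\cdots\wedge v_r$ from \cref{Dform}, and compare with the $\ev_b^*$ picture to pin down $L(A(u))$. You additionally spell out that $E_i(u)v_+=0$ by a weight argument, a step the paper takes for granted since $v_+$ is up to scalar the unique vector of its $\gl_n$-weight in $\bigwedge^r V$; this is harmless extra detail rather than a genuine difference in strategy.
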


\begin{proof}
See \cite[Prop.~6]{Arakawa}.
Here is another proof.
By \cref{freight}, $\Res^{\Yn}_{\Un} \left( \Functor_{n,r} \kk_{[b,a]}\right)
\cong \bigwedge^r V$. Thus, $\Functor_{n,r} \kk_{[b,a]}$
is an irreducible $\Yn$-module. The vector
$v_+ := v_1\wedge\cdots\wedge v_r$ is a highest weight vector
of weight $\eps_1+\cdots+\eps_r$.
By \cref{Dform}, $D_i(u)$ acts on $v_+$ in the same way as
$\frac{u+i-x_i}{u+i-1-x_i}$,
which is 
by multiplication by $\frac{u+1-b}{u-b} = 1+\frac{1}{u-b}$
if $1 \leq i \leq r$,
or as $1$ if $r+1\leq i \leq n$.
This is the same as how $D_i(u)$ acts on this vector
in $\ev_b^* \left(\bigwedge^r V\right)$. Hence, the two modules are isomorphic,
We have also computed how each $D_i(u)$ acts on $v_+$,
identifying both modules with $L(A(u))$.
\end{proof}

Now suppose that we are given $m \geq 0$ and $\ba,\bb \in \kk^m$
such that $0 \leq a_j - b_j \leq n-1$ for each $j=1,\dots,m$.
Consider the $\Yn$-module
\begin{equation}
M(\ba,\bb) := \textstyle
\ev^*_{b_1}\left( \bigwedge^{a_1-b_1+1} V\right) \otimes \cdots \otimes
\ev^*_{b_m} \left(\bigwedge^{a_m-b_m+1} V\right).
\end{equation}
We call $M(\ba,\bb)$ a {\em standard module}.

\begin{lemma}\label{itspoly}
Assume that  
$0 \leq a_j - b_j \leq n-1$ for $j=1,\dots,m$.
Then the standard 
module $M(\ba,\bb)$ is a polynomial representation of degree
$\sum_{j=1}^m (a_j - b_j+1)$.
\end{lemma}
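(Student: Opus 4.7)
The plan is to combine the two lemmas immediately preceding the statement. First I would observe that the assumption $0 \leq a_j - b_j \leq n-1$ means that $r_j := a_j - b_j + 1$ satisfies $1 \leq r_j \leq n$, so Arakawa's \cref{alem} applies to each tensor factor and gives an isomorphism of $\Yn$-modules
\[
\ev^*_{b_j}\Big(\textstyle\bigwedge^{r_j} V\Big) \cong \Functor_{n,r_j}\, \kk_{[b_j,a_j]}.
\]
In particular each factor is a polynomial representation of $\Yn$ of degree $r_j$, being in the image of the Drinfeld functor $\Functor_{n,r_j}$.

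Next I would apply \cref{cplem} inductively on $m$. That lemma says the tensor product of a polynomial representation of degree $r$ with one of degree $s$ is a polynomial representation of degree $r+s$, because
\[
\Functor_{n,r}(-) \otimes \Functor_{n,s}(-) \cong \Functor_{n,r+s} \circ (- \ostar -).
\]
Applying this $m-1$ times to the isomorphisms from the previous paragraph gives
\[
M(\ba,\bb) \cong \Functor_{n,r}\bigl(\kk_{[b_1,a_1]} \ostar \kk_{[b_2,a_2]} \ostar \cdots \ostar \kk_{[b_m,a_m]}\bigr),
\]
where $r = r_1 + \cdots + r_m = \sum_{j=1}^m (a_j - b_j + 1)$. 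Since the right-hand side is manifestly a polynomial representation of $\Yn$ of degree $r$ (it lies in the image of $\Functor_{n,r}$, which factors through $\aS(n,r)\mod$ by \cref{equivcats} and the definition of $\Functor_{n,r}$), this completes the proof.

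There is essentially no obstacle here: both inputs are already available, and the hypothesis $a_j - b_j \leq n-1$ is precisely what is needed to invoke \cref{alem}. The only thing worth double-checking is that the degree is additive across the iterated tensor product, which is immediate from the $\ostar$ formula in \cref{cplem} since $\kk_{[b_j,a_j]}$ is an $\aH_{r_j}$-module and induction products of $\aH_{r_j}$-modules are $\aH_{r_1+\cdots+r_m}$-modules.
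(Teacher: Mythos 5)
Your proof is correct and follows essentially the same route as the paper: combining \cref{alem} to realise each exterior-power evaluation module as $\Functor_{n,r_j}\,\kk_{[b_j,a_j]}$, then iterating \cref{cplem} to exhibit $M(\ba,\bb)$ as $\Functor_{n,r}\big(\kk_{[b_1,a_1]}\ostar\cdots\ostar\kk_{[b_m,a_m]}\big)$. One tiny note: the invocation of \cref{equivcats} at the end is unnecessary, since $\Functor_{n,r}$ is by definition a functor into $\aS(n,r)\mod$, so its image is automatically a polynomial representation of degree $r$.
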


\begin{proof}
\cref{cplem,alem} imply that $M(\ba,\bb)$ is
isomorphic to the image under $\Drinfeld_{n,r}$
of the multisegment
$\kk_{[b_1,a_1]} \ostar
\cdots\ostar \kk_{[b_m,a_m]}$.
\end{proof}

We say that $\ba \in \kk^m$ is {\em dominant} 
if $1 \leq i < j \leq m \Rightarrow a_i \not< a_j$.
The following theorem was proved originally by Nazarov and Tarasov \cite{NT2}; see \cite[Th.~8]{Arakawa} for another proof exploiting the Drinfeld functor.

\begin{theorem}[Nazarov-Tarasov, Arakawa]\label{nta}
If $\ba \in \kk^m$ is dominant and $0 \leq a_j - b_j \leq n-1$ for $j=1,\dots,m$
then the standard module
$M(\ba,\bb)$ has a unique irreducible quotient
$L(\ba,\bb)$.
Moreover,
$L(\ba,\bb) \cong L(A(u))$
where $A(u) := \left(\frac{\lambda_1(u+1)}{\lambda_1(u)},\dots,\frac{\lambda_n(u+1)}{\lambda_n(u)}\right)$
with
\begin{equation}
\lambda_i(u) := 
\prod_{\substack{1 \leq j \leq m\\ i \leq a_j - b_j+1}} (u-b_j).
\end{equation}
\end{theorem}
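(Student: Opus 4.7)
The plan is to reduce the theorem to the corresponding statement for the degenerate affine Hecke algebra $\aH_r$ via the Drinfeld functor, then to compute the highest weight explicitly.

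First I would set $r := \sum_{j=1}^m (a_j - b_j + 1)$ and combine \cref{cplem} with \cref{alem} to identify
\begin{equation*}
M(\ba,\bb) \;\cong\; \Functor_{n,r}\bigl(\kk_{[b_1,a_1]} \ostar \cdots \ostar \kk_{[b_m,a_m]}\bigr).
\end{equation*}
The key property of the $\aH_r$-module $N := \kk_{[b_1,a_1]} \ostar \cdots \ostar \kk_{[b_m,a_m]}$ is a theorem of Rogawski/Zelevinsky type (originally for the affine Hecke algebra, but the proof adapts verbatim to the degenerate setting): when the multisegment is arranged so that $\ba$ is dominant in our sense, the induced module $N$ has a unique irreducible quotient $S(\ba,\bb)$ whose formal character and central character are explicitly determined by the multisegment.

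Next I would use that $\Functor_{n,r}$ is right exact (being a tensor product), so we obtain a surjection $M(\ba,\bb) \twoheadrightarrow \Functor_{n,r}(S(\ba,\bb))$. The hypothesis $0 \leq a_j - b_j \leq n-1$ ensures $\bigwedge^{a_j-b_j+1} V \neq 0$ so $M(\ba,\bb)\neq 0$; combined with the results in \cite{Arakawa} on when the Drinfeld functor preserves irreducibility of such multisegment heads, we get that $\Functor_{n,r}(S(\ba,\bb))$ is itself irreducible. Any proper quotient of $M(\ba,\bb)$ factors through this irreducible quotient (since competing composition factors at the top would lift to composition factors in the top of $N$, contradicting uniqueness there), establishing the first assertion with $L(\ba,\bb) := \Functor_{n,r}(S(\ba,\bb))$.

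Finally I would pin down $L(\ba,\bb) \cong L(A(u))$ by exhibiting an explicit highest weight vector and computing the action of $D_i(u)$. Take
\begin{equation*}
v_+ := \bigotimes_{j=1}^m \bigl(v_1 \wedge v_2 \wedge \cdots \wedge v_{a_j-b_j+1}\bigr) \in M(\ba,\bb).
\end{equation*}
The comultiplication \cref{comultiplication} is group-like on the matrix $T(u)$ and hence on its diagonal Gauss factors $D_i(u)$ modulo the nilpotent off-diagonal pieces, so $D_i(u)$ acts on $v_+$ as the product of its actions on each tensor factor. Using \cref{alem}, on the $j$th factor $D_i(u)$ acts by multiplication by $\tfrac{u+1-b_j}{u-b_j}$ if $i \leq a_j-b_j+1$ and by $1$ otherwise. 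Multiplying across $j=1,\dots,m$ gives exactly $\tfrac{\lambda_i(u+1)}{\lambda_i(u)}$ for $\lambda_i(u) = \prod_{\substack{1 \leq j \leq m \\ i \leq a_j-b_j+1}}(u-b_j)$, and the $E_i(u)$-annihilation is immediate from the tensor factorization since each factor is an evaluation module on an exterior power. Thus $L(\ba,\bb) \cong L(A(u))$ with the stated $A(u)$. The main obstacle is the irreducibility/uniqueness step in the second paragraph — this is precisely where we need Arakawa's refinement of the Zelevinsky classification, since without the non-vanishing hypothesis on the segment lengths, $\Functor_{n,r}$ could kill the irreducible quotient $S(\ba,\bb)$.
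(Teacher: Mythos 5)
The paper does not itself prove this theorem; it is stated with attribution to Nazarov--Tarasov and Arakawa, and the paper's only direct contribution is the identification $M(\ba,\bb) \cong \Functor_{n,r}\big(\kk_{[b_1,a_1]} \ostar \cdots \ostar \kk_{[b_m,a_m]}\big)$ used in the proof of \cref{itspoly}, which you correctly reproduce from \cref{cplem,alem}. Your highest-weight computation with $v_+$ is also correct in substance; the phrase ``group-like on the diagonal Gauss factors modulo the nilpotent off-diagonal pieces'' should be replaced by the precise argument that if $v_1^+, v_2^+$ are singular then $T_{i,j}(u)v_k^+ = 0$ for $i<j$ and $T_{i,i}(u)v_k^+=D_i(u)v_k^+$, so in $\Delta(T_{i,j}(u))=\sum_k T_{i,k}(u)\otimes T_{k,j}(u)$ only the diagonal term $k=i=j$ survives on $v_1^+\otimes v_2^+$.

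The genuine gap is the uniqueness of the irreducible quotient. Your claim that ``competing composition factors at the top would lift to composition factors in the top of $N$'' is not a valid inference. The functor $\Functor_{n,r}$ is in fact exact in characteristic zero (since $V^{\otimes r}$ is a projective $\kk\S_r$-module by Maschke, $V^{\otimes r}\otimes_{\kk\S_r}\aH_r$ is projective as a right $\aH_r$-module), but exact functors that send simples to simples-or-zero need not preserve tops --- idempotent truncations give immediate counterexamples --- so a surjection $\Functor_{n,r}(N)\twoheadrightarrow \Functor_{n,r}(S(\ba,\bb))$ does not by itself imply the left-hand side has $\Functor_{n,r}(S(\ba,\bb))$ as its unique simple quotient. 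The missing input, and this is precisely the Nazarov--Tarasov content rather than a formal consequence of the functorial set-up, is that under the dominance hypothesis $M(\ba,\bb)$ is cyclic, generated by $v_+$. Once cyclicity is known, uniqueness is immediate: the $\gl_n$-weight of $v_+$ is the highest weight appearing in $M(\ba,\bb)$ and has multiplicity one, so every proper submodule misses $v_+$ and the sum of all proper submodules is again proper. Your final paragraph flags that Arakawa's nonvanishing criterion is needed to keep $\Functor_{n,r}$ from killing $S(\ba,\bb)$, but that addresses nonvanishing, not uniqueness; as written the argument silently assumes what must be proved.
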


Using this, we can complete the proof of \cref{laugh}.

\begin{proof}[Proof of the sufficient condition $(\Leftarrow$)
in \cref{laugh}]
Given $A(u)$ with $A_i(u) = \frac{\lambda_i(u+1)}{\lambda_i(u)}$
as in \cref{laugh}(1)--(3), we need to show that 
$L(A(u))$ is a polynomial representation of degree $r$.
Let $m := \deg \lambda_1(u)$.
We define $\ba,\bb \in \kk^m$ as follows:
\begin{itemize}
\item Let $b_1$ be any root of $\lambda_1(u)$.
\item 
Let $b_1 \leq a_1 \leq b_1+n-1$ 
be maximal such that $b_1$ is a root of $\lambda_i(u)$ for all
$1 \leq i \leq a_1-b_1+1$.
\item Divide $\lambda_i(u)$ by $(u-b_1)$ for each 
$i=1,\dots,a_1-b_1+1$, then iterate $(m-1)$ more times with the new polynomials to obtain $a_2 \leq b_2,\dots,a_m\leq b_m$.
\end{itemize}
This ensures that $0 \leq a_j -b_j \leq n-1$ for each $j=1,\dots,m$.
Finally, we simultaneously rearrange the $m$-tuples
$\ba,\bb$ to ensure that $\ba$ dominant.
\cref{nta} implies that $L(A(u)) \cong L(\ba,\bb)$, and
this is a polynomial representation of degree $r$ by \cref{itspoly}.
\end{proof}

Now that \cref{laugh} is proved, we switch to using the notation
$L\big(\lambda_1(u),\dots,\lambda_n(u)\big)$ to denote the irreducible polynomial representation of $\Yn$ indexed by a sequence 
$\lambda_1(u),\dots,\lambda_n(u)$ of 
monic polynomials satisfying \cref{laugh}(3).
Using also \cref{equivcats}, it is easy to deduce
the following convenient parametrization of irreducible representations of the degenerate affine Hecke algebra $\aH_r$, which is different from the usual parametrization by multisegments:

\begin{corollary}
Isomorphism classes of irreducible left $\aH_r$-modules
are in bijection with
sequences $\lambda(u) = (\lambda_1(u),\lambda_2(u),\dots)$
of monic polynomials in $\kk[u]$
whose degrees sum to $r$ and
$\lambda_{i+1}(u) \ \big|\ \lambda_i(u)$ for each $i\geq 1$.
The irreducible module $D(\lambda(u))$ labelled by such a sequence
may be constructed explicitly by setting
\begin{equation}
D(\lambda(u)) := 1_\omega
L\big(\lambda_1(u),\dots,\lambda_n(u)\big)
\end{equation}
in the setup of \cref{equivcats}.
Alternatively, letting $\ba,\bb \in \kk^m$ be the sequences constructed from $\lambda(u)$ following the algorithm in the proof of the sufficient condition of \cref{laugh} just explained (with $\ba$ dominant), 
$D(\lambda(u))$ is the irreducible head of
$\kk_{[b_1,a_1]} \ostar \cdots \ostar \kk_{[b_m,a_m]}$
\end{corollary}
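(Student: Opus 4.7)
The plan is to combine the Morita equivalence of \cref{equivcats} with the classification of irreducible polynomial representations in \cref{laugh}. Fix any $n \geq r$; by \cref{equivcats}, the idempotent truncation $1_\omega(-) : \aS(n,r)\mod \stackrel{\sim}{\rightarrow} \aH_r\mod$ is an equivalence of categories, so it restricts to a bijection on isomorphism classes of simple objects. By \cref{laugh}, these in turn are parametrized by sequences $(\lambda_1(u),\dots,\lambda_n(u))$ of monic polynomials in $\kk[u]$ with $\deg\lambda_1+\cdots+\deg\lambda_n = r$ and $\lambda_n(u)\mid\cdots\mid\lambda_1(u)$. Since the degrees sum to $r$, at most $r$ entries of any such sequence can be non-constant, so for any $n \geq r$ these length-$n$ sequences correspond naturally (via padding with trailing $1$'s) to the eventually-$1$ infinite sequences $(\lambda_1(u),\lambda_2(u),\dots)$ described in the corollary; this yields the bijection, provided the resulting $\aH_r$-module does not depend on the choice of $n \geq r$.

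To prove this independence, as well as the alternative multisegment construction, I would apply \cref{nta}: with $\ba,\bb \in \kk^m$ constructed from $\lambda(u)$ as in the proof of the sufficient direction of \cref{laugh} (so $\ba$ is dominant and each $a_j - b_j + 1 \leq n$ by construction of the algorithm), we have $L(\lambda_1,\dots,\lambda_n) \cong L(\ba,\bb)$, the irreducible head of the standard module $M(\ba,\bb)$. Combining \cref{cplem} and \cref{alem},
\[
M(\ba,\bb) \cong \Functor_{n,r}\big(\kk_{[b_1,a_1]} \ostar \cdots \ostar \kk_{[b_m,a_m]}\big).
\]
Applying the quasi-inverse functor $1_\omega(-)$, which is exact and hence preserves heads, and using the canonical natural isomorphism $1_\omega \Functor_{n,r}(N) \cong N$ (arising from $M(\omega) \otimes_{\kk \S_r} \aH_r \cong \aH_r$ as right $\aH_r$-modules, since $\S_\omega$ is trivial), I conclude that $D(\lambda(u)) := 1_\omega L(\lambda_1,\dots,\lambda_n)$ is isomorphic to the irreducible head of $\kk_{[b_1,a_1]} \ostar \cdots \ostar \kk_{[b_m,a_m]}$. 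Since this depends only on the multisegment $(\ba,\bb)$, hence only on $\lambda(u)$, both the alternative description and the required independence from $n$ follow.

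The main obstacle is the identification $1_\omega \Functor_{n,r}(N) \cong N$ together with its naturality in $N$, which is what makes the whole argument tick; this is essentially tautological from the definitions of $\Functor_{n,r}$ and $1_\omega$, but needs to be verified carefully since it is the link between the two descriptions of $D(\lambda(u))$. Everything else is a routine consequence of the already-established Morita equivalence, the Nazarov--Tarasov classification, and the compatibilities recorded in \cref{cplem,alem}.
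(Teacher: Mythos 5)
Your proof is correct and follows the approach the paper clearly intends: combine the Morita equivalence of \cref{equivcats} with the classification in \cref{laugh}, then use \cref{cplem,alem,nta} and the natural isomorphism $1_\omega\circ\Functor_{n,r}\cong\id$ (which comes for free from the quasi-inverse pair of equivalences established in the proof of \cref{equivcats}) to obtain the multisegment description and independence of $n$. The paper itself only says ``it is easy to deduce,'' so your write-up supplies precisely the intended details.
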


\begin{remark}
There is a remarkable
explicit formula for the composition multiplicities of the standard modules $M(\ba,\bb)$ in terms of Kazhdan-Lusztig polynomials. It is closely 
related to the degenerate analog of Zelevinsky's $p$-adic analog of the Kazhdan-Lusztig conjecture for $GL_n$.
See \cite[Th.~15]{Arakawa}, which is proved using results from \cite{AS} deduced ultimately from the Kazhdan-Lusztig conjecture for the Lie algebra $\mathfrak{gl}_r$.
\end{remark}

\bibliographystyle{alphaurl}
\bibliography{das}

\begin{thebibliography}{BEAEO20}

\bibitem[Ant20]{Antor}
Jonas Antor.
\newblock Affine versions of {S}chur-{W}eyl duality.
\newblock Master's thesis, Universtät Bonn, 2020.
\newblock URL:
  \url{https://www.math.uni-bonn.de/ag/stroppel/Masterarbeit_Antor-4-1.pdf}.

\bibitem[Ara99]{Arakawa}
Tomoyuki Arakawa.
\newblock Drinfeld functor and finite-dimensional representations of {Y}angian.
\newblock {\em Comm. Math. Phys.}, 205(1):1--18, 1999.
\newblock \href {https://doi.org/10.1007/s002200050664}
  {\path{doi:10.1007/s002200050664}}.

\bibitem[AS98]{AS}
Tomoyuki Arakawa and Takeshi Suzuki.
\newblock Duality between {$\sl_n({\C})$} and the degenerate affine {H}ecke
  algebra.
\newblock {\em J. Algebra}, 209(1):288--304, 1998.
\newblock \href {https://doi.org/10.1006/jabr.1998.7530}
  {\path{doi:10.1006/jabr.1998.7530}}.

\bibitem[BEAEO20]{BEEO}
Jonathan Brundan, Inna Entova-Aizenbud, Pavel Etingof, and Victor Ostrik.
\newblock Semisimplification of the category of tilting modules for {$GL_ n$}.
\newblock {\em Adv. Math.}, 375:107331, 37, 2020.
\newblock \href {https://doi.org/10.1016/j.aim.2020.107331}
  {\path{doi:10.1016/j.aim.2020.107331}}.

\bibitem[BK05]{BKdrinfeld}
Jonathan Brundan and Alexander Kleshchev.
\newblock Parabolic presentations of the {Y}angian {$Y({\mathfrak{gl}}_n)$}.
\newblock {\em Comm. Math. Phys.}, 254(1):191--220, 2005.
\newblock \href {https://doi.org/10.1007/s00220-004-1249-6}
  {\path{doi:10.1007/s00220-004-1249-6}}.

\bibitem[BK08]{BKschur}
Jonathan Brundan and Alexander Kleshchev.
\newblock Schur-{W}eyl duality for higher levels.
\newblock {\em Selecta Math. (N.S.)}, 14(1):1--57, 2008.
\newblock \href {https://doi.org/10.1007/s00029-008-0059-7}
  {\path{doi:10.1007/s00029-008-0059-7}}.

\bibitem[BLM90]{BLM}
Alexander Beilinson, George Lusztig, and Robert MacPherson.
\newblock A geometric setting for the quantum deformation of {${\rm GL}_n$}.
\newblock {\em Duke Math. J.}, 61(2):655--677, 1990.
\newblock \href {https://doi.org/10.1215/S0012-7094-90-06124-1}
  {\path{doi:10.1215/S0012-7094-90-06124-1}}.

\bibitem[Bru25]{BSchur}
Jonathan Brundan.
\newblock The {$q$}-{S}chur category and polynomial tilting modules for quantum
  {${\rm GL}_n$}.
\newblock {\em Pacific J. Math.}, 336(1-2):63--112, 2025.
\newblock \href {https://doi.org/10.2140/pjm.2025.336.63}
  {\path{doi:10.2140/pjm.2025.336.63}}.

\bibitem[BT18]{BT}
Jonathan Brundan and Lewis Topley.
\newblock The {$p$}-centre of {Y}angians and shifted {Y}angians.
\newblock {\em Mosc. Math. J.}, 18(4):617--657, 2018.
\newblock \href {https://doi.org/10.17323/1609-4514-2018-18-4-617-657}
  {\path{doi:10.17323/1609-4514-2018-18-4-617-657}}.

\bibitem[CKM14]{CKM}
Sabin Cautis, Joel Kamnitzer, and Scott Morrison.
\newblock Webs and quantum skew {H}owe duality.
\newblock {\em Math. Ann.}, 360(1-2):351--390, 2014.
\newblock \href {https://doi.org/10.1007/s00208-013-0984-4}
  {\path{doi:10.1007/s00208-013-0984-4}}.

\bibitem[CP96]{CP}
Vyjayanthi Chari and Andrew Pressley.
\newblock Quantum affine algebras and affine {H}ecke algebras.
\newblock {\em Pacific J. Math.}, 174(2):295--326, 1996.
\newblock URL: \url{http://projecteuclid.org/euclid.pjm/1102365173}.

\bibitem[DF15]{du-fu}
Jie Du and Qiang Fu.
\newblock Quantum affine {$\mathfrak{gl}_n$} via {H}ecke algebras.
\newblock {\em Adv. Math.}, 282:23--46, 2015.
\newblock \href {https://doi.org/10.1016/j.aim.2015.06.007}
  {\path{doi:10.1016/j.aim.2015.06.007}}.

\bibitem[DG02]{doty-giaquinto}
Stephen Doty and Anthony Giaquinto.
\newblock Presenting {S}chur algebras.
\newblock {\em Int. Math. Res. Not.}, 36:1907--1944, 2002.
\newblock \href {https://doi.org/10.1155/S1073792802201026}
  {\path{doi:10.1155/S1073792802201026}}.

\bibitem[DG07]{green-doty}
Stephen Doty and Richard Green.
\newblock Presenting affine {$q$}-{S}chur algebras.
\newblock {\em Math. Z.}, 256(2):311--345, 2007.
\newblock \href {https://doi.org/10.1007/s00209-006-0076-1}
  {\path{doi:10.1007/s00209-006-0076-1}}.

\bibitem[DJ86]{DJgl}
Richard Dipper and Gordon James.
\newblock Representations of {H}ecke algebras of general linear groups.
\newblock {\em Proc. London Math. Soc. (3)}, 52(1):20--52, 1986.
\newblock \href {https://doi.org/10.1112/plms/s3-52.1.20}
  {\path{doi:10.1112/plms/s3-52.1.20}}.

\bibitem[DKM25]{DKMZ2}
Nicholas Davidson, Jonathan Kujawa, and Robert Muth.
\newblock Superalgebra deformations of web categories: affine and cyclotomic
  webs, 2025.
\newblock \href {https://arxiv.org/abs/2511.21671} {\path{arXiv:2511.21671}}.

\bibitem[DKMZ23]{DKMZ1}
Nicholas Davidson, Jonathan Kujawa, Robert Muth, and Jieru Zhu.
\newblock Superalgebra deformations of web categories: finite webs, 2023.
\newblock \href {https://arxiv.org/abs/2302.04073} {\path{arXiv:2302.04073}}.

\bibitem[Dot03]{doty}
Stephen Doty.
\newblock Presenting generalized {$q$}-{S}chur algebras.
\newblock {\em Represent. Theory}, 7:196--213, 2003.
\newblock \href {https://doi.org/10.1090/S1088-4165-03-00176-6}
  {\path{doi:10.1090/S1088-4165-03-00176-6}}.

\bibitem[Dri86]{Drinfeld}
Vladimir Drinfeld.
\newblock Degenerate affine {H}ecke algebras and {Y}angians.
\newblock {\em Funktsional. Anal. i Prilozhen.}, 20(1):69--70, 1986.

\bibitem[Dri87]{Dnew}
Vladimir Drinfeld.
\newblock A new realization of {Y}angians and of quantum affine algebras.
\newblock {\em Dokl. Akad. Nauk SSSR}, 296(1):13--17, 1987.

\bibitem[Eli15]{Elias}
Ben Elias.
\newblock Light ladders and clasp conjectures.
\newblock \arxiv{1510.06840}, 2015.

\bibitem[FM02]{FM}
Edward Frenkel and Evgeny Mukhin.
\newblock The {H}opf algebra {${\rm Rep}\,U_q\widehat{\mathfrak{gl}}_\infty$}.
\newblock {\em Selecta Math. (N.S.)}, 8(4):537--635, 2002.
\newblock \href {https://doi.org/10.1007/PL00012603}
  {\path{doi:10.1007/PL00012603}}.

\bibitem[Gre99]{Green-affine}
Richard Green.
\newblock The affine {$q$}-{S}chur algebra.
\newblock {\em J. Algebra}, 215(2):379--411, 1999.
\newblock \href {https://doi.org/10.1006/jabr.1998.7753}
  {\path{doi:10.1006/jabr.1998.7753}}.

\bibitem[Gre07]{Green}
James~A. Green.
\newblock {\em Polynomial {R}epresentations of {${\rm GL}_{n}$}}, volume 830 of
  {\em Lecture Notes in Mathematics}.
\newblock Springer, Berlin, augmented edition, 2007.
\newblock With an appendix on Schensted correspondence and Littelmann paths by
  K. Erdmann, Green and M. Schocker.
\newblock URL:
  \url{https://link.springer.com/content/pdf/10.1007/3-540-46944-3.pdf}.

\bibitem[GT21]{GTrestricted}
Simon Goodwin and Lewis Topley.
\newblock Restricted shifted {Y}angians and restricted finite {$W$}-algebras.
\newblock {\em Trans. Amer. Math. Soc. Ser. B}, 8:190--228, 2021.
\newblock \href {https://doi.org/10.1090/btran/63}
  {\path{doi:10.1090/btran/63}}.

\bibitem[Kle05]{Kleshchev}
Alexander Kleshchev.
\newblock {\em Linear and {P}rojective {R}epresentations of {S}ymmetric
  {G}roups}, volume 163 of {\em Cambridge Tracts in Mathematics}.
\newblock Cambridge University Press, Cambridge, 2005.
\newblock \href {https://doi.org/10.1017/CBO9780511542800}
  {\path{doi:10.1017/CBO9780511542800}}.

\bibitem[Kup96]{Kuperberg}
Greg Kuperberg.
\newblock Spiders for rank {$2$} {L}ie algebras.
\newblock {\em Comm. Math. Phys.}, 180(1):109--151, 1996.
\newblock URL: \url{http://projecteuclid.org/euclid.cmp/1104287237}.

\bibitem[LM25]{LM}
Chun-Ju Lai and Alexandre Minets.
\newblock Schurification of polynomial quantum wreath products, 2025.
\newblock \href {https://arxiv.org/abs/2502.02108} {\path{arXiv:2502.02108}}.

\bibitem[MNO96]{MNO}
Alexander Molev, Maxim Nazarov, and Grigori Olshanski.
\newblock Yangians and classical {L}ie algebras.
\newblock {\em Uspekhi Mat. Nauk}, 51(2(308)):27--104, 1996.
\newblock \href {https://doi.org/10.1070/RM1996v051n02ABEH002772}
  {\path{doi:10.1070/RM1996v051n02ABEH002772}}.

\bibitem[MOY98]{MOY}
Hitoshi Murakami, Tomotada Ohtsuki, and Shuji Yamada.
\newblock Homfly polynomial via an invariant of colored plane graphs.
\newblock {\em Enseign. Math. (2)}, 44(3-4):325--360, 1998.

\bibitem[MS19]{MS}
Vanessa Miemietz and Catharina Stroppel.
\newblock Affine quiver {S}chur algebras and {$p$}-adic {$GL_n$}.
\newblock {\em Selecta Math. (N.S.)}, 25(2):Paper No. 32, 66, 2019.
\newblock \href {https://doi.org/10.1007/s00029-019-0474-y}
  {\path{doi:10.1007/s00029-019-0474-y}}.

\bibitem[MS21]{MakS}
Ruslan Maksimau and Catharina Stroppel.
\newblock Higher level affine {S}chur and {H}ecke algebras.
\newblock {\em J. Pure Appl. Algebra}, 225(8):Paper No. 106442, 44, 2021.
\newblock \href {https://doi.org/10.1016/j.jpaa.2020.106442}
  {\path{doi:10.1016/j.jpaa.2020.106442}}.

\bibitem[MSV13]{MSV}
Marco Mackaay, Marko Stosić, and Pedro Vaz.
\newblock A diagrammatic categorification of the {$q$}-{S}chur algebra.
\newblock {\em Quantum Topol.}, 4(1):1--75, 2013.
\newblock \href {https://doi.org/10.4171/QT/34} {\path{doi:10.4171/QT/34}}.

\bibitem[NT98a]{NT2}
Maxim Nazarov and Vitaly Tarasov.
\newblock On irreducibility of tensor products of {Y}angian modules.
\newblock {\em Internat. Math. Res. Notices}, 3:125--150, 1998.
\newblock \href {https://doi.org/10.1155/S1073792898000129}
  {\path{doi:10.1155/S1073792898000129}}.

\bibitem[NT98b]{NT}
Maxim Nazarov and Vitaly Tarasov.
\newblock Representations of {Y}angians with {G}elfand-{Z}etlin bases.
\newblock {\em J. Reine Angew. Math.}, 496:181--212, 1998.
\newblock \href {https://doi.org/10.1515/crll.1998.029}
  {\path{doi:10.1515/crll.1998.029}}.

\bibitem[SSW24]{ShenSongWang}
Yaolong Shen, Linliang Song, and Weiqiang Wang.
\newblock Affine and cyclotomic $q$-{S}chur categories via webs, 2024.
\newblock \href {https://arxiv.org/abs/2504.10270} {\path{arXiv:2504.10270}}.

\bibitem[SW11]{SW}
Catharina Stroppel and Benjamin Webster.
\newblock Quiver {S}chur algebras and $q$-{F}ock space, 2011.
\newblock \href {https://arxiv.org/abs/1110.1115} {\path{arXiv:1110.1115}}.

\bibitem[SW24]{SongWang2}
Linliang Song and Weiqiang Wang.
\newblock Affine and cyclotomic {S}chur categories, 2024.
\newblock \href {https://arxiv.org/abs/2407.10119} {\path{arXiv:2407.10119}}.

\bibitem[SW25]{SongWang}
Linliang Song and Weiqiang Wang.
\newblock Affine and cyclotomic webs.
\newblock {\em J. Lond. Math. Soc. (2)}, 112(3):Paper No. e70278, 43, 2025.
\newblock \href {https://doi.org/10.1112/jlms.70278}
  {\path{doi:10.1112/jlms.70278}}.

\bibitem[Vig03]{vigneras}
Marie-France Vign\'eras.
\newblock Schur algebras of reductive {$p$}-adic groups. {I}.
\newblock {\em Duke Math. J.}, 116(1):35--75, 2003.
\newblock \href {https://doi.org/10.1215/S0012-7094-03-11612-9}
  {\path{doi:10.1215/S0012-7094-03-11612-9}}.

\bibitem[Zel80]{zelevinsky}
A.~V. Zelevinsky.
\newblock Induced representations of reductive $p$-adic groups. {II}. {O}n
  irreducible representations of $\mathrm{GL}(n)$.
\newblock {\em Ann. Sci. École Norm. Sup. (4)}, 13(2):165--210, 1980.
\newblock URL: \url{http://www.numdam.org/item?id=ASENS_1980_4_13_2_165_0}.

\end{thebibliography}
\end{document}